\documentclass[10pt,a4paper]{amsart}
\usepackage[utf8]{inputenc}
\usepackage{amsmath}
\usepackage{amsfonts}
\usepackage{amssymb}
\usepackage[left=2cm,right=2cm,top=2cm,bottom=2cm]{geometry}
\usepackage{bbm}
\usepackage{mathrsfs}
\usepackage[all]{xy}
\usepackage{bm}
\usepackage{manfnt}
\usepackage[usenames,dvipsnames]{xcolor}
\usepackage{leftindex}
\usepackage[backref=page]{hyperref}
\author{Geoffrey Powell}
\title{Relating Brauer categories, Koszul complexes, and graph complexes}
\address{Univ Angers, CNRS, LAREMA, SFR MATHSTIC, F-49000 Angers, France}
\email{Geoffrey.Powell@math.cnrs.fr}
\urladdr{https://math.univ-angers.fr/~powell/}

\keywords{}
\subjclass[2000]{}
\newtheorem{THM}{Theorem}

\newtheorem{COR}[THM]{Corollary}
\newtheorem{thm}{Theorem}[section]
\newtheorem{prop}[thm]{Proposition}
\newtheorem{cor}[thm]{Corollary}
\newtheorem{lem}[thm]{Lemma}
\theoremstyle{definition}
\newtheorem{defn}[thm]{Definition}
\newtheorem{exam}[thm]{Example}

\theoremstyle{remark}
\newtheorem{rem}[thm]{Remark}
\newtheorem{nota}[thm]{Notation}
\newtheorem{hyp}[thm]{Hypothesis}

\renewcommand{\epsilon}{\varepsilon}
\newcommand{\f}{\mathcal{F}}
\renewcommand{\phi}{\varphi}
\renewcommand{\hom}{\mathrm{Hom}}
\newcommand{\sym}{\mathfrak{S}}
\newcommand{\kmod}{\mathtt{Mod}_\kk}
\newcommand{\calc}{\mathcal{C}}
\newcommand{\nat}{\mathbb{N}}

\newcommand{\ext}{\mathrm{Ext}}
\newcommand{\op}{^\mathrm{op}}
\newcommand{\ob}{\mathrm{Ob}\hspace{2pt}}
\newcommand{\fb}{\mathsf{FB}}
\newcommand{\finj}{{\mathsf{FI}}}
\newcommand{\id}{\mathrm{Id}}
\newcommand{\triv}{\mathsf{triv}}
\newcommand{\sgn}{\mathsf{sgn}}
\newcommand{\aut}{\mathrm{Aut}}
\newcommand{\n}{\mathbf{n}}
\newcommand{\m}{\mathbf{m}}
\newcommand{\kk}{\mathbbm{k}}
\newcommand{\dash}{\text{-}}
\newcommand{\modules}{\mathsf{mod}}
\newcommand{\opd}{\mathscr{O}}
\newcommand{\uwb}{\mathsf{uwb}}
\newcommand{\ub}{\mathsf{ub}}
\newcommand{\dwb}{\mathsf{dwb}}
\newcommand{\db}{\mathsf{db}}
\newcommand{\ddb}{\overrightarrow{\db}}
\newcommand{\dub}{\overrightarrow{\ub}}
\newcommand{\dwbord}{\dwb^{\mathrm{ord}}}
\newcommand{\tfb}{\fb \times \fb}
\newcommand{\uwbord}{\uwb^{\mathrm{ord}}}
\newcommand{\fdvs}{\mathscr{V}}
\newcommand{\cala}{\mathscr{A}}
\newcommand{\calb}{\mathscr{B}}
\newcommand{\stfb}{S_\circledcirc}
\newcommand{\ltfb}{\Lambda_\circledcirc}
\newcommand{\tor}{\mathrm{Tor}}
\newcommand{\der}{\mathrm{Der}}

\newcommand{\unwall}{\mathsf{d}\junwall}
\newcommand{\junwall}{\mathcal{J}}
\newcommand{\sfb}{S_\odot}
\newcommand{\lfb}{\Lambda_\odot}
\newcommand{\cpd}{\mathscr{C}}
\newcommand{\copds}{\mathsf{CyclOpd}}
\newcommand{\opds}{\mathsf{Opd}}
\newcommand{\diopds}{\mathsf{DiOpd}}
\newcommand{\nuco}{\copds^{\mathrm{nu}}}
\newcommand{\nudo}{\diopds^{\mathrm{nu}}}
\newcommand{\nuo}{\opds^{\mathrm{nu}}}
\newcommand{\lad}{\mathscr{L}}
\newcommand{\diag}{\mathrm{diag}}
\newcommand{\tmalg}{\widetilde{\amalg}}

\newcommand{\dubord}{\dub^\mathsf{ord}}
\newcommand{\ddbord}{\ddb^\mathsf{ord}}
\newcommand{\ouw}{\mathrm{UnW}}
\newcommand{\sgntr}{\mathsf{Tr}}
\newcommand{\dsgntr}{\mathsf{dTr}}
\newcommand{\one}{\mathbf{1}}
\newcommand{\two}{\mathbf{2}}
\newcommand{\diopd}{\mathscr{D}}
\newcommand{\X}{\mathsf{d}\Upsilon}
\newcommand{\od}{\mathbf{d}}
\newcommand{\sct}{\mathsf{Sct}}
\newcommand{\dupsilon}{\mathsf{d}\upsilon}
\newcommand{\zero}{\mathbf{0}}
\newcommand{\ump}{\mathrm{mp}}
\newcommand{\dmp}{\mathsf{d}\mathrm{mp}}
\newcommand{\cten}{\otimes}
\numberwithin{equation}{section}
\begin{document}

\begin{abstract}
The purpose of this paper is to investigate the relationship between hairy graph complexes associated to cyclic operads and their counterparts for operads (and, more generally, dioperads).
 This is based on the author's interpretation of these as Koszul complexes for the associated modules over the respective appropriate twisted downward (walled) Brauer category. 

The general question of relating such Koszul complexes is addressed by analysing the relationships between the respective twisted Brauer-type categories, proceeding through a direct analysis. The passage from the walled to unwalled context involves functors induced by the disjoint union of finite sets. 

As an application, for the cyclic operad associated to an operad, this leads to an explicit relation between the respective (hairy) graph homologies. 
\end{abstract}

\maketitle

\section{Introduction}
\label{sect:intro}

Given a cyclic operad $\cpd$, one can construct odd and even flavours of (hairy) graph complexes. (For the purposes of this introduction, we shall concentrate on the even case.)
This goes back to Kontsevich \cite{MR1247289,MR1341841}, as generalized by Conant and Vogtmann \cite{MR2026331}, and \cite{MR3347586,MR3029423} for example for the hairy case. A more sophisticated approach is to consider the cyclic operad as a modular operad and then apply the appropriate Feynman transform \cite{MR1601666} to $\cpd$.  Throughout we work over a field $\kk$ of characteristic zero.

For $\opd$ an operad, Dotsenko has considered forms of hairy graph complexes \cite{MR4945404}. These he relates to the wheeled bar construction on $\opd$ (considered as a wheeled operad with trivial wheeled operad).

The author has developed a different perspective on both of these constructions \cite{P_cyclic,P_opd_wall}. (Here we also extend the operadic theory to dioperads, which provides the natural level of generality.) This approach is based on the fact that graphs can be described by using appropriate flavours of downward Brauer categories.  The starting point in \cite{P_cyclic} for  the cyclic operad case (without requiring a unit) is that there is a natural $(\kk \db)_{(-;-)}$-module $\lfb^* \cpd$ associated to the cyclic operad $\cpd$ (see  Theorem \ref{thm:cpd_db_kdbm} here). Here $\lfb^*$ is the exterior power functor constructed with respect to Day convolution $\odot$ on $\kk \fb$-modules, where $\fb$ is the category of finite sets and bijections; $\db$ is the downward Brauer category, and $(\kk \db)_{(-;-)}$ is the appropriate twisted $\kk$-linearization of this (see Section \ref{subsect:twist_db} for this).  

For $\diopd$ a dioperad (without requiring a unit), there is a similar story: there is a natural $(\kk \dwb)_-$-module structure on $\ltfb^* \diopd$ (see Theorem \ref{thm:diopd_dwb_kkdwb-_modules}). Here, $\ltfb^*$ is the exterior power functor with respect to  Day convolution $\circledcirc$ on $\kk (\tfb)$-modules, $\dwb$ is the downward walled Brauer category, and $(\kk \dwb)_-$ is the appropriate twisted $\kk$-linearization of this (see Section \ref{subsect:twist_uwb} for this).

Now, both $(\kk \db)_{(-;-)}$ and $(\kk \dwb)_-$ are homogeneous quadratic $\kk$-linear categories (over $\kk \fb$ and $\kk (\tfb)$ respectively). Thus, for a $(\kk \db)_{(-;-)}$-module $M$, we can form the following two Koszul complexes (that we call here the {\em first} and {\em second} Koszul complexes, respectively):
\begin{eqnarray*}
&&
(\kk \ub)_{(-;+)} \otimes _{\kk \fb} M
\\
&&
(\kk \ub)_{(-;+)}^\sharp \otimes _{\kk \fb} M.
\end{eqnarray*}
Here $\ub$ is the upward Brauer category, with a different twist for the $\kk$-linearization; the $^\sharp$ indicates duality. These  are complexes of $(\kk \ub)_{(-;+)}$-modules; they are described explicitly in Section \ref{sect:kz_cx}. 

Likewise, for $F$ a $(\kk \dwb)_-$-module, we have the two Koszul complexes:
\begin{eqnarray*}
&&
\kk \uwb \otimes _{\kk (\tfb)} F
\\
&&
(\kk \uwb)^\sharp \otimes _{\kk (\tfb)} F.
\end{eqnarray*}
Here $\uwb$ is the upward Brauer category and $\kk \uwb$ its $\kk$-linearization. These are complexes of $\kk\uwb$-modules.

Specializing to the case of interest, the hairy graph complexes correspond to the respective second Koszul complexes:
\begin{eqnarray*}
&&
(\kk \ub)_{(-;+)}^\sharp \otimes _{\kk \fb} \lfb^* \cpd 
\\
&&
(\kk \uwb)^\sharp \otimes _{\kk (\tfb)} \ltfb^* \diopd .
\end{eqnarray*}
(See Corollary \ref{cor:cpd_Koszul_complexes} for the cyclic case and Corollary \ref{cor:diopd_Koszul_complexes} for the dioperad case, where the first Koszul complexes are also treated, together with the {\em odd} variants of these.) The fact that these correspond to hairy graph complexes is explained in \cite{P_cyclic,P_opd_wall}.

Now, given a cyclic operad $\cpd$, one can consider its underlying operad. More generally, one can form the associated dioperad $\diopd_\cpd$. The values of this are given (for finite sets $X$, $Y$ labelling the inputs and outputs respectively) by 
$ \diopd_\cpd (X,Y) = \cpd (X \amalg Y)$. Otherwise put, the underlying $\kk (\tfb)$-module is given by restricting along $\amalg : \tfb \rightarrow \fb$, the functor induced by the disjoint union of finite sets. Thus we write $\diopd_\cpd$ as $\amalg^* \cpd$. (See Theorem \ref{thm:cpd_to_diopd} for a precise statement identifying the dioperad structure.)

The passage in the other direction is perhaps less familiar, although related constructions occur in \cite{MR1913297}. For a dioperad $\diopd$, one has the associated cyclic operad $\cpd_\diopd$ with values given by 
$$
\cpd_\diopd (X) = \bigoplus_{X = X_1 \amalg X_2} \diopd (X_1, X_2).
$$
This corresponds to forgetting the distinction between `inputs' and `outputs'. 
At the level of the underlying objects, we may identify $\cpd_\diopd$ as $\amalg_* \cpd$, where $\amalg_*$ is both left and right adjoint to $\amalg^*$. (See Theorem \ref{thm:diopd_to_cpd} for the precise statement identifying the cyclic operad structure.)

The next goal is to understand the relationship between $\lfb^* \cpd$ and $\ltfb^* (\amalg^*\cpd)$ (respectively $\ltfb^* \diopd$ and $\lfb^* (\amalg_* \diopd)$). Theorem \ref{THM:compare_modules} below gives the `even parts' of Theorems \ref{thm:cyclic_2_diopd_modules} and \ref{thm:diopd_2_cyclic_modules} respectively. The statement uses the functors
\begin{eqnarray*}
\dupsilon_{(-;-)} ^*
&: &
(\kk \db)_{(-;-)} \dash \modules 
\rightarrow 
(\kk \dwb)_-\dash \modules
\\
\unwall_{(-;-)} &:& (\kk \dwb)_-\dash \modules 
\rightarrow
(\kk \db)_{(-; -)}\dash \modules
\end{eqnarray*}
of Notation \ref{nota:upsilon} and  from Section \ref{subsect:I} respectively.

\begin{THM}
\label{THM:compare_modules}
\ 
\begin{enumerate}
\item 
For $\cpd$ a cyclic operad, there is a natural isomorphism of $(\kk \dwb)_-$-modules:
$$
\ltfb^* (\amalg^* \cpd) \cong \dupsilon_{(-;-)} ^* \lfb^* \cpd. 
$$
\item
For $\diopd$ a dioperad, there is a natural isomorphism of $(\kk \db)_{(-;-)}$-modules:
$$
\lfb^* (\amalg_* \diopd) \cong \unwall_{(-;-)} \ltfb^* \diopd.
$$
\end{enumerate}
\end{THM}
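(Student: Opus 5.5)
The plan is to compare the two sides first as underlying modules, then check that the two actions agree on the generating morphisms. Both $(\kk \db)_{(-;-)}$ and $(\kk \dwb)_-$ are homogeneous quadratic over $\kk\fb$ and $\kk(\tfb)$ respectively. So a module structure is determined by its underlying $\kk\fb$- (resp. $\kk(\tfb)$-) module together with the action of the degree-one generators, namely single contractions of a pair of points (resp. of an input/output pair). The isomorphism therefore needs only to be compatible with these generators.

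\textbf{Part (1).} First identify the underlying $\kk(\tfb)$-modules. Since $\amalg:\tfb\to\fb$ is symmetric monoidal from $\circledcirc$ to $\odot$, there is a natural isomorphism $\amalg^*(M\odot N)\cong \amalg^*M\circledcirc\amalg^*N$. Passing to symmetric coinvariants with the sign twist then gives $\ltfb^n(\amalg^*\cpd)\cong\amalg^*\lfb^n\cpd$ for each $n$, and summing over $n$ gives $\ltfb^*(\amalg^*\cpd)\cong\amalg^*\lfb^*\cpd$. By Notation \ref{nota:upsilon}, $\dupsilon^*_{(-;-)}$ should be restriction along the functor $\dupsilon:\dwb\to\db$ induced by $\amalg$, which sends a walled pairing to the corresponding pairing of the disjoint union. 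Its underlying module is therefore $\amalg^*$.

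Next, compare the actions on a generator of $\dwb$ contracting an input $x$ with an output $y$. On $\ltfb^*\diopd$ this acts via the dioperadic composition (Theorem \ref{thm:diopd_dwb_kkdwb-_modules}) when $x$ and $y$ lie in different tensor factors, and by zero otherwise. By Theorem \ref{thm:cpd_to_diopd}, the dioperadic composition of $\amalg^*\cpd$ is the cyclic composition of $\cpd$ along the pair $\{x,y\}$, which is exactly the action of $\dupsilon$ of that generator on $\lfb^*\cpd$ (Theorem \ref{thm:cpd_db_kdbm}).

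\textbf{Part (2).} I expect $\unwall_{(-;-)}$ (Section \ref{subsect:I}) to be induction along $\dupsilon$, possibly after a twist, and hence, at the level of underlying objects, to be given by $\amalg_*$. Using the exponential property, $\lfb^*(\amalg_*\diopd)\cong\amalg_*\ltfb^*\diopd$. Concretely, this is a sum over splittings $X=X_1\amalg X_2$ of exterior products of values $\diopd(X_1^i,X_2^i)$. Then check that a contraction of a pair $\{a,b\}\subset X$ acts on the right-hand side by summing over the two orientations: $a$ an input and $b$ an output, or the reverse. This matches the cyclic composition of $\cpd_\diopd$ given in Theorem \ref{thm:diopd_to_cpd}, and pairs lying on the same side of a splitting act by zero on both sides. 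Alternatively, Part (2) can be deduced from Part (1) by adjunction, provided $\unwall$ is characterized as an adjoint of $\dupsilon^*$.

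\textbf{Main obstacle.} The hardest step is the sign bookkeeping: reconciling the twist $(-;-)$ on $\kk\db$ and the twist $-$ on $\kk\dwb$ with the exterior-power signs and the orientation signs produced when $\amalg$ reorders elements. I would fix explicit orderings, putting inputs before outputs, verify that the Koszul signs coming from $\Lambda$ and from the twists cancel on a single generator, and then rely on quadraticity for the rest.
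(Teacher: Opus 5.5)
Your overall strategy is the paper's. You match underlying objects using that $\amalg^*$ and $\amalg_*$ are symmetric monoidal for Day convolution, use homogeneous quadraticity to reduce to degree-one contractions, and compare these on the $\lfb^2$/$\ltfb^2$ components using diagonality, a fixed order of the tensor factors, and the sign from reversing a pair. Part (1) goes through as you describe. Proposition \ref{prop:upsilon^*_dupsilon^*} supplies the description of $\dupsilon^*_{(-;-)}$ you need: a walled pair $(s,t)$ acts as the ordered pair $(s,t)$ of $X\amalg Y$.

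In Part (2), however, your description of $\unwall_{(-;-)}$ is wrong, so your justification that its underlying object is $\amalg_*$ does not hold. By Definition \ref{defn:I}, $\unwall_{(-;-)}=\dsgntr^*_{(-;-)}\circ(\X_-)_*$. This is the right adjoint of restriction along $\X_-$ (whose underlying object is $\amalg_*$ by Proposition \ref{prop:Xi_*}), followed by restriction along the transfer functor. It is neither induction along $\dupsilon_{(-;-)}$ nor any adjoint of $\dupsilon^*_{(-;-)}$.

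Induction evaluated at $X$ involves $(\kk\db)_{(-;-)}(A\amalg B,X)$ with $|A|+|B|>|X|$, that is, contractions of pairs lying on one side of the wall. For example, take $F$ concentrated at $(\two,\zero)$ with value $\sgn_2$. The induced module is $\kk$ at $\zero$, whereas $\amalg_*F(\zero)=0$. The right adjoint of $\dupsilon^*_{(-;-)}$ is also in general a proper submodule of $\unwall_{(-;-)}F$. The paper stresses the analogous discrepancy between $\junwall$ and the left adjoint $\lad$ of $\upsilon^*$ (Corollary \ref{cor:surject_junwall_to_lad}, Example \ref{exam:J-L}). For the same reason, your fallback of deducing (2) from (1) by adjunction is not available.

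The repair is to use Proposition \ref{prop:unwall_functor}. The underlying object is $\amalg_*$, and an ordered pair $(a,b)$ acts on the summand indexed by $(X_1,X_2)$ as follows:
\begin{itemize}
\item by zero if $a,b$ lie on the same side;
\item via the walled contraction if $a\in X_1$ and $b\in X_2$;
\item via minus the corresponding walled contraction if $a\in X_2$ and $b\in X_1$.
\end{itemize}
This is exactly the signed two-orientation action you then propose to compare with $\mu_{\amalg_*\diopd}$. With this description in place, your check becomes the paper's proof.
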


This means that the problem of relating the respective hairy graph complexes can be phrased in more general terms using the second Koszul complexes. Namely, for $M$ a $(\kk \db)_{(-;-)}$-module, what is the relationship between
$$
(\kk \uwb)^\sharp \otimes _{\kk (\tfb)} \dupsilon_{(-;-)} ^*M
\mbox{\quad  and \quad }
(\kk \ub)_{(-;+)}^\sharp \otimes_{\kk \fb} M \ ?
$$
Likewise, for $F$ a $(\kk \dwb)_-$-module, what is the relationship between
$$
(\kk \ub)_{(-;+)}^\sharp \otimes_{\kk \fb} \unwall_{(-;-)} F
\mbox{\quad  and \quad } 
(\kk \uwb)^\sharp \otimes _{\kk (\tfb)} F \ ?
$$
The first of these questions is addressed in Section \ref{subsect:2nd_Kz_dupsilon^*} and the second in Section \ref{subsect:second_Koszul_unwall}. This is based upon the results of Part \ref{part:brauer} together with the general results of Section \ref{sect:new_compare}.

These results then feed into the comparison results of Section \ref{sect:graph_cx_compare}, which contains the main results of the paper. For example, in the cyclic operad case, the second Koszul complex part of Theorem \ref{thm:compare_cyclic_case_grph_cx} is the following (in which $\upsilon^*_{(-;+)}$ is restriction along the functor $\upsilon_{(-;+)}$ of Notation \ref{nota:upsilon}):

\begin{THM}
For $\cpd$ a cyclic operad, there is an inclusion of complexes of $\kk \uwb$-modules:
$$
\upsilon^*_{(-;+)} \big( (\kk \ub)^\sharp_{(-;+)} \otimes_{\kk \fb} \lfb^* \cpd) \big)
\hookrightarrow 
(\kk \uwb)^\sharp \otimes_{\kk (\tfb)} \ltfb^* (\amalg^* \cpd).
$$
\end{THM}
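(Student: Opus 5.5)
The plan is to combine Theorem \ref{THM:compare_modules}(1) with a general comparison of second Koszul complexes along the functors $\upsilon$. By Theorem \ref{THM:compare_modules}(1), the right-hand side is isomorphic to $(\kk \uwb)^\sharp \otimes_{\kk (\tfb)} \dupsilon^*_{(-;-)} \lfb^* \cpd$. So it suffices to prove, for an arbitrary $(\kk \db)_{(-;-)}$-module $M$, that there is a natural inclusion of complexes of $\kk\uwb$-modules
\[
\upsilon^*_{(-;+)} \big( (\kk \ub)^\sharp_{(-;+)} \otimes_{\kk \fb} M \big) \hookrightarrow (\kk \uwb)^\sharp \otimes_{\kk (\tfb)} \dupsilon^*_{(-;-)} M .
\]
We then specialize to $M = \lfb^*\cpd$. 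This is the content to be established in Section \ref{subsect:2nd_Kz_dupsilon^*}.

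\textbf{Comparing the underlying graded objects.} At $(X,Y) \in \tfb$, the left-hand side is a sum over sets $U$ of perfect matchings on a subset of $X \amalg Y$, twisted by signs, tensored with $M$ evaluated on the complement. The right-hand side is a sum only over walled matchings, which pair an element of $X$ with an element of $Y$, tensored with $M$ evaluated on the complementary pair of sets. This is the wrong direction for a naive inclusion, since more matchings appear on the left.

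The resolution is the duality $^\sharp$. In the dual $(\kk\ub)^\sharp$, the relevant generators are those in the quadratic dual relations. Restricting along $\upsilon_{(-;+)}$, which passes from unwalled to walled by means of $\amalg$, should pick out exactly the walled summands. The precise form of this is what I would extract from the Brauer-category analysis of Part \ref{part:brauer}. First, I would identify $\upsilon^*_{(-;+)} (\kk\ub)^\sharp_{(-;+)}$ as a $\kk\uwb$-bimodule-type object. Second, I would exhibit a natural map from it to $(\kk\uwb)^\sharp$, compatible with the $\kk\fb$ versus $\kk(\tfb)$ tensor products via the adjunction between $\amalg^*$ and $\amalg_*$. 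Third, I would check that this map is injective degreewise. Injectivity should follow because $\amalg_*$ is exact and the map is a direct summand inclusion on the level of $\kk(\tfb)$-modules, with the summands indexed by how the chords split across the wall.

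\textbf{Compatibility with differentials.} The Koszul differentials are built from the action of the generating morphisms (single chords) of $\db$ and $\dwb$ on $M$, combined with the dual coproduct structure. Theorem \ref{THM:compare_modules}(1) shows that $\dupsilon^*_{(-;-)}$ transports the $(\kk\db)_{(-;-)}$-action to the $(\kk\dwb)_-$-action. It therefore suffices to check that the sign twists $(-;-)$, $(-;+)$ and $-$ match under $\upsilon$ and $\dupsilon$. This should use the general result of Section \ref{sect:new_compare} on how restriction along a morphism of quadratic $\kk$-linear categories interacts with Koszul complexes.

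\textbf{Expected obstacle.} I expect the hardest part to be this sign bookkeeping, together with showing that the summand indexed by unwalled chords really forms a subcomplex rather than merely a graded subobject. Concretely, the differential on the right-hand side must not map the image into the complementary summands. My first attempt would be to verify this on generators of degree one, where it reduces to a statement about a single chord. I would then propagate the result using the multiplicativity of the Koszul differential.
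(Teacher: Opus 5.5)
Your opening reduction matches the paper: Theorem \ref{thm:cyclic_2_diopd_modules} rewrites the target as $(\kk \uwb)^\sharp \otimes_{\kk (\tfb)} \dupsilon^*_{(-;-)} \lfb^* \cpd$. What remains is Theorem \ref{thm:second_kz_cx_dupsilon} for an arbitrary $(\kk \db)_{(-;-)}$-module $M$. The gap lies in everything after that.

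\textbf{The underlying objects.} What you describe are the \emph{first} Koszul complexes, for which the paper obtains a surjection (Theorem \ref{thm:first_kz_cx_dupsilon}). The second Koszul complex is covariant in the hairs. Its value at $(X,Y)$ is $\bigoplus_n (\kk \ub)_{(-;+)}(X \amalg Y, \mathbf{n})^\sharp \otimes_{\kk \sym_n} M(\mathbf{n})$. So the chords form a perfect matching of the complement of $X \amalg Y$ in $\mathbf{n}$, and $M$ is evaluated on the larger set.

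By Frobenius reciprocity the right-hand side is $\bigoplus_n \big( \bigoplus_{\mathbf{n} = B_1 \amalg B_2} (\kk \uwb)((X,Y),(B_1,B_2))^\sharp \big) \otimes_{\kk \sym_n} M(\mathbf{n})$. This is the \emph{larger} side: an unwalled matching with $t$ chords arises from $2^t$ walled configurations, one for each orientation of its chords across a wall. So there is no wrong direction to repair, and duality could not change a dimension count anyway.

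Nothing here is a ``walled summand'' either. The mechanism you sketch is that of Corollary \ref{cor:inject_junwall_dupsilon}, which serves the other comparison (for $\unwall F$) and does not give this statement.

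\textbf{The correct map.} It sends the dual of an unoriented configuration to the signed sum over its orientations. It is dual to the surjection $\bigoplus_{B = B_1 \amalg B_2} (\kk \uwb)((X,Y),(B_1,B_2)) \twoheadrightarrow (\kk \ub)_{(-;+)}(X \amalg Y, B)$ induced by $\amalg$ and forgetting directions. Injectivity is therefore immediate, but the image is a diagonal, not a sum of summands. Its target must be $\unwall_{(-;+)}\big((\kk \uwb)^\sharp\big)$, that is, $\amalg_*$ in the contracted variable equipped with a $(\kk \db)_{(-;+)}$-action. It cannot be $(\kk \uwb)^\sharp$ itself.

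\textbf{Compatibility with differentials.} The substantive obstacle is not sign bookkeeping. The naive map $\iota$ satisfies $d \iota = 2\, \iota d$, because on the walled side each chord is removed once for each of its two orientations. Already for $(X,Y) = (\emptyset, \emptyset)$ and $n=2$, the element $[\alpha_{(12)}]^\vee \otimes g$ has differential $[\alpha_{(12)}\op] g$, whereas its image has differential $2 [\alpha_{(12)}\op] g$. You must rescale by $2^{-t}$ on the component with $t$ chords, i.e. average over orientations, which uses $\frac{1}{2} \in \kk$.

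This is exactly what the paper's machinery supplies:
\begin{itemize}
\item The section $\sct = \tau_{\frac12} \circ \sgntr$ of $\ump$ and the weight isomorphism $\sgntr^* \cong \sct^*$ of Corollary \ref{cor:sct_versus_sgntr}, together with Lemma \ref{lem:rho_sigma_elementary}, give Proposition \ref{prop:compare_sngtr_psi}.
\item Combined with Proposition \ref{prop:apply_X_*}, this yields Proposition \ref{prop:inclusion_upsilon*_unwall_sharp}: an inclusion $\upsilon^*_{(-;+)}\big((\kk \ub)^\sharp_{(-;+)}\big) \hookrightarrow \unwall_{(-;+)}\big((\kk \uwb)^\sharp\big)$ of $(\kk \db)_{(-;+)} \cten \kk \uwb$-modules.
\item Because this respects the $(\kk \db)_{(-;+)}$-action in the contracted variable, applying $- \otimes_{\kk \fb} M$ gives a map of complexes directly. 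Your subcomplex worry therefore disappears.
\item Corollary \ref{cor:unwall_vs_dupsilon} is Frobenius reciprocity upgraded to an isomorphism of Koszul complexes, and it is where the $\pm$ in $\unwall$ matters. It identifies the target with $(\kk \uwb)^\sharp \otimes_{\kk (\tfb)} \dupsilon^*_{(-;-)} M$.
\end{itemize}

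Your proposal supplies neither the $(\kk \db)$-linear comparison map nor the factor $2^{-t}$. As written, the steps ``exhibit a natural map'' and ``check the signs match'' would not go through.
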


Unfortunately, this does not give us an explicit calculation of the hairy graph homology of the dioperad $\amalg^* \cpd$ in terms of the hairy graph homology of $\cpd$. Moreover, we may be more interested in the case of the associated {\em operad} rather than the {\em dioperad}; the operad can be considered as a sub dioperad $\opd _\cpd \hookrightarrow \amalg^* \cpd$. The above does not yield a morphism between the respective hairy graph complexes.

The situation is better when passing in the other direction,  i.e., starting with a dioperad. Then the second Koszul complex statement of Theorem \ref{thm:compare_diopd_case_grph_cx}
gives the following (in which $\amalg_* \op (\kk \ub)_{(-; +)}^{\cten  2}$ is the bimodule obtained from $(\kk \ub)_{(-; +)}^{\cten  2}$ of Notation \ref{nota:cten_kkub_pmmp}):

\begin{THM}
\label{THM:diopd_2_cpd}
For a dioperad $\diopd$, there is an  isomorphism of complexes of $\kk \fb$-modules:
\begin{eqnarray*}
(\kk \ub)_{(-;+)}^\sharp \otimes_{\kk \fb} \lfb^* (\amalg_* \diopd) 
&\cong &
\big( \amalg_* \op (\kk \ub)_{(-; +)}^{\cten  2}\big)^\sharp
\otimes_{\kk (\tfb)}
\big( (\kk\uwb)^\sharp  \otimes_{\kk(\tfb)} \ltfb^* \diopd \big). 
\end{eqnarray*}
This induces an isomorphism in homology:
 \begin{eqnarray*}
H_* ((\kk \ub)_{(-;+)}^\sharp \otimes_{\kk \fb} \lfb^* (\amalg_* \diopd) )
&\cong &
\big( \amalg_* \op (\kk \ub)_{(-; +)}^{\cten  2}\big)^\sharp
\otimes_{\kk (\tfb)}
H_*  \big( (\kk\uwb)^\sharp  \otimes_{\kk(\tfb)} \ltfb^* \diopd \big). 
\end{eqnarray*}
\end{THM}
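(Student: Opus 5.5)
Combining Theorem \ref{THM:compare_modules}(2) with the results of Section \ref{subsect:second_Koszul_unwall} reduces the problem to a statement about a general $(\kk \dwb)_-$-module $F$ (then $F = \ltfb^*\diopd$). Theorem \ref{THM:compare_modules}(2) gives an isomorphism of $(\kk \db)_{(-;-)}$-modules $\lfb^*(\amalg_*\diopd) \cong \unwall_{(-;-)} \ltfb^* \diopd$, and naturality of the second Koszul construction in the module gives an isomorphism of complexes
\[
(\kk \ub)_{(-;+)}^\sharp \otimes_{\kk \fb} \lfb^* (\amalg_* \diopd) \cong (\kk \ub)_{(-;+)}^\sharp \otimes_{\kk \fb} \unwall_{(-;-)} \ltfb^* \diopd .
\]
So it suffices to show, for any $(\kk\dwb)_-$-module $F$, a natural isomorphism of complexes of $\kk\fb$-modules
\[
(\kk \ub)_{(-;+)}^\sharp \otimes_{\kk \fb} \unwall_{(-;-)} F \cong \big( \amalg_* \op (\kk \ub)_{(-; +)}^{\cten 2}\big)^\sharp \otimes_{\kk (\tfb)} \big( (\kk\uwb)^\sharp \otimes_{\kk(\tfb)} F \big).
\]

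For this general statement I would use the structure of $\unwall_{(-;-)}$ from Section \ref{subsect:I}. It should be built as an induction along the passage from walled to unwalled Brauer categories, with underlying $\kk\fb$-module $\amalg_* F$ (a sum over splittings of a finite set into two parts). The key input from Part \ref{part:brauer} should be a factorization of $(\kk\ub)_{(-;+)}$, restricted along $\amalg$, as a tensor product over $\kk(\tfb)$. It would combine the walled part $\kk\uwb$, pairing an element of the first set with one of the second, and the part $(\kk\ub)_{(-;+)}^{\cten 2}$, consisting of pairs of chords lying within a single side. Dualizing this factorization gives a decomposition of the Koszul dual $(\kk\ub)^\sharp_{(-;+)}$ as $\big(\amalg_*\op (\kk\ub)^{\cten2}_{(-;+)}\big)^\sharp \otimes_{\kk(\tfb)} (\kk\uwb)^\sharp$, compatible with the bimodule structures. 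Associativity of $\otimes$ then identifies the underlying graded objects, since $\otimes_{\kk\fb}\amalg_*$ corresponds to $\otimes_{\kk(\tfb)}$ after restriction.

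Next I would check that the differentials match. The Koszul differential on the left contracts a chord of $\ub$ against the $(\kk\db)_{(-;-)}$-action on $\unwall_{(-;-)}F$. By construction of $\unwall$, the action of a chord joining the two sides is the $(\kk\dwb)_-$-action on $F$, while chords within a single side act by zero. This is exactly why the $\cten 2$-factor should carry zero differential. The $\uwb$-part of the differential then recovers the Koszul differential of $(\kk\uwb)^\sharp\otimes_{\kk(\tfb)}F$. I expect this matching to be the main obstacle, in particular the agreement of the twists ($(-;+)$ versus untwisted $\uwb$, and the $(-;-)$ versus $-$ signs) under the identification. I would handle it by using the explicit sign conventions of Section \ref{sect:kz_cx} on generators, that is, single chords, and checking that the quadratic relations match.

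Finally, the homology statement follows formally. The first factor is a $\kk(\tfb)$-module with zero differential, and since $\mathrm{char}\,\kk = 0$ every $\kk(\tfb)$-module is projective. Hence $\big(\amalg_*\op(\kk\ub)^{\cten2}_{(-;+)}\big)^\sharp\otimes_{\kk(\tfb)}-$ is exact, so it commutes with taking homology.
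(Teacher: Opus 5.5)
Your proposal follows the paper's route. Theorem \ref{thm:diopd_2_cyclic_modules} reduces the statement to Theorem \ref{thm:unwall_second_complex}, which the paper proves by combining Corollary \ref{cor:unwall_vs_dupsilon} (Frobenius reciprocity, together with the matching of differentials you describe, where only walled pairs act on $\unwall_{(-;-)} F$) with Corollary \ref{cor:dupsilon_kkub^sharp}, the dual of the factorization Theorem \ref{thm:upsilon^*kkub}; homology then follows from exactness of $\big( \amalg_* \op (\kk \ub)_{(-; +)}^{\cten 2}\big)^\sharp \otimes_{\kk (\tfb)} -$.

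One correction: this factorization is compatible with the $(\kk \dwb)_+ \cten \kk \fb$-module structure only, not with the full bimodule structure (Remark \ref{rem:more_naturality_upsilon^*kkub}). That is enough here, since only walled pairs enter the differential once $\unwall_{(-;-)}$ has killed the within-side chords, and it is exactly why the result is only an isomorphism of complexes of $\kk \fb$-modules.
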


This achieves what we set out to do: it expresses the hairy graph homology of the cyclic operad $\amalg_* \diopd$ in terms of that of the hairy graph homology of the dioperad $\diopd$.

The only downside to this result is that it only gives the  restricted $\kk \fb$-module structure, not the full $(\kk \ub)_{(-;+)}$-module structure. The origin of this restriction is Theorem \ref{thm:upsilon^*kkub} (see the discussion in Remark \ref{rem:more_naturality_upsilon^*kkub} and the refinement in Corollary \ref{cor:surject_uspilon^*kkub_unwall_kkuwb}). In the general theory of \cite{P_cyclic}, the full module structure has an important role in understanding the relationship between the first and second Koszul complexes, for example.

As an application of Theorem \ref{THM:diopd_2_cpd}, we obtain Corollary \ref{cor:comparison_opd_unit_case}:

\begin{COR}
\label{COR:relate_graph}
Suppose that $\opd$ is an operad with unit. Then 
there are isomorphisms of $\kk$-vector spaces
 \begin{eqnarray*}
H_* ((\kk \ub)_{(-;+)}^\sharp \otimes_{\kk \fb} \lfb^* (\amalg_* \opd) )(\zero) 
&\cong &
H_*  \big( (\kk\uwb)^\sharp  \otimes_{\kk(\tfb)} \ltfb^* \opd \big)(\zero, \zero) . 
\end{eqnarray*}
All other homology groups vanish.
\end{COR}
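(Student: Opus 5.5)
The plan is to derive the corollary from Theorem \ref{THM:diopd_2_cpd}, applied to the dioperad $\diopd = \opd$ (an operad viewed as a dioperad concentrated in output arity one). Two ingredients are needed: a vanishing result for the hairy graph homology of a unital operad, and an explicit evaluation of the bimodule $\big(\amalg_* \op (\kk \ub)_{(-;+)}^{\cten 2}\big)^\sharp$ at arity zero.

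\textbf{Step 1: vanishing for unital operads.} I would first show that $H_*\big((\kk\uwb)^\sharp\otimes_{\kk(\tfb)}\ltfb^*\opd\big)(X,Y)$ vanishes unless $X=Y=\zero$. The natural tool is the unit of $\opd$, which allows an extra hair to be inserted along a unary vertex. Concretely, for a hair (a leg of the $\kk\uwb$-module, i.e.\ a chosen element of $X$ or $Y$), one builds a contracting homotopy on the second Koszul complex. This homotopy contracts or creates a unit vertex attached to that hair, so that $d h + h d$ is an isomorphism (multiplication by a nonzero scalar, using characteristic zero). This is the standard argument that hairy graph complexes with hairs on a unital operad are acyclic.

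Alternatively, I would use the first/second Koszul complex relationship established in the operadic setting of \cite{P_opd_wall}, together with the corresponding corollary there for operads with unit. Either way, one gets that the homology is concentrated in bidegree $(\zero,\zero)$ as a $\kk(\tfb)$-module.

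\textbf{Step 2: reducing Theorem \ref{THM:diopd_2_cpd} to arity zero.} Given Step 1, the right-hand side of the homology isomorphism in Theorem \ref{THM:diopd_2_cpd} becomes
$$
\big(\amalg_* \op (\kk \ub)_{(-;+)}^{\cten 2}\big)^\sharp(-;\zero,\zero)\otimes_{\kk(\tfb)} H_*(\dots)(\zero,\zero).
$$
It remains to identify $\big(\amalg_* \op (\kk \ub)_{(-;+)}^{\cten 2}\big)^\sharp$ evaluated at $(\zero,\zero)$ as a $\kk\fb$-module.

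The morphisms of $\ub$ from $\emptyset$ to a finite set $Z$ are perfect matchings of $Z$. So the value at source $(\zero,\zero)$ is built from pairs of (twisted) perfect matchings indexed by decompositions $Z = Z_1\amalg Z_2$. After applying the pushforward along $\amalg$ and dualizing, this bimodule should restrict to $\kk$ at $Z=\zero$. The key point to check is whether it is nonzero for nonempty $Z$.

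\textbf{Main obstacle.} The statement ``all other homology groups vanish'' means exactly that the left-hand side is supported at $\zero$. This requires the bimodule's value at $(Z;\zero,\zero)$ to vanish for $Z\neq\zero$. That is plausible only if Notation \ref{nota:cten_kkub_pmmp} involves the twisted Brauer pieces in a way that kills matchings with nonempty support, and I expect this to be where care is needed.

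If the bimodule does not vanish there, I would instead argue directly that the left-hand homology vanishes for $Z\neq\zero$. The argument would use the same unit-hair contraction on the cyclic side, since $\amalg_*\opd$ inherits a unit from $\opd$. One then compares only the arity-zero parts, where the bimodule contributes $\kk$, to obtain the stated isomorphism.

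So the order is: (1) prove the vanishing on both sides via unit-induced contractions (the hardest step, namely verifying that the homotopy is compatible with the sign twists $(-;+)$ and with the orientation in $\ltfb^*$); (2) evaluate the bimodule at $\zero$ to obtain $\kk$; (3) conclude from Theorem \ref{THM:diopd_2_cpd}.
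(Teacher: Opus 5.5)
You follow the paper's route: vanishing of the walled hairy graph homology away from $(\zero,\zero)$, then Theorem \ref{THM:diopd_2_cpd} evaluated there. Your Step 1 is justified at the same level as the paper's, which also just appeals to the arguments of \cite{P_cyclic} transposed to the setting of \cite{P_opd_wall}.

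The gap is in Step 2, where you put $(\zero,\zero)$ in the wrong slot. The bimodule is
$$
\amalg_*\op(\kk\ub)_{(-;+)}^{\cten 2}\ :\ (U,(X,Y))\mapsto\bigoplus_{U=U_1\amalg U_2}(\kk\ub)_{(-;+)}(U_1,X)\otimes(\kk\ub)_{(-;+)}(U_2,Y).
$$
The $\kk(\tfb)$-variable $(X,Y)$, along which you tensor with the walled complex, is the \emph{target} of the Brauer morphisms. The free $\kk\fb$-variable $U$ (the cyclic hairs) is the \emph{source}. So the relevant terms are $(\kk\ub)_{(-;+)}(U_i,\zero)$, not morphisms from $\emptyset$ to $Z_i$ (perfect matchings). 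Since morphisms of $\ub$ only increase cardinality, these vanish unless $U_i=\emptyset$, and $(\kk\ub)_{(-;+)}(\emptyset,\emptyset)=\kk$. Hence the dual bimodule evaluated at $((\zero,\zero),U)$ is $\kk$ for $U=\zero$ and $0$ otherwise. This gives at once both the isomorphism in arity $\zero$ and the vanishing for $U\neq\zero$; it is precisely Corollary \ref{cor:direct_comparison_kz_1}. No cyclic-side argument is needed.

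As written, you leave this decisive point open, and your perfect-matching count leads you to expect non-vanishing. The fallback does not rescue the argument. As soon as $H_*\big((\kk\uwb)^\sharp\otimes_{\kk(\tfb)}\ltfb^*\opd\big)(\zero,\zero)\neq 0$ (e.g.\ because of the class of the empty graph), non-vanishing of the bimodule at some nonempty $Z$ would force the cyclic hairy homology at $Z$ to be nonzero, by Theorem \ref{THM:diopd_2_cpd}. So a separate unit-contraction proof that the cyclic side vanishes for $Z\neq\zero$ could not exist alongside that non-vanishing; it would contradict the theorem. Getting the variance right removes the need for it.
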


This says that, in the case of an operad with a unit, the hairy graph homology {\em with hairs} is zero and there is a simple isomorphism between the respective (non-hairy) graph homologies, 
 constructed in the operadic and the cyclic operadic contexts. 

This Corollary was motivated by the attempt to understand the following. For an operad $\opd$ and $V$ a $\kk$-vector space, one can consider $\der (\opd (V))$, the Lie algebra of derivations of the free $\opd$-algebra on $V$, as in \cite{MR4945404}. The underlying vector space identifies as $\der (\opd (V)) \cong V^\sharp \otimes \opd (V)$. Now, if $V$ is a symplectic vector space, we have  the canonical isomorphism $V^\sharp \cong V$ provided by the form, so that this gives 
$$
\der (\opd (V)) \cong V \otimes \opd (V) \cong \cpd_\opd (V),
$$
where the right hand side denotes the Schur functor associated to the object underlying the cyclic operad associated to $\opd$. 

Now, for any cyclic operad $\cpd$ and symplectic vector space $V$, $\cpd (V)$ has a natural Lie algebra structure (as in \cite{MR2026331}, for example). One can check that the isomorphism $\der (\opd (V)) \cong \cpd_\opd (V)$ is one of  Lie algebras.  This allows the {\em stablization} of the respective Lie algebra homologies to be compared. Using the respective results of Conant-Vogtmann-Kontsevich and of Dotsenko, this relates the corresponding (non-hairy) graph complexes. Corollary \ref{COR:relate_graph} is the explicit manifestation of this, avoiding the detour through Lie algebra homology.

\subsection*{Organization of the paper}

The paper has been separated into Parts corresponding to the major themes. 
\begin{itemize}
\item 
Part \ref{part:brauer} addresses the technical foundations related to twisted Brauer-type categories, modules over them,  and the relationships between these. 
\item 
Part \ref{part:opd_diopd_cpd} is  an intermezzo to introduce the motivational examples arising from cyclic operads and dioperads.
\item 
Part \ref{part:kz_cx} recalls the relevant Koszul complexes and then explains how these are related via the functors introduced in Part \ref{part:brauer}.
\item 
Part \ref{part:graph} puts everything together, so as to compare the hairy graph complexes. 
\item 
Appendix \ref{sect:day_convolution} provides background material on the Day convolution products.
\end{itemize}

\tableofcontents

\part{Brauer-type categories}
\label{part:brauer}

\section{Walled and unwalled upward Brauer categories}
\label{sect:brauer_cat}

The purpose of this section is to review the various flavours of Brauer categories that intervene in this work, including the twisted variants of their $\kk$-linearizations, working over a field $\kk$.  These structures are of well-established interest; for instance, they are used by Sam and Snowden in their work on stability patterns in representation theory \cite{MR3376738}. The presentation here mirrors that given in \cite{P_cyclic} and \cite{P_opd_wall}. 

\begin{hyp}
\label{hyp:car0}
Throughout, we take $\kk$ to be a field of characteristic zero. 
\end{hyp}

%%%%%%%%%%%%%%%%%%%%%%%%%%%%%%%%%%%%%%%%%%%%%%%%%%%%%%%%%%%%
\subsection{Background}

The category of $\kk$-vector spaces is denoted $\kmod$; undecorated tensor products are taken over $\kk$  and vector space duality is denoted by $(-)^\sharp$.

For a category $\calc$ with objects $X, Y$, morphisms in $\calc$ will be denoted either by $\hom_\calc (X, Y)$ or $\calc (X,Y)$.
  If $\calc$ is essentially small, the category of functors from $\calc$ to $\kmod$ by $\f (\calc)$. 
The category $\f (\calc)$ is equipped with the value-wise tensor product $\otimes $:  for functors $F$ and $G$, and $X$ an object of $\calc$, $(F \otimes G) (X) : = F(X) \otimes G(X)$.

   For $\cala$ an essentially small $\kk$-linear category, the category of $\kk$-linear functors from $\cala$ to $\kmod$, which can be viewed as the category of (left) $\cala$-modules, is denoted $\cala \dash\modules$. Thus $\f (\calc)$ is equivalent to $\kk \calc \dash\modules$, the category of $\kk \calc$-modules.

\begin{nota}
\label{nota:cten}
For $\cala$ and $\calb$ two $\kk$-linear categories, $\cala \cten \calb$ denotes the $\kk$-linear category with objects $\ob \cala \times \ob \calb$ and morphisms given by $\cala (-,-) \otimes \calb(-,-)$.
\end{nota}

\begin{nota}
\label{nota:proj}
For $X$ an object of an essentially small category $\calc$, $P^\calc_X$ denotes the standard projective functor to $\kk$-vector spaces, given by 
$
P^\calc_X (Y):= \kk \calc (X,Y)
$.
\end{nota}

\begin{rem}
In the case of an essentially-small $\kk$-linear category $\cala$, one generalizes the above by considering $\cala (X, -)$, which is a projective $\cala$-module.
\end{rem}

\begin{nota}
\label{nota:fb_finj}
For $n \in \nat$, $\n$ denotes $\{1, \ldots , n\}$. The symmetric group on $\n$ is denoted $\sym_n$; the trivial (respectively sign) representations of $\kk \sym_n$ are denoted $\triv_n$ (resp. $\sgn_n$).

Denote by $\fb$ the category of finite sets and bijections and $\finj$ that of finite sets and injections, so that $\fb$ is the maximal subgroupoid of $\finj$.
The category $\finj$ (and hence $\fb$) has skeleton given by the set of objects $\{ \n \mid n \in \nat \}$.
\end{nota}

%%%%%%%%%%%%%%%%%%%%%%%%%%%%%%%%%%%%%%%%%%%%%%%%%%%%%%%%%%%%%
\subsection{Flavours of Brauer categories}

This section provides a brief overview of the categories with which we work. 
We refer to \cite{P_cyclic} for a more detailed presentation of the upward (and downward) Brauer categories and their twisted $\kk$-linearizations, and to \cite{P_opd_wall} 
for a presentation of the upward (and downward) walled Brauer categories and their twisted $\kk$-linearizations. These references treat the ordered versions of these categories; definitions are outlined below in Remark \ref{rem:identify_morphisms_uwb_ub}.

\begin{nota}
\label{nota:Brauer_categories}
Denote by 
\begin{enumerate}
\item
$\uwbord$ the ordered version of the upward Brauer category;
\item 
$\dubord$ the ordered version of the directed upward Brauer category;
\item 
$\uwb$ the upward walled Brauer category; 
\item 
$\dub$ the directed upward Brauer category; 
\item 
$\ub$ the upward Brauer category.
\end{enumerate}
The opposite categories are respectively, $\dwbord$, $\ddbord$, $\dwb$ (the downward walled Brauer category), $\ddb$ (the directed downward Brauer category); $\db$ (the downward Brauer category).
\end{nota}

\begin{rem}
\label{rem:identify_morphisms_uwb_ub}
\ 
\begin{enumerate}
\item 
Objects of $\uwbord$ are pairs of finite sets $(X,Y)$; $\uwbord ((X,Y), (U,V))$ is empty unless $|U|-|X| = |Y|- |V| = n $, for some $n\in \nat$, the degree of the morphism; in the latter case 
$\uwbord ((X,Y), (U,V) ) := (\tfb) ( (X \amalg \n, Y \amalg \n), (U, V))$, so that  the group $\sym_n$ acts on this set (diagonally with respect to the two copies of $\n$).  For the composition of morphisms, see \cite{P_opd_wall}.

For a morphism $f \in \uwbord ((X,Y), (U,V) )= (\tfb) ( (X \amalg \n, Y \amalg \n), (U, V))$ and $i \in \n$, we consider $f(i,i) \in U \times V$ as a {\em walled pair}; the set of such walled pairs is indexed (ordered) by the elements of $\n$ and the $\sym_n$ action changes the order.
\item 
There is a  forgetful functor  $\uwbord \rightarrow \uwb$ 
 that  is the identity on objects. On morphisms, for $(X,Y)$, $(U,V)$ satisfying the above condition, 
$$
\uwb  ((X,Y), (U,V) ):=(\tfb) ( (X \amalg \n, Y \amalg \n) (U, V))/ \sym_n.
$$
This corresponds to forgetting the ordering of the walled pairs. 
\item 
Objects of $\dubord$ are finite sets; $\dubord ( A, B)$ is empty unless $|B|-|A|= 2t $ for some $t\in \nat$, the degree of the morphism; in the latter case $\dubord (A,B) := \fb (A \amalg (\mathbf{2} \times \mathbf{t}), B)$ so that the group $\sym_t$ acts. For the composition of morphisms, see \cite{P_cyclic}.

For $g \in \dubord(A,B) = \fb (A \amalg (\mathbf{2} \times \mathbf{t}), B)$ and $j\in \mathbf{t}$, we consider $(g(1,j), g (2,j))$ as a directed pair of elements in $B$. The directed pairs are indexed (ordered) by the elements of $\mathbf{t}$.
\item 
For $t \in \nat$, the wreath product $\sym_2 \wr \sym_t$ acts on $\mathbf{2} \times \mathbf{t}$. Since $\sym_2 \wr \sym_t$ is  the semidirect product $\sym_2 ^{\times t}\rtimes \sym_t$, there is an inclusion $\sym_t\hookrightarrow \sym_2 \wr \sym_t$ corresponding to the action permuting the copies of $\mathbf{2}$.

There are forgetful functors 
$$
\dubord
\rightarrow 
\dub
\rightarrow 
\ub
$$
that are the identity on objects and correspond to quotient maps on morphisms. In particular, for $A$, $B$ as above,  $\dub (A,B):=  \fb (A \amalg (\mathbf{2} \times \mathbf{t}), B)/ \sym_t$ and $\ub (A,B):= \fb (A \amalg (\mathbf{2} \times \mathbf{t}), B)/ \sym_2 \wr \sym_t$.

The first functor corresponds to forgetting the ordering of the directed pairs; the second then also forgets the `direction' of the pairs (in the notation of the previous point, retaining only the subset $\{ g(1,j), g (2,j)\} \subset B$, the value of $j$ having also been forgotten).
\end{enumerate}
\end{rem}

\begin{rem}
\label{rem:FB_opposite}
In passing to the downward categories, there is a subtlety to bear in mind. For example, consider the case of the upward Brauer category $\ub$, which contains $\fb$ as its maximal subgroupoid. On passing to the opposite categories, the inclusion $\fb\hookrightarrow \ub$ gives $\fb\op \hookrightarrow \db$. Since $\fb$ is a groupoid, passage to the inverse yields the isomorphism of categories $\fb\op \cong \fb$ and we usually identify the maximal subgroupoid of $\db$ as $\fb$ rather than $\fb\op$. 
\end{rem}

%%%%%%%%%%%%%%%%%%%%%%%%%%%%%%%%%%%%%%%%%%%%%%%%%%%%%%%%%%%%%%%%%%%%%%%%%%%%%%%%%%%%%%%
\subsection{Generating morphisms}

For the applications, it is important to understand `generating morphisms' for the categories $\dubord$ and $\uwbord$ and the categories derived from them. This starts from the following observation:

\begin{lem}
\label{lem:maximal_subgroupoid}
The maximal subgroupoid of $\dubord$ identifies as $\fb$ and that of $\uwbord$ as $\tfb$.
\end{lem}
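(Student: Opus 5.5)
The plan is to use the degree grading. By Remark \ref{rem:identify_morphisms_uwb_ub}, every morphism of $\dubord$ (respectively $\uwbord$) has a degree $t \in \nat$ (respectively $n\in \nat$). I would first record that composition is additive in degree: composing a morphism of degree $t$ with one of degree $t'$ gives a morphism of degree $t+t'$. In $\dubord$ this holds because the target cardinality increases by exactly $2t$, so degrees are forced by $|B|-|A|=2t$. In $\uwbord$ it follows from the relations $|U|-|X| = |Y|-|V| = n$. The point is that the degree is determined by source and target alone, so additivity is automatic and does not depend on the precise form of the composition law from \cite{P_cyclic,P_opd_wall}.

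Next, an isomorphism $f$ with inverse $g$ satisfies $g\circ f = \id$, which has degree $0$. Since degrees lie in $\nat$, both $f$ and $g$ must have degree $0$. Conversely, I would check that the degree-$0$ morphisms form a subcategory containing all objects, and identify it as follows.
\begin{itemize}
\item For $\dubord$, taking $t=0$ gives $\dubord(A,B) = \fb(A,B)$.
\item For $\uwbord$, taking $n=0$ gives $\uwbord((X,Y),(U,V)) = (\tfb)((X,Y),(U,V))$.
\end{itemize}
In both cases the morphisms are bijections.

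The one thing to verify is that composition of degree-$0$ morphisms is ordinary composition of bijections, so that the identification is an isomorphism of categories and not merely of hom-sets. This requires appealing to the definition of composition in the cited references. There, the composite is obtained by applying the second map to the image of the first, together with its designated pairs. When no pairs are present, this reduces to composition of maps of finite sets. Given that, every degree-$0$ morphism is invertible, since its inverse bijection is again of degree $0$.

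Combining the two steps, the maximal subgroupoid consists exactly of the degree-$0$ morphisms, which identify with $\fb$ and with $\tfb$ respectively. I expect the main obstacle to be the compatibility of composition in degree $0$, and only mildly so: it is the one point that depends on the explicit definitions in \cite{P_cyclic,P_opd_wall}, rather than on cardinality counting.
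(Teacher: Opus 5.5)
Your proof is correct and matches the paper, which states this lemma without proof as a direct consequence of the definitions. Degrees are forced by cardinalities, so isomorphisms have degree $0$, and the degree-$0$ morphisms are exactly the bijections with their usual composition. One small point: you copied a typo from Remark~\ref{rem:identify_morphisms_uwb_ub}, since the constraint for $\uwbord$ should read $|U|-|X| = |V|-|Y| = n$; this does not affect your argument.
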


Morphisms of these categories are then generated over the respective maximal subgroupoid by the morphisms involving a single pair. To describe these, we introduce the following notation.

\begin{nota}
\label{nota:alpha_beta}
\ 
\begin{enumerate}
\item 
For $n \in \nat$ and  $(i j)$ an ordered pair, where $i \neq j \in \mathbf{n+2}$, let $\alpha_{(i j)} \in \dubord (\n, \mathbf{n+2})$ denote the morphism represented by the bijection $\n \amalg \mathbf{2} \stackrel{\cong}{\rightarrow} \mathbf{n+2}$ such that the restriction to $\n$ is order-preserving and the restriction to $\mathbf{2}$ is given by $1 \mapsto i$ and $2 \mapsto j$.
\item 
For $m, n \in \nat$ and $i \in \mathbf{m+1}$ and $j \in \mathbf{n+1}$, let $\beta_{i,j} \in \uwbord ((\m, \n) , (\mathbf{m+1}, \mathbf{n+1}))$ denote the morphism represented by the pair of injective maps $\mathbf{m} \stackrel{\cong}{\rightarrow} (\mathbf{m+1}) \backslash \{i\} \subset \mathbf{m+1}$ and $\mathbf{n} \stackrel{\cong}{\rightarrow} (\mathbf{n+1}) \backslash \{j\} \subset \mathbf{n+1}$, where the bijection is order-preserving in each case.
\end{enumerate}
When necessary for precision, these will be denoted by $\alpha^n _{(ij)}$ and $\beta^{m,n}_{i,j}$ respectively.
\end{nota}

We  have the following: 

\begin{prop}
\label{prop:degree_one_gens_fiordev_uwbord}
\ 
\begin{enumerate}
\item 
For $n \in \nat$, $\dubord  (\n, \mathbf{n+2})$ is a free right $\sym_n$-set on $\{ \alpha_{(ij)} \}$, indexed by the set of ordered pairs of elements of $\mathbf{n+2}$. 
\item 
For $m, n \in \nat$, $\uwbord ((\m, \n) , (\mathbf{m+1}, \mathbf{n+1}))$ is a free right $\sym_m \times \sym_n$-set on $\{ \beta_{i,j} \mid i \in \mathbf{m+1}, \ j \in \mathbf{n+1} \}$.
\end{enumerate}
\end{prop}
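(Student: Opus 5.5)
The plan is to work directly from the explicit descriptions of morphism sets in Remark \ref{rem:identify_morphisms_uwb_ub}. For (1), a morphism of degree $t=1$ in $\dubord(\n,\mathbf{n+2})$ is a bijection $g\colon \n\amalg \mathbf{2}\to \mathbf{n+2}$. For degree one the group $\sym_1$ is trivial, so no quotient is involved. The right action of $\sym_n$ comes from precomposition with automorphisms of the source $\n$, i.e.\ composition in $\dubord$ with the maximal subgroupoid $\fb$ of Lemma \ref{lem:maximal_subgroupoid}. I must check from the composition law of \cite{P_cyclic} that precomposing with a degree-zero morphism $\sigma\in\sym_n$ acts on $\fb(\n\amalg\mathbf{2},\mathbf{n+2})$ by $g\mapsto g\circ(\sigma\amalg \mathrm{id}_{\mathbf{2}})$. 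This should be immediate, since composition with degree-zero morphisms is just composition of bijections, extended by the identity on the pair-labelling set.

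Granting this, freeness is elementary. The action $g\mapsto g\circ(\sigma\amalg\mathrm{id}_\mathbf{2})$ does not change the values $(g(1),g(2))$ on $\mathbf{2}$, which form an ordered pair $(ij)$ of distinct elements. For a fixed $(ij)$, the bijections with these values correspond exactly to bijections $\n\to\mathbf{n+2}\backslash\{i,j\}$. These form a $\sym_n$-torsor under precomposition. The orbit of such bijections contains exactly one order-preserving representative, and that representative is $\alpha_{(ij)}$ by Notation \ref{nota:alpha_beta}. Hence $\dubord(\n,\mathbf{n+2})=\coprod_{(ij)}\alpha_{(ij)}\cdot\sym_n$, with each orbit free.

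For (2) the argument is the same. A degree-one morphism is a pair of bijections $\m\amalg\mathbf{1}\to\mathbf{m+1}$ and $\n\amalg\mathbf{1}\to\mathbf{n+1}$. The invariant under the right $\sym_m\times\sym_n$-action is the walled pair $(i,j)$, the image of the two copies of $\mathbf{1}$. The fibre over $(i,j)$ is a torsor for $\sym_m\times\sym_n$. Its order-preserving representative is $\beta_{i,j}$, which gives freeness on $\{\beta_{i,j}\}$.

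The only point that needs care, and the main obstacle, is confirming that the right action of the maximal subgroupoid, defined via composition in these categories, is indeed precomposition on the $\n$ (respectively $(\m,\n)$) factor. In particular it must fix the pair labelling and not mix it in. I would verify this directly from the composition formulas in \cite{P_cyclic,P_opd_wall} in the special case where one of the morphisms has degree zero.
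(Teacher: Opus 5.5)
Your proposal is correct and takes essentially the same approach as the paper, whose proof only says the result follows from the explicit definitions $\dubord(\n,\mathbf{n+2})=\fb(\n\amalg\mathbf{2},\mathbf{n+2})$ and $\uwbord((\m,\n),(\mathbf{m+1},\mathbf{n+1}))=(\tfb)((\m\amalg\mathbf{1},\n\amalg\mathbf{1}),(\mathbf{m+1},\mathbf{n+1}))$. Your orbit/torsor argument, with $\alpha_{(ij)}$ and $\beta_{i,j}$ as the order-preserving representatives of each orbit, spells this out; you also correctly read the indexing set in (1) as ordered pairs of \emph{distinct} elements.
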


\begin{proof}
This is a simple consequence of the explicit definitions of the morphism sets in the respective categories.
\end{proof}

This result extends to give the following:

\begin{cor}
\label{cor:decompose_mor_fiordev_uwbord}
Let $t $ be a positive integer.
\begin{enumerate}
\item 
For $n \in \nat$, $\dubord  (\n, \mathbf{n+2t})$ is a free right $\sym_n$-set on $\{ \bigcirc_{k=0}^{t-1} \alpha^{n+2k}_{(i_k j_k)} \}$, the set of all possible composites of the $\alpha$'s.
\item 
For $m, n \in \nat$, $\uwbord ((\m, \n) , (\mathbf{m+t}, \mathbf{n+t}))$ is a free right $\sym_m \times \sym_n$-set on $\{ \bigcirc_{k=0}^{t-1} \beta^{m+k,n+k}_{i_k,j_k} \}$, 
the set of all possible composites of the $\beta$'s.
\end{enumerate}
\end{cor}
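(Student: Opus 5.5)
We argue by induction on $t$, with Proposition \ref{prop:degree_one_gens_fiordev_uwbord} as both the base case $t=1$ and the inductive input. The key structural fact is a unique factorization: every morphism of degree $t$ factors as a degree-one generator composed with a degree $t-1$ morphism. Since the ordered categories keep track of the ordering of pairs, this factorization is essentially canonical. We treat case (1) in detail; case (2) is identical after replacing directed pairs in $\mathbf{n+2t}$ by walled pairs in $(\mathbf{m+t}, \mathbf{n+t})$ and $\sym_n$ by $\sym_m\times\sym_n$.

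For (1), take $g \in \dubord(\n, \mathbf{n+2t}) = \fb(\n \amalg (\mathbf{2}\times \mathbf{t}), \mathbf{n+2t})$. The pairs are ordered, so single out the last directed pair $(g(1,t), g(2,t)) = (i,j)$. Let $B' := \mathbf{n+2t}\setminus\{i,j\}$, identified with $\mathbf{n+2t-2}$ by the order-preserving bijection. Then $g$ restricts to a morphism $g' \in \dubord(\n, \mathbf{n+2(t-1)})$, and by construction $g = \alpha^{n+2t-2}_{(ij)} \circ g'$. Checking this identity requires the composition law of \cite{P_cyclic}. Conversely, given $(i,j)$, the complement set and the order-preserving identification are determined, so $g'$ is determined by $g$ and $(i,j)$. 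The map $g \mapsto ((i,j), g')$ is therefore a bijection
$$\dubord(\n,\mathbf{n+2t}) \cong \{(ij)\} \times \dubord(\n, \mathbf{n+2t-2}).$$

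This bijection is compatible with the right $\sym_n$-action, which only precomposes on the $\n$ factor and leaves the pairs untouched. By the inductive hypothesis, $\dubord(\n, \mathbf{n+2t-2})$ is free on the composites $\bigcirc_{k=0}^{t-2}\alpha^{n+2k}_{(i_kj_k)}$. It follows that $\dubord(\n,\mathbf{n+2t})$ is free on the elements $\alpha^{n+2t-2}_{(ij)}\circ \bigcirc_{k=0}^{t-2}\alpha^{n+2k}_{(i_kj_k)}$, which is exactly the stated set of all composites. Freeness and distinctness of these generators both follow from the bijection.

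The main obstacle is the identification $g = \alpha_{(ij)}\circ g'$. This requires unwinding the composition in $\dubord$ and $\uwbord$ precisely: one must check that composing with $\alpha$ appends the new pair as the \emph{last} pair in the ordering and relabels the existing pairs order-preservingly. If instead the convention appends the new pair first, the same argument goes through by peeling off the first pair. As a sanity check, the cardinalities agree: $(n+2t)!/n!$ on both sides, since $\prod_k (n+2k+2)(n+2k+1)$ telescopes to this.
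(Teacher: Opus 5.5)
Your proof is correct and is essentially the paper's argument. The paper takes as free generators the morphisms whose restriction to $\n$ is order-preserving, and notes that, because the pairs are ordered, each such morphism is uniquely an iterated composite of $\alpha$'s (and similarly of $\beta$'s for $\uwbord$). Your inductive peeling-off of the last pair, via the $\sym_n$-equivariant bijection $\dubord(\n,\mathbf{n+2t}) \cong \{(ij)\}\times\dubord(\n,\mathbf{n+2t-2})$, is an explicit version of that unique-factorization step, and your remark about the composition convention covers the one point the paper leaves implicit.
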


\begin{proof}
It is clear that $\dubord  (\n, \mathbf{n+2t})$ is free as a right $\sym_n$-set and, as generators, one can take the morphisms such that the underlying injective map is order-preserving. Using the fact that, by definition of $\dubord$, the pairs defining a morphism are ordered, any such morphism can be expressed {\em uniquely} as an iterated composite of the maps of the form $\alpha_{(ij)}$. 

The case of $\uwbord$ is proved similarly.
\end{proof}

%%%%%%%%%%%%%%%%%%%%%%%%%%%%%%%%%%%%%%%%%%%%%%%%%%%%%%%%%%%%%%%%%%%%%%%%%%%%%%%%%%%%%%%%
\subsection{The twisted $\kk$-linear versions of the Brauer categories}
\label{subsect:twist_db}

In this section we recall the definition of the twisted $\kk$-linear versions of the upward Brauer category (and their opposites). For more details, see \cite{P_cyclic}.

We use the following notation throughout the paper:

\begin{nota}
\label{nota:pm_mp}
Write $(\pm; \mp)$  for an element of the set of ordered pairs of signs  $ \{ (+;+) ,\  (+; -),\  (-;+)  ,\ (-;-) \}$.
\end{nota}

Fix $0<t \in \nat$ and consider the wreath product $\sym_2 \wr \sym_t \subset \sym_{2t}$. The sign and trivial representations of $\sym_2$ (respectively $\sym_t$) induce four (non-isomorphic if $t \geq 2$) representations of $\sym_2 \wr \sym_t$, denoted  by
$
(\kk^{\pm}; \kk^{\mp}) 
$, 
with underlying vector space $\kk$.  Since $\sym_2 \wr \sym_t$ is the semi-direct product $\sym_2^{\times t} \rtimes \sym_t$, this has subgroups $\sym_2^{\times t}$ and $\sym_t$ which generate $\sym_2 \wr \sym_t$. We have the following  characterization of the representations $(\kk^{\pm}; \kk^{\mp}) $:
\begin{enumerate}
\item 
$(\kk^+; \kk^+)$ is the trivial representation of $\sym_2 \wr \sym_t$; 
\item 
$(\kk^+; \kk^-)$: is the restriction of $\sgn_t$ along $\sym_2 \wr \sym_t \twoheadrightarrow \sym_t$;
\item 
$(\kk^-; \kk^+)$: as a $\kk \sym_2^{\times t}$-module, this identifies as $\sgn_2^{\boxtimes t}$; $\sym_t$ acts trivially; 
\item 
$(\kk^-; \kk^-)$:  as a $\kk\sym_2^{\times t}$-module, this identifies as $\sgn_2^{\boxtimes t}$; $\sym_t$ acts via $\sgn_t$. 
\end{enumerate}

\begin{exam}
The representation $(\kk^-; \kk^-)$ is isomorphic to $(\kk^+; \kk^-) \otimes  (\kk^-; \kk^+)$ (with the diagonal action).
\end{exam}

In the following, we assume that  $A$, $B$, $t$ satisfy  $|B|- |A|=2  t$,  as in Remark \ref{rem:identify_morphisms_uwb_ub}, so that $\dubord (A, B)=\fb (A \amalg (\mathbf{2} \times \mathbf{t}), B)$.

\begin{defn}
\label{defn:twisted_kk-linear_ub}
\ 
\begin{enumerate}
\item 
Let $(\kk \dub)_{\pm}$ denote the $\kk$-linear category with objects finite sets and with morphisms given by 
$$
(\kk \dub)_\pm (A, B) := \kk \fb (A \amalg (\mathbf{2} \times \mathbf{t}) , B) \otimes_{\kk \sym_t} \kk^{\pm}, 
$$
where $\kk^+ = \triv_t$ and $\kk^-= \sgn_t$. Composition is induced by that of $\kk \dubord$.
\item 
Let $(\kk \ub)_{(\pm;\mp)}$ denote the $\kk$-linear category with objects finite sets and with morphisms given by
$$
(\kk \ub)_{(\pm;\mp)}:= \kk \fb (A \amalg (\mathbf{2} \times \mathbf{t}) , B) \otimes_{\kk(\sym_2 \wr \sym_t)} (\kk^\pm; \kk^\mp)
$$
and with composition induced by that of $\kk\dubord$.
\end{enumerate}

The twisted variants $(\kk \ddb)_\pm$ and $(\kk \db)_{(\pm; \mp)}$ are defined similarly (or by passage to the opposite category).
\end{defn}

\begin{rem}
\ 
\begin{enumerate}
\item 
We identify $(\kk \dub)_+$ with  $\kk \dub$ and $(\kk \ub)_{(+;+)}$  with $\kk \ub$. 
\item
For given $A$, $B$, as above, the underlying $\kk$-vector spaces of $(\kk \dub)_+ (A, B) = \kk \dub (A,B)$ and $(\kk \dub)_- (A, B)$ are non-canonically isomorphic; similarly the underlying $\kk$-vector spaces of  $(\kk \ub)_{(\pm; \mp)} (A, B)$, for each choice of $(\pm; \mp)$, are pairwise non-canonically isomorphic. 
\end{enumerate}
\end{rem}

By construction, we have:

\begin{prop}
\label{prop:dub_quotients}
There are $\kk$-linear full functors that are the identity on objects 
\[
\xymatrix@R0em{
(\kk \dub)_+ 
&
\kk \dubord 
\ar[l]
\ar[r]
&
(\kk \dub)_-
\\
(\kk \ub)_{(+:\mp)} 
&
(\kk \dub)_\mp
\ar[l]
\ar[r]
&
(\kk \ub)_{(-;\mp)}.
}
\]
\end{prop}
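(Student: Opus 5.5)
The plan is to view all the functors as the identity on objects and, on morphisms, as maps induced by passing to coinvariants (equivalently, tensoring with a one-dimensional representation) along an inclusion of groups. Fullness is then automatic, because each map on hom-spaces is a quotient map, hence surjective. The real work is to check that these maps respect composition.

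For the first row, fix $A$, $B$ with $|B|-|A|=2t$. Then
$$\kk\dubord(A,B)=\kk\fb(A\amalg(\mathbf{2}\times\mathbf{t}),B),$$
and I would define the map to $(\kk\dub)_\pm(A,B)$ as the canonical projection
$$\kk\fb(A\amalg(\mathbf{2}\times\mathbf{t}),B)\twoheadrightarrow \kk\fb(A\amalg(\mathbf{2}\times\mathbf{t}),B)\otimes_{\kk\sym_t}\kk^\pm .$$
Since composition in $(\kk\dub)_\pm$ is defined in Definition \ref{defn:twisted_kk-linear_ub} as the map induced by that of $\kk\dubord$, this projection is functorial, provided the induced composition is well defined.

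The well-definedness is the key point. Composing $f\in\dubord(A,B)$ (with $t$ pairs) and $g\in\dubord(B,C)$ (with $s$ pairs) gives a morphism with $s+t$ ordered pairs. The ordering concatenates the two orderings, so the action of $\sym_t\times\sym_s$ is compatible with the block inclusion $\sym_t\times\sym_s\subset\sym_{t+s}$. Moreover, the restrictions of $\triv_{t+s}$ and $\sgn_{t+s}$ to $\sym_t\times\sym_s$ are $\triv_t\boxtimes\triv_s$ and $\sgn_t\boxtimes\sgn_s$ respectively. It follows that composition descends to a map
$$(\kk\dub)_\pm(B,C)\otimes(\kk\dub)_\pm(A,B)\to(\kk\dub)_\pm(A,C).$$
Since this is exactly what the cited definition presupposes, I would simply record it.

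The second row is handled in the same way. I would use the isomorphism
$$(\kk\ub)_{(\pm;\mp)}(A,B)\cong (\kk\dub)_\mp(A,B)\otimes_{\kk\sym_2^{\times t}}(\text{triv or }\sgn_2^{\boxtimes t}),$$
which holds because $\sym_2\wr\sym_t=\sym_2^{\times t}\rtimes\sym_t$ and the restriction of $(\kk^\pm;\kk^\mp)$ to $\sym_t$ is $\kk^\mp$. To make this precise, I would check that tensoring over $\kk(\sym_2\wr\sym_t)$ can be done in two stages: first over $\kk\sym_t$, then over $\kk\sym_2^{\times t}$. This works because $\sym_2^{\times t}$ is normal and acts compatibly on $\kk^\mp$. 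With this in hand, the functor $(\kk\dub)_\mp\to(\kk\ub)_{(\pm;\mp)}$ is again a projection onto coinvariants, so it is surjective on hom-spaces, and it is compatible with composition by the same concatenation argument, now applied to the wreath products.

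The step I expect to be the main obstacle is the sign bookkeeping in the wreath-product case: checking that the two-stage coinvariants agree with the stated characterizations of $(\kk^\pm;\kk^\mp)$, and that the resulting signs are compatible with composition. This rests on the description of composition in \cite{P_cyclic}. Given that, everything reduces to the observation that quotient maps of hom-spaces which respect composition define full, identity-on-objects $\kk$-linear functors.
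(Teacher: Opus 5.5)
Your proposal is correct and is essentially the paper's argument. The paper records the statement as holding by construction: the functors are the quotient maps on hom-spaces (hence full and the identity on objects), and your check that $(\sym_2\wr\sym_t)\times(\sym_2\wr\sym_s)$ sits block-wise in $\sym_2\wr\sym_{t+s}$, with the one-dimensional representations restricting to exterior tensor products, is exactly what makes the composition ``induced by that of $\kk\dubord$'' well defined.

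One small correction: your two-stage coinvariant formula is in the wrong order. The subgroup $\sym_t$ is not normal in $\sym_2\wr\sym_t$, and $(\kk\dub)_\mp(A,B)$ carries no residual $\sym_2^{\times t}$-action; normality of $\sym_2^{\times t}$ only lets you take its coinvariants first and then those of $\sym_t$. You do not need the formula anyway: for $H\le G$ and a character $\chi$ of $G$, the natural map $M\otimes_{\kk H}\chi|_H\to M\otimes_{\kk G}\chi$ is already surjective, which gives the second row at once.
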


\begin{nota}
\label{nota:psi_phi}
For given $(\pm; \mp)$, let 
\begin{eqnarray*}
\ump_{(\pm; \mp)} &:& (\kk \dub)_\mp \rightarrow (\kk \ub)_{(\pm; \mp)}
\\
\dmp_{(\pm; \mp)}&:& (\kk \ddb)_\mp \rightarrow (\kk \db)_{(\pm;\mp)}
\end{eqnarray*}
denote the $\kk$-linear functors given by Proposition \ref{prop:dub_quotients} and its opposite.
\end{nota}

There is an $\nat$-grading of the morphisms of $\kk \dubord$, given by declaring $\kk \fb (A \amalg (\mathbf{2} \times \mathbf{t}), B)$ to have degree $t$. The subcategory of degree zero morphisms identifies with $\kk \fb$. 
 Moreover, $\kk \dubord$ is a homogeneous quadratic $\kk$-linear category over $\kk \fb$, freely generated by the $\kk \fb$-bimodule of degree one morphisms and with no quadratic relations. (See \cite{P_cyclic} for details and \cite[Chapter 1]{MR4398644} for background on homogeneous quadratic algebras over a base ring, which generalizes to the categorical setting here.)
 
There is an induced $\nat$-grading of the morphisms of $(\kk \dub)_{\pm}$ and of $(\kk \ub)_{(\pm; \mp)}$ and we have (see \cite{P_cyclic}):

\begin{prop}
\label{prop:homog_quadratic_ub}
The categories $(\kk \dub) _{\pm}$ and $(\kk \ub)_{(\pm;\mp)} $ are homogeneous quadratic $\kk$-linear categories over $\kk \fb$.
 Moreover, the $\kk$-linear functors of Proposition \ref{prop:dub_quotients} preserve the $\nat$-grading of the morphisms and are  the identity in degree zero.
\end{prop}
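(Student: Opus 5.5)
The plan is to deduce everything from the corresponding properties of $\kk \dubord$, using the fact that the twisted categories are obtained by coinvariants along the actions of $\sym_t$ and $\sym_2 \wr \sym_t$. The $\nat$-grading on $\kk\dubord (A,B) = \kk \fb (A \amalg (\mathbf{2}\times \mathbf{t}), B)$ is concentrated in the single degree $t = (|B|-|A|)/2$. It is therefore determined by the objects, and it passes to any quotient that is the identity on objects. In particular, $(\kk \dub)_\pm(A,B)$ and $(\kk \ub)_{(\pm;\mp)}(A,B)$ are homogeneous of degree $t$. The functors of Proposition \ref{prop:dub_quotients} are the identity on objects, so they preserve this grading automatically. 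In degree $0$ the groups $\sym_0$ and $\sym_2 \wr \sym_0$ are trivial, so all the categories agree with $\kk \fb$ and the functors are the identity there.

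The first step is to check that composition is well defined on the twisted quotients, which the definitions implicitly assert. Composing a degree $s$ morphism with a degree $t$ morphism in $\kk\dubord$ concatenates the pairs, indexed by $\mathbf{s} \amalg \mathbf{t} \cong \mathbf{s+t}$. This composition is equivariant for $\sym_s \times \sym_t \subset \sym_{s+t}$, and likewise for $(\sym_2\wr\sym_s)\times(\sym_2\wr \sym_t) \subset \sym_2 \wr \sym_{s+t}$. The characters $(\kk^\pm;\kk^\mp)$ and $\kk^\pm$ restrict along these inclusions to the external tensor products of the corresponding characters. For sign characters this uses the fact that the sign of a block permutation is the product of the signs. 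Hence composition descends to the coinvariants.

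Next comes generation in degree one. By Corollary \ref{cor:decompose_mor_fiordev_uwbord}, every morphism of $\kk\dubord$ is a composite of the degree-one morphisms $\alpha_{(ij)}$ together with bijections. The functors to $(\kk\dub)_\pm$ and $(\kk\ub)_{(\pm;\mp)}$ are full, since they are surjective on each morphism space. So these categories are also generated over $\kk\fb$ by their degree-one bimodules $V := (\kk\dub)_\pm(-,-)_1$, respectively $(\kk\ub)_{(\pm;\mp)}(-,-)_1$. Here $\sym_1$ and $\sym_2\wr\sym_1=\sym_2$ act, the latter by the sign or trivial representation of $\sym_2$ according to the first entry of $(\pm;\mp)$. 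This gives a surjection $T_{\kk\fb}(V) \twoheadrightarrow \mathcal{C}$ from the free $\kk$-linear category over $\kk\fb$ generated by $V$, which is a quotient of $\kk\dubord = T_{\kk\fb}(\kk\dubord_1)$.

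The main step is to identify the kernel as the ideal generated by its degree-two part, that is, by quadratic relations. In degree $t$ the map $\kk\dubord(A,B)_t \to T(V)(A,B)_t$ is the quotient by the $\sym_2^{\times t}$ part of the action, acting on each pair separately (trivially in the $\dub$ case). The remaining kernel of $T(V)_t \to \mathcal{C}_t$ consists of $\kappa - \chi(\sigma)\,\kappa\sigma$ for $\sigma \in \sym_t$, where $\chi$ is the relevant character. Since $\sym_t$ is generated by the adjacent transpositions $(k,k+1)$, it suffices to take $\sigma$ to be such a transposition. The relation $\kappa = \chi(\sigma)\kappa\sigma$ for $\sigma = (k,k+1)$ is obtained by pre- and post-composing a degree-two relation $x = \pm x\tau$ with $\tau \in \sym_2$ acting on $\mathbf{2}$ indexing two consecutive pairs. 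This uses the unique factorization of Corollary \ref{cor:decompose_mor_fiordev_uwbord} to write $\kappa$ with the $k$-th and $(k+1)$-st pairs forming a degree-two factor. I expect the hardest part to be the bookkeeping that shows swapping consecutive pairs is exactly such a degree-two factor, compatibly with the $\sym_2^{\times t}$ signs. Since the $\sym_2$ factors commute with the permutation of blocks in this local way, this should go through. Hence $\mathcal{C} \cong T_{\kk\fb}(V)/(R)$ with $R \subset V\otimes_{\kk\fb}V$, which is the homogeneous quadratic property.
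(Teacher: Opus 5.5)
Your proposal is correct and follows essentially the paper's argument, only in more detail: the twisted categories are quotients of the free category $\kk\dubord$ over $\kk\fb$, and because $\sym_t$ is generated by the adjacent transpositions $(k\ k+1)$, the extra relations are generated in degree two; the grading and degree-zero claims follow by inspection. One small point: the sign in your degree-two relation $x = \pm x\tau$ should be the one attached to $\sym_t$, i.e.\ the second entry $\mp$ in the case $(\kk\ub)_{(\pm;\mp)}$, since the first entry is already absorbed into $V$.
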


\begin{proof}
(Sketch.) 
The first statement is essentially immediate from the construction of these $\kk$-linear categories, by using that the symmetric group $\sym_t$ is generated by the transpositions of the form $(i \ i+1)$. The second statement is clear by inspection.
\end{proof}

Proposition \ref{prop:dub_quotients} has the following  consequence (cf. \cite{MR3376738}, for example).

\begin{cor}
\label{cor:full_subcat_kkdub_pmmp}
\ 
\begin{enumerate}
\item 
For each $(\pm; \mp)$, $(\kk \ub)_{(\pm; \mp)} \dash\modules$ identifies as a full $\kk$-linear subcategory of $(\kk \dub)_\mp\dash\modules$. 
\item 
For each $\mp$, $(\kk \dub)_\mp \dash\modules$ identifies as a full $\kk$-linear subcategory of $(\kk \dubord)\dash\modules$.
\end{enumerate}
\end{cor}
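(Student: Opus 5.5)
The plan is to deduce both statements from the general fact that restriction along a full $\kk$-linear functor that is the identity on objects is fully faithful. Here $\ump_{(\pm;\mp)} : (\kk \dub)_\mp \rightarrow (\kk \ub)_{(\pm;\mp)}$ and $\kk\dubord \rightarrow (\kk \dub)_\mp$ are such functors, by Proposition \ref{prop:dub_quotients}. Concretely, let $\Phi : \cala \rightarrow \calb$ be a $\kk$-linear functor which is the identity on objects and surjective on each morphism space. Restriction $\Phi^* : \calb\dash\modules \rightarrow \cala\dash\modules$ is evidently faithful, since it does not change the underlying vector spaces or the underlying linear maps.

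Fullness of $\Phi^*$ is the main point. Take a natural transformation $\theta : \Phi^* M \rightarrow \Phi^* N$ of $\cala$-modules, and let $g \in \calb(X,Y)$. Choose a lift $f \in \cala(X,Y)$ with $\Phi(f) = g$. Then
$$
\theta_Y \circ M(g) = \theta_Y \circ M(\Phi f) = N(\Phi f)\circ \theta_X = N(g) \circ \theta_X .
$$
So $\theta$ is natural for $\calb$, and the choice of lift plays no role. Hence $\Phi^*$ identifies $\calb\dash\modules$ with a full subcategory of $\cala\dash\modules$. This subcategory is also replete up to the obvious identification: it consists of the $\cala$-modules $F$ for which $F(f)$ depends only on $\Phi(f)$, equivalently those on which $\ker \Phi$ acts by zero.

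Applying this with $\Phi = \ump_{(\pm;\mp)}$ gives part (1). Applying it with $\Phi$ the quotient $\kk\dubord \rightarrow (\kk\dub)_\mp$, induced by $-\otimes_{\kk\sym_t}\kk^\mp$, gives part (2).

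The only thing that needs checking is that these functors are indeed surjective on morphism spaces. This holds because each target morphism space is a coinvariant quotient $\kk\fb(A\amalg(\mathbf{2}\times\mathbf{t}),B)\otimes_{\kk G}(\text{character})$ of the source space. Passing from $\sym_t$ to $\sym_2\wr\sym_t$ is a further quotient, since the character $(\kk^\pm;\kk^\mp)$ restricts on $\sym_t$ to $\kk^\mp$, and composition is compatible by construction. All of this is recorded in Proposition \ref{prop:dub_quotients}, so no real obstacle is expected.

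If one wants an intrinsic description of the image, one can also say that a $(\kk\dub)_\mp$-module lies in $(\kk\ub)_{(\pm;\mp)}\dash\modules$ exactly when each $\sym_2$ factor acts by the sign $\pm$. By the generation of Corollary \ref{cor:decompose_mor_fiordev_uwbord}, it suffices to check this on the degree-one generators $\alpha_{(ij)}$, namely $M(\alpha_{(ji)}) = \pm M(\alpha_{(ij)})$. This refinement is optional.
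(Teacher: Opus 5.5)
Your proposal is correct and follows the same route as the paper: restriction along the full, identity-on-objects quotient functors of Proposition \ref{prop:dub_quotients} is fully faithful. You spell out the details the paper leaves implicit, namely faithfulness, fullness via lifting morphisms, and surjectivity of the coinvariant quotients on morphism spaces.
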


\begin{proof}
Restriction along $(\kk \dub)_\mp \rightarrow (\kk \ub)_{(\pm; \mp)}$ induces a functor 
$
(\kk \ub)_{(\pm; \mp)} \dash\modules
\rightarrow 
(\kk \dub)_\mp\dash\modules.
 $
This identifies as the embedding of a full $\kk$-linear subcategory. 
 The proof of the second statement is similar.
\end{proof}

\begin{rem}
The essential image of each inclusion given by Corollary \ref{cor:full_subcat_kkdub_pmmp} can be identified explicitly in terms of the action of the degree one morphisms.
\end{rem}

The statement of Proposition \ref{prop:homog_quadratic_ub} can be made more precise by identifying the bimodules of degree one morphisms
using  the following lemma, in which  $\alpha_{(ij )} \in \dubord (\n , \mathbf{n+2}) $ is as in  Notation \ref{nota:alpha_beta}:

\begin{lem}
\label{lem:kkdub_deg1_generator}
For $n \in \nat$, there is an identification of $\kk (\sym_n\op \times \sym_{n+2})$-modules 
$$
(\kk \dub)_+ (\n, \mathbf{n+2}) =  (\kk \dub)_- (\n, \mathbf{n+2}).
$$
Moreover, 
\begin{enumerate}
\item 
as a $\kk \sym_{n+2}$-module, this is  free on the  image of the class $[\alpha_{(n+1 \ n+2)}] \in \kk \dubord (\n , \mathbf{n+2})$;
\item 
as a $\kk \sym_n\op$-module, it is free on the set $\{ [\alpha_{(i j)}] \}$ indexed by ordered pairs of elements $i \neq j \in \n$.
\end{enumerate}

Hence, for each $(\pm; \mp)$, the $\kk (\sym_n\op \times \sym_{n+2})$-module $(\kk \ub)_{(\pm;\mp)}  (\n, \mathbf{n+2}) $ is
\begin{enumerate}
\item 
generated by the image of $[\alpha_{(n+1 \ n+2)}]$ as a $\kk \sym_{n+2}$-module (and hence also as a $\kk (\sym_n\op \times \sym_{n+2})$-module);
\item 
free as a $\kk \sym_n\op$-module on the set $\{ [\alpha_{(i j)}] \mid i < j \in \mathbf{n+2} \}$.
\end{enumerate}
\end{lem}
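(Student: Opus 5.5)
The plan is to work directly from the definition: $(\kk \dub)_\pm(\n, \mathbf{n+2}) = \kk \fb(\n \amalg (\mathbf{2}\times \mathbf{1}), \mathbf{n+2}) \otimes_{\kk \sym_1} \kk^\pm$. Since $t=1$, the group $\sym_t = \sym_1$ is trivial, so $\kk^+ = \kk^- = \kk$ and the tensor product over $\kk\sym_1$ changes nothing. Hence both sides equal $\kk\dubord(\n,\mathbf{n+2}) = \kk \fb(\n \amalg \mathbf{2}, \mathbf{n+2})$. The action of $\sym_n\op \times \sym_{n+2}$ is by precomposition on $\n$ and postcomposition, so the two sides coincide as bimodules. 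This gives the stated identification.

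For the two freeness statements I will use that $\fb(\n\amalg\mathbf{2}, \mathbf{n+2})$ is a $\sym_{n+2}$-torsor under postcomposition. It is therefore free on any single element, for instance the order-preserving bijection, which is $\alpha_{(n+1\ n+2)}$ by Notation \ref{nota:alpha_beta}. This gives (1).

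For (2), precomposition by $\sym_n$ acts freely on the bijections. A bijection $\n\amalg\mathbf{2}\to\mathbf{n+2}$ is determined by the ordered pair $(i,j)$ of images of $1,2 \in \mathbf{2}$, which are distinct elements of $\mathbf{n+2}$, together with a bijection $\n \to \mathbf{n+2}\setminus\{i,j\}$. Each orbit therefore contains exactly one element whose restriction to $\n$ is order-preserving, namely $\alpha_{(ij)}$. (This is Proposition \ref{prop:degree_one_gens_fiordev_uwbord}(1).) Note that the indexing set must be ordered pairs of distinct elements of $\mathbf{n+2}$; reading it as ``$i\neq j\in\n$'' would be a typo.

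For $(\kk\ub)_{(\pm;\mp)}(\n,\mathbf{n+2})$, with $t=1$ the wreath product is $\sym_2\wr\sym_1 = \sym_2$, acting on $\mathbf{2}$ by precomposition. The representation $(\kk^\pm;\kk^\mp)$ restricted to this $\sym_2$ is $\triv_2$ or $\sgn_2$ according to the first sign. Thus the space is the coinvariant quotient of the above by $\sym_2$, twisted by a character. This action commutes with the $\sym_n\op\times\sym_{n+2}$ action, so the quotient map is bimodule surjective.

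Generation by $[\alpha_{(n+1\ n+2)}]$ then follows from (1). For the $\sym_n\op$-freeness, the swap sends $\alpha_{(ij)}$ to $\alpha_{(ji)}$, with no change to the order-preserving part. So $\sym_2$ permutes the $\sym_n\op$-free basis $\{\alpha_{(ij)}\}$ freely, pairing $(ij)$ with $(ji)$. The twisted coinvariants identify $[\alpha_{(ji)}]=\pm[\alpha_{(ij)}]$. In characteristic zero, or indeed in general since the pairing is free, the quotient is free over $\kk\sym_n\op$ on the representatives $[\alpha_{(ij)}]$ with $i<j$.

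The main obstacle, though minor, is checking that the $\sym_2$-swap commutes with precomposition by $\sym_n$ and preserves the order-preserving normal form. If it did not, the basis would not descend; but the swap only alters the images of $\mathbf{2}$, so this should be clear.
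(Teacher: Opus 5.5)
Your proof is correct and follows the same route as the paper: in degree one ($t=1$) the ordering-of-pairs twist is vacuous, so both categories agree with $\kk\dubord(\n,\mathbf{n+2})$, whose freeness properties come from Proposition \ref{prop:degree_one_gens_fiordev_uwbord}; passing to $(\kk \ub)_{(\pm;\mp)}$ then imposes $[\alpha_{(ij)}]=\pm[\alpha_{(ji)}]$, and one keeps the representatives with $i<j$. You are also right that in item (2) of the first part the index set should be the ordered pairs of distinct elements of $\mathbf{n+2}$, not of $\n$.
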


\begin{proof}
The modules considered lie in degree one with respect to the $\nat$-grading and, in particular, correspond to a single `pair'. Hence the orientation data for the ordering of pairs makes no difference in these two cases. Now, as a $\sym_{n+2}$-set, by Proposition \ref{prop:degree_one_gens_fiordev_uwbord}, it is clear that $\dubord (\n , \mathbf{n+2})$ is generated freely by $\alpha_{(n+1\  n+2)}$. This implies that the image $[\alpha_{(n+1 \  n+2)}]$ generates the respective terms as a $\kk \sym_{n+2}$-module. 

On passage to the category $(\kk \ub)_{(\pm;\mp)}$, the corresponding $\kk \sym_{n+2}$-module is still generated by the image of $[\alpha_{(n+1 \  n+2)}]$, but it is no longer free. More precisely, due to the relation corresponding to $\pm$, we have the equality
$
[\alpha_{(i  j)}]
= 
\pm [\alpha_{(j  i)}]
$ 
for all ordered pairs of elements  $i\neq j \in \mathbf{n+2}$.  Using this in conjunction with Proposition \ref{prop:degree_one_gens_fiordev_uwbord}, one deduces the final statement, by choosing the generators specified by $i<j$.
\end{proof}

%%%%%%%%%%%%%%%%%%%%%%%%%%%%%%%%%%%%%%%%%%%%%%%%%%%%%%%%%%%%%%%%%%%%%%%%%%%
\subsection{The twisted form of  $\kk \uwb$}
\label{subsect:twist_uwb}

The twisting constructions for $\kk \dub$ and  $\kk \ub$ have an analogue for the upward walled Brauer category $\uwb$,  by introducing an orientation sign corresponding to the ordering of the pairs defining a morphism. This is reviewed briefly in this section; see \cite{P_opd_wall} for more details.

\begin{defn}
\label{defn:twist_kkuwb}
For $\mp \in \{ +, -\}$, let $(\kk \uwb)_\mp$ denote the $\kk$-linear category with objects pairs of finite sets and with morphisms given  for $(X,Y)$, $(U,V)$, and $n$ 
as in Remark \ref{rem:identify_morphisms_uwb_ub} by 
$$
(\kk \uwb) _\mp ((X,Y), (U,V)) := 
\kk (\tfb) ((X \amalg \n, Y \amalg \n), (U,V) ) \otimes_{\kk \sym_n} \kk^\mp
$$
for the diagonal action of $\sym_n$,  and with composition induced by that of $\kk \uwbord$. (Here $\kk^+ := \triv_n$ and $\kk^- := \sgn_n$.)

The  downward category $(\kk \dwb)_\mp$ is  the opposite category of $(\kk \uwb)_\mp$. 
\end{defn}

\begin{rem}
The $\kk$-linear category $(\kk \uwb) _+$ identifies with $\kk \uwb$. 
\end{rem}

As for the unwalled case, there is an $\nat$-grading of the morphisms of $(\kk \uwb)_\pm$ corresponding to the number of walled pairs. The degree zero subcategory identifies as $\kk (\tfb)$ and one has the following counterpart of Proposition \ref{prop:homog_quadratic_ub} (see \cite{P_opd_wall}): 

\begin{prop}
\label{prop:twist_kuwb_hq}
The category $(\kk \uwb)_\mp$ is a homogeneous quadratic $\kk$-linear category over $\kk (\tfb)$. 
\end{prop}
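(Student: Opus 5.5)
We need to show that $(\kk \uwb)_\mp$ is a homogeneous quadratic $\kk$-linear category over $\kk(\tfb)$. The plan follows the sketch given for Proposition \ref{prop:homog_quadratic_ub}, transferred to the walled setting.

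First we treat $\kk \uwbord$. By Corollary \ref{cor:decompose_mor_fiordev_uwbord}, every morphism of degree $t$ in $\uwbord$ can be written uniquely, up to the right $\sym_m \times \sym_n$-action, as a composite $\bigcirc_{k=0}^{t-1} \beta^{m+k,n+k}_{i_k,j_k}$ of degree one morphisms. It follows that $\kk \uwbord$ is the tensor category over $\kk(\tfb)$ freely generated by the $\kk(\tfb)$-bimodule $E$ of degree one morphisms, with no relations. Indeed, consider the natural map
$$
E^{\otimes_{\kk(\tfb)} t} \rightarrow \kk\uwbord(\text{degree } t).
$$
By Proposition \ref{prop:degree_one_gens_fiordev_uwbord}, $E$ is free as a right $\sym_m\times\sym_n$-module. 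Hence the source is free on the composites of $\beta$'s, and the map is an isomorphism by the uniqueness statement. We also record that the degree one bimodules of $\kk\uwbord$ and $(\kk\uwb)_\mp$ coincide: a single walled pair carries no ordering data, since $\sym_1$ is trivial. This is the same argument as in Lemma \ref{lem:kkdub_deg1_generator}.

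Next we pass to $(\kk\uwb)_\mp$. In degree $t$ it is the quotient of $\kk\uwbord$ by the (twisted) coinvariants of the diagonal $\sym_t$-action reordering the walled pairs. Since $\sym_t$ is generated by the adjacent transpositions $(k\ k{+}1)$, the kernel of $\kk\uwbord \twoheadrightarrow (\kk\uwb)_\mp$ is the two-sided ideal generated by the degree two elements
$$
f - (\pm) f\cdot(1\,2), \qquad f \in \kk\uwbord \text{ of degree } 2,
$$
where the sign is $+1$ in the untwisted case and $-1$ for $\kk^-=\sgn$. The effect of $(1\,2)$ is to swap the order of the two walled pairs. Concretely, it sends $\beta^{m+1,n+1}_{i',j'}\circ \beta^{m,n}_{i,j}$ to the composite adding the same two pairs in the other order, up to the $\tfb$-action.

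The key step is to check that this ideal is generated in degree two. Write a degree $t$ element as a composite of degree one morphisms. The action of $(k\ k{+}1)$ on it only affects the $k$th and $(k{+}1)$st factors. It is therefore obtained by applying the degree two relation to that adjacent pair and composing on either side, which is compatible with the composition law of $\uwbord$ described in \cite{P_opd_wall}. The sign character is multiplicative, so the twisted coinvariants are the quotient by the ideal generated by these quadratic relations $R \subset E\otimes_{\kk(\tfb)}E$. Together with the free presentation of $\kk\uwbord$ from the first step, this gives
$$
(\kk\uwb)_\mp \cong T_{\kk(\tfb)}(E)/\langle R\rangle,
$$
which is the required homogeneous quadratic presentation over $\kk(\tfb)$.

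The main obstacle is verifying the compatibility in this key step. One must show that the $\sym_t$-action on $(\tfb)((X\amalg \mathbf{t},Y\amalg\mathbf{t}),(U,V))$ corresponds under the decomposition of Corollary \ref{cor:decompose_mor_fiordev_uwbord} to permuting the order of the factors. This requires unwinding the composition in $\uwbord$, and is a direct but careful check on representatives.
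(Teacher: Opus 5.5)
Your proposal is correct and is essentially the paper's argument. The paper gives no proof of this statement (it defers to \cite{P_opd_wall}), but its sketch for the unwalled analogue, Proposition \ref{prop:homog_quadratic_ub}, runs exactly as yours: $\kk \uwbord$ is freely generated over $\kk(\tfb)$ by its degree-one bimodule (Corollary \ref{cor:decompose_mor_fiordev_uwbord}), and passing to the (twisted) $\sym_t$-coinvariants imposes only quadratic relations because $\sym_t$ is generated by adjacent transpositions. The check you flag as the main obstacle is the equivariance of composition in $\uwbord$ with respect to $\sym_s \times \sym_t \subset \sym_{s+t}$; the paper likewise leaves this implicit.
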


The structure can be made more precise by identifying the bimodule of degree one morphisms, using the morphisms $\beta_{i,j}$ of Notation \ref{nota:alpha_beta}:

\begin{lem}
\label{lem:gen_deg_one_kkuwbmp}
For $m, n \in \nat$, the $\kk ((\sym_m \times \sym_n)\op \times (\sym_{m+1} \times \sym_{n+1}))$-module
$$
(\kk \uwb)_- ((\m, \n), (\mathbf{m+1}, \mathbf{n+1}))
= 
\kk \uwb ((\m, \n), (\mathbf{m+1}, \mathbf{n+1}))
= 
\kk \uwbord ((\m, \n), (\mathbf{m+1}, \mathbf{n+1}))
$$
is
\begin{enumerate}
\item 
generated freely as a $\kk(\sym_{m+1} \times \sym_{n+1})$-module by the class $[\beta_{m+1, n+1}]$; 
\item 
generated freely as a $\kk ((\sym_m \times \sym_n)\op)$-module by the set $ \{ [\beta_{i,j}] \mid i \in \mathbf{m+1}, \ j \in \mathbf{n+1} \}$.
\end{enumerate}
\end{lem}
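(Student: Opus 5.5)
The plan is to reduce everything to the degree one case, where the twisting plays no role, and then apply Proposition \ref{prop:degree_one_gens_fiordev_uwbord}.

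First I would establish the two equalities. A morphism in $\uwbord ((\m,\n),(\mathbf{m+1},\mathbf{n+1}))$ has degree one, so it involves a single walled pair. By Remark \ref{rem:identify_morphisms_uwb_ub} and Definition \ref{defn:twist_kkuwb}, both $\kk\uwb$ and $(\kk\uwb)_-$ are obtained from $\kk (\tfb)((\m \amalg \mathbf{1}, \n\amalg \mathbf{1}),(\mathbf{m+1},\mathbf{n+1}))$ by tensoring over $\kk \sym_1$ with $\triv_1$ and $\sgn_1$ respectively. Since $\sym_1$ is trivial and $\triv_1 = \sgn_1$, both quotients are the identity. This gives
\[
(\kk \uwb)_- ((\m,\n),(\mathbf{m+1},\mathbf{n+1})) = \kk\uwb((\m,\n),(\mathbf{m+1},\mathbf{n+1})) = \kk\uwbord((\m,\n),(\mathbf{m+1},\mathbf{n+1})).
\]
These identifications are compatible with the $(\sym_m\times\sym_n)\op\times(\sym_{m+1}\times\sym_{n+1})$-actions, because the actions are induced by composition with the maximal subgroupoid $\tfb$ (Lemma \ref{lem:maximal_subgroupoid}) in each category. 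The three categories share that subgroupoid.

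It therefore suffices to prove the two freeness statements for the linearization of the set $S := \uwbord ((\m,\n),(\mathbf{m+1},\mathbf{n+1})) = (\tfb)((\m\amalg\mathbf{1},\n\amalg\mathbf{1}),(\mathbf{m+1},\mathbf{n+1}))$. The linearization of a free $G$-set is a free $\kk G$-module on the same basis of orbit representatives, so I only need statements about $G$-sets.

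Statement (2) is then exactly Proposition \ref{prop:degree_one_gens_fiordev_uwbord}(2), after linearizing.

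For statement (1), I would argue directly. The set $S$ is the set of pairs of bijections $(\m\amalg\mathbf{1} \to \mathbf{m+1},\ \n\amalg\mathbf{1}\to\mathbf{n+1})$. Postcomposition by $\sym_{m+1}\times\sym_{n+1}$ acts simply transitively on this set, being a torsor action on a product of sets of bijections. Hence $S$ is a free $\sym_{m+1}\times\sym_{n+1}$-set of rank one, generated by any element.

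It remains to identify $\beta_{m+1,n+1}$ as such an element. By Notation \ref{nota:alpha_beta}, $\beta_{m+1,n+1}$ is represented by the order-preserving inclusions $\m\subset\mathbf{m+1}$ and $\n\subset\mathbf{n+1}$, with the walled pair sent to $(m+1,n+1)$. So it is a valid generator.

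There is no genuine obstacle here. The only point needing care is that the left and right actions really are given by composition with $\tfb$ in the ordered category. That composition, as in \cite{P_opd_wall}, is just composition of the underlying bijections, so both actions agree with the obvious pre- and postcomposition actions on $S$.
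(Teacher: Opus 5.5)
Your proposal is correct and follows essentially the same route as the paper: in degree one the $\sym_1$-quotient and the sign twist are trivial, so you work in $\kk\uwbord$, read off (2) from Proposition \ref{prop:degree_one_gens_fiordev_uwbord}, and get (1) because postcomposition makes the set of pairs of bijections a torsor for $\sym_{m+1}\times\sym_{n+1}$. The paper only says (1) follows ``similarly to Lemma \ref{lem:kkdub_deg1_generator}'' (where it is called clear), so your torsor argument simply spells out that step.
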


\begin{proof}
For degree one morphisms, the quotient $\kk \uwbord \rightarrow (\kk \uwb)_\mp$ is an isomorphism (independent of $\pm$), so we can work with $\kk \uwbord$. The remaining statement then follows readily by using Proposition \ref{prop:degree_one_gens_fiordev_uwbord}, similarly to Lemma \ref{lem:kkdub_deg1_generator}.
\end{proof}

By construction, for given $\mp$, there is a full $\kk$-linear functor
$
\kk \uwbord \rightarrow (\kk\uwb)_\mp
$
that restricts to the identity functor on the wide subcategory $\kk (\tfb)$ of degree zero morphisms. The following is the counterpart of Corollary \ref{cor:full_subcat_kkdub_pmmp}: 

\begin{cor}
\label{cor: full_subcat_kkuwb_mp}
The category $(\kk\uwb)_\mp \dash\modules$ identifies as a full subcategory of $\kk \uwbord \dash \modules = \f (\uwbord)$.
\end{cor}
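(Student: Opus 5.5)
The proof mirrors that of Corollary \ref{cor:full_subcat_kkdub_pmmp}. Write $q_\mp : \kk \uwbord \rightarrow (\kk\uwb)_\mp$ for the full $\kk$-linear functor above; it is the identity on objects and restricts to the identity on $\kk(\tfb)$. The first step is to identify $\kk \uwbord \dash\modules$ with $\f(\uwbord)$. This is the standard equivalence between $\kk$-linear functors on $\kk\calc$ and functors on $\calc$, recalled in the Background subsection. Restriction along $q_\mp$ then gives a functor
$$
q_\mp^* : (\kk\uwb)_\mp \dash\modules \rightarrow \kk\uwbord\dash\modules ,
$$
and it remains to show that $q_\mp^*$ is fully faithful.

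Faithfulness is immediate, because $q_\mp$ is the identity on objects. A natural transformation between $(\kk\uwb)_\mp$-modules is just a family of linear maps indexed by objects, and $q_\mp^*$ leaves this family unchanged.

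For fullness, let $M, N$ be $(\kk\uwb)_\mp$-modules and let $\theta : q_\mp^* M \rightarrow q_\mp^* N$ be a morphism of $\kk\uwbord$-modules. We must check that each component $\theta_{(X,Y)}$ commutes with the action of every morphism of $(\kk\uwb)_\mp$. Since $q_\mp$ is full, hence surjective on morphism spaces, any morphism $\phi$ of $(\kk\uwb)_\mp$ can be written $\phi = q_\mp(\tilde\phi)$ for some $\tilde\phi$ in $\kk\uwbord$. The action of $\phi$ on $M$ is by definition the action of $\tilde\phi$ on $q_\mp^*M$, and likewise for $N$. Naturality of $\theta$ with respect to $\tilde\phi$ is therefore exactly naturality with respect to $\phi$, so $\theta$ is a morphism of $(\kk\uwb)_\mp$-modules.

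The only point needing care is that $q_\mp$ really is full. This holds because each morphism space $(\kk\uwb)_\mp((X,Y),(U,V))$ is defined as a quotient of $\kk(\tfb)((X\amalg\n, Y\amalg\n),(U,V)) = \kk\uwbord((X,Y),(U,V))$ by the coinvariants $\otimes_{\kk\sym_n}\kk^\mp$, and composition is induced from $\kk\uwbord$. Optionally, one can also describe the essential image: a $\kk\uwbord$-module lies in it precisely when the $\sym_n$-action on degree-$n$ morphisms acts on it through $\kk^\mp$. By Corollary \ref{cor:decompose_mor_fiordev_uwbord}, this amounts to the relation $\beta\beta' = \pm(\text{swapped composite})$ on degree-two composites of the $\beta_{i,j}$ of Notation \ref{nota:alpha_beta}.
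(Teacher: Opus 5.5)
Your proof is correct and is essentially the paper's argument. The paper simply notes, as for Corollary \ref{cor:full_subcat_kkdub_pmmp}, that restriction along the full, identity-on-objects quotient functor $\kk \uwbord \rightarrow (\kk\uwb)_\mp$ embeds $(\kk\uwb)_\mp$-modules as a full subcategory, and you have spelled out the faithfulness and fullness checks behind that. One small slip in your optional remark on the essential image: the sign relating a degree-two composite to its pair-swapped version should be the ordering sign $\mp$, not $\pm$, which in the paper's notation is reserved for the direction of pairs in the unwalled setting.
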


\section{Unwalling and walling}
\label{sect:unwall}

This section initiates the study of the passage between the walled and unwalled settings, exploiting the disjoint union of finite sets. This yields the unwalling functor of Proposition \ref{prop:unwall_uwbord} and its twisted counterparts of Corollary \ref{cor:uwb_pm_to_dub_pm}. Then, composing with the functors introduced in Section \ref{sect:brauer_cat}, we have $\kk$-linear functors 
$$
\upsilon _{(\pm; \mp)} :
(\kk \uwb)_{\mp}
\rightarrow 
(\kk \ub) _{(\pm; \mp)} 
$$
and hence the associated restriction functors $\upsilon_{(\pm; \mp)}^*$ between the associated module categories. Theorem \ref{thm:upsilon^*kkub} illustrates these functors by describing $\upsilon_{(\pm; \mp)}^* (\kk \ub)_{(\pm; \mp)}$.

%%%%%%%%%%%%%%%%%%%%%%%%%%%%%%%%%%%%%%%%%%%%%%%%%%%%%%%%%%%%%%%%%%%%%%%%%%%%%%%%%%%%%%%%%
\subsection{Unwalling results}
\label{subsect:unwalling_results}

Recall that we have the subcategories 
$
\tfb \subset \uwbord  $ and $
\fb \subset \dubord
$ 
(see Lemma \ref{lem:maximal_subgroupoid}) and the functor $\tfb \stackrel{\amalg}{\rightarrow } \fb$ given by the disjoint union of finite sets.

\begin{prop}
\label{prop:unwall_uwbord}
The disjoint union of finite sets 
induces a functor 
$
\ouw : 
\uwbord 
\rightarrow 
\dubord
$
and hence a functor $\uwb \rightarrow \dub$ that fits into the commutative (up to natural isomorphism) diagram
$$
\xymatrix{
\uwbord 
\ar[r]^\ouw
\ar[d]
&
\dubord
\ar[d]
\\
\uwb 
\ar[r]
&\dub
\ar[r]
&
\ub,
}
$$
in which the vertical functors are the respective forgetful functors and $\dub \rightarrow \ub$ is the forgetful functor. The composite of the bottom row yields a functor $\uwb \rightarrow \ub$ given on objects by the disjoint union of finite sets. 
\end{prop}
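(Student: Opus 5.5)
The functor $\ouw$ will be defined explicitly on objects and morphisms, and then the required properties checked. On objects, set $\ouw(X,Y) := X \amalg Y$. A morphism of $\uwbord$ of degree $n$ from $(X,Y)$ to $(U,V)$ is a pair of bijections $f=(f_1,f_2)$ with $f_1 : X \amalg \n \to U$ and $f_2 : Y \amalg \n \to V$. Its image under $\ouw$ will be the morphism of degree $t=n$ in $\dubord (X\amalg Y, U \amalg V) = \fb\big((X\amalg Y) \amalg (\mathbf{2}\times \n), U \amalg V\big)$ obtained from the bijection
$$
(X \amalg Y) \amalg (\mathbf{2} \times \n) \cong (X \amalg \n) \amalg (Y \amalg \n) \xrightarrow{f_1 \amalg f_2} U \amalg V .
$$
Here the canonical identification sends $(1,i)$ to $i$ in the first copy of $\n$ and $(2,i)$ to $i$ in the second copy. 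Thus the walled pair $f(i,i) = (f_1(i), f_2(i))$ becomes the directed pair $(f_1(i), f_2(i))$, directed from the "input" side to the "output" side, and the pairs keep their indexing by $\n$. The degree condition $|U \amalg V| - |X \amalg Y| = 2n$ holds automatically.

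The first step is functoriality. On degree zero morphisms, $\ouw$ is exactly $\amalg : \tfb \to \fb$. So it suffices to check that composites are respected. For this I would use the generator description of Corollary \ref{cor:decompose_mor_fiordev_uwbord}. Every morphism is a product of a $\tfb$-isomorphism and iterated composites of the $\beta$'s. One checks that $\ouw(\beta^{m,n}_{i,j})$ is, up to the $\fb$-isomorphism that reorders $\m\amalg\n$ and $\mathbf{m+1}\amalg\mathbf{n+1}$, a morphism of type $\alpha_{(i',j')}$ with $i'$ in the first block and $j'$ in the second.

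Alternatively, and more cleanly, one compares directly with the composition rules of \cite{P_opd_wall} and \cite{P_cyclic}. In both categories, composing $g\circ f$ uses the underlying bijection $g \circ (f \amalg \id)$. The new pairs are indexed by $\n \amalg \n'$, which is identified with $\mathbf{n+n'}$ by the order-preserving concatenation. Disjoint union commutes with these operations, and the identification $\mathbf{2}\times(\n\amalg\n') \cong (\mathbf{2}\times\n)\amalg(\mathbf{2}\times\n')$ is compatible with the one for walled pairs. The main (bookkeeping) obstacle is this verification that the orderings of the pairs agree under composition, because both compositions concatenate the pair orderings in the same way. Compatibility with identities is immediate.

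Next, I would show that $\ouw$ is equivariant for the $\sym_n$-actions. In $\uwbord$, $\sym_n$ acts diagonally on the two copies of $\n$. Under the identification above, this is exactly the action of $\sym_n \subset \sym_2\wr\sym_n$ on $\mathbf{2}\times\n$ that permutes the pairs. Hence $\ouw$ passes to the quotients by $\sym_n$, and the quotient functors $\uwbord\to\uwb$ and $\dubord\to\dub$ are compatible with composition. This gives the functor $\uwb\to\dub$ making the square commute. The square in fact commutes strictly once the identification of objects is fixed; up to natural isomorphism suffices in any case, given the choices of disjoint union. Composing with the forgetful functor $\dub\to\ub$ gives a functor $\uwb\to\ub$, which on objects is $(X,Y)\mapsto X\amalg Y$, as stated.
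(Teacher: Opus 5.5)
Your proposal is correct and matches the paper's proof: the paper defines $\ouw$ on morphisms by the same identification $(U_1\amalg U_2)\amalg(\mathbf{2}\times\n)\cong(U_1\amalg\n)\amalg(U_2\amalg\n)$ followed by $f_1\amalg f_2$, states that compatibility with identities and composition is a direct check, and obtains $\uwb\to\dub$ by forgetting the ordering of the pairs. Your observation that the diagonal $\sym_n$-action on $\uwbord$ corresponds to the subgroup $\sym_n\subset\sym_2\wr\sym_n$ permuting the pairs is exactly what justifies that last step, which the paper leaves implicit.
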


\begin{proof}
The behaviour on objects is clear; let us consider morphisms. By definition, for degree $n$ morphisms, we have 
$$
\uwbord ((U_1, U_2) , (X_1, X_2)) = (\tfb) ((U_1 \amalg \n, U_2 \amalg \n), (X_1, X_2)).
$$
The disjoint union gives the map 
$$(\tfb) ((U_1 \amalg \n, U_2 \amalg \n), (X_1, X_2)) 
\rightarrow 
\fb ((U_1 \amalg \n) \amalg ( U_2 \amalg \n), X_1 \amalg X_2) \cong \fb (U_1 \amalg U_2 \amalg (\mathbf{2} \times \mathbf{n}), X_1 \amalg X_2)),
$$ where the isomorphism is induced by transposing the terms $\n \amalg U_2$ and using the bijection $\n \amalg \n \cong (\mathbf{2} \times \n)$ such that the first copy of $\n$ in the domain corresponds to $\{1 \} \times \n$ and the second to $\{2\} \times \n$. By definition, $\fb (U_1 \amalg U_2 \amalg (\mathbf{2} \times \mathbf{n}), X_1 \amalg X_2))=\dubord (U_1 \amalg U_2, X_1 \amalg X_2)$. One checks directly that this is compatible with the identity morphisms and composition of morphisms, whence the first statement. 

The unordered case follows  by forgetting the ordering.
\end{proof}

\begin{exam}
\label{exam:uwbord_2_fiordev_alpha_beta}
For $m, n \in \nat$, the functor $\ouw$ yields 
$$
\uwbord ((\m, \n ), (\mathbf{m+1}, \mathbf{n+1}) ) \rightarrow \dubord (\mathbf{m+n}, \mathbf{m+n+2}).
$$
 This is a map of right $\sym_m \times \sym_n$-sets, using the restricted action for $\sym_m \times \sym_n \subset \sym_{m+n}$ on the codomain. 
  Using the morphisms introduced in  Notation \ref{nota:alpha_beta}, this is determined by 
  $$
  \beta^{m,n}_{i, j} \mapsto \alpha^{m+n}_{(i j')},
  $$
  where $ i \in \mathbf{m+1}$,  $j \in \mathbf{n+1}$, and $j' := j+m+1$. In particular, the map is injective but not surjective;  the image is the free $\sym_m \times \sym_n$-module generated by the set 
$\{ \alpha^{m+n}_{(i'' j'')} \mid   1 \leq i'' \leq m+1, \ m+2 \leq j'' \leq m+n+2\}$.
\end{exam}

In order to understand left Kan extension along $\ouw$, we consider $\dubord (U_1 \amalg U_2, X)$, recalling that  the set of degree $n$ morphisms have a natural $\sym_n$-action.

\begin{lem}
\label{lem:unwalling_lemma}
Let $U_1$, $U_2$, $X$ be finite sets with $|X| = |U_1| + |U_2 | + 2n $ (for some $n \in \nat$). Then there is a $\sym_n$-equivariant bijection
$$
\fb (U_1 \amalg U_2 \amalg (\mathbf{2} \times \n)  , X) 
\cong 
\coprod_{X = X_1 \amalg X_2} (\fb \times \fb) ((U_1 \amalg \n, U_2 \amalg \n) , (X_1, X_2)).
$$

This corresponds to a bijection 
\begin{eqnarray}
\label{eqn:fiordev_vs_ubord}
\dubord (U_1 \amalg U_2, X) 
\cong 
\coprod_{X = X_1 \amalg X_2} \uwbord ((U_1, U_2) , (X_1, X_2)).
\end{eqnarray}

In particular, for a morphism $f \in \dubord(U_1 \amalg U_2, X) $, there exists a unique pair $(X_1, X_2)$ and a unique map  $\tilde{f} \in \uwbord ((U_1, U_2) , (X_1, X_2))$  such that the image of $\tilde{f}$ under the bijection  is $f$.
\end{lem}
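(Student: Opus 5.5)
The plan is to build the bijection on the level of underlying bijections by decomposing $X$ according to where each summand of the domain lands, and then to check compatibility with the $\sym_n$-actions and with the identifications of Remark \ref{rem:identify_morphisms_uwb_ub} and Proposition \ref{prop:unwall_uwbord}.

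First, rewrite the domain using the bijection $\mathbf{2}\times\n \cong \n \amalg \n$ fixed in the proof of Proposition \ref{prop:unwall_uwbord}: $\{1\}\times\n$ is the first copy and $\{2\}\times \n$ the second. This gives an identification $U_1 \amalg U_2 \amalg (\mathbf{2}\times \n) \cong (U_1 \amalg \n)\amalg (U_2 \amalg \n)$, obtained by transposing the middle terms. It is $\sym_n$-equivariant when $\sym_n$ acts diagonally on the two copies of $\n$, since the subgroup $\sym_n \subset \sym_2\wr\sym_n$ acts on $\mathbf{2}\times\n$ via the second factor.

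Next, given a bijection $f : (U_1\amalg\n)\amalg(U_2\amalg\n) \to X$, set $X_1 := f(U_1 \amalg \n)$ and $X_2 := f(U_2\amalg \n)$. Then $X = X_1 \amalg X_2$, and $f$ restricts to a pair of bijections $(f_1,f_2) \in (\tfb)((U_1\amalg\n, U_2\amalg\n),(X_1,X_2))$. Conversely, for any decomposition $X = X_1\amalg X_2$, a pair $(f_1,f_2)$ gives $f := f_1 \amalg f_2$ composed with the inclusion $X_1 \amalg X_2 = X$. These two assignments are mutually inverse, because a bijection out of a disjoint union is exactly a decomposition of the target together with bijections on the pieces. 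The decomposition $(X_1,X_2)$ is forced by $f$, which gives the uniqueness statement. Equivariance is clear: precomposing with $\sigma\in\sym_n$ acting diagonally does not change the images $X_1$, $X_2$, and it acts on $(f_1,f_2)$ by the diagonal action.

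Finally, translate back to morphism sets. By Remark \ref{rem:identify_morphisms_uwb_ub}, the left side is $\dubord(U_1\amalg U_2,X)$ and each summand on the right is $\uwbord((U_1,U_2),(X_1,X_2))$; the degree conditions $|X_1|-|U_1| = n = |X_2|-|U_2|$ hold automatically for each summand. This gives (\ref{eqn:fiordev_vs_ubord}). On each summand the bijection is precisely the map induced by $\ouw$ in the proof of Proposition \ref{prop:unwall_uwbord}, followed by the inclusion $X_1\amalg X_2 = X$, which yields the final statement about $\tilde f$.

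The only point needing care, and the one I expect to be the main obstacle, is bookkeeping: checking that the chosen identification $\n\amalg\n\cong \mathbf{2}\times\n$ makes the diagonal $\sym_n$-action match the $\sym_n$-action on $\dubord$. Doing the reordering of summands explicitly handles this; there is no real difficulty.
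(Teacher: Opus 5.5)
Your proposal is correct and is essentially the paper's argument. The paper likewise uses the $\sym_n$-equivariant identification of $U_1 \amalg U_2 \amalg (\mathbf{2}\times\n)$ with $(U_1\amalg\n)\amalg(U_2\amalg\n)$ from the proof of Proposition~\ref{prop:unwall_uwbord}, takes $X_i$ to be the image of $U_i\amalg\n$, and regards the bijection as an element of $(\tfb)((U_1\amalg\n,U_2\amalg\n),(X_1,X_2))$. One small correction: the degree condition holds only for the \emph{nonempty} summands, since decompositions with $|X_1|\neq|U_1|+n$ give empty sets on both sides, which is harmless.
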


\begin{proof}
Analogously to the proof of Proposition \ref{prop:unwall_uwbord},
there is a $\sym_n$-equivariant isomorphism
\begin{eqnarray}
\label{eqn:shuffle_iso}
\fb (U_1 \amalg U_2 \amalg (\mathbf{2} \times \n)  , X) 
\cong 
\fb ((U_1 \amalg \n) \amalg (U_2 \amalg \n) , X).
\end{eqnarray}

Given a map on the right hand side, corresponding to a bijection  from $(U_1 \amalg \n) \amalg (U_2 \amalg \n)$ to $X$, take $X_i$ to be the image of $(U_i \amalg \n)$, $i \in \{1, 2\}$. Clearly  $X = X_1\amalg  X_2$ and  the map can be considered as belonging to 
$$
 (\tfb) ((U_1 \amalg \n, U_2 \amalg \n) , (X_1, X_2)).
$$ 
This gives rise to the bijection of the statement, which is clearly $\sym_n$-equivariant.
\end{proof}

\begin{prop}
\label{prop:uwbordop_x_dubord-set}
The associations 
\begin{eqnarray*}
((U_1, U_2), X) & \mapsto & \dubord (U_1 \amalg U_2, X) 
\\
((U_1, U_2), X) & \mapsto & \amalg_{X=X_1 \amalg X_2} \uwbord ((U_1, U_2), (X_1,X_2))
\end{eqnarray*}
define functors from $(\uwbord)\op \times \dubord$ to sets.
 The isomorphism 
$$
\dubord (U_1 \amalg U_2, X) 
\cong 
\coprod_{X = X_1 \amalg X_2} \uwbord ((U_1, U_2) , (X_1, X_2))
$$
of Lemma \ref{lem:unwalling_lemma} yields a natural isomorphism between functors from  $(\uwbord)\op \times \dubord$ to sets.
\end{prop}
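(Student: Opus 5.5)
We first make both associations into functors of $((U_1,U_2),X)$, and then check that the bijection of Lemma \ref{lem:unwalling_lemma} commutes with the resulting actions. For the first association this is immediate. It is the composite of $\ouw\op \times \id : (\uwbord)\op \times \dubord \rightarrow (\dubord)\op \times \dubord$ with the hom-functor $\dubord(-,-)$. Here we use Proposition \ref{prop:unwall_uwbord}: $\ouw$ is a functor given on objects by $(U_1,U_2)\mapsto U_1\amalg U_2$.

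The second association needs an explicit definition of its action. Fix $g \in \uwbord((U'_1,U'_2),(U_1,U_2))$, a morphism $h\in \dubord(X,X')$, and a component element $\tilde f \in \uwbord((U_1,U_2),(X_1,X_2))$. Precomposition is straightforward: $\tilde f\circ g$ lies in the same component $(X_1,X_2)$. For postcomposition, we form $h\circ \ouw(\tilde f)\circ \ouw(g) \in \dubord(U'_1\amalg U'_2, X')$. Lemma \ref{lem:unwalling_lemma} then gives a unique decomposition $X' = X'_1\amalg X'_2$ and a unique lift in $\uwbord((U'_1,U'_2),(X'_1,X'_2))$, and we take this lift as the image. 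Concretely, the lift is obtained by sending each element of $X_i$ via $h$, and sending each directed pair $(a,b)$ added by $h$ so that $a$ joins $X'_1$ and $b$ joins $X'_2$, where $a,b$ are the images of the two copies of $\n$. With this definition, functoriality of the second association is transported from the first, provided the Lemma's bijection is compatible with $\ouw$ on morphisms. So the real content is a single compatibility check.

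That check has two parts. First, the bijection (\ref{eqn:fiordev_vs_ubord}) sends $\tilde f$ to $\ouw(\tilde f)$. This is visible from the proofs: both use the same shuffle isomorphism (\ref{eqn:shuffle_iso}), which transposes $\n\amalg U_2$ and identifies $\n\amalg\n$ with $\mathbf{2}\times\n$. Second, the decomposition $X=X_1\amalg X_2$ read off from a morphism is determined by the images of $U_i\amalg\n$. Once these hold, naturality in the $(\uwbord)\op$ variable is just functoriality of $\ouw$, namely $\ouw(\tilde f\circ g)=\ouw(\tilde f)\circ\ouw(g)$. Naturality in the $\dubord$ variable holds by the definition of the action just given. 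Uniqueness in Lemma \ref{lem:unwalling_lemma} gives the identity and composition laws for the second functor: two lifts of the same morphism in $\dubord$ must coincide.

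The main obstacle is bookkeeping in the composition law of $\dubord$. We must confirm that composing in $\dubord$ with new directed pairs is compatible with the ordering and shuffle conventions. In particular, the new pairs must be appended after the existing ones in the ordering of $\mathbf{2}\times(\mathbf{n}+\mathbf{t})$, with first components going to the "$X_1$ side". Otherwise the lift is well defined only up to a reordering, and equivariance would fail. The plan is to check this on the generators $\alpha_{(ij)}$ of Corollary \ref{cor:decompose_mor_fiordev_uwbord}, using Example \ref{exam:uwbord_2_fiordev_alpha_beta} for the image of the $\beta_{i,j}$. Since morphisms are iterated composites of these generators over the groupoid $\fb$, the degree-one case suffices.
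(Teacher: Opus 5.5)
Your proposal is correct and takes essentially the paper's route. The paper defines the action in the $\dubord$ variable by postcomposing with the lift $\tilde h$ of $h$ given by Lemma \ref{lem:unwalling_lemma}; your lift of $h\circ\ouw(\tilde f)\circ\ouw(g)$ is exactly $\tilde h\circ\tilde f\circ g$. In both, naturality reduces to the Lemma's bijection being given componentwise by $\ouw$, together with functoriality of $\ouw$.

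The ordering and shuffle convention check in your last paragraph is precisely the statement of Proposition \ref{prop:unwall_uwbord} that $\ouw$ is a functor, so you can cite it rather than redo it on generators. Note also that the Lemma's lift is always uniquely defined. What a convention mismatch would break is only the agreement of the transported action with composition in $\uwbord$.
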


\begin{proof}
That $((U_1, U_2), X)  \mapsto  \dubord (U_1 \amalg U_2, X) $ defines a functor on $(\uwbord)\op \times \dubord$ is clear: functoriality with respect to $X$ is immediate and the functoriality with respect to $(U_1, U_2)$ is given by restriction along $\ouw$. 

For $((U_1, U_2), X)  \mapsto  \amalg_{X=X_1 \amalg X_2} \uwbord ((U_1, U_2), (X_1,X_2))$, the functoriality with respect to $(U_1,U_2)$ is immediate and that with respect to $X$ corresponds to left Kan extension along $\ouw$. Lemma \ref{lem:unwalling_lemma} allows the left Kan extension to be understood directly, as follows. Consider a degree $t$ morphism $g$ in  $\dubord (X, Y)$ (so that $|Y|-|X| = 2t$). For a  pair $(X_1, X_2)$ of subsets such that $X = X_1 \amalg X_2$,  the lemma provides a unique pair $(Y_1, Y_2)$ such that $Y = Y_1 \amalg Y_2$ and  $\tilde{g} \in \uwbord ((X_1, X_2), (Y_1, Y_2))$ that corresponds to $g$. Postcomposing with $\tilde{g}$ then gives the map 
$$
\uwbord ((U_1, U_2) , (X_1, X_2))
\rightarrow 
\uwbord ((U_1, U_2) , (Y_1, Y_2))
$$
and the disjoint union of these yields the counterpart of the map 
$$
\dubord(U_1 \amalg U_2, X) 
\rightarrow 
\dubord (U_1 \amalg U_2, Y) 
$$
induced by $g$. One checks that this yields a functor from $(\uwbord)\op \times \dubord$ to sets, as required.

 From the explicit construction, it follows that the isomorphism of Lemma \ref{lem:unwalling_lemma} yields a natural isomorphism between these functors, as required.
\end{proof}

%%%%%%%%%%%%%%%%%%%%%%%%%%%%%%%%%%%%%%%%%%%%%%%%%%%%%%%%%%%%%%%%%%%%%%%%%%
\subsection{Twisted unwalling}
 There is  a twisted version of the unwalling functor $\kk \uwb \rightarrow \kk \dub$ of Proposition  \ref{prop:unwall_uwbord}:
 
\begin{cor}
\label{cor:uwb_pm_to_dub_pm}
The functor $\ouw : \uwbord \rightarrow \dubord$ of Proposition \ref{prop:unwall_uwbord} induces $\kk$-linear functors 
$$
\Upsilon_\mp : (\kk \uwb)_{\mp} 
\rightarrow 
(\kk \dub)_{\mp}.
$$
 Hence there are $\kk$-linear  functors  
$
(\kk \uwb)_\mp 
\rightarrow 
(\kk \ub)_{(\pm;\mp)}
$ 
given by composition with the functors of Proposition \ref{prop:dub_quotients}. On objects these are given by $(X, Y) \mapsto X \amalg Y$.
\end{cor}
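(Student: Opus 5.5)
The plan is to show that the functor $\ouw$ of Proposition \ref{prop:unwall_uwbord} is compatible with the group actions that define the two twisted categories. Then $\kk\ouw : \kk\uwbord \rightarrow \kk\dubord$ descends to the quotients. In degree $n$, $(\kk\uwb)_\mp$ is $\kk\uwbord(-,-)\otimes_{\kk\sym_n}\kk^\mp$ for the diagonal action of $\sym_n$ on the two copies of $\n$. Likewise $(\kk\dub)_\mp$ is $\kk\dubord(-,-)\otimes_{\kk\sym_n}\kk^\mp$, with $\sym_n$ permuting the directed pairs, that is, acting on the second factor of $\mathbf{2}\times\n$.

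The first step is to check, on the explicit formula in the proof of Proposition \ref{prop:unwall_uwbord}, that the map
\[
(\tfb)((U_1\amalg\n,U_2\amalg\n),(X_1,X_2)) \rightarrow \fb(U_1\amalg U_2\amalg(\mathbf{2}\times\n),X_1\amalg X_2)
\]
is $\sym_n$-equivariant for these two actions. This holds because the identification $\n\amalg\n\cong\mathbf{2}\times\n$ sends the first copy to $\{1\}\times\n$ and the second to $\{2\}\times\n$. A permutation acting diagonally on $\n\amalg\n$ therefore corresponds exactly to the action of $\sym_t\subset\sym_2\wr\sym_t$ on $\mathbf{2}\times\n$; this equivariance is also recorded in Lemma \ref{lem:unwalling_lemma}. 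Tensoring with $\kk^\mp$ over $\kk\sym_n$ then gives linear maps $(\kk\uwb)_\mp((U_1,U_2),(X_1,X_2))\rightarrow(\kk\dub)_\mp(U_1\amalg U_2,X_1\amalg X_2)$.

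The second step, and the only real point, is functoriality: the sign twists must match under composition. Composition in both twisted categories is induced from the ordered categories. When two morphisms of degrees $s$ and $t$ are composed, the resulting ordering of the $s+t$ pairs is obtained by concatenation, in both $\uwbord$ and $\dubord$, and $\ouw$ respects this concatenation. Since $\ouw$ is a functor at the ordered level and preserves the orderings of pairs, the induced maps on quotients are compatible with composition. In particular the sign coefficients introduced by reordering agree, because the relevant permutations in $\sym_{s+t}$ coincide. Identities are in degree zero, where nothing is twisted. I expect this matching of the orderings under composition to be the main thing to verify; I would do it by unwinding the composition rule recalled from \cite{P_opd_wall} and \cite{P_cyclic}. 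Alternatively, one can use the homogeneous quadratic presentations (Propositions \ref{prop:homog_quadratic_ub} and \ref{prop:twist_kuwb_hq}). Then it suffices to check that the quadratic relations, which swap the order of two consecutive pairs with sign $\mp$, are sent to the corresponding relations, using $\beta_{i,j}\mapsto\alpha_{(ij')}$ from Example \ref{exam:uwbord_2_fiordev_alpha_beta}.

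Finally, composing $\Upsilon_\mp$ with the full functors $\ump_{(\pm;\mp)}:(\kk\dub)_\mp\rightarrow(\kk\ub)_{(\pm;\mp)}$ of Proposition \ref{prop:dub_quotients} and Notation \ref{nota:psi_phi} gives the functors $(\kk\uwb)_\mp\rightarrow(\kk\ub)_{(\pm;\mp)}$. Their value on objects, $(X,Y)\mapsto X\amalg Y$, is inherited from $\ouw$, since all the other functors involved are the identity on objects.
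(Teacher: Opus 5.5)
Your proposal is correct and is the argument the paper leaves implicit (it states the corollary without proof): the map induced by $\ouw$ on degree $n$ morphisms is $\sym_n$-equivariant for the diagonal action on one side and the action of $\sym_n\subset\sym_2\wr\sym_n$ on the other, as recorded in Lemma \ref{lem:unwalling_lemma}, so $\kk\ouw$ descends to the twisted quotients, and one then composes with $\ump_{(\pm;\mp)}$. Your ``second step'' is in fact automatic and needs no unwinding of composition rules. The quotient functors $\kk\uwbord\to(\kk\uwb)_\mp$ and $\kk\dubord\to(\kk\dub)_\mp$ are full and the identity on objects, and on each hom-space their kernel is spanned by the elements $f\sigma-\chi(\sigma)f$, where $\chi$ is the character $\kk^\mp$. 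Equivariance therefore gives well-definedness, and compatibility with composition then follows formally from the functoriality of $\ouw$ at the ordered level.
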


\begin{nota}
\label{nota:upsilon}
For given $(\pm; \mp)$, denote by 
\begin{eqnarray*}
\upsilon _{(\pm; \mp)} &:& 
(\kk \uwb)_{\mp}
\rightarrow 
(\kk \ub) _{(\pm; \mp)} 
\\
\dupsilon _{(\pm; \mp)} 
&: &
(\kk \dwb)_{\mp}
\rightarrow 
(\kk \db) _{(\pm; \mp)}
\end{eqnarray*}
the composite  $\kk$-linear functor  given by Corollary \ref{cor:uwb_pm_to_dub_pm} and its opposite.
\end{nota}

\begin{rem}
\label{rem:maps_beta_alpha_upsilon}
For given $(\pm; \mp)$, the $\kk$-linear functor  $\upsilon _{(\pm; \mp)} $ is the composite
$$
(\kk \uwb)_{\mp} 
\stackrel{\Upsilon_\mp}{\rightarrow }
(\kk \dub)_{\mp}
\stackrel{\ump_{(\pm; \mp)}}{\rightarrow} 
(\kk \ub)_{(\pm; \mp)}
$$
 (where $\ump_{(\pm; \mp)}$ is as in Notation \ref{nota:psi_phi}).
 For $m, n \in \nat$, we have the corresponding morphisms between bimodules of degree one morphisms:
$$
(\kk \uwb)_{\mp} ((\m,\n), (\mathbf{m+1}, \mathbf{n+1}))
\rightarrow 
(\kk \dub)_{\mp} (\mathbf{m+n}, \mathbf{m+n+2}) 
\rightarrow 
(\kk \ub)_{(\pm; \mp)} (\mathbf{m+n}, \mathbf{m+n+2}). 
$$
These can be described  using the generators exhibited in Lemmas \ref{lem:gen_deg_one_kkuwbmp} and \ref{lem:kkdub_deg1_generator}. 
\begin{enumerate}
\item 
The first map is determined by $[\beta_{i, j}] \mapsto [\alpha_{(i j')}]$, where $j':= m+1 +j$, as in Example \ref{exam:uwbord_2_fiordev_alpha_beta}. 
\item 
The second map is determined by 
$$
[\alpha_{i'' j ''}] \mapsto
\left\{
\begin{array}{ll}
[\alpha_{i'' j ''}] & i'' < j'' \\
\pm [\alpha_{j'' i ''}] & j''< i''.
\end{array}
\right.
$$
\item 
The composite is thus determined  by $[\beta_{i, j}] \mapsto [\alpha_{(i j')}]$ (noting that $i < j'$).
\end{enumerate}
\end{rem}

\begin{exam}
\label{exam:restrict_sympl_vs_module}
We have the $\kk$-linear functor 
$
\dupsilon_{(-;+)} : 
\kk \dwb = (\kk \dwb)_+ \rightarrow (\kk \db)_{(-;+)}. 
$ 
Now, for $V$ a finite-dimensional symplectic $\kk$-vector space, it is well-known that the association $\n \mapsto V^{\otimes n}$ defines a $(\kk \db)_{(-;+)}$-module (see \cite{P_cyclic} for example). Here the `contraction maps' act via the symplectic form $V \otimes V \rightarrow \kk$. Moreover, this $(\kk \db)_{(-;+)}$-module  structure is natural with respect to the symplectic vector space $V$.

We may  restrict this $(\kk \db)_{(-;+)}$-module structure along $\dupsilon_{(-;+)}$ to obtain a $\kk \dwb$-module. By construction, this is given on objects by 
$$
(\mathbf{s}, \mathbf{t}) \mapsto V^{\otimes s} \otimes V^{\otimes t}.
$$
The  symplectic form  `contracts' across the wall; this means that the $\kk \dwb$-module structure does not see the antisymmetry of the symplectic form. 

Indeed, we can mimic this construction starting only with a vector space $W$ that is equipped with a bilinear form $ W \otimes W \rightarrow \kk$ (which need not be non-degenerate).  We then obtain a $\kk \ddb$-module given on objects by $\mathbf{n} \mapsto W^{\otimes n}$. We can restrict along $\Upsilon_+\op : \kk \dwb \rightarrow \kk \ddb$ to obtain a $\kk \dwb$-module. This is given on objects by 
$
(\mathbf{s}, \mathbf{t}) \mapsto W^{\otimes s} \otimes W^{\otimes t}.
$
\end{exam}

%%%%%%%%%%%%%%%%%%%%%%%%%%%%%%%%%%%%%%%%%%%%%%%%%%%%%%%%%%%%%%%%%%%%%%%%%%%%%
\subsection{Restricting along the functors $\upsilon$ and $\dupsilon$}

We have the functor 
$
\upsilon_{(\pm; \mp)} : (\kk \uwb)_\mp \rightarrow (\kk \ub)_{(\pm; \mp)}
$
from Notation \ref{nota:upsilon}. Restriction gives:
$$
\upsilon^*_{(\pm; \mp)} : (\kk \ub)_{(\pm; \mp)} \dash \modules 
\rightarrow 
(\kk \uwb)_\mp \dash \modules.
$$
Restricting to the subcategory of degree zero morphisms, this corresponds to restriction along $\amalg : \tfb \rightarrow \fb$.
Hence, for $F$ a $(\kk \ub)_{(\pm; \mp)}$-module, to identify 
the $(\kk \uwb)_\mp$-module $\upsilon^*_{(\pm; \mp)} F$,  it suffices to specify how the degree one morphisms of $(\kk \uwb)_\mp$ act. These are independent of the twist $\mp$: for any object $(X, Y)$ of $(\kk\uwb)_{\mp}$ and walled pair $s \in X$, $t \in Y$, we require to specify the action corresponding to the pair of inclusions $X \backslash \{s\} \subset X$ and $Y \backslash \{t\} \subset Y$.

Similarly, this applies when considering the restriction along $\dupsilon_{(\pm;\mp)}$ from Notation \ref{nota:upsilon}:
$$
\dupsilon^*_{(\pm; \mp)} : (\kk \db)_{(\pm; \mp)} \dash \modules 
\rightarrow 
(\kk \dwb)_\mp \dash \modules.
$$
Thus, the following result determines the functors $\upsilon_{(\pm; \mp)}^*$ and $\dupsilon_{(\pm; \mp)}^*$.

\begin{prop}
\label{prop:upsilon^*_dupsilon^*}
Let $(X,Y)$ be a pair of finite sets  and $(s,t) \in X \times Y$ be a walled pair.
\begin{enumerate}
\item 
For $F$ a $(\kk \ub)_{(\pm; \mp)}$-module,  the structure morphism 
$$
\upsilon^*_{(\pm; \mp)} F (X \backslash \{s\},Y \backslash \{t\})
\rightarrow 
\upsilon^*_{(\pm;\mp)} F (X,Y) 
$$
identifies with $
F ((X \amalg Y )\backslash \{s,t\})
\rightarrow 
F (X \amalg Y) 
$, 
the structure morphism of the $(\kk \ub)_{(\pm; \mp)}$-module $F$ associated to the  $(X \amalg Y )\backslash \{s,t\} \subset X \amalg Y$ and the ordered pair $(s ,t)$. 
\item
For $G$ a $(\kk \db)_{(\pm; \mp)}$-module, the structure morphism 
$$
\dupsilon^*_{(\pm;\mp)} G (X,Y) 
\rightarrow 
\dupsilon^*_{(\pm; \mp)} G (X \backslash \{s\},Y \backslash \{t\})
$$
identifies with $
G (X \amalg Y) 
\rightarrow 
G ((X \amalg Y )\backslash \{s,t\})
$, 
the structure morphism of the $(\kk \db)_{(\pm; \mp)}$-module $G$ associated to the set $X \amalg Y$ and the ordered pair $(s ,t)$. 
\end{enumerate}
\end{prop}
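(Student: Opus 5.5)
By definition $\upsilon^*_{(\pm;\mp)} F = F \circ \upsilon_{(\pm;\mp)}$. So the structure morphism of $\upsilon^*_{(\pm;\mp)}F$ attached to a degree one morphism $\phi$ of $(\kk \uwb)_\mp$ is $F(\upsilon_{(\pm;\mp)}(\phi))$. The plan is therefore to compute $\upsilon_{(\pm;\mp)}$ on the degree one morphism determined by the walled pair $(s,t)$, and to check that the result is the degree one morphism of $(\kk \ub)_{(\pm;\mp)}$ given by the inclusion $(X\amalg Y)\backslash\{s,t\}\subset X\amalg Y$ with the ordered pair $(s,t)$.

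On objects, $\upsilon_{(\pm;\mp)}$ is the disjoint union (Corollary \ref{cor:uwb_pm_to_dub_pm}), so $(X\backslash\{s\},Y\backslash\{t\})\mapsto (X\amalg Y)\backslash\{s,t\}$ and $(X,Y)\mapsto X\amalg Y$. For morphisms, I use the factorization $\upsilon_{(\pm;\mp)} = \ump_{(\pm;\mp)}\circ \Upsilon_\mp$ of Remark \ref{rem:maps_beta_alpha_upsilon}, where $\Upsilon_\mp$ is induced by $\ouw$. In degree one there is no twisting (Lemma \ref{lem:gen_deg_one_kkuwbmp}), so it suffices to compute $\ouw$ on the element of
$$
\uwbord((X\backslash\{s\},Y\backslash\{t\}),(X,Y)) = (\tfb)\big(((X\backslash\{s\})\amalg \one,(Y\backslash\{t\})\amalg\one),(X,Y)\big)
$$
given by the inclusions together with $1\mapsto s$ and $1\mapsto t$. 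Following the proof of Proposition \ref{prop:unwall_uwbord}, $\ouw$ takes the disjoint union of these two bijections and identifies $\one\amalg\one$ with $\two\times\one$, the first copy becoming $\{1\}\times\one$ and the second $\{2\}\times\one$. The result is the bijection $(X\amalg Y)\backslash\{s,t\}\amalg \two \to X\amalg Y$ that is the inclusion on the first summand and sends $1\mapsto s$, $2\mapsto t$. This is exactly the element of $\dubord$ with directed pair $(s,t)$; for $X=\mathbf{m+1}$, $Y=\mathbf{n+1}$ it is the formula $\beta_{i,j}\mapsto \alpha_{(ij')}$ of Example \ref{exam:uwbord_2_fiordev_alpha_beta}. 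Applying $\ump_{(\pm;\mp)}$ yields the class in $(\kk \ub)_{(\pm;\mp)}$ associated to the ordered pair $(s,t)$. The ordering is recorded because, by Lemma \ref{lem:kkdub_deg1_generator}, reversing it changes the class by the sign $\pm$. Applying $F$ gives statement (1).

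Statement (2) follows by passing to opposite categories. The functor $\dupsilon_{(\pm;\mp)}$ is the opposite of $\upsilon_{(\pm;\mp)}$, with the maximal subgroupoids identified as in Remark \ref{rem:FB_opposite}. Hence the same morphism, read as a map from $X\amalg Y$ to $(X\amalg Y)\backslash\{s,t\}$, corresponds to the contraction of the ordered pair $(s,t)$. Applying $G$ gives the claim.

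The only step requiring care is the orientation bookkeeping in the middle computation: the convention in $\ouw$ that places the $X$-side element of the walled pair first in the directed pair. This convention is what makes the ordered pair $(s,t)$, rather than $(t,s)$, appear, and it only matters up to the sign $\pm$. Every other step is formal.
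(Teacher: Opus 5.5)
Your proposal is correct and follows essentially the paper's route: you factor $\upsilon_{(\pm;\mp)}=\ump_{(\pm;\mp)}\circ\Upsilon_\mp$, show that the walled degree-one morphism goes to the directed pair $(s,t)$ and then to its undirected class, and obtain the downward case by passing to opposites. The only difference is that you compute $\ouw$ directly from its construction in the proof of Proposition \ref{prop:unwall_uwbord}, whereas the paper cites Proposition \ref{prop:Upsilon^*} for $\Upsilon_\mp^*$; that proposition encodes the same computation.
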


\begin{proof}
We prove the case of $\upsilon^* _{(\pm; \mp)}$; the case of $\dupsilon^* _{(\pm; \mp)}$ is established similarly.

By the identification of $\upsilon^* _{(\pm; \mp)}$ on the underlying objects, we have:
\begin{eqnarray*}
\upsilon^*_{(\pm;\mp)} F (X,Y)  &=& F (X \amalg Y) 
\\
\upsilon^*_{(\pm; \mp)} F (X \backslash \{s\},Y \backslash \{t\})
&=& 
 F ((X \backslash \{s\}) \amalg (Y \backslash \{t\}))
 \cong 
 F ((X \amalg Y )\backslash \{s,t\}).
\end{eqnarray*}
Moreover, the walled pair $(s,t)$ determines the {\em ordered} pair $(s,t)$ in $X \amalg Y$. 

By definition, $\upsilon_{(\pm; \mp)} = \ump_{(\pm; \mp)} \circ \Upsilon_\mp$, so that $\upsilon_{(\pm; \mp)}^*  =  \Upsilon_\mp^*  \circ\ump_{(\pm; \mp)}^*$. The restriction functor $\ump_{(\pm; \mp)}^*$ is easily understood and the restriction functor $\Upsilon_\mp^*$ is described in  Proposition \ref{prop:Upsilon^*}. Together, these yield the stated identification.
\end{proof}

%%%%%%%%%%%%%%%%%%%%%%%%%%%%%%%%%%%%%%%%%%%%%%%%%%%%%%%%%%%%%%%%%%%%%%%%%%%%%
\subsection{Walling: the covariant case}

As an example of calculating $\upsilon_{(\pm; \mp)}^*$, we consider $(\kk \ub)_{(\pm; \mp)}$, using its canonical   $(\kk \ub)_{(\pm; \mp)}$-bimodule structure. Equivalently, this can  be considered as a  $(\kk \db)_{(\pm; \mp)} \cten  (\kk\ub)_{(\pm; \mp)}$-module. Then, applying $\upsilon_{(\pm; \mp)}^*$ (with respect to the `covariant' structure), we have
$$
\upsilon_{(\pm; \mp)}^* (\kk \ub)_{(\pm; \mp)}
$$ 
which has the structure of a $(\kk \db)_{(\pm; \mp)} \cten  (\kk\uwb)_{ \mp}$-module.

\begin{rem}
We  simplify considerations by restricting the $(\kk \db)_{(\pm; \mp)}$-module structure along $\kk \fb\op \hookrightarrow (\kk \db)_{(\pm; \mp)} $. Thus, we consider $
\upsilon_{(\pm; \mp)}^* (\kk \ub)_{(\pm; \mp)}
$
 as a $\kk \fb\op  \cten  (\kk\uwb)_{ \mp}$-module.
 \end{rem}

On  objects, the functor $\upsilon_{(\pm; \mp)}^* (\kk \ub)_{(\pm; \mp)}$ is given by 
\begin{eqnarray}
\label{eqn:LHS_U_XY}
(U, (X,Y))
\mapsto 
(\kk \ub)_{(\pm; \mp)} (U, X \amalg Y).
\end{eqnarray}
We will relate this to $(\kk \uwb)_{(\pm; \mp)}$, restricting its canonical bimodule structure to a $\kk (\tfb)\op \cten  (\kk \uwb)_\mp$-module structure, using the following notation:

\begin{nota}
\label{nota:cten_kkub_pmmp}
Write $(\kk \ub)_{(\pm; \mp)}^{\cten  2}$ for the $\kk (\tfb)$-bimodule 
$$
((A,B) , (X,Y)) \mapsto (\kk \ub)_{(\pm; \mp)}(A, X) \otimes (\kk \ub)_{(\pm; \mp)} (B,Y).
$$
\end{nota}

The functor $\amalg : \tfb \rightarrow \fb$ induces the functor $\amalg_* : \kk (\tfb) \dash \modules \rightarrow \kk \fb\dash\modules$ (see Section \ref{sect:day_convolution}), and we write $\amalg_*\op$ for the corresponding functor $\kk (\tfb)\op \dash \modules \rightarrow \kk \fb\op\dash\modules$. Thus we have
$$
\amalg_*\op (\kk \ub)_{(\pm; \mp)}^{\cten  2}  \in \kk \fb\op \cten  \kk (\tfb)\dash\modules.
$$
This is given on objects by 
$$
(U, (X,Y) ) \mapsto \bigoplus_{U = U_1 \amalg U_2} (\kk \ub)_{(\pm; \mp)}(U_1, X) \otimes (\kk \ub)_{(\pm; \mp)} (U_2,Y).
$$

\begin{thm}
\label{thm:upsilon^*kkub}
There is an isomorphism of $\kk \fb\op \cten  (\kk \uwb)_\mp$-modules
$$
\upsilon_{(\pm; \mp)}^* (\kk \ub)_{(\pm; \mp)}
\cong 
(\kk \uwb)_{\mp} \otimes_{\kk (\tfb)} \big( \amalg_*\op (\kk \ub)_{(\pm; \mp)}^{\cten  2}\big) .
$$
Here, $\otimes_{\kk (\tfb)}$ is formed with respect to the restricted $\kk (\tfb)\op$-module structure of $(\kk \uwb)_\mp$ and the $\kk (\tfb)$-module structure of $ \amalg_*\op (\kk \ub)_{(\pm; \mp)}^{\cten  2}$, as dictated by the variance.
\end{thm}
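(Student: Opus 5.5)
The plan is to first establish a set-level (ordered, untwisted) version of the isomorphism, and then pass to the twisted $\kk$-linear quotients by taking coinvariants. At the set level, consider $U$ a finite set and $(X,Y)$ a pair of finite sets with $|X|+|Y| = |U| + 2t$. A morphism in $\dubord(U, X\amalg Y)$ is a bijection $U \amalg (\mathbf{2}\times\mathbf{t}) \cong X \amalg Y$. Each directed pair $(g(1,j), g(2,j))$ has one of four types according to where its two elements lie: $X\times Y$, $Y \times X$, $X\times X$, or $Y\times Y$. The first two types are \emph{walled} pairs, after reversing direction in the $Y\times X$ case. The last two are \emph{internal} pairs. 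In addition, $U$ splits as $U = U_1 \amalg U_2$ according to whether an element lands in $X$ or in $Y$.

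Passing to $\ub$, that is, quotienting by $\sym_2\wr\sym_t$, a morphism of $\ub(U, X\amalg Y)$ is therefore uniquely determined by four pieces of data:
\begin{enumerate}
\item the decomposition $U = U_1\amalg U_2$;
\item the set of walled pairs, which determines $X' \subset X$ and $Y'\subset Y$ (the elements not in walled pairs) together with a degree-$n$ morphism in $\uwb((X',Y'),(X,Y))$;
\item a morphism in $\ub(U_1, X')$, namely the injection of $U_1$ together with the pairs inside $X'$;
\item similarly, a morphism in $\ub(U_2, Y')$.
\end{enumerate}
Summing over the isomorphism classes of $(X',Y')$ and using $\otimes_{\kk(\tfb)}$ to absorb the choice of identification gives, at the level of underlying $\kk\fb\op\cten\kk(\tfb)$-modules,
\[
\kk\ub(U, X\amalg Y) \cong \bigoplus_{U = U_1\amalg U_2} \big(\kk\uwb \otimes_{\kk(\tfb)} (\kk\ub(U_1,-)\otimes \kk\ub(U_2,-))\big)(X,Y).
\]
This is the analogue of Lemma \ref{lem:unwalling_lemma}, and it fits naturally with the form of $\amalg_*\op$. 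Naturality in $U$ under bijections is clear from the construction.

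Next, I would incorporate the twists. A key observation is that $\ump$ and the orientation conventions only contribute signs. First, the direction of a walled pair is fixed canonically as $X \to Y$, compatibly with Remark \ref{rem:maps_beta_alpha_upsilon}(3), where $[\beta_{i,j}] \mapsto [\alpha_{(ij')}]$ with $i<j'$. Hence the $\sym_2$ part of the $(\pm)$-twist is absorbed on walled pairs, while for internal pairs it is carried by the factors $(\kk\ub)_{(\pm;\mp)}(U_i,-)$. Second, the $\sym_t$ sign $\mp$ factors as the sign of the ordering of walled pairs, which gives the twist of $(\kk\uwb)_\mp$, times the signs for the internal pairs in $X'$ and in $Y'$. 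This requires fixing a convention that orders walled pairs first, then $X$-internal pairs, then $Y$-internal pairs. With that convention, the restriction of $(\sgn_t)$ to $\sym_n\times\sym_{t_1}\times\sym_{t_2}$ is an external product of sign representations. Taking coinvariants blockwise then yields the stated isomorphism of underlying $\kk\fb\op\cten\kk(\tfb)$-modules.

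Finally, I would check compatibility with the left $(\kk\uwb)_\mp$-action. By Proposition \ref{prop:upsilon^*_dupsilon^*} it suffices to treat degree-one generators $[\beta_{s,t}]$. On the left-hand side, $[\beta_{s,t}]$ acts by post-composition with the pair $(s,t)$ in $X\amalg Y$. This pair is walled, so it is simply appended to the walled data, which is exactly left multiplication in $(\kk\uwb)_\mp$ on the right-hand side. The sign check here is the main obstacle. Post-composition adds the new pair as the \emph{last} pair in the ordering, whereas the convention above puts walled pairs first. One must therefore move the new pair past the $t_1+t_2$ internal pairs, which introduces a factor $(\mp 1)^{t_1+t_2}$. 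I would handle this by adjusting the isomorphism by a sign depending only on $n$ and $t_1+t_2$, for example $(\mp1)^{n(t_1+t_2)}$, chosen so that the generator relations match. Equivalently, one can place the internal pairs first, since the right-hand side tensor applies $\amalg_*\op(\kk\ub)^{\cten 2}$ before $\kk\uwb$; with that convention the appended walled pair is naturally last and no correction is needed. I would adopt this ordering from the outset.
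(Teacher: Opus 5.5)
Your proposal is correct and follows essentially the paper's argument: both sort the pairs in $X \amalg Y$ into walled pairs (canonically directed from $X$ to $Y$) and pairs internal to $X$ or to $Y$, together with the decomposition $U = U_1 \amalg U_2$. Your final convention, ordering internal pairs before walled ones, is exactly the order produced by the paper's composite landing in $\kk\fb(U_1\amalg U_2\amalg(\two\times\n_1)\amalg(\two\times\n_2)\amalg(\two\times\m), X\amalg Y)$. The difference is organizational: the paper defines the map from right to left by composition in $\kk\dubord$, so $(\kk\uwb)_\mp$-linearity comes for free from functoriality of $\ouw$, and gets bijectivity from surjectivity plus a dimension count in the $(+;+)$ case. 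You instead build the basis-level bijection directly, take coinvariants blockwise, and check equivariance on the generators $[\beta_{s,t}]$ by hand; your sign analysis there, including the alternative correction $(\mp 1)^{n(t_1+t_2)}$, is right.
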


\begin{rem}
\label{rem:upsilon^*kkub}
The value on $(U, (X,Y))$ of the expression on the right hand side in the statement of the Theorem can be expanded as 
\begin{eqnarray}
\label{eqn:RHS_U_XY}
\bigoplus_{s,t \in \nat}
(\kk \uwb)_{\mp} ((\mathbf{s}, \mathbf{t}), (X,Y)) \otimes_{\kk (\sym_s \times \sym_t)} 
\Big(
\bigoplus_{U=U_1 \amalg U_2} (\kk \ub)_{(\pm; \mp)}(U_1, \mathbf{s}) \otimes (\kk \ub)_{(\pm; \mp)} (U_2,\mathbf{t})
\Big).
\end{eqnarray}
Here, we may restrict to $s, t \leq \inf \{ |X|,|Y| \}$; in particular,  the indexing sets are finite.
\end{rem}

\begin{proof}[Proof of Theorem \ref{thm:upsilon^*kkub}]
Recall that, for finite sets $(A,X)$, $(\kk \ub)_{(\pm; \mp)}(A, X) $ is zero unless $|X|-|A| \in 2 \nat$, say is equal to $2n$; if this is the case, it is a quotient of $\kk \fb (A \amalg (\mathbf{2} \times \n) , X)$. Similarly, for finite sets $(B,Y)$, $(\kk \uwb)_\mp ((A,B), (X,Y))$ is zero unless $|X|-|A|= |Y|-|B|$ and this is a non-negative integer, $m$ say. In this case, it is a quotient of $\kk (\tfb) ((A \amalg \m, B \amalg \m) , (X,Y))$.

We start from the  following observation. Given a bijection $f \in \fb (U \amalg (\mathbf{2} \times \mathbf{N}), X \amalg Y)$, one has 
\begin{enumerate}
\item 
a canonical decomposition $U= U_1 \amalg U_2$, where $U_1 = f^{-1} (X)$ and $U_2 = f^{-1} (Y)$; 
\item 
a canonical decomposition $\mathbf{N} = \mathbf{N}_1 \amalg \mathbf{N}_2 \amalg \mathbf{N}'$, where $f(\mathbf{2} \times \mathbf{N}_1) \subseteq X$,  $f(\mathbf{2} \times \mathbf{N}_2) \subseteq Y$, and $\mathbf{N}_1$ and $\mathbf{N}_2$ are both maximal with respect to these properties.
\end{enumerate}
The decomposition of $\mathbf{N}$ stems from the observation that a directed pair in $X \amalg Y$ is either entirely contained within $X$ or is entirely contained within $Y$ or consists of one element of $X$ and one element of $Y$. In the latter case, the elements can occur in the pair in either order $(x,y)$ or $(y,x)$, where $x \in X$ and $y \in Y$.

Inspired by the above observation, we define a natural map from (\ref{eqn:RHS_U_XY}) to (\ref{eqn:LHS_U_XY}). We start by defining a lift (i.e., to the ordered, directed setting), fixing $U_1, U_2$ and $s,t$:
\begin{eqnarray*}
\kk (\tfb) ((\mathbf{s} \amalg \m, \mathbf{t}\amalg \m), (X,Y)) \otimes_{\kk (\sym_s \times \sym_t)} 
\big(
\kk \fb (U_1 \amalg (\two \times \n_1) , \mathbf{s}) \otimes (\kk\fb (U_2 \amalg (\two \times \n_2) ,\mathbf{t})
\big)
\\
\rightarrow 
\kk \fb (U_1 \amalg U_2 \amalg (\two \times \n_1) \amalg (\two \times \n_2) \amalg (\two \times \m), X \amalg Y)
.
\end{eqnarray*}
This is defined by using $\amalg $ to give
\begin{eqnarray*}
\kk (\tfb) ((\mathbf{s} \amalg \m, \mathbf{t}\amalg \m), (X,Y))
& \rightarrow &
\kk \fb (\mathbf{s} \amalg \mathbf{t} \amalg (\two \times \m),X \amalg Y)
\\
\kk \fb (U_1 \amalg (\two \times \n_1) , \mathbf{s}) \otimes (\kk\fb (U_2 \amalg (\two \times \n_2) ,\mathbf{t})
&\rightarrow & 
\kk \fb (U_1 \amalg U_2 (\two \times \n_1) \amalg (\two \times \n_2)  , \mathbf{s} \amalg \mathbf{t})
\end{eqnarray*}
(cf.  Lemma \ref{lem:unwalling_lemma}). Then, after applying $-\amalg (\two \times \m)$ which induces
$$
\kk \fb (U_1 \amalg U_2 (\two \times \n_1) \amalg (\two \times \n_2)  , \mathbf{s} \amalg \mathbf{t})
\rightarrow 
\kk \fb (U_1 \amalg U_2 (\two \times \n_1) \amalg (\two \times \n_2) \amalg (\two \times \m)  , \mathbf{s} \amalg \mathbf{t}\amalg (\two \times \m) ), 
$$
we may compose to obtain an element of $\kk \fb (U_1 \amalg U_2 \amalg (\two \times \n_1) \amalg (\two \times \n_2) \amalg (\two \times \m), X \amalg Y)$. This gives a $\kk$-linear map that  factorizes across the tensor product $\otimes_{\kk (\sym_s \times \sym_t)} $, as required.

We can sum these components over $U_1, U_2$ such that $U =U_1 \amalg U_2$ and over all  $s,t$. To complete the construction of the natural map from (\ref{eqn:RHS_U_XY}) to (\ref{eqn:LHS_U_XY}), we must show that this descends when we pass to the quotient taking into account the orientation data for the direction of pairs  and for the ordering of pairs. For the direction of pairs, this is clear: for pairs that lie either entirely in $X$ or entirely in $Y$, this is immediate; for pairs crossing the wall, there is nothing to show, since they come with a canonical direction. 

For the ordering of pairs, this is also straightforward: using the three possibilities corresponding to the decomposition of $\mathbf{N}$ in the discussion above, we may always reorder pairs in $X \amalg Y$ compatibly with this decomposition (following the choice of order of these implicitly made in the construction of the morphism above). Then the construction ensures compatibility of orders of pairs within each class, whence the result.  

The key point is that this $\kk$-linear map is surjective. This is a consequence of the observation above (note that having passed to undirected edges (with orientation signs) is crucial here, to palliate the problem arising from the fact that  walled pairs come with a canonical direction). 

Injectivity can either be established directly or by comparing (finite) dimensions, which is sufficient by the surjectivity. For the latter, one can reduce to the case $(+;+)$ and thereby work set theoretically with $\ub$ and $\uwb$. The result then essentially reduces to the characterization of the different types of pairs that can occur in $X \amalg Y$.
\end{proof}

\begin{rem}
\label{rem:more_naturality_upsilon^*kkub}
The statement of Theorem \ref{thm:upsilon^*kkub} restricts to $\kk \fb\op \cten  (\kk \uwb)_\mp$ since  there is no immediate way to describe the action on the right hand side of a morphism of $(\kk \db)_{(\pm; \mp)}$ that is represented by a directed pair $(u_1 u_2)$ in a component corresponding to the decomposition $U = U_1 \amalg U_2$ when $u_1 \in U_1$ and $u_2\in U_2$. Namely, we require a `distributive law'  to enable us to pass this (walled) pair through the term corresponding to $\amalg_* \op (\kk \ub)_{(\pm; \mp)}^{\cten  2}$ so that it can act as a walled pair on $(\kk \uwb)_\mp$ from the right.

There is a way to finesse this problem, by forgetting the pairs that are not walled. This can be done by using the augmentation $(\kk \ub)_{(\pm; \mp)} \rightarrow \kk \fb$ (applied to both of the factors appearing in $\amalg_* \op (\kk \ub)_{(\pm; \mp)}^{\cten  2}$). This induces a surjection of $\kk \fb\op \cten  (\kk \uwb)_\mp$-modules:
$$
(\kk \uwb)_{\mp} \otimes_{\kk (\tfb)} \big( \amalg_*\op (\kk \ub)_{(\pm; \mp)}^{\cten  2}\big)
\twoheadrightarrow 
\amalg_*\op (\kk \uwb)_{\mp}
$$
and hence a surjection of $\kk \fb\op \cten  (\kk \uwb)_\mp$-modules:
$$
\upsilon_{(\pm; \mp)}^* (\kk \ub)_{(\pm; \mp)}
\twoheadrightarrow 
\amalg_*\op (\kk \uwb)_{\mp}.
$$

Here, we are in better shape: we will see that it is possible to equip the codomain with the structure of a $(\kk \ub)_{(\pm; \mp)} \cten  (\kk \uwb)_\mp$-module, using the functor $\unwall_{(\pm; \mp)} : (\kk \dwb)_\mp\dash\modules  \rightarrow (\kk \db)_{(\pm; \mp)} \dash \modules$ introduced in Definition \ref{defn:I};  this then leads to Corollary \ref{cor:surject_uspilon^*kkub_unwall_kkuwb}.
\end{rem}

It is clear that the $\kk (\tfb)$-bimodule $\amalg_*\op (\kk \ub)_{(\pm; \mp)}^{\cten  2} $ takes finite-dimensional values; this is also true of the underlying bimodule of $(\kk \uwb)_\mp$. Together with the fact that the direct sums appearing in the statement of Theorem \ref{thm:upsilon^*kkub} are over finite indexing sets (see Remark \ref{rem:upsilon^*kkub}), this implies that we may dualize the Theorem to obtain:

\begin{cor}
\label{cor:dupsilon_kkub^sharp}
There is an isomorphism in $(\kk \dwb)_\mp \cten  \kk \fb  $-modules
$$
\dupsilon_{(\pm; \mp)}^*\big(  (\kk \ub)_{(\pm; \mp)}^\sharp \big) 
\cong 
\big( \amalg_*\op (\kk \ub)_{(\pm; \mp)}^{\cten  2}\big)^\sharp
\otimes_{\kk (\tfb)}
(\kk \uwb)_{\mp}^\sharp    .
$$
\end{cor}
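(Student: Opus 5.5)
The plan is to obtain Corollary \ref{cor:dupsilon_kkub^sharp} by applying vector space duality $(-)^\sharp$ to the isomorphism of Theorem \ref{thm:upsilon^*kkub}. Duality reverses variance, so the three identifications to make are: the dual of the left hand side with $\dupsilon^*_{(\pm;\mp)}\big((\kk \ub)_{(\pm;\mp)}^\sharp\big)$, the dual of the tensor product over $\kk(\tfb)$, and the resulting structure of $(\kk \dwb)_\mp \cten \kk \fb$-modules.

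\emph{The left hand side.} By definition, $(\kk \ub)^\sharp_{(\pm;\mp)}$ is the valuewise dual of the bimodule $(\kk \ub)_{(\pm;\mp)}$. It is therefore a $(\kk \db)_{(\pm;\mp)} \cten (\kk \ub)_{(\pm;\mp)}$-module with the roles of the two variances exchanged: the covariant $(\kk\ub)_{(\pm;\mp)}$-action becomes an action of the opposite category $(\kk\db)_{(\pm;\mp)}$. Restriction along $\upsilon_{(\pm;\mp)}$ commutes with taking valuewise duals. Since $\dupsilon_{(\pm;\mp)} = \upsilon_{(\pm;\mp)}\op$, we get
$$\big(\upsilon^*_{(\pm;\mp)} (\kk \ub)_{(\pm;\mp)}\big)^\sharp = \dupsilon^*_{(\pm;\mp)} \big((\kk\ub)_{(\pm;\mp)}^\sharp\big)$$
as $(\kk\dwb)_\mp \cten \kk \fb$-modules. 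Here the $\kk\fb\op$-structure dualizes to a $\kk \fb$-structure, using Remark \ref{rem:FB_opposite}.

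\emph{The right hand side.} By Remark \ref{rem:upsilon^*kkub}, the value at $(U,(X,Y))$ is the finite direct sum (\ref{eqn:RHS_U_XY}) over $s,t \leq \inf\{|X|,|Y|\}$ and over the decompositions $U = U_1 \amalg U_2$. Each summand is a tensor product over $\kk(\sym_s\times\sym_t)$ of finite-dimensional modules. Two facts are needed. First, the dual of a finite direct sum is the direct sum of the duals. Second, for $M$ a finite-dimensional right and $N$ a finite-dimensional left $\kk G$-module, with $G$ a finite group, there is a natural isomorphism $(M \otimes_{\kk G} N)^\sharp \cong N^\sharp \otimes_{\kk G} M^\sharp$. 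This holds because coinvariants and invariants agree in characteristic zero (Hypothesis \ref{hyp:car0}); in fact it holds for finite-dimensional modules in any case, via $\hom_{\kk G}$, but the characteristic hypothesis makes it canonical as a tensor product. Assembling the summands gives
$$\big(\amalg_*\op (\kk \ub)_{(\pm;\mp)}^{\cten 2}\big)^\sharp \otimes_{\kk(\tfb)} (\kk\uwb)_\mp^\sharp$$
at each object, with the order of the factors swapped as dictated by the variance.

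\emph{Naturality.} The remaining step is to check that these valuewise identifications are natural in the module variables. Since the isomorphism of Theorem \ref{thm:upsilon^*kkub} is one of $\kk\fb\op\cten(\kk\uwb)_\mp$-modules, its dual transpose is an isomorphism of $(\kk\dwb)_\mp\cten\kk\fb$-modules, and the structure maps on both sides are just the transposes of the original ones. I expect the main point requiring care to be the bookkeeping of variances and of the identification $\fb\op\cong\fb$, ensuring that the $\kk(\tfb)$-module structures on the two factors are correctly paired after dualization. Beyond this there is no real obstacle, because the finiteness conditions recorded before the statement guarantee that dualizing is exact and compatible with the finite sums and coinvariants.
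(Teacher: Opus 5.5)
Your proposal is correct and follows the paper's proof. You dualize Theorem \ref{thm:upsilon^*kkub}: on the left you identify $\big(\upsilon^*_{(\pm;\mp)}(\kk\ub)_{(\pm;\mp)}\big)^\sharp$ with $\dupsilon^*_{(\pm;\mp)}\big((\kk\ub)^\sharp_{(\pm;\mp)}\big)$, and on the right you apply $(M\otimes_{\kk G}N)^\sharp\cong N^\sharp\otimes_{\kk G}M^\sharp$ (invariants equal coinvariants in characteristic zero) to the finite sums of Remark \ref{rem:upsilon^*kkub}.

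Drop one aside, however: that duality isomorphism does not hold for arbitrary finite-dimensional modules in every characteristic. For $M=\kk$ trivial it compares $(N^\sharp)^G$ with $(N^\sharp)_G$, whose dimensions can differ when the characteristic divides $|G|$. This does not affect your argument, since characteristic zero is assumed throughout.
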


\begin{proof}
Working over a field $\kk$ of characteristic zero, invariants for the action of a finite group are naturally isomorphic to coinvariants.  In particular, if $G$  is a finite group, $M$ is a right $\kk G$-module and $N$ is a left $\kk G$-module, we consider the duals $M^\sharp$ and $N^\sharp$ as left and right $\kk G$-modules respectively. Then, we have the natural isomorphism 
$
(M \otimes_{\kk G} N) ^\sharp 
\cong 
N^\sharp \otimes_{\kk G} N^\sharp. 
$  
Using this,  the right hand side in the statement is the  vector space dual of that appearing in Theorem \ref{thm:upsilon^*kkub}, equipped with its natural  $ (\kk \dwb)_\mp \cten  \kk \fb$-module structure.

On the left hand side, we use the fact that the dual $\big ( \upsilon_{(\pm; \mp)}^* (\kk \ub)_{(\pm; \mp)} \big)^\sharp$ is naturally isomorphic to 
$$
\dupsilon_{(\pm;\mp)}^* \big(  (\kk \ub)_{(\pm; \mp)}^\sharp \big) $$
 as a $ (\kk \dwb)_\mp \cten  (\kk \ub)_{(\pm; \mp)} $-module (here $\dupsilon_{(\pm;\mp)}^*$ applies to the contravariant variable of the $(\kk \ub)_{(\pm;\mp)}$-bimodule $(\kk \ub)_{(\pm; \mp)}^\sharp$). This restricts to the $(\kk \dwb)_\mp  \cten  \kk \fb$-module structure in the statement.
 
Combining these isomorphisms gives the result.
\end{proof}

\section{Transfers and sections}
\label{sect:transfers}

The purpose of this section is to introduce the transfer functors $\sgntr_{(\pm; \mp)} 
: 
(\kk \ub)_{(\pm; \mp)} \rightarrow (\kk \dub)_\mp$ and the closely related functors $\sct_{(\pm; \mp)} : (\kk \ub)_{(\pm ; \mp)} \rightarrow (\kk \dub)_{\mp}$. The significance of $\sct_{(\pm; \mp)} $ is that it is a section to the canonical surjection $\ump_{(\pm; \mp)} : (\kk \dub)_\mp \twoheadrightarrow (\kk \ub)_{(\pm; \mp)}$.

%%%%%%%%%%%%%%%%%%%%%%%%%%%%%%%%%%%%%%%%%%%%%%%%%%%%%%%%%%%%%%%%%%%%%%%%%%%%%%%%%
\subsection{The transfer functors}
\label{subsect:transfer_functors}

Proposition \ref{prop:dub_quotients}  gives the $\kk$-linear functors 
$
\ump_{(\pm; \mp)} : 
(\kk \dub)_\mp \rightarrow (\kk \ub)_{(\pm; \mp)}
$
(see Notation \ref{nota:psi_phi}). 
 These are the identity on objects and restrict to the identity on the degree zero morphisms (with respect to the $\nat$-grading).

\begin{prop}
\label{prop:signed_transfer}
For given $(\pm; \mp)$, there is a unique $\kk$-linear functor 
$$
\sgntr_{(\pm; \mp)} 
: 
(\kk \ub)_{(\pm; \mp)} \rightarrow (\kk \dub)_\mp
$$
that is the identity on the $\kk$-linear subcategory $\kk \fb$ of degree zero morphisms and which, for each $n \in \nat$, sends  $[\alpha_{(n+1 \ n+2)}]$ to 
$$
[\alpha_{(n+1 \ n+2)}] + \pm [\alpha_{(n+1 \ n+2)}  \tau], 
$$
where $\tau \in \sym_2$ is the transposition $(1 \ 2)$, considered as acting on $\fb (\mathbf{n+2}, \mathbf{n+2}) \cong \fb (\n \amalg \mathbf{2}, \mathbf{n+2})$ via the action of $\sym_2$ on $\mathbf{2}$.

The composite functor 
$$
(\kk \ub)_{(\pm; \mp)} \stackrel{\sgntr_{(\pm; \mp)}}{\longrightarrow}
 (\kk \dub)_\mp
\longrightarrow 
(\kk \ub)_{(\pm; \mp)} 
$$
is the identity on objects; on degree $t$ morphisms, it acts by multiplication by $2^t$.
\end{prop}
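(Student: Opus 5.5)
The plan is to exploit the fact that $(\kk \ub)_{(\pm;\mp)}$ is a homogeneous quadratic $\kk$-linear category over $\kk \fb$ (Proposition \ref{prop:homog_quadratic_ub}). It is generated over $\kk \fb$ by the degree one bimodule, subject to quadratic relations coming from the orientation data.

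A $\kk$-linear functor out of such a category, which is the identity on $\kk \fb$, is determined by a map of $\kk \fb$-bimodules on degree one morphisms. Such a functor exists precisely when this map sends the quadratic relations into zero, i.e. into the relations of $(\kk \dub)_\mp$. Uniqueness is then immediate: by Lemma \ref{lem:kkdub_deg1_generator}, the degree one part $(\kk \ub)_{(\pm;\mp)}(\n,\mathbf{n+2})$ is generated as a $\kk \sym_{n+2}$-module by $[\alpha_{(n+1\ n+2)}]$. So the value on degree one morphisms is forced by equivariance, and higher degrees are forced by generation.

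For existence, I first define the degree one map. Set $\theta := [\alpha_{(n+1\ n+2)}] \pm [\alpha_{(n+1\ n+2)}\tau]$ in $(\kk \dub)_\mp(\n,\mathbf{n+2})$, which is free over $\kk\sym_{n+2}$. Then check that $\theta$ satisfies every relation satisfied by $[\alpha_{(n+1\ n+2)}]$ in $(\kk \ub)_{(\pm;\mp)}(\n,\mathbf{n+2})$ as a $\kk(\sym_n\op\times\sym_{n+2})$-module. By Lemma \ref{lem:kkdub_deg1_generator}, the only extra relation is $[\alpha_{(ij)}] = \pm[\alpha_{(ji)}]$, i.e. $[\alpha_{(n+1\ n+2)}]$ is a $\pm$-eigenvector for the transposition swapping $n+1,n+2$. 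Since $\tau$ is an involution, $\theta\tau = \pm\theta$, as required. Compatibility with the right $\sym_n$-action also holds, because $\tau$ acts on the $\mathbf{2}$ factor and commutes with it.

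Extending to higher degrees is where I expect the main obstacle. One option is to write a degree $t$ morphism of $(\kk \ub)_{(\pm;\mp)}$ as the class of $g\in\fb(A\amalg(\two\times\mathbf{t}),B)$ and set
\[
\sgntr_{(\pm;\mp)}[g] := \sum_{\sigma\in\sym_2^{\times t}} \chi(\sigma)\,[g\sigma],
\]
where $\chi$ is the character of $\sym_2^{\times t}$ given by $\pm$ on each factor. One then checks that this is well defined modulo $\sym_2\wr\sym_t$, using the $\sym_t$-twist $\mp$ on both sides (the $\mp$ twists agree, so the $\sym_t$-relations match). One also checks that it is compatible with composition: composites in $\dubord$ concatenate pairs, and the sum over $\sym_2^{\times(t+t')}$ factors as a product of the sums. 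Equivalently, one can verify the quadratic relations (commuting two pairs, with sign $\mp$) directly on products $\theta\theta'$, which is the same computation in degree two.

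Finally, for the composite with $\ump_{(\pm;\mp)}$: in $(\kk \ub)_{(\pm;\mp)}$ each term $\chi(\sigma)[g\sigma]$ equals $[g]$. The $2^t$ terms therefore sum to $2^t[g]$, while degree zero is fixed. Alternatively, this follows from the degree one computation, where $\theta \mapsto 2[\alpha_{(n+1\ n+2)}]$, together with functoriality and generation in degree one.
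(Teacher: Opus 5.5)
Your proposal is correct and follows the paper's proof: use the homogeneous quadratic structure of $(\kk \ub)_{(\pm;\mp)}$ over $\kk \fb$ for uniqueness, define the functor on the degree one generator, check compatibility with the $\kk \fb$-bimodule relations (including $[\alpha_{(ij)}] = \pm[\alpha_{(ji)}]$) and with the quadratic relations, then compute the composite. Your closed formula $\sum_{\sigma\in\sym_2^{\times t}}\chi(\sigma)[g\sigma]$ spells out the step the paper calls ``immediate''; it is well defined modulo $\sym_2\wr\sym_t$, respects composition, and makes the factor $2^t$ transparent.
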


\begin{proof}
By Proposition \ref{prop:homog_quadratic_ub}, the $\kk$-linear category $(\kk \ub)_{(\pm; \mp)}$ is homogeneous quadratic over $\kk \fb$. Thus, to construct the required functor it suffices to define it on degree one morphisms (this must be compatible with the $\kk \fb$-bimodule structure) and check that this is compatible with the quadratic relation. 

It is straightforward to check that the recipe on the image of $[\alpha_{(n+1 \  n+2)}]$ gives a well-defined bimodule morphism on the degree one morphisms. That this is compatible with the quadratic relations is immediate, since using the same $\mp$ implies that these quadratic relations impose the same orientation data condition with respect to the ordering of pairs.

The identification of the composite functor is straightforward.
\end{proof}

%%%%%%%%%%%%%%%%%%%%%%%%%%%%%%%%%%%%%%%%%%%%%%%%%%%%%%%%%%%%%%
\subsection{Weights}
\label{subsect:weights}

This section is an interlude to introduce {\em weights}, that will allow us to introduce the section functor $\sct_{(\pm;\mp)}$ and relate it to $\sgntr_{(\pm; \mp)}$ in Section \ref{subsect:section_functors}.

Let $\cala$ be a small $\kk$-linear category  that has an $\nat$-grading of the morphisms (that is compatible with the composition). 

\begin{lem}
\label{lem:weight}
For  $\lambda \in \kk$, there is a $\kk$-linear functor 
$
\tau_\lambda : \cala \rightarrow \cala
$
 that is the identity on objects and that sends $f$ of degree $n \in \nat$ to $\lambda^n f$. 

Moreover, if $\mu \in \kk$, the $\kk$-linear functors $\tau _\mu \tau_\lambda$ and $\tau_{\mu \lambda}$ are naturally isomorphic. In particular, if $\lambda$ is invertible, then $\tau_\lambda$ is an isomorphism of $\kk$-linear categories with inverse $\tau_{\lambda^{-1}}$.

If $\cala'$ is a second such category, equipped with a $\kk$-linear functor $F :\cala \rightarrow \cala'$ that respects the $\nat$-grading, then the following diagram commutes:
\[
\xymatrix{
\cala 
\ar[r]^{\tau_\lambda}
\ar[d]_F
&
\cala 
\ar[d]^F
\\
\cala'
\ar[r]_{\tau_\lambda}
&
\cala'.
}
\]
 \end{lem}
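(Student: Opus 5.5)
The argument is a direct verification from the definitions; no earlier result beyond the grading hypothesis is needed. Define $\tau_\lambda$ to be the identity on objects and, on the morphism space $\cala(X,Y)$, to act on the degree $n$ homogeneous component $\cala(X,Y)_n$ by multiplication by $\lambda^n$. This is then extended linearly, using the decomposition $\cala(X,Y)=\bigoplus_n \cala(X,Y)_n$ that is part of the $\nat$-grading hypothesis. This is clearly $\kk$-linear on each hom space.

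For functoriality, I would check identities and composition. Identity morphisms have degree $0$, since the grading is compatible with composition and $\mathrm{id}\circ\mathrm{id}=\mathrm{id}$ forces the identity into degree zero; hence $\tau_\lambda(\mathrm{id})=\lambda^0\mathrm{id}=\mathrm{id}$. For homogeneous $f$ of degree $n$ and $g$ of degree $m$, the composite $g\circ f$ has degree $n+m$, so
\[
\tau_\lambda(g\circ f)=\lambda^{n+m}\, g\circ f=(\lambda^m g)\circ(\lambda^n f)=\tau_\lambda(g)\circ\tau_\lambda(f).
\]
Bilinearity of composition then extends this identity to all morphisms.

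Next, $\tau_\mu\tau_\lambda$ sends a degree $n$ morphism $f$ to $\mu^n\lambda^n f=(\mu\lambda)^n f$. So $\tau_\mu\tau_\lambda=\tau_{\mu\lambda}$ on the nose, and a fortiori they are naturally isomorphic, via the identity natural transformation. Since $\tau_1=\mathrm{Id}$, when $\lambda$ is invertible we get $\tau_\lambda\tau_{\lambda^{-1}}=\tau_{\lambda^{-1}}\tau_\lambda=\mathrm{Id}$.

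Finally, if $F$ preserves degrees and $f$ is homogeneous of degree $n$, then $F(f)$ has degree $n$. Therefore
\[
F\tau_\lambda(f)=\lambda^n F(f)=\tau_\lambda F(f).
\]
On objects both composites agree with $F$, so the square commutes.

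The only point needing care is the degree-zero nature of identities. If the grading convention is unclear, this can also be read off directly in the cases of interest, where the degree zero part is $\kk\fb$ or $\kk(\tfb)$ and contains the identities.
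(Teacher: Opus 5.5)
Your proposal is correct and carries out the same direct verification the paper intends; the paper only says the first statement is elementary, using compatibility of the grading with composition, and that the rest is clear. You even obtain $\tau_\mu\tau_\lambda=\tau_{\mu\lambda}$ on the nose.

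One small repair is needed for the claim that identities have degree zero. Idempotency $\mathrm{id}\circ\mathrm{id}=\mathrm{id}$ alone does not force this: for example, $E_{11}+E_{12}$ is a non-homogeneous idempotent in upper triangular $2\times 2$ matrices graded by distance from the diagonal. Use unitality instead. Write $\mathrm{id}_X=\sum_n e_n$. Then $a\circ\mathrm{id}_X=a$ for homogeneous $a\in\cala(X,X)$ gives $a\circ e_n=0$ for $n>0$, hence for all $a$ by linearity, and taking $a=\mathrm{id}_X$ yields $e_n=0$. Alternatively, your fallback suffices: in the cases at hand the identities lie in the degree-zero part $\kk\fb$ or $\kk(\tfb)$.
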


\begin{proof}
The first statement is an elementary verification, using the fact that the $\nat$-grading of the morphisms of $\cala$ is compatible with the composition.
 The remaining statements are clear.
\end{proof}

Restriction along $\tau_\lambda$ induces the exact functor 
$$
\tau_\lambda^* : 
\cala \dash \modules 
\rightarrow 
\cala \dash \modules.
$$
If $\lambda \in \kk^\times$, then this is an equivalence of categories. 

\begin{rem}
\label{rem:tau_lambda^*}
For $\lambda \in \kk$, the  action of $\tau_\lambda^*$ is given explicitly as follows. 
For $M$ an $\cala$-module and $f$ a morphism of $\cala$ of degree $n$, 
$$
(\tau_\lambda^* M) (f) = \lambda^n M (f).
$$
\end{rem}

In general, for $M$ a $\cala$-module and $\lambda\in \kk^\times$, there is no reason for $M $ and $\tau_\lambda ^*M$ to be isomorphic. However, when the $\nat$-grading is derived from a grading of the objects of $\cala$, one has the following.

\begin{thm}
\label{thm:weight_comparison}
Let $\cala$ be a small $\kk$-linear category equipped with a degree function
$
\od : \ob \cala \rightarrow \nat
$ 
that defines a $\nat$-grading of the morphisms of $\cala$ by setting $f \in \cala (X,Y)$ to have degree $\od (Y) - \od (X)$. (In particular, $\cala (X, Y) = \emptyset$ if $\od (Y) < \od (X)$.)

Then, there is a  natural transformation 
$$
\rho_\lambda  : \id \rightarrow \tau^*_\lambda
$$
of functors given for an $\cala$-module $M$ and an object $X$ by 
$$
M (X) \stackrel{\lambda^{\od (X)}}{\rightarrow}  M(X) = \tau_\lambda^* M (X) .
$$

If $\lambda\in \kk^\times$, then this is a natural isomorphism.
\end{thm}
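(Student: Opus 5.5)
We verify directly that the proposed components $\rho_\lambda$ form a natural transformation, i.e. that they commute with the structure maps of $M$ and of $\tau_\lambda^* M$. For each $\cala$-module $M$ and object $X$, set $(\rho_\lambda)_{M,X} := \lambda^{\od(X)} \cdot \id_{M(X)}$. By Remark \ref{rem:tau_lambda^*}, $\tau_\lambda^* M$ has the same values as $M$, and a morphism $f$ of degree $n$ acts by $\lambda^n M(f)$. The main requirement is that, for each $f \in \cala (X,Y)$, the square
\[
\xymatrix{
M(X) \ar[r]^{M(f)} \ar[d]_{\lambda^{\od(X)}} & M(Y) \ar[d]^{\lambda^{\od(Y)}}
\\
M(X) \ar[r]_{\lambda^{\od(Y)-\od(X)} M(f)} & M(Y)
}
\]
commutes.

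The hypothesis that degrees are given by $\od(Y) - \od(X)$ is exactly what is needed here. Going right then down gives $\lambda^{\od(Y)} M(f)$. Going down then right gives $\lambda^{\od(Y)-\od(X)}\lambda^{\od(X)} M(f) = \lambda^{\od(Y)} M(f)$. Hence the square commutes. A general morphism need not be homogeneous, but $\kk$-linearity reduces us to homogeneous $f$. Indeed, since $\cala(X,Y)$ is empty (zero) unless $\od(Y) \geq \od(X)$, every morphism $X \to Y$ is homogeneous of degree $\od(Y)-\od(X)$.

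We then check naturality of $\rho_\lambda$ in $M$. For a morphism $\phi : M \to N$ of $\cala$-modules, $\tau_\lambda^*\phi$ has the same components as $\phi$. Since $\phi_X$ is linear, it commutes with the scalar $\lambda^{\od(X)}$, so $\rho_\lambda : \id \to \tau_\lambda^*$ is a natural transformation of endofunctors of $\cala\dash\modules$.

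Finally, if $\lambda \in \kk^\times$, each component is multiplication by an invertible scalar. The inverse components $\lambda^{-\od(X)}$ are natural by the same computation, so $\rho_\lambda$ is a natural isomorphism. No step is a genuine obstacle. The only point requiring care is the convention $\lambda^0 = 1$ when $\lambda = 0$: this makes $\rho_0$ the projection onto the part of $M$ supported on objects of degree zero. The argument above still applies in that case, because it never divides by $\lambda$.
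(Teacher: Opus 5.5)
Your proposal is correct and follows essentially the same argument as the paper: both check that the square with vertical maps $\lambda^{\od(X)}$ and $\lambda^{\od(Y)}$ commutes because $f$ acts on $\tau_\lambda^* M$ by $\lambda^{\od(Y)-\od(X)} M(f)$, and both note that invertibility of $\lambda$ makes every component an isomorphism. Your extra checks, naturality in $M$ and the convention $\lambda^0 = 1$ for $\lambda = 0$, are harmless additions that the paper leaves implicit.
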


\begin{proof}
Consider $f \in \cala (X,Y)$. Then, by definition of $\tau_\lambda$, $f$ acts on $\tau_\lambda^* M $ by 
$\lambda^{\od (Y) - \od (X)} M(f)$. By the definition of $\rho_\lambda$, we have the following diagram 
$$
\xymatrix{
M(X) 
\ar[d]_{M (f)} 
\ar[rr]^{\rho_\lambda(X) = \lambda^{\od (X)}} 
&
 &
\tau_\lambda^* M (X) 
\ar[d]^{\lambda^{\od (Y) - \od (X)} M(f)}
\\
M(Y) 
\ar[rr]_{\rho_\lambda(Y) = \lambda^{\od (Y)}}
& 
& 
\tau_\lambda^* M (Y) 
}
$$
and this clearly commutes. This establishes that $\rho_\lambda$ is a natural transformation. 

If $\lambda \in \kk^\times$, then each $\rho _\lambda (X)$ is  an isomorphism, so that $\rho_\lambda$ is a natural isomorphism.
\end{proof}

\begin{exam}
\ 
\begin{enumerate}
\item 
Consider $\cala = (\kk \ub)_{(\pm;\mp)}$ and define $\od$ by $\od (X) := \lfloor \frac{|X|}{2} \rfloor$.  
This determines the usual $\nat$-grading of the morphisms of $(\kk \ub)_{(\pm;\mp)}$.
\item 
The case $\cala = (\kk \dub)_{(\pm; \mp)}$ can be treated similarly, as can $\kk \dubord$. 
\item 
Consider $\cala = (\kk \uwb)_\mp$ and define $\od (X,Y) := |Y|$. This determines the usual $\nat$-grading of the morphisms of $(\kk \uwb)_\mp$.
\end{enumerate}
\end{exam}

%%%%%%%%%%%%%%%%%%%%%%%%%%%%%%%%%%%%%%%%%%%%%%%%%%%%%%%%%%%%%%%%%%%%%%%%%%%%%%%%%%%
\subsection{The section functors}
\label{subsect:section_functors}

Since $\frac{1}{2} \in \kk$, one can show that the functors $\ump_{(\pm; \mp)}$  each admit a section, a weighted variant of $\sgntr_{(\pm; \mp)}$, using the material of Section \ref{subsect:weights}, in particular the $\kk$-linear functors $ \tau_{\frac{1}{2}}$.

\begin{defn}
\label{defn:sct}
Let $\sct_{(\pm; \mp)} : (\kk \ub)_{(\pm ; \mp)} \rightarrow (\kk \dub)_{\mp} $ be the $\kk$-linear functor 
$$
\sct_{(\pm; \mp)} := \tau_{\frac{1}{2}} \circ  \sgntr_{(\pm; \mp)} = \sgntr_{(\pm; \mp)} \circ \tau_{\frac{1}{2}},
$$
where the equality is given by Lemma \ref{lem:weight}.
\end{defn}

\begin{rem}
The functor $\sct_{(\pm; \mp)}$ can also be defined directly (as in Proposition \ref{prop:signed_transfer}) by specifying its behaviour on the degree one morphisms: 
$$
[\alpha_{(n+1 \ n+2)}]
\mapsto 
\frac{1}{2} \Big( [\alpha_{(n+1 \ n+2)}] + \pm [\alpha_{(n+1 \ n+2)}  \tau]\Big). 
$$
\end{rem}

The following can be proved directly or as a consequence of Proposition \ref{prop:signed_transfer}:

\begin{cor}
\label{cor:sct}
The functor $\sct_{(\pm; \mp)} : (\kk \ub)_{(\pm ; \mp)} \rightarrow (\kk \dub)_{\mp} $ is a section of $
\ump_{(\pm; \mp)} : 
(\kk \dub)_\mp \rightarrow (\kk \ub)_{(\pm; \mp)}
$.
\end{cor}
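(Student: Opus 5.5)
The claim is $\ump_{(\pm;\mp)} \circ \sct_{(\pm;\mp)} = \id$ as $\kk$-linear functors $(\kk \ub)_{(\pm;\mp)} \rightarrow (\kk \ub)_{(\pm;\mp)}$. I would deduce this from Proposition \ref{prop:signed_transfer} together with the weight formalism of Lemma \ref{lem:weight}. All functors involved are the identity on objects and preserve the $\nat$-grading of morphisms. For $\ump_{(\pm;\mp)}$ this is Proposition \ref{prop:homog_quadratic_ub}; for $\sgntr_{(\pm;\mp)}$ it holds because it sends degree one generators to degree one morphisms; and $\tau_{\frac12}$ preserves the grading by construction.

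First I write
$$
\ump_{(\pm;\mp)} \circ \sct_{(\pm;\mp)} = \ump_{(\pm;\mp)} \circ \sgntr_{(\pm;\mp)} \circ \tau_{\frac12}.
$$
By Proposition \ref{prop:signed_transfer}, $\ump_{(\pm;\mp)} \circ \sgntr_{(\pm;\mp)}$ is the identity on objects and multiplies degree $t$ morphisms by $2^t$. It is therefore exactly $\tau_2$ on $(\kk \ub)_{(\pm;\mp)}$. The composite is then $\tau_2 \circ \tau_{\frac12}$, which sends a morphism $f$ of degree $t$ to $2^t 2^{-t} f = f$. Hence it is the identity functor: it is literally equal to the identity, not merely naturally isomorphic, since it is the identity on objects and on every morphism.

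Alternatively, I would check the statement directly on generators, using that $(\kk \ub)_{(\pm;\mp)}$ is homogeneous quadratic over $\kk \fb$. Both functors are the identity in degree zero, so it suffices to check the degree one generators $[\alpha_{(n+1\ n+2)}]$. Under $\sct_{(\pm;\mp)}$ the generator maps to $\frac12\big([\alpha_{(n+1\ n+2)}] \pm [\alpha_{(n+1\ n+2)}\tau]\big)$. In $(\kk \ub)_{(\pm;\mp)}$ we have $[\alpha_{(n+1\ n+2)}\tau] = [\alpha_{(n+2\ n+1)}] = \pm[\alpha_{(n+1\ n+2)}]$, as in the proof of Lemma \ref{lem:kkdub_deg1_generator}. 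Since $(\pm)^2 = +$, the image is $\frac12 \cdot 2\,[\alpha_{(n+1\ n+2)}] = [\alpha_{(n+1\ n+2)}]$.

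The only point requiring care is the sign bookkeeping: the $\pm$ in the definition of $\sgntr_{(\pm;\mp)}$ must match the $\sym_2$-twist $\pm$ in $(\kk\ub)_{(\pm;\mp)}$. This holds by construction, and no other obstacle arises.
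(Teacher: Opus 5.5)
Your proof is correct and follows the paper's approach. The paper gives no separate argument, only the remark that the corollary ``can be proved directly or as a consequence of Proposition \ref{prop:signed_transfer}''. Your two arguments are exactly these two routes: the identification $\ump_{(\pm;\mp)} \circ \sgntr_{(\pm;\mp)} = \tau_2$, which gives $\ump_{(\pm;\mp)} \circ \sct_{(\pm;\mp)} = \tau_2 \circ \tau_{\frac{1}{2}} = \id$, and the direct check on the degree one generators $[\alpha_{(n+1\ n+2)}]$ using $[\alpha_{(n+2\ n+1)}] = \pm [\alpha_{(n+1\ n+2)}]$.
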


 %%%%%%%%%%%%%%%%%%%%%%%%%%%%%%%%%%%%%%%%%%%%%%%%%%%%%%%%%%%%%%%%%%%%%%%%%%%%%%%%%%%%%
\subsection{Restricting along the transfer functor}
\label{subsect:restrict_along_transfer}

By Proposition \ref{prop:signed_transfer}, for given $(\pm; \mp)$, we have the $\kk$-linear functor 
$
\sgntr_{(\pm; \mp)} 
: 
(\kk \ub)_{(\pm; \mp)} \rightarrow (\kk \dub)_\mp.
 $
This yields the restriction functor 
$$
\sgntr_{(\pm; \mp)} ^* 
: 
(\kk \dub)_\mp \dash \modules 
\rightarrow 
(\kk \ub)_{(\pm; \mp)}\dash \modules.
$$
This is described explicitly as follows.

\begin{prop}
\label{prop:sgntr^*_upward}
For  $M$ a $(\kk \dub)_\mp$-module,
\begin{enumerate}
\item 
for $X$ an object of $(\kk \ub)_{(\pm; \mp)}$, $\big(\sgntr_{(\pm; \mp)} ^* M \big)(X)$ is $M (X)$; 
\item 
for a degree one morphism $\overline{[g]}$ of $(\kk \ub)_{(\pm; \mp)}$ represented by the class of a  bijection $g : X \amalg \mathbf{2} \cong Y$ (corresponding to a morphism of $\dub (X, Y) = \fb (X \amalg \mathbf{2}, Y)$), the morphism $\big(\sgntr_{(\pm; \mp)} ^* M\big) (\overline{[g]})$ is given by 
$$
\ ([g] + \pm [g  \tau])  : M (X) \rightarrow M (Y) 
$$  
Here $\tau$ is the transposition $(1 \ 2)$ that acts on $\dub (X, Y) = \fb (X \amalg \mathbf{2}, Y)$ via the action of $\sym_2$ on $\mathbf{2}$. 
\end{enumerate}
\end{prop}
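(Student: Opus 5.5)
The plan is to unwind the definition of restriction along $\sgntr_{(\pm;\mp)}$: on objects restriction does nothing, and on morphisms it precomposes with the functor. Since $\sgntr_{(\pm;\mp)}$ is the identity on objects and on the degree zero subcategory $\kk\fb$ (Proposition \ref{prop:signed_transfer}), statement (1) is immediate: $(\sgntr_{(\pm;\mp)}^*M)(X) = M(\sgntr_{(\pm;\mp)}X) = M(X)$. The same reasoning shows that the $\kk\fb$-module structure is unchanged.

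For statement (2), I first reduce to the standard generator. By Lemma \ref{lem:kkdub_deg1_generator}, the degree one morphisms of $(\kk\ub)_{(\pm;\mp)}$ from $X$ to $Y$ (with $|Y|=|X|+2$) form a $\kk\fb$-bimodule generated by the class of $\alpha_{(n+1\ n+2)}$. Any class $\overline{[g]}$ with $g : X\amalg \mathbf{2}\cong Y$ can be written as $\sigma\circ \overline{[\alpha_{(n+1\ n+2)}]}\circ \theta$, using bijections $\theta : X\cong \n$ and $\sigma : \mathbf{n+2}\cong Y$ chosen so that $g$ corresponds to $\sigma\circ(\theta\amalg \id_{\mathbf{2}})$. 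Functoriality of $\sgntr_{(\pm;\mp)}$, together with the fact that it is the identity on $\kk\fb$, then gives
\[
\sgntr_{(\pm;\mp)}(\overline{[g]}) = \sigma\circ\bigl([\alpha_{(n+1\ n+2)}] + \pm[\alpha_{(n+1\ n+2)}\tau]\bigr)\circ\theta .
\]

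The remaining step is to check that this equals $[g] + \pm[g\tau]$ in $(\kk\dub)_\mp(X,Y)$. The $\kk\fb$-bimodule action on $\dub(X,Y) = \fb(X\amalg\mathbf{2},Y)$ is given by pre- and post-composition on the $X$ and $Y$ factors. The action of $\tau$ on the $\mathbf{2}$ factor commutes with both of these actions, since it acts on a different tensor factor of the domain. So the composite above is $[\sigma(\theta\amalg\id)] + \pm[\sigma(\theta\amalg\id)\tau] = [g] + \pm[g\tau]$. Applying $M$ then gives the formula in (2). I should also note that the answer does not depend on the chosen representative $g$ of $\overline{[g]}$. This is automatic, because $\sgntr_{(\pm;\mp)}$ is well defined, as established in Proposition \ref{prop:signed_transfer}.

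The only point I expect to need any care is the commutation of $\tau$ with the bimodule structure, in the identification $\dub(X,Y)=\fb(X\amalg\mathbf{2},Y)$ used in degree one. In degree one the ordering twist $\mp$ is irrelevant, since there is only one pair, so this is a routine verification.
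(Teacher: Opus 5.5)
Your proposal is correct and follows the same route as the paper, which says only that the description follows directly from the definition of $\sgntr_{(\pm;\mp)}$, and that independence from the choice of representative of $\overline{[g]}$ is automatic. You make explicit what the paper leaves implicit: the reduction to the generator $[\alpha_{(n+1\ n+2)}]$ by $\kk\fb$-bimodule equivariance, and the check that $\tau$ commutes with pre- and post-composition by bijections.
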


\begin{proof}
This follows directly from the definition of $\sgntr_{(\pm; \mp)}$. In particular, this ensures that the definition of the action of morphisms is well-defined (i.e., independent of the choice of representative of $\overline{[g]}$).
\end{proof}

This also describes the restriction functor $\sct_{(\pm; \mp)}^*$, by applying the following consequence of  
Theorem \ref{thm:weight_comparison}:

\begin{cor}
\label{cor:sct_versus_sgntr}
There is a natural isomorphism of functors $  (\kk \dub)_{\mp}\dash \modules  \rightarrow (\kk \ub)_{(\pm ; \mp)}\dash \modules $
$$
\sgntr_{(\pm; \mp)}^* \cong \sct_{(\pm; \mp)}^*.
$$
\end{cor}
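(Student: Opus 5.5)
The plan is to deduce this directly from Theorem \ref{thm:weight_comparison}, using the identification $\sct_{(\pm; \mp)} = \tau_{\frac{1}{2}} \circ \sgntr_{(\pm; \mp)}$ from Definition \ref{defn:sct}. Restriction reverses the order of composition, so
$$
\sct_{(\pm; \mp)}^* = \sgntr_{(\pm; \mp)}^* \circ \tau_{\frac{1}{2}}^*,
$$
where $\tau_{\frac{1}{2}}^*$ is the restriction functor on $(\kk \dub)_\mp\dash\modules$. (Alternatively, one may use $\sct_{(\pm; \mp)} = \sgntr_{(\pm; \mp)} \circ \tau_{\frac{1}{2}}$, in which case $\tau_{\frac{1}{2}}^*$ acts on $(\kk \ub)_{(\pm; \mp)}\dash\modules$ and $\sct_{(\pm; \mp)}^* = \tau_{\frac{1}{2}}^* \circ \sgntr_{(\pm; \mp)}^*$.) The claim therefore reduces to showing that $\tau_{\frac{1}{2}}^*$ is naturally isomorphic to the identity functor on the relevant module category.

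For this, I would equip $(\kk \dub)_\mp$ (or $(\kk \ub)_{(\pm; \mp)}$) with the degree function $\od (X) := \lfloor \frac{|X|}{2} \rfloor$, as in the Example following Theorem \ref{thm:weight_comparison}. The one point to verify is that this induces the $\nat$-grading of morphisms. A morphism $X \to Y$ exists only when $|Y| - |X| = 2t$, so $|X|$ and $|Y|$ have the same parity. Hence $\od (Y) - \od (X) = t$, which is exactly the degree (the number of pairs), and $\cala (X,Y) = 0$ whenever $\od (Y) < \od (X)$.

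Since $\frac{1}{2} \in \kk^\times$ (characteristic zero), Theorem \ref{thm:weight_comparison} gives a natural isomorphism $\rho_{\frac{1}{2}} : \id \stackrel{\cong}{\to} \tau_{\frac{1}{2}}^*$, natural in the module. Whiskering with $\sgntr_{(\pm; \mp)}^*$ yields
$$
\sgntr_{(\pm; \mp)}^* \cong \sgntr_{(\pm; \mp)}^* \circ \tau_{\frac{1}{2}}^* = \sct_{(\pm; \mp)}^*.
$$
Concretely, on $M(X)$ the isomorphism is multiplication by $2^{-\lfloor |X|/2 \rfloor}$.

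There is no real obstacle. The only care needed is the parity check above, and confirming that the equality of functors in Definition \ref{defn:sct}, which relies on $\sgntr_{(\pm; \mp)}$ preserving degree (Lemma \ref{lem:weight}), is an honest equality rather than merely an isomorphism, so that restriction along it commutes strictly.
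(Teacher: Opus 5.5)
Your proposal is correct and follows the same route as the paper. The paper states this corollary as a direct consequence of Theorem \ref{thm:weight_comparison}, applied with $\lambda=\frac{1}{2}$ and the degree function $\od(X)=\lfloor |X|/2\rfloor$, together with $\sct_{(\pm;\mp)}=\tau_{\frac{1}{2}}\circ\sgntr_{(\pm;\mp)}$; your parity check and the explicit rescaling by $2^{-\lfloor |X|/2\rfloor}$ spell out what the paper leaves implicit.
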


%%%%%%%%%%%%%%%%%%%%%%%%%%%%%%%%%%%%%%%%%%%%%%%%%%%%%%%%%%%%%%%%%%%%%%%%%%
\subsection{Comparison with the left adjoint to $\ump^*_{(\pm;\mp)}$}

The functor $
\ump_{(\pm; \mp)} : 
(\kk \dub)_\mp \rightarrow (\kk \ub)_{(\pm; \mp)} 
$
of Notation \ref{nota:psi_phi}  induces the restriction functor 
$$
\ump_{(\pm; \mp)}^* : 
(\kk \ub)_{(\pm; \mp)}\dash \modules
\rightarrow 
(\kk \dub)_\mp\dash \modules .
$$
This has a left adjoint $(\ump_{(\pm;\mp)})_\sharp : (\kk \dub)_\mp\dash \modules \rightarrow (\kk \ub)_{(\pm; \mp)}\dash \modules$.

\begin{rem}
The left adjoint $(\ump_{(\pm;\mp)})_\sharp$  can be identified as the induction functor
$$
(\kk \ub)_{(\pm; \mp)} \otimes_{(\kk \dub)_\mp} - : (\kk \dub)_\mp\dash \modules \rightarrow (\kk \ub)_{(\pm; \mp)}\dash \modules.
$$
\end{rem}

For $M$ a $(\kk \dub)_\mp$-module, we have the adjunction unit 
\begin{eqnarray}
\label{eqn:psi_adjunction_unit}
M 
\rightarrow 
\ump_{(\pm; \mp)}^*(\ump_{(\pm;\mp)})_\sharp M
\end{eqnarray}
and this can  be checked to be a natural surjection.

\begin{prop}
\label{prop:sct/sgntr_to_psisharp}
For $M$ a $(\kk \dub)_\mp$-module, there is a natural surjection of $(\kk \ub)_{(\pm;\mp)}$-modules
$$
\sct_{(\pm; \mp)}^* M 
\twoheadrightarrow 
(\ump_{(\pm;\mp)})_\sharp M.
$$

Hence, using the natural isomorphism of Corollary \ref{cor:sct_versus_sgntr}, there is a natural surjection of $(\kk \ub)_{(\pm;\mp)}$-modules
$$
\sgntr_{(\pm; \mp)}^* M 
\twoheadrightarrow 
(\ump_{(\pm;\mp)})_\sharp M.
$$
\end{prop}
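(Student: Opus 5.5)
The natural strategy is to work objectwise. Note first that the left adjoint is $(\ump_{(\pm;\mp)})_\sharp M = (\kk \ub)_{(\pm;\mp)} \otimes_{(\kk \dub)_\mp} M$. Since $\ump_{(\pm;\mp)}$ is full and the identity on objects, the unit (\ref{eqn:psi_adjunction_unit}) is surjective, as noted above. Concretely, $(\ump_{(\pm;\mp)})_\sharp M (X)$ is the quotient of $M(X)$ by the subspace spanned by elements $M(\phi)(m)$, where $\phi$ ranges over the kernels of $\ump_{(\pm;\mp)}$ on morphisms into $X$. I would therefore take the underlying map, for each object $X$, to be the quotient map
$$
q_X : M(X) = \big(\sct_{(\pm;\mp)}^* M\big)(X) \twoheadrightarrow (\ump_{(\pm;\mp)})_\sharp M (X),
$$
i.e. the component at $X$ of the adjunction unit. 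Surjectivity is then automatic, so the content of the statement is that $q$ is a morphism of $(\kk \ub)_{(\pm;\mp)}$-modules.

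Naturality of the unit gives, for every morphism $g$ of $(\kk \dub)_\mp$,
$$
q \circ M(g) = \big((\ump_{(\pm;\mp)})_\sharp M\big)(\ump_{(\pm;\mp)} g) \circ q .
$$
For a morphism $f$ of $(\kk \ub)_{(\pm;\mp)}$, the action on $\sct_{(\pm;\mp)}^* M$ is $M(\sct_{(\pm;\mp)} f)$. Applying the displayed identity with $g = \sct_{(\pm;\mp)} f$ and using Corollary \ref{cor:sct}, namely $\ump_{(\pm;\mp)} \circ \sct_{(\pm;\mp)} = \id$, yields
$$
q \circ \big(\sct_{(\pm;\mp)}^* M\big)(f) = \big((\ump_{(\pm;\mp)})_\sharp M\big)(f) \circ q .
$$
This is exactly $(\kk \ub)_{(\pm;\mp)}$-linearity of $q$. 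Naturality in $M$ is inherited from naturality of the adjunction unit. It suffices to check this on $\kk\fb$ and on degree one generators, by Proposition \ref{prop:homog_quadratic_ub}, but the formal argument above already covers all morphisms.

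The second statement follows by precomposing with the natural isomorphism $\sgntr_{(\pm;\mp)}^* M \cong \sct_{(\pm;\mp)}^* M$ of Corollary \ref{cor:sct_versus_sgntr}. That isomorphism is $\rho_{1/2}$ from Theorem \ref{thm:weight_comparison}, i.e. multiplication by $2^{-\od(X)}$, which is invertible since $\frac12 \in \kk$.

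The only point needing care, and the step I expect to be the main obstacle, is the surjectivity of the unit together with the explicit identification of $(\ump_{(\pm;\mp)})_\sharp M(X)$ as a quotient of $M(X)$. This comes from fullness of $\ump_{(\pm;\mp)}$ and its being bijective on objects: the multiplication map $(\kk\ub)_{(\pm;\mp)}(X,-) \otimes M(X) \to$ the coend is hit by $\id_X \otimes m$. I would spell this out via the coend presentation of the tensor product.
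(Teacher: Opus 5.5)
Your proposal is correct and is essentially the paper's argument: the paper applies the exact functor $\sct_{(\pm;\mp)}^*$ to the surjective adjunction unit $M \to \ump_{(\pm;\mp)}^*(\ump_{(\pm;\mp)})_\sharp M$ and uses $\ump_{(\pm;\mp)} \circ \sct_{(\pm;\mp)} = \id$ (Corollary \ref{cor:sct}), then composes with the isomorphism of Corollary \ref{cor:sct_versus_sgntr}. You additionally write out the naturality square and justify the surjectivity of the unit via fullness and bijectivity on objects, which the paper only asserts.
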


\begin{proof}
This follows by applying the exact functor $\sct_{(\pm; \mp)}^* $ to the adjunction unit (\ref{eqn:psi_adjunction_unit}); since $\sct_{(\pm;\mp)}$ is a section of $\ump_{(\pm;\mp)}$ by Corollary \ref{cor:sct}, we have that $\sct_{(\pm; \mp)}^* \ump_{(\pm; \mp)}^*$ is naturally isomorphic to the identity, whence the result.
\end{proof} 

\section{Upwards: restriction functors and their left adjoints}
\label{sect:restrict}

This section uses the functors introduced in Sections \ref{sect:brauer_cat} and \ref{sect:unwall} between various flavours of Brauer categories to introduce basic functors between the associated module categories. These functors are used, for example, in Proposition \ref{prop:duspilon_lad}
 to analyse $\dupsilon_{(\pm; \mp)}^* 
(\kk \ub)_{(\pm; \mp)}$.

%%%%%%%%%%%%%%%%%%%%%%%%%%%%%%%%%%%%%%%%%%%%%%%%%%%%%%%%%%%%%%%%%%%%%%%%%%%%%%%%%%%%%%%%
\subsection{Restricting along $\uwbord \rightarrow \dubord$}

The functor $\ouw : \uwbord \rightarrow \dubord$ of Proposition  \ref{prop:unwall_uwbord} yields the restriction functor 
$
\ouw^* : 
\f (\dubord) 
\rightarrow 
\f (\uwbord)
$
given by precomposition. This is compatible with the restrictions to the subcategories $\tfb \subset \uwbord$ and $\fb \subset \dubord$ via the commutative (up to natural isomorphism) diagram 
$$
\xymatrix{
\f (\dubord) 
\ar[r]^{\ouw^*}
\ar[d]
&
\f (\uwbord)
\ar[d]
\\
\f (\fb) 
\ar[r]_{\amalg^*} 
&
\f (\tfb),
}
$$
in which the vertical maps give the underlying functors and $\amalg^*$ is restriction along $\amalg : \tfb \rightarrow \fb$. Recall  (see Appendix \ref{sect:day_convolution}) that there is a functor $\amalg_* : \f (\tfb) \rightarrow \f (\fb)$ that is both  left and right adjoint to $\amalg^*$. On objects, for $G \in \ob \f (\tfb)$, $\amalg_* G$ is given by 
$$
(\amalg_* G) (X) 
= 
\bigoplus _{X = X_1 \amalg X_2} G(X_1, X_2).
$$

The restriction functor $\ouw^*$ admits a left adjoint $\ouw_\sharp : \f (\uwbord) \rightarrow \f (\dubord)$. Viewing these as the respective categories of modules over the 
 $\kk$-linearizations, the left adjoint identifies as  the induction functor 
$$
\kk \dubord \otimes_{\kk \uwbord} - \ : \ \kk\uwbord \dash \modules \rightarrow \kk \dubord \dash \modules. 
$$

This  is described explicitly as follows, using left Kan extension:

\begin{prop}
\label{prop:ouw_sharp}
The functor $\ouw_\sharp : \f (\uwbord) \rightarrow \f (\dubord)$ is compatible with $\amalg_*$, in that the following diagram commutes up to natural isomorphism
$$
\xymatrix{
 \f (\uwbord) 
\ar[r]^{\ouw_\sharp}
\ar[d]
&
\f (\dubord)
\ar[d]
\\
\f (\tfb) 
\ar[r]_{\amalg_*}
&
\f (\fb).
}
$$
In particular,  for $G \in \ob \f (\uwbord)$ and a finite set $X$, 
$$(\ouw_\sharp G) (X) 
= 
\bigoplus _{X = X_1 \amalg X_2} G(X_1, X_2)
$$
and the functor $\ouw_\sharp$ is exact.

For $g \in \dubord (X, Y)$, the induced map 
$$
(\ouw_\sharp G) (X) 
= 
\bigoplus _{X = X_1 \amalg X_2} G(X_1, X_2)
\rightarrow 
(\ouw_\sharp G) (Y) 
= 
\bigoplus _{Y = Y_1 \amalg Y_2} G(Y_1, Y_2)
$$
 restricted to the component indexed by $(X_1, X_2)$ is given by 
 $$
 G (\tilde{g}) : G(X_1, X_2) \rightarrow G(Z_1, Z_2)  \subset \bigoplus _{Y = Y_1 \amalg Y_2} G(Y_1, Y_2),
 $$
 where $(Z_1, Z_2)$ and $\tilde{g} \in \uwbord ((X_1, X_2), (Z_1, Z_2) ) $ are given by Lemma \ref{lem:unwalling_lemma}.
\end{prop}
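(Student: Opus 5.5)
The plan is to compute the left adjoint $\ouw_\sharp$ pointwise as a coend (induction) and then use the natural bijection of Proposition \ref{prop:uwbordop_x_dubord-set} to collapse the coend. By the identification of $\ouw_\sharp$ with $\kk \dubord \otimes_{\kk \uwbord} -$, for $G \in \ob \f(\uwbord)$ and a finite set $X$ we have
$$
(\ouw_\sharp G)(X) \cong \kk \dubord (\ouw(-), X) \otimes_{\kk \uwbord} G ,
$$
where the bimodule $((U_1,U_2), X) \mapsto \kk\dubord(U_1 \amalg U_2, X)$ is regarded as a right $\kk\uwbord$-module in $(U_1,U_2)$ and a left $\kk\dubord$-module in $X$.

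Proposition \ref{prop:uwbordop_x_dubord-set} (linearized) identifies this bimodule, naturally in both variables, with
$$
((U_1,U_2),X) \mapsto \bigoplus_{X = X_1 \amalg X_2} \kk\uwbord((U_1,U_2),(X_1,X_2)).
$$
Here the $\kk\dubord$-action on the right-hand side is the one defined there via the unique lifts $\tilde g$ of Lemma \ref{lem:unwalling_lemma}. Tensoring over $\kk\uwbord$ and applying the Yoneda lemma summand by summand, $\kk\uwbord(-,(X_1,X_2)) \otimes_{\kk\uwbord} G \cong G(X_1,X_2)$, gives
$$
(\ouw_\sharp G)(X) \cong \bigoplus_{X = X_1\amalg X_2} G(X_1,X_2).
$$
This is the stated formula and is visibly natural in $G$.

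For the action of $g \in \dubord(X,Y)$, transport the left $\kk\dubord$-action across the natural isomorphism. By the explicit description in the proof of Proposition \ref{prop:uwbordop_x_dubord-set}, $g$ sends the $(X_1,X_2)$-summand to the $(Z_1,Z_2)$-summand by postcomposition with $\tilde g$. Under Yoneda, postcomposition by $\tilde g$ corresponds to $G(\tilde g)$. This yields the stated component formula.

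It remains to establish the commutative square with $\amalg_*$ and exactness. Restrict the formula to the degree-zero morphisms, that is, bijections $g \in \fb(X,Y)$. In that case $\tilde g$ is the pair of restricted bijections $X_i \to g(X_i)$, which is exactly the structure of $\amalg_*$ on $\f(\tfb)$ recalled above; hence the square commutes up to natural isomorphism. Exactness follows because exactness in functor categories is checked objectwise, and $G \mapsto \bigoplus_{X = X_1\amalg X_2} G(X_1,X_2)$ is a finite direct sum of evaluations. (Alternatively, exactness follows from $\amalg_*$ being exact and the vertical forgetful functors reflecting exactness.)

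The main obstacle is the naturality in $X$ of the bijection in Proposition \ref{prop:uwbordop_x_dubord-set}. Specifically, one must check that the lift of a composite $hg$ in $\dubord$ agrees with the composite of the lifts in $\uwbord$, with the correct target decompositions. That proposition is already available, so the remaining work is bookkeeping: making sure the coend identification is compatible with its explicit description.
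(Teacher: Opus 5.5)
Your proposal is correct and follows essentially the paper's route: both rest on Lemma \ref{lem:unwalling_lemma}, in its bimodule form Proposition \ref{prop:uwbordop_x_dubord-set}, to identify the left Kan extension along $\ouw$ with the explicit recipe. The difference is only in presentation. The paper checks that the recipe is a functor and asserts ``by construction'' that it is the Kan extension, whereas you start from the coend formula for $\kk\dubord\otimes_{\kk\uwbord}G$ and collapse it summand by summand with the co-Yoneda lemma, which makes that step explicit.
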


\begin{proof}
Using Lemma \ref{lem:unwalling_lemma} one can check directly that the recipe in the statement defines a functor $\f (\uwbord) \rightarrow \f (\dubord)$. Moreover, it is clear that this is exact and that it is compatible with $\amalg_*$. By construction, this corresponds to the left Kan extension along $\ouw$, hence gives the left adjoint as required.
\end{proof}

%%%%%%%%%%%%%%%%%%%%%%%%%%%%%%%%%%%%%%%%%%%%%%%%%%%%%%%%%%%%%%%%%%%%%%%%%%%%%%%%%%%%%%%%%%%
\subsection{The restriction functors from walled to directed, with twists}
The functor 
$
\Upsilon_\mp : 
(\kk \uwb) _{\mp} 
\rightarrow
(\kk \dub) _{\mp}
$ of Corollary \ref{cor:uwb_pm_to_dub_pm} 
 induces the exact restriction functor 
\begin{eqnarray}
\label{eqn:Upsilon^*}
\Upsilon_\mp^* : 
(\kk \dub) _{\mp}\dash\modules 
\rightarrow 
(\kk \uwb) _{\mp} \dash \modules.
\end{eqnarray}

\begin{rem}
By Corollary \ref{cor:full_subcat_kkdub_pmmp}, $(\kk \dub) _{\mp}\dash\modules $ identifies as a full $\kk$-linear subcategory of $\kk \dubord\dash\modules$; likewise, by Corollary \ref{cor: full_subcat_kkuwb_mp}, $(\kk \uwb) _{\mp} \dash \modules$ identifies as a full $\kk$-linear subcategory of $\kk \uwbord \dash \modules$. One can check that the restriction functor $\ouw ^* : \kk \dubord \dash\modules \rightarrow \kk \uwbord\dash \modules$ restricts to $\Upsilon_\mp^*$ between the respective full subcategories. Using this observation, the results of this subsection can be deduced from those for $\ouw$. 
\end{rem}

The restriction functor $\Upsilon_\mp^* $ is determined by the following Proposition. Since $(\kk \uwb)_\mp$ is a homogeneous quadratic $\kk$-linear category (see Proposition \ref{prop:twist_kuwb_hq}),  it suffices to define the values on objects and the action of degree one morphisms. For the degree one morphisms, one  
uses the  identification 
$$
(\kk \uwb)_{\mp} ((X, Y), (U, V))
= 
(\kk \uwbord) ((X, Y), (U,V))
= \kk (\tfb) ((X \amalg \mathbf{1}, Y \amalg \mathbf{1}), (U, V)),
$$
where $|U|= |X |+1 $ and $|V| = |Y|+1$.

\begin{prop}
\label{prop:Upsilon^*}
The  functor $\Upsilon_\mp^*  : 
(\kk \dub) _{ \mp}\dash\modules 
\rightarrow 
(\kk \uwb) _{\mp} \dash \modules
$
is given on a $(\kk \dub) _{\mp}$-module $M$ by:
\begin{enumerate}
\item 
 the value of $\Upsilon_\mp^* M$ on the object $(X,Y)$ is $M (X\amalg Y)$;
\item 
for a  degree one morphism represented by the class $[f]$ of a morphism $f \in (\tfb) ((X \amalg \mathbf{1}, Y \amalg \mathbf{1}), (U, V))$, the induced map 
$M (X\amalg Y) \rightarrow M (U \amalg V)$ is given by $M ([\amalg f])$, where $[\amalg f]$ is the morphism in $(\kk \dub) _{\mp}$ given by the class of the morphism $\amalg f $ in $\fb$, using the direction of the pair determined by the wall.
\end{enumerate}
\end{prop}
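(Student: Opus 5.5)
The plan is to exploit the fact that $\Upsilon_\mp^*$ is simply precomposition with $\Upsilon_\mp$, so that both claims reduce to an identification of how $\Upsilon_\mp$ acts on objects and on degree one morphisms.

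\textbf{Objects.} By Corollary \ref{cor:uwb_pm_to_dub_pm}, $\Upsilon_\mp$ sends $(X,Y)$ to $X\amalg Y$. Hence $(\Upsilon_\mp^* M)(X,Y)=M(X\amalg Y)$, which is the first statement.

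\textbf{Degree one morphisms.} By Proposition \ref{prop:twist_kuwb_hq}, $(\kk\uwb)_\mp$ is homogeneous quadratic over $\kk(\tfb)$. Its morphisms are therefore generated, over the degree zero subcategory $\kk(\tfb)$, by the degree one morphisms. Consequently the $(\kk\uwb)_\mp$-module $\Upsilon_\mp^*M$ is determined by three pieces of data:
\begin{enumerate}
\item its underlying $\kk(\tfb)$-module, which is $\amalg^*$ of the underlying $\kk\fb$-module of $M$, since $\Upsilon_\mp$ restricts to $\amalg$ on $\tfb$;
\item the action of degree one morphisms;
\item the compatibility of that action with composition.
\end{enumerate}
The third point is automatic, because restriction along a functor preserves composition.

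So I need to identify $\Upsilon_\mp[f]$ for $f\in(\tfb)((X\amalg\mathbf 1,Y\amalg\mathbf 1),(U,V))$. I will go back to the construction in Proposition \ref{prop:unwall_uwbord}. There the degree one case of $\ouw$ takes $f$ to $\amalg f$, precomposed with the shuffle isomorphism $X\amalg Y\amalg(\two\times\mathbf 1)\cong(X\amalg\mathbf 1)\amalg(Y\amalg\mathbf 1)$. This shuffle sends $(1,1)$ to the point of $X\amalg\mathbf 1$ and $(2,1)$ to the point of $Y\amalg\mathbf 1$. The resulting directed pair is therefore $(f(\ast_X),f(\ast_Y))$, with first element in $U$ and second in $V$: this is exactly ``the direction determined by the wall''.

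\textbf{Twist.} For degree one morphisms the quotients $\kk\uwbord\to(\kk\uwb)_\mp$ and $\kk\dubord\to(\kk\dub)_\mp$ are isomorphisms (Lemmas \ref{lem:gen_deg_one_kkuwbmp} and \ref{lem:kkdub_deg1_generator}), so the twist $\mp$ imposes no extra sign in this degree. Thus $\Upsilon_\mp[f]=[\amalg f]$, and $M$ applied to this morphism gives the second statement. Compatibility with Example \ref{exam:uwbord_2_fiordev_alpha_beta}, where $\beta_{i,j}\mapsto\alpha_{(ij')}$, confirms the ordering convention.

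\textbf{Main obstacle.} The only point needing care is the sign and direction bookkeeping: one has to check that the shuffle isomorphism puts the $X$-side element first, so that no orientation sign $\pm$ enters. I would verify this directly on the generators $\beta_{m+1,n+1}$, using the explicit formula from Example \ref{exam:uwbord_2_fiordev_alpha_beta}.
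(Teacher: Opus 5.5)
Your proposal is correct and matches the paper's implicit argument. The paper gives no separate proof: the statement is a direct unwinding of $\Upsilon_\mp$ being induced by $\ouw$ from Proposition \ref{prop:unwall_uwbord}, with the homogeneous quadratic structure (Proposition \ref{prop:twist_kuwb_hq}) reducing everything to objects and degree one morphisms. Your two checks — that the shuffle isomorphism puts the $X$-side element first, and that the twist imposes no sign in degree one — are exactly the bookkeeping this requires.
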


The restriction functor $\Upsilon_\mp^*$ has a left adjoint 
$
(\Upsilon_\mp)_\sharp : 
 (\kk \uwb) _{\mp} \dash \modules
 \rightarrow 
 (\kk \dub) _{ \mp}\dash\modules .
$
 This has an explicit description that can be deduced from that of Proposition \ref{prop:ouw_sharp} for $\ouw_\sharp$.
 
 \begin{prop}
 \label{prop:Upsilon_sharp}
 The left adjoint to (\ref{eqn:Upsilon^*}),
 $$
(\Upsilon_\mp)_\sharp : 
 (\kk \uwb) _{\mp} \dash \modules
 \rightarrow 
 (\kk \dub) _{ \mp}\dash\modules ,
 $$
is determined for $N$ a $(\kk \uwb) _{\mp}$-module by the following.
\begin{enumerate}
\item
For  $X$ an object of $(\kk \dub) _{ \mp}$, 
$$
\big((\Upsilon_\mp)_\sharp N\big) (X) 
= 
\bigoplus_{X = X_1 \amalg X_2} 
N (X_1, X_2).
$$
\item 
For $[f]$ a degree one morphism in $(\kk \dub)_\mp (X,Y)$ represented by a bijection $f: X \amalg \mathbf{2} \cong Y$, and for a decomposition $X = X_1 \amalg X_2$,  the restriction of  $\big( (\Upsilon_\mp)_\sharp N\big) ([f])$ to the summand   $N (X_1, X_2)$ of $\big( (\Upsilon_\mp)_\sharp N \big)(X) $ is the composite
$$
N (X_1, X_2)
\stackrel{f_{X_1, X_2}}{\rightarrow} 
N(Y_1 , Y_2) 
\subset 
\big((\Upsilon_\mp)_\sharp N \big)(Y), 
$$
where $Y_1 := f(X_1  \amalg \{1\})$ and $Y_2 := f (X_2 \amalg \{2\})$, with associated map $f_{(X_1,X_2)}$ in $\uwbord ((X_1, X_2) , (Y_1, Y_2))$.
\end{enumerate}
 \end{prop}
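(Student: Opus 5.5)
The plan is to deduce the description of $(\Upsilon_\mp)_\sharp$ from that of $\ouw_\sharp$ in Proposition \ref{prop:ouw_sharp}. The argument rests on the full embeddings of Corollaries \ref{cor:full_subcat_kkdub_pmmp} and \ref{cor: full_subcat_kkuwb_mp}, together with the observation (in the Remark preceding Proposition \ref{prop:Upsilon^*}) that $\ouw^*$ restricts to $\Upsilon_\mp^*$ on these full subcategories.

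First, for a $(\kk\uwb)_\mp$-module $N$, regard it as a $\kk\uwbord$-module via the quotient $\kk\uwbord\to(\kk\uwb)_\mp$. Form $\ouw_\sharp N$, which by Proposition \ref{prop:ouw_sharp} has value $\bigoplus_{X=X_1\amalg X_2}N(X_1,X_2)$. For a morphism $g$, it acts summandwise via the lift $\tilde g$ of Lemma \ref{lem:unwalling_lemma}. For a degree one $f:X\amalg\mathbf 2\cong Y$, the lemma gives $Y_1=f(X_1\amalg\{1\})$ and $Y_2=f(X_2\amalg\{2\})$, with $\tilde f=f_{X_1,X_2}$. This is exactly the formula in the statement, using the convention of Proposition \ref{prop:unwall_uwbord} that the first element of the directed pair corresponds to the copy of $\n$ on the left of the wall.

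Second, I check that $\ouw_\sharp N$ lies in the full subcategory $(\kk\dub)_\mp\dash\modules$, i.e. that the action of $\kk\dubord$ factors through the twist by $\kk^\mp$ on reordering of pairs. Take a degree $t$ morphism of $\dubord$ and act by $\sigma\in\sym_t$. The decomposition of Lemma \ref{lem:unwalling_lemma} is $\sym_t$-equivariant, so $\tilde{g\sigma}=\tilde g\sigma$, where $\sigma$ acts on the walled pairs. Since $N$ is a $(\kk\uwb)_\mp$-module, $N(\tilde g\sigma)=\mp^{\mathrm{sgn}\sigma}N(\tilde g)$, as required. Equivalently, by homogeneous quadraticity (Proposition \ref{prop:homog_quadratic_ub}) it suffices to check the quadratic relation swapping two consecutive pairs, and this reduces to the corresponding quadratic relation in $(\kk\uwb)_\mp$.

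Third, I verify the adjunction. For $M$ a $(\kk\dub)_\mp$-module, fullness gives
$$\hom_{(\kk\dub)_\mp}(\ouw_\sharp N,M)=\hom_{\kk\dubord}(\ouw_\sharp N,M)\cong\hom_{\kk\uwbord}(N,\ouw^*M)=\hom_{(\kk\uwb)_\mp}(N,\Upsilon_\mp^*M).$$
This identifies $(\Upsilon_\mp)_\sharp N$ with $\ouw_\sharp N$, and the description follows.

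The main obstacle is the second step. It requires care with the sign bookkeeping: one must confirm that the $\sym_t$-actions on ordered pairs in $\dubord$ and on walled pairs in $\uwbord$ correspond exactly under the bijection (\ref{eqn:fiordev_vs_ubord}). This holds because the bijection (\ref{eqn:shuffle_iso}) is $\sym_t$-equivariant for the diagonal action.
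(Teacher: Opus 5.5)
Your proposal is correct and follows the route the paper indicates. The paper gives no separate proof: it says the description of $(\Upsilon_\mp)_\sharp$ is deduced from that of $\ouw_\sharp$ in Proposition~\ref{prop:ouw_sharp}, using the full embeddings and the fact that $\ouw^*$ restricts to $\Upsilon_\mp^*$. You also supply the two details the paper leaves implicit. First, $\ouw_\sharp$ preserves the full subcategory of $(\kk \dub)_\mp$-modules, by the $\sym_t$-equivariance of Lemma~\ref{lem:unwalling_lemma}. Second, the adjunction $\ouw_\sharp \dashv \ouw^*$ then restricts to the twisted module categories.
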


%%%%%%%%%%%%%%%%%%%%%%%%%%%%%%%%%%%%%%%%%%%%%%%%%%%%%%%%%%%%%%%%%%%%%%%%%%%%%%%%%%%
\subsection{From $(\kk \uwb)_\mp$-modules to $(\kk \ub)_{(\pm; \mp)}$-modules: the functor $\junwall_{(\pm; \mp)}$}
\label{subsect:J}
Using the functors $(\Upsilon_\mp)_\sharp$ and $\sgntr_{(\pm; \mp)}^*$ (introduced in Section \ref{sect:transfers}), we introduce the following:

\begin{defn}
\label{defn:J}
Let $\junwall_{(\pm; \mp)} : (\kk \uwb)_\mp\dash\modules  \rightarrow (\kk \ub)_{(\pm; \mp)} \dash \modules$ be the 
composite functor 
$$
\xymatrix{
(\kk \uwb)_\mp\dash\modules 
\ar[r]^{(\Upsilon_\mp)_\sharp}
&
(\kk \dub)_\mp\dash \modules 
\ar[r]^{\sgntr_{(\pm; \mp)}^*}
&
(\kk \ub)_{(\pm; \mp)} \dash \modules.
}
$$ 
\end{defn}

Using the descriptions of  $(\Upsilon_\mp)_\sharp$ and $\sgntr_{(\pm; \mp)}^*$, we have:

\begin{prop}
\label{prop:J}
The functor $\junwall_{(\pm; \mp)} : (\kk \uwb)_\mp\dash\modules  \rightarrow (\kk \ub)_{(\pm; \mp)} \dash \modules$ is exact. 

For $M$ a $(\kk \uwb)_\mp$-module, it is determined  by the following:
\begin{enumerate}
\item 
for $X$ an object of $(\kk \ub)_{(\pm; \mp)}$, one has 
$$
\big(\junwall_{(\pm; \mp)} M \big)(X) 
= 
\bigoplus_{X = X_1 \amalg X_2} M (X_1, X_2);
$$
\item 
for a degree one morphism $\overline{[g]}$ of $(\kk \ub)_{(\pm; \mp)}$ represented by the class of a  bijection $g : X \amalg \mathbf{2} \cong Y$, the restriction of $\big(\junwall_{(\pm; \mp)} M \big) (\overline{[g]})$ to the summand indexed by $(X_1, X_2)$ is given by 
$$
M(X_1,X_2) 
\stackrel{\big([\tilde{g}], \pm [\tilde{g'}]\big)}{\longrightarrow } 
M(Y_1, Y_2) 
\oplus 
M(Y_1', Y_2') 
\subset 
\big(\junwall_{(\pm; \mp)} M \big) (Y),
$$
where 
\begin{enumerate}
\item 
$(Y_1, Y_2)$ and $\tilde{g} \in \uwbord ((X_1, X_2) , (Y_1, Y_2))$ is given by Lemma \ref{lem:unwalling_lemma} for $g$; 
\item 
$(Y_1', Y_2')$ and $\tilde{g'} \in \uwbord ((X_1, X_2) , (Y_1', Y_2'))$ is given by Lemma \ref{lem:unwalling_lemma} for $g':= g \circ \tau$, $\tau = (1 \ 2)$ acting on $\mathbf{2}$.
\end{enumerate}
\end{enumerate}
\end{prop}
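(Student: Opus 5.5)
The plan is to read off both claims directly from Definition \ref{defn:J}, since $\junwall_{(\pm; \mp)} = \sgntr_{(\pm; \mp)}^* \circ (\Upsilon_\mp)_\sharp$.

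\textbf{Exactness.} Restriction functors such as $\sgntr_{(\pm; \mp)}^*$ are exact, because kernels and cokernels of modules are computed objectwise. For $(\Upsilon_\mp)_\sharp$, Proposition \ref{prop:Upsilon_sharp} gives its value on an object $X$ as $\bigoplus_{X = X_1 \amalg X_2} N(X_1,X_2)$. On underlying $\kk \fb$-modules this is exactly $\amalg_*$, which is exact because it is a finite direct sum of evaluations. Since exactness is detected on underlying objects, the composite $\junwall_{(\pm; \mp)}$ is exact. One could also derive this from the exactness of $\ouw_\sharp$ in Proposition \ref{prop:ouw_sharp}, using the full embeddings of Corollaries \ref{cor:full_subcat_kkdub_pmmp} and \ref{cor: full_subcat_kkuwb_mp}.

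\textbf{Values on objects.} By part (1) of Proposition \ref{prop:sgntr^*_upward}, restriction along $\sgntr_{(\pm; \mp)}$ does not change values on objects. Statement (1) therefore follows immediately from Proposition \ref{prop:Upsilon_sharp}(1).

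\textbf{Degree one morphisms.} By Proposition \ref{prop:homog_quadratic_ub}, $(\kk \ub)_{(\pm; \mp)}$ is homogeneous quadratic over $\kk \fb$. A module over it is therefore determined by its underlying $\kk \fb$-module together with the action of the degree one morphisms, so it is enough to compute the latter. Take $\overline{[g]}$ represented by $g : X \amalg \mathbf{2} \cong Y$. Proposition \ref{prop:sgntr^*_upward}(2) says this acts on $\sgntr^*_{(\pm; \mp)} (\Upsilon_\mp)_\sharp M$ by
\[
\big((\Upsilon_\mp)_\sharp M\big)([g]) \pm \big((\Upsilon_\mp)_\sharp M\big)([g\tau]).
\]
Now restrict to the summand $M(X_1,X_2)$ and apply Proposition \ref{prop:Upsilon_sharp}(2) to each term:
\begin{itemize}
\item $[g]$ sends this summand to $M(Y_1,Y_2)$ via $\tilde g$, where $Y_1 = g(X_1 \amalg \{1\})$ and $Y_2 = g(X_2 \amalg\{2\})$;
\item $[g\tau]$ sends it to $M(Y'_1,Y'_2)$ via $\tilde{g'}$, where $Y'_1 = g(X_1 \amalg\{2\})$ and $Y'_2 = g(X_2\amalg\{1\})$.
\end{itemize}
Here $\tilde g$ and $\tilde{g'}$ are the lifts supplied by Lemma \ref{lem:unwalling_lemma}, which agree with the maps $f_{X_1,X_2}$ of Proposition \ref{prop:Upsilon_sharp}. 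Adding the two contributions with the sign $\pm$ gives exactly the map $([\tilde g], \pm[\tilde{g'}])$ in statement (2).

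\textbf{Main obstacle.} The one point needing care is the bookkeeping. One must check that the two targets $(Y_1,Y_2)$ and $(Y'_1,Y'_2)$ are distinct decompositions of $Y$: they differ in which side of the wall receives $g(1)$ and $g(2)$. This is what makes the formula a map into a direct sum of two distinct summands, rather than a sum inside a single summand. One must also check that the orientation conventions for degree one morphisms are consistent. This is harmless here, because in degree one the quotients from $\kk\uwbord$ and $\kk\dubord$ are isomorphisms (Lemmas \ref{lem:kkdub_deg1_generator} and \ref{lem:gen_deg_one_kkuwbmp}), so no ordering sign enters.
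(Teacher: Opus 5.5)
Your proposal is correct and follows the paper's proof, which likewise obtains the statement by combining Proposition \ref{prop:Upsilon_sharp} with Proposition \ref{prop:sgntr^*_upward}, exactly as you do via $\junwall_{(\pm; \mp)} = \sgntr_{(\pm; \mp)}^* \circ (\Upsilon_\mp)_\sharp$. Your explicit exactness argument (objectwise, via the underlying $\amalg_*$) and your check that the two target summands $(Y_1,Y_2)$ and $(Y'_1,Y'_2)$ are distinct fill in points the paper leaves implicit.
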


\begin{proof}
This follows by combining Proposition \ref{prop:Upsilon_sharp} with Proposition \ref{prop:sgntr^*_upward}.
\end{proof}

%%%%%%%%%%%%%%%%%%%%%%%%%%%%%%%%%%%%%%%%%%%%%%%%%%%%%%%%%%%%%%%%%%%%%%%%%%%%%%%%%%%
\subsection{From $(\kk \uwb)_\mp$-modules to $(\kk \ub)_{(\pm; \mp)}$-modules: the left adjoint $\lad_{(\pm; \mp)}$}
\label{subsect:lad}

For given $(\pm; \mp)$,  we now exploit the $\kk$-linear functor 
$ 
\upsilon _{(\pm; \mp)} : 
(\kk \uwb) _{\mp} 
\rightarrow 
(\kk \ub)_{(\pm; \mp)}$ 
of Notation \ref{nota:upsilon}. 
Restriction yields the exact functor
$$
\upsilon _{(\pm; \mp)}^*
 :
 (\kk \ub)_{(\pm; \mp)} \dash \modules 
\rightarrow 
(\kk \uwb) _{\mp} \dash \modules. 
$$
This fits into the commutative (up to natural isomorphism) diagram 
\begin{eqnarray}
\label{eqn:upsilon^*_square}
\xymatrix{
 (\kk \ub)_{(\pm; \mp)} \dash \modules 
\ar[r]^{\upsilon_{(\pm; \mp)}^*}
\ar[d]
&
(\kk \uwb)_{\mp} \dash \modules
\ar[d]
\\
\kk \fb\dash \modules 
\ar[r]_{\amalg^*}
&
\kk(\tfb)\dash \modules ,
}
\end{eqnarray}
in which the vertical morphisms give the underlying object. 

\begin{nota}
\label{nota:lad}
Denote the left adjoint to  $  \upsilon_{(\pm; \mp)}^*$ by
\[
\lad_{(\pm; \mp)} 
: 
(\kk \uwb) _{\mp} \dash \modules
\rightarrow 
 (\kk \ub)_{(\pm; \mp)} \dash \modules. 
\]
\end{nota}

\begin{rem}
Since $\upsilon_{(\pm; \mp)}$ identifies as the composite $\ump_{(\pm; \mp)} \circ \Upsilon_\mp$, the left adjoint $\lad _{(\pm; \mp)}$ is naturally isomorphic to  the composite $(\ump_{(\pm;\mp)})_\sharp \circ (\Upsilon_\mp)_\sharp $, where $(\ump_{(\pm;\mp)})_\sharp$ is left adjoint to the restriction functor $\ump_{(\pm; \mp)}^* : (\kk \ub)_{(\pm; \mp)} \dash \modules \rightarrow (\kk \dub)_\mp \dash\modules$. 
\end{rem}

This should be compared with the definition of the functor $\junwall_{(\pm;\mp)}$ given in Definition \ref{defn:J}, which is the composite $\sgntr_{(\pm;\mp)}^* \circ (\Upsilon_\mp)_\sharp $. Indeed, we have the following consequence of Proposition \ref{prop:sct/sgntr_to_psisharp}: 

\begin{cor}
\label{cor:surject_junwall_to_lad}
There is a natural surjection of functors from $(\kk \uwb) _{\mp} \dash \modules$ to $
 (\kk \ub)_{(\pm; \mp)} \dash \modules$:
 \begin{eqnarray}
 \label{eqn:junwall_to_lad}
 \junwall_{(\pm;\mp)}
 \twoheadrightarrow
  \lad_{(\pm;\mp)} 
 .
 \end{eqnarray}
\end{cor}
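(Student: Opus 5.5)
The surjection will be obtained by applying Proposition \ref{prop:sct/sgntr_to_psisharp} to modules in the image of $(\Upsilon_\mp)_\sharp$. Fix a $(\kk \uwb)_\mp$-module $N$ and set $M := (\Upsilon_\mp)_\sharp N$, a $(\kk \dub)_\mp$-module. By Definition \ref{defn:J}, $\junwall_{(\pm;\mp)} N = \sgntr_{(\pm;\mp)}^* M$. Since $\upsilon_{(\pm;\mp)} = \ump_{(\pm;\mp)} \circ \Upsilon_\mp$ (Remark \ref{rem:maps_beta_alpha_upsilon}), and left adjoints compose (as in the remark following Notation \ref{nota:lad}), there is a natural isomorphism
\[
\lad_{(\pm;\mp)} N \cong (\ump_{(\pm;\mp)})_\sharp M .
\]

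Proposition \ref{prop:sct/sgntr_to_psisharp} provides a natural surjection of $(\kk\ub)_{(\pm;\mp)}$-modules
\[
\sgntr_{(\pm;\mp)}^* M \twoheadrightarrow (\ump_{(\pm;\mp)})_\sharp M .
\]
Composing it with the two identifications above gives the required surjection $\junwall_{(\pm;\mp)} N \twoheadrightarrow \lad_{(\pm;\mp)} N$.

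It remains to check naturality in $N$. The surjection of Proposition \ref{prop:sct/sgntr_to_psisharp} has three ingredients:
\begin{enumerate}
\item the isomorphism $\sgntr^*_{(\pm;\mp)} \cong \sct^*_{(\pm;\mp)}$ of Corollary \ref{cor:sct_versus_sgntr}, which is natural in $M$;
\item the exact functor $\sct^*_{(\pm;\mp)}$ applied to the unit $M \to \ump^*_{(\pm;\mp)} (\ump_{(\pm;\mp)})_\sharp M$, which is natural in $M$;
\item the identification $\sct^*_{(\pm;\mp)} \ump^*_{(\pm;\mp)} = \id$, which comes from Corollary \ref{cor:sct} and so holds on the nose.
\end{enumerate}
Each of these is natural in $M$, hence natural in $N$ after precomposing with the functor $(\Upsilon_\mp)_\sharp$. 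The isomorphism $(\ump_{(\pm;\mp)})_\sharp (\Upsilon_\mp)_\sharp \cong \lad_{(\pm;\mp)}$ is a canonical isomorphism of left adjoints, so it is also natural. Surjectivity is preserved under precomposition by a functor, because it is checked objectwise.

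The only point needing care is the surjectivity of the adjunction unit (\ref{eqn:psi_adjunction_unit}), on which Proposition \ref{prop:sct/sgntr_to_psisharp} depends. If it has to be justified, I would argue as follows. $(\ump)_\sharp M = (\kk\ub)_{(\pm;\mp)} \otimes_{(\kk\dub)_\mp} M$, and $\ump$ is full and the identity on objects. Hence the map $M(X) \to ((\ump)_\sharp M)(X)$ sending $m \mapsto [\id_X] \otimes m$ is surjective. Indeed, any element $[\ump(f)] \otimes m$ equals $[\id] \otimes M(f) m$, by the defining relations of the tensor product over $(\kk\dub)_\mp$.

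Since everything else is formal, I expect no serious obstacle.
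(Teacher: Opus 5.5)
Your proposal is correct and is essentially the paper's proof: precompose the natural surjection $\sgntr_{(\pm;\mp)}^* \twoheadrightarrow (\ump_{(\pm;\mp)})_\sharp$ of Proposition \ref{prop:sct/sgntr_to_psisharp} with $(\Upsilon_\mp)_\sharp$, and use $(\ump_{(\pm;\mp)})_\sharp \circ (\Upsilon_\mp)_\sharp \cong \lad_{(\pm;\mp)}$. Your extra argument for the surjectivity of the adjunction unit, via fullness of $\ump_{(\pm;\mp)}$ and the fact that it is the identity on objects, correctly fills in a point the paper only asserts.
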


\begin{proof}
Proposition \ref{prop:sct/sgntr_to_psisharp} gives the natural surjection $\sgntr_{(\pm;\mp)}^* \twoheadrightarrow (\ump_{(\pm;\mp)})_\sharp$. The result follows by precomposing with the functor $ (\Upsilon_\mp)_\sharp$.
\end{proof}

The functor $\lad_{(\pm; \mp)}$ can be identified with the induction functor 
$$
(\kk \ub)_{(\pm; \mp)} \otimes_{(\kk \uwb)_\mp} - 
 :  
(\kk \uwb) _{\mp} \dash \modules
\rightarrow 
 (\kk \ub)_{(\pm; \mp)} \dash \modules. 
$$
This is not as easy to describe explicitly  as the functor $\junwall_{(\pm; \mp)}$ (cf. Proposition \ref{prop:J} above).
In particular, the natural surjection (\ref{eqn:junwall_to_lad}) is not in general an isomorphism (see Example \ref{exam:J-L} below).

However, if we restrict to objects induced up from $\kk(\tfb)$-modules, then there is a straightforward description.
We use the following induction functors:
\begin{enumerate}
\item 
the left adjoint to the restriction functor $(\kk \uwb) _{\mp} \dash \modules \rightarrow \kk(\tfb)\dash \modules $:
\[
\kk \uwb_\mp \otimes_{\kk (\tfb)} - : 
\kk(\tfb)\dash \modules 
\rightarrow 
(\kk \uwb) _{\mp} \dash \modules
\]
\item 
the left adjoint to the restriction functor $ (\kk \ub)_{(\pm; \mp)} \dash \modules \rightarrow \kk \fb\dash \modules $:
\[
\kk \ub_{(\pm; \mp)} \otimes_{\kk\fb} - : 
\kk \fb\dash \modules 
\rightarrow 
 (\kk \ub)_{(\pm; \mp)} \dash \modules .
\]
\end{enumerate}

We have the following, in which $\amalg_*$ is considered as the left adjoint to $\amalg^*$ (see Proposition \ref{prop:amalg_tmalg_adjoints}):

\begin{lem}
\label{lem:left_adjoint_induced}
There is a commutative (up to natural isomorphism) diagram of left adjoints:
\[
\xymatrix{
\kk(\tfb)\dash \modules 
\ar[r]^{\amalg_*} 
\ar[d]
&
\kk \fb\dash \modules
\ar[d]
\\
(\kk \uwb) _{\mp} \dash \modules
\ar[r]_{\lad_{(\pm; \mp)} }
&
(\kk \ub)_{(\pm; \mp)} \dash \modules, 
}
\]
in which the vertical arrows correspond to the induction functors.  In particular, $\lad_{(\pm; \mp)}$ sends induced objects in $(\kk \uwb) _{\mp} \dash \modules$ to induced objects in $(\kk \ub)_{(\pm; \mp)} \dash \modules$.
\end{lem}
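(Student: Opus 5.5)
The plan is to argue purely formally, via uniqueness of left adjoints: a square of left adjoints commutes up to natural isomorphism as soon as the corresponding square of right adjoints does. The right adjoints of the four functors in the statement are as follows:
\begin{enumerate}
\item the vertical induction functors are left adjoint to the forgetful (restriction) functors $(\kk \uwb)_\mp\dash\modules \rightarrow \kk(\tfb)\dash\modules$ and $(\kk \ub)_{(\pm;\mp)}\dash\modules \rightarrow \kk\fb\dash\modules$;
\item $\lad_{(\pm;\mp)}$ is by definition (Notation \ref{nota:lad}) left adjoint to $\upsilon^*_{(\pm;\mp)}$;
\item $\amalg_*$ is left adjoint to $\amalg^*$ (Proposition \ref{prop:amalg_tmalg_adjoints}).
\end{enumerate}
Thus the right adjoint square is exactly the square (\ref{eqn:upsilon^*_square}). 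That square commutes up to natural isomorphism, because $\upsilon_{(\pm;\mp)}$ restricts on degree zero morphisms to $\kk(\amalg) : \kk(\tfb) \rightarrow \kk\fb$. This is the content of Corollary \ref{cor:uwb_pm_to_dub_pm} and Remark \ref{rem:maps_beta_alpha_upsilon}, together with the fact that $\ump_{(\pm;\mp)}$ is the identity in degree zero.

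The key steps, in order, are the following. First, record that $\upsilon_{(\pm;\mp)} \circ \iota_{\tfb} = \iota_{\fb} \circ \amalg$ as $\kk$-linear functors, where $\iota$ denotes the inclusion of the degree zero subcategories. Restricting along both composites then gives a strict equality
\[
\iota_{\tfb}^* \circ \upsilon^*_{(\pm;\mp)} = \amalg^* \circ \iota_{\fb}^* .
\]
Second, pass to left adjoints. The left adjoint of the left-hand composite is $\lad_{(\pm;\mp)} \circ \big((\kk\uwb)_\mp \otimes_{\kk(\tfb)} -\big)$. The left adjoint of the right-hand composite is $\big((\kk\ub)_{(\pm;\mp)} \otimes_{\kk\fb} -\big) \circ \amalg_*$. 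Uniqueness of adjoints up to unique natural isomorphism then yields the commutativity of the diagram.

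The final assertion follows immediately. An induced object $(\kk\uwb)_\mp \otimes_{\kk(\tfb)} G$ is sent by $\lad_{(\pm;\mp)}$ to $(\kk\ub)_{(\pm;\mp)} \otimes_{\kk\fb} \amalg_* G$, which is an induced object.

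The only point needing care, and the only potential obstacle, is the use of $\amalg_*$ as a \emph{left} adjoint of $\amalg^*$, rather than as a right adjoint. This is supplied by Proposition \ref{prop:amalg_tmalg_adjoints}, since $\amalg_*$ is both left and right adjoint to $\amalg^*$.

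One should also check that the identification in the first step genuinely holds on the nose, and not just on objects. On degree zero morphisms, $\Upsilon_\mp$ is induced by $\ouw$, which is given by disjoint union (Proposition \ref{prop:unwall_uwbord}). The isomorphism used there to identify morphism sets is trivial in degree zero, so no twist or sign from $(\pm;\mp)$ intervenes. Even if this identification only holds up to natural isomorphism, the argument goes through unchanged.
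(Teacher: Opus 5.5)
Your proposal is correct and is essentially the paper's own proof. The paper likewise obtains the square by passing to left adjoints in the commutative square of restriction functors (\ref{eqn:upsilon^*_square}), regarding $\amalg_*$ as the left adjoint of $\amalg^*$, and then reads off the statement about induced objects. Your extra check that $\upsilon_{(\pm;\mp)}$ agrees with $\amalg$ in degree zero is a detail the paper leaves implicit.
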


\begin{proof}
The commutative diagram follows from (\ref{eqn:upsilon^*_square}) by passing to left adjoints. 
The second statement follows immediately.
\end{proof}

We may consider $(\kk \uwb)_\mp$ as a $(\kk \uwb)_\mp$-bimodule or, equivalently, a $(\kk \dwb)_\mp \cten  (\kk \uwb)_\mp$-module. We may thus apply $\lad_{(\pm; \mp)}$ with respect to the covariant variable to give 
$$
\lad_{(\pm; \mp)} (\kk \uwb)_\mp 
\in  
(\kk \dwb)_\mp \cten  (\kk \ub)_{(\pm; \mp)} 
\dash \modules.
$$ 
Correspondingly, we may consider $(\kk \ub)_{(\pm; \mp)}$ as a $(\kk \db)_{(\pm; \mp)} \cten  (\kk \ub)_{(\pm; \mp)}$-module and apply $\dupsilon_{(\pm; \mp)}^*$ (with respect to the covariant variable) to give
$$
\dupsilon_{(\pm; \mp)}^* 
(\kk \ub)_{(\pm; \mp)}
\in 
(\kk \dwb)_\mp \cten  (\kk \ub)_{(\pm; \mp)} 
\dash \modules.
$$

We have the following:

\begin{prop}
\label{prop:duspilon_lad}
There is a natural isomorphism 
$$
\dupsilon_{(\pm; \mp)}^* 
(\kk \ub)_{(\pm; \mp)}
\cong 
\lad_{(\pm; \mp)} (\kk \uwb)_\mp 
$$
in 
$
(\kk \dwb)_\mp \cten  (\kk \ub)_{(\pm; \mp)} 
$-modules.
\end{prop}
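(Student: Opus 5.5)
The plan is a Yoneda-type argument. The claim is that, for each object $(U_1,U_2)$ of $(\kk \dwb)_\mp$, the $(\kk \ub)_{(\pm;\mp)}$-module $\lad_{(\pm;\mp)}$ applied to the representable $(\kk \uwb)_\mp((U_1,U_2),-)$ is the representable $(\kk \ub)_{(\pm;\mp)}(U_1\amalg U_2,-)$, naturally in $(U_1,U_2)$. The right-hand side here is exactly $\dupsilon_{(\pm;\mp)}^*(\kk \ub)_{(\pm;\mp)}$ evaluated in its contravariant variable at $(U_1,U_2)$.

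First, $\lad_{(\pm;\mp)}$ is left adjoint to $\upsilon^*_{(\pm;\mp)}$. Hence for any $(\kk \ub)_{(\pm;\mp)}$-module $F$ there are natural isomorphisms
$$
\hom\big(\lad_{(\pm;\mp)}(\kk \uwb)_\mp((U_1,U_2),-), F\big)
\cong
\hom\big((\kk \uwb)_\mp((U_1,U_2),-), \upsilon^*_{(\pm;\mp)}F\big)
\cong F(U_1\amalg U_2),
$$
using the $\kk$-linear Yoneda lemma twice. The first uses the fact that $\upsilon_{(\pm;\mp)}$ is $\amalg$ on objects, so that $(\upsilon^*_{(\pm;\mp)}F)(U_1,U_2) = F(U_1\amalg U_2)$. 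The Yoneda lemma applied again in $(\kk \ub)_{(\pm;\mp)}\dash\modules$ gives $F(U_1\amalg U_2)\cong \hom\big((\kk \ub)_{(\pm;\mp)}(U_1\amalg U_2,-),F\big)$. Since these isomorphisms are natural in $F$, we obtain an isomorphism of $(\kk \ub)_{(\pm;\mp)}$-modules
$$
\lad_{(\pm;\mp)}(\kk \uwb)_\mp((U_1,U_2),-)\cong (\kk \ub)_{(\pm;\mp)}(U_1\amalg U_2,-).
$$
Concretely, this isomorphism is induced by $g\otimes f\mapsto g\circ\upsilon_{(\pm;\mp)}(f)$. Equivalently, via the identification of $\lad_{(\pm;\mp)}$ with $(\kk \ub)_{(\pm;\mp)}\otimes_{(\kk \uwb)_\mp}-$, it is the standard isomorphism $\cala\otimes_{\calb}\calb(b,-)\cong\cala(\upsilon b,-)$ for a $\kk$-linear functor $\upsilon:\calb\to\cala$.

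The remaining step, which is the only point requiring care, is naturality in the contravariant variable $(U_1,U_2)$ with respect to morphisms of $(\kk \dwb)_\mp$. Both sides are $(\kk \dwb)_\mp\cten(\kk \ub)_{(\pm;\mp)}$-modules. On the left, a morphism $h$ of $(\kk \uwb)_\mp$ acts by precomposition on the representable and then via $\lad_{(\pm;\mp)}$. On the right, $h$ acts by precomposition with $\upsilon_{(\pm;\mp)}(h)$; this is precisely the definition of $\dupsilon^*_{(\pm;\mp)}$ on the contravariant variable. The explicit formula $g\otimes f\mapsto g\circ\upsilon_{(\pm;\mp)}(f)$ is compatible with both actions, by associativity of composition and functoriality of $\upsilon_{(\pm;\mp)}$. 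This gives the natural isomorphism of bimodules.

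I expect no serious obstacle. The one thing to keep straight is the variance convention: $\dupsilon$ is the opposite of $\upsilon$, so restriction of the contravariant variable along $\upsilon$ is the same as restriction of a $(\kk \db)_{(\pm;\mp)}$-module along $\dupsilon$, as in Remark~\ref{rem:FB_opposite}.
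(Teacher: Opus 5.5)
Your argument is correct and is essentially the paper's. The paper identifies $\lad_{(\pm;\mp)}$ with induction $(\kk \ub)_{(\pm;\mp)}\otimes_{(\kk\uwb)_\mp}-$ and uses $B\otimes_A A\cong B$ as $B\otimes A\op$-modules; that is your formula $g\otimes f\mapsto g\circ\upsilon_{(\pm;\mp)}(f)$ taken globally. Your objectwise computation on the projectives $(\kk\uwb)_\mp((U_1,U_2),-)$, followed by the naturality check in $(U_1,U_2)$, is exactly the alternative proof the paper describes in the remark following its proof.
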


\begin{proof}
This is a direct consequence of the identification of $\lad_{(\pm;\mp)}$ as an induction functor. For clarity we recall how this works for modules over associative, unital rings. Consider a morphism $f : A \rightarrow B$ of unital associative rings; this induces the restriction functor on left modules $f^* : B\dash\modules \rightarrow A \dash\modules$ and also on right modules $(-)\downarrow : 
B\op \dash\modules\rightarrow A\op \dash\modules$ by restriction. The left adjoint to $f^*$ is the induction functor 
$$
\ell := B\downarrow \otimes_A - 
:
A \dash \modules 
\rightarrow 
B\dash\modules,
$$
writing $B\downarrow$ for $B$ considered as a $B\otimes A\op$-module. Applied to $A$ considered as an $A$-bimodule, this gives the isomorphism of $B\otimes A\op$-modules
$
\ell A \cong B\downarrow.
$
\end{proof}

\begin{rem}
Alternatively, Proposition \ref{prop:duspilon_lad}  can be proved by  building upon Lemma \ref{lem:left_adjoint_induced} by analysing the behaviour of $\lad_{(\pm; \mp)}$ on the standard projectives of $(\kk \uwb)_\mp$-modules given by $(\kk \uwb)_\mp ((X,Y), -)$. This requires checking the naturality with respect to $(X,Y)$.
\end{rem}

\begin{exam}
\label{exam:J-L}
Take $(\pm; \mp)$ to be $(+;+)$ and consider an object $(X_1, X_2)$ of $\uwb$, so that we have the projective  $\kk \uwb$-module $P^\uwb _{(X_1, X_2)} = \kk \uwb ((X_1,X_2), -)$. By Lemma  \ref{lem:left_adjoint_induced}, we have the isomorphism
$$
\lad _{(+;+)} \kk \uwb ((X_1,X_2), -) 
\cong 
\kk \ub (X_1 \amalg X_2, -)
$$
in $\kk \ub$-modules. This can be written as $\lad_{(+;+)} P^\uwb _{(X_1, X_2)} = P^\ub _{X_1 \amalg X_2}$.

We can also apply the functor $\junwall_{(+;+)}$ to $P^\uwb _{(X_1, X_2)}$. By Proposition \ref{prop:J}, evaluating on $Y$
$$
\big( \junwall_{(+;+)} P^\uwb _{(X_1, X_2)}) \big) ( Y) \cong  \bigoplus_{Y = Y_1 \amalg Y_2} \kk \uwb ((X_1,X_2), (Y_1, Y_2)).
$$

The natural surjection (\ref{eqn:junwall_to_lad}) of Corollary \ref{cor:surject_junwall_to_lad} is not an isomorphism in this case.
 For example, take $(X_1, X_2) = (\emptyset, \emptyset) $ and $Y = \mathbf{2}$. Then, for these values:
\begin{eqnarray*}
\dim \big(\lad_{(+;+)} P^\uwb _{(X_1, X_2)}\big)  (Y) &= &1
\\
\dim \big( \junwall _{(+;+)}P^\uwb _{(X_1, X_2)} \big) ( Y)&=&2
\end{eqnarray*}
The difference stems from the fact that $\junwall_{(+;+)} $ retains  information on the `direction' of pairs.
\end{exam}

\section{Downwards: restriction functors and their right adjoints}
\label{sect:new_adjoints}

In this section we consider the counterparts of the results of Section \ref{sect:restrict} for the   downward Brauer categories and their $\kk$-linear variants. Because of the change of variance, these  results concern right adjoints rather than left adjoints. (The results can be deduced from those of Section \ref{sect:restrict}; for convenience, the statements are spelled out.)

This allows us to state and prove Corollary \ref{cor:surject_uspilon^*kkub_unwall_kkuwb} provide the refinement of Theorem \ref{thm:upsilon^*kkub} that was signposted in 
 Remark \ref{rem:more_naturality_upsilon^*kkub}.

%%%%%%%%%%%%%%%%%%%%%%%%%%%%%%%%%%%%%%%%%%%%%%%%%%%%%%%%%%%%%%%%%%
\subsection{The right adjoint to $\X_\pm^*$}
\label{subsect:Xi}

By Corollary \ref{cor:uwb_pm_to_dub_pm},   for given $\mp \in \{ +, -\}$, we have the $\kk$-linear functor 
$
\Upsilon_\mp : (\kk \uwb)_{\mp} 
\rightarrow 
(\kk \dub)_{\mp}.
 $
Passing to the opposite categories gives the following:

\begin{nota}
\label{nota:Xi}
Denote by 
$
\X_\mp : (\kk \dwb)_{\mp} 
\rightarrow 
(\kk \ddb)_{\mp}
 $
 the
$\kk$-linear functor $\Upsilon_\mp\op$. 
\end{nota}
 
On objects, $\X_\mp $ is given by $(X, Y) \mapsto X \amalg Y$. For the case $\mp = +$, this is the $\kk$-linearization of the functor $\dwb \rightarrow \ddb$ that is opposite to the functor given by Proposition   \ref{prop:unwall_uwbord}.

One has the restriction functor 
\begin{eqnarray}
\label{eqn:restriction_dwb_ddb}
\X_\mp ^* : (\kk \ddb)_{\mp} \dash \modules \rightarrow (\kk \dwb)_{\mp} \dash\modules.
\end{eqnarray}
For $\mp = +$, this identifies with the restriction functor $\f (\ddb) \rightarrow \f (\dwb)$ induced by $\dwb \rightarrow \ddb$.

\begin{prop}
\label{prop:Xi_*}
The right adjoint to (\ref{eqn:restriction_dwb_ddb}), 
$$
(\X_\mp)_* : (\kk \dwb)_{\mp} \dash\modules \rightarrow (\kk \ddb)_{\mp} \dash \modules,
$$
is given on $F \in \ob (\kk \dwb)_{\mp} \dash\modules$ by 
$$
\big((\X_\mp)_* F \big)(X) = \amalg_* F (X) = \bigoplus_{X= X_1 \amalg X_2} F (X_1, X_2).
$$

The morphism of $\ddb (X, X \backslash \{s, t\})$ given by the inclusion $X \backslash \{s, t\} \subset X$ and the ordering $(s,t)$ acts on $F(X_1, X_2)$ as follows:
\begin{enumerate}
\item 
if $s \in X_1$ and $t \in X_2$, then it acts via the $(\kk \dwb)_\mp$-module structure of $F$ via the associated morphism $(X_1, X_2) \rightarrow (X_1 \backslash \{s\}, X_2 \backslash \{t\})$; 
\item 
otherwise it acts as zero. 
\end{enumerate}

The functor $(\X_\mp)_*$ is exact. 
\end{prop}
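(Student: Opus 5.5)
The plan is to obtain the right adjoint by dualizing the left-adjoint description of Proposition \ref{prop:Upsilon_sharp} (equivalently Proposition \ref{prop:ouw_sharp}), which amounts to right Kan extension along the opposite functor. Concretely, $(\X_\mp)_*$ is the coinduction functor $F \mapsto \hom_{(\kk \dwb)_\mp}\big((\kk \ddb)_\mp(-,\X_\mp(-)),F\big)$. Its value on $X$ is computed from the $(\kk\dwb)_\mp$-module $(X_1,X_2)\mapsto (\kk\ddb)_\mp(X, X_1\amalg X_2) = (\kk\dub)_\mp(X_1\amalg X_2, X)$. By Lemma \ref{lem:unwalling_lemma} and Proposition \ref{prop:uwbordop_x_dubord-set}, passed to the twisted quotients (the $\sym_n$-equivariance of the bijection (\ref{eqn:fiordev_vs_ubord}) lets us tensor with $\kk^\mp$), this module decomposes as a direct sum over decompositions $X = X'_1 \amalg X'_2$ of representables $(\kk\uwb)_\mp((X_1,X_2),(X'_1,X'_2))$, that is, of representables $(\kk\dwb)_\mp((X'_1,X'_2),-)$. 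The Yoneda lemma then gives $\big((\X_\mp)_*F\big)(X)\cong \prod_{X=X'_1\amalg X'_2}F(X'_1,X'_2)$. The product is finite, so it equals $\bigoplus F(X_1,X_2) = \amalg_*F(X)$.

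Next I identify the action of the generating degree-one morphism $d_{(s,t)} : X \to X\backslash\{s,t\}$ of $(\kk\ddb)_\mp$. Under the above identification, it acts by precomposition on the representing module, i.e. by composing with $\alpha$-type generators in $\dubord$. Using Lemma \ref{lem:unwalling_lemma} again, precomposition with the directed pair $(s,t)$ sends the summand indexed by $(X'_1,X'_2)$ to a summand indexed by $(X_1,X_2)$ with $X_i \subset X'_i$. Running this through Yoneda, the component $F(X_1,X_2)\to F(Z_1,Z_2)$ is nonzero only when the pair $(s,t)$ lifts to a walled pair, that is, $s\in X_1$, $t\in X_2$ (the direction being forced by the wall). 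In that case it equals $F$ applied to the walled contraction $(X_1,X_2)\to(X_1\backslash\{s\},X_2\backslash\{t\})$. Pairs lying on one side, or crossing in the order $(t,s)$, have no preimage under $\ouw$, and so contribute zero. This is the dual of the statement in Proposition \ref{prop:Upsilon_sharp} that $f$ maps $N(X_1,X_2)$ to the single summand $N(Y_1,Y_2)$ with $Y_1\ni f(1)$ and $Y_2\ni f(2)$.

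Since $(\kk\ddb)_\mp$ is homogeneous quadratic over $\kk\fb$, these degree-one actions together with the evident $\fb$-action determine the module. Sign bookkeeping for $\mp$ is harmless: degree-one morphisms carry no ordering sign, and composites inherit compatible signs from $\kk^\mp$ on both sides. Exactness is then immediate, because the underlying functor is $\amalg_*$ applied value-wise, which is exact (a finite direct sum).

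I expect the main obstacle to be the careful check in the second step: that the Yoneda transport of precomposition really yields the projection onto the walled summand, with the correct direction convention. I would first verify this directly. The plan is to show that the stated recipe defines a $(\kk\ddb)_\mp$-module, which needs only the quadratic relations, and then to exhibit the unit $\X_\mp^*(\X_\mp)_*F\to F$ (projection onto the summand $F(X_1,X_2)$) and the counit-style map $G\to(\X_\mp)_*\X_\mp^*G$ (the diagonal $G(X)\to\bigoplus G(X_1\amalg X_2)$). Finally I would check the triangle identities, which reduces the whole matter to the combinatorics of Lemma \ref{lem:unwalling_lemma}.
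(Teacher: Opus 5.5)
Your proposal is correct and is essentially the paper's argument written out. The paper's proof only says that the statement is the explicit description of the right Kan extension along $\X_\mp$ obtained from Lemma~\ref{lem:unwalling_lemma}, by analogy with Proposition~\ref{prop:Upsilon_sharp}; your decomposition of $(\kk \ddb)_\mp(X, \X_\mp(-))$ into the representables $(\kk \dwb)_\mp((X_1,X_2),-)$, followed by Yoneda, is precisely that computation. One small slip: for $\X_\mp^* \dashv (\X_\mp)_*$ the diagonal $G \to (\X_\mp)_*\X_\mp^* G$ is the unit and the projection $\X_\mp^*(\X_\mp)_* F \to F$ is the counit, not the other way round.
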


\begin{proof}
This is analogous to the proof of Proposition \ref{prop:Upsilon_sharp}. The statement gives an explicit description of the right Kan extension based upon Lemma \ref{lem:unwalling_lemma}.
\end{proof}

%%%%%%%%%%%%%%%%%%%%%%%%%%%%%%%%%%%%%%%%%%%%%%%%%%%%%%%%%%%%%%%%%%%%%%%%%%%%%%%%%%%%%
\subsection{Relating $(\kk \db)_{(\pm; \mp)}$ and $(\kk \ddb)_\mp$}
\label{subsect:sgntr^*_downward}

\begin{nota}
\label{nota:dsgntr}
Write 
$
\dsgntr_{(\pm; \mp)} 
: 
(\kk \db)_{(\pm; \mp)} \rightarrow (\kk \ddb)_\mp 
 $
 for the  opposite of the $\kk$-linear functor $
\sgntr_{(\pm; \mp)} 
: 
(\kk \ub)_{(\pm; \mp)} \rightarrow (\kk \dub)_\mp
$ introduced in Proposition \ref{prop:signed_transfer}.
\end{nota}

There is a counterpart of Proposition \ref{prop:sgntr^*_upward}, giving the explicit description of $\dsgntr_{(\pm; \mp)} ^*$:

\begin{prop}
\label{prop:sgntr^*_downward}
For given $(\pm; \mp)$ and $N$ a ($\kk \ddb)_\mp$-module:
\begin{enumerate}
\item 
for $X$ an object of $(\kk \db)_{(\pm; \mp)}$, $\big (\dsgntr_{(\pm; \mp)} ^*N \big) (X)$ is $N (X)$; 
\item 
for a degree one morphism $\overline{[f]}$ of $(\kk \db)_{(\pm; \mp)}$ represented by the class $[f]$ of $f \in \ddb (Y, X) = \fb (Y, X \amalg \mathbf{2})$, the morphism $\big(\dsgntr_{(\pm; \mp)} ^* N \big) (\overline{[f]})$ is given by 
$$
 [f]  + \pm [\tau f] : N (Y) \rightarrow N (X).
$$
\end{enumerate}  
\end{prop}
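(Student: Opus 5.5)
The plan is to deduce the statement by dualizing, that is by passing to opposite categories, from Proposition \ref{prop:sgntr^*_upward}. By Notation \ref{nota:dsgntr}, $\dsgntr_{(\pm; \mp)}$ is the opposite of $\sgntr_{(\pm; \mp)}$. So restriction along it is just precomposition with the functor that is the identity on objects, is the identity on the degree zero subcategory $\kk \fb$ (with $\fb\op$ identified with $\fb$ as in Remark \ref{rem:FB_opposite}), and is given on degree one morphisms by the opposite of the recipe in Proposition \ref{prop:signed_transfer}. The first point of the statement then holds at once, since the functor is the identity on objects.

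For the second point, I would first record how $\dsgntr_{(\pm; \mp)}$ acts on a degree one morphism $\overline{[f]}$ of $(\kk \db)_{(\pm; \mp)}(Y,X)$. Such a morphism is the opposite of a degree one morphism of $(\kk \ub)_{(\pm;\mp)}(X,Y)$, represented by the bijection $f^{-1} : X \amalg \mathbf{2} \cong Y$. By Proposition \ref{prop:signed_transfer}, together with $\kk\fb$-bimodule equivariance (every degree one morphism is a translate of $[\alpha_{(n+1\ n+2)}]$ under the bimodule action, Lemma \ref{lem:kkdub_deg1_generator}), $\sgntr_{(\pm;\mp)}$ sends $\overline{[f^{-1}]}$ to $[f^{-1}] \pm [f^{-1}\tau]$. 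Taking opposites, and using that inversion turns $f^{-1}\tau$ into $\tau^{-1} f = \tau f$ (as $\tau$ is an involution), we get
\[
\dsgntr_{(\pm;\mp)} (\overline{[f]}) = [f] + \pm [\tau f]
\]
in $(\kk\ddb)_\mp (Y,X)$. Here $\tau$ acts on $\fb(Y, X\amalg \mathbf{2})$ through the action of $\sym_2$ on $\mathbf{2}$. Applying $N$ gives the formula in the statement. Since the functor is $\kk$-linear and $(\kk \db)_{(\pm;\mp)}$ is homogeneous quadratic (the opposite of Proposition \ref{prop:homog_quadratic_ub}), these data determine the module.

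The one point needing care is well-definedness, namely that the formula does not depend on the representative $f$ of $\overline{[f]}$. Replacing $f$ by $\tau f$ changes $\overline{[f]}$ by the sign $\pm$ in $(\kk\db)_{(\pm;\mp)}$. The expression becomes $[\tau f] \pm [f] = \pm([f] \pm [\tau f])$, using $(\pm)^2 = 1$, so it is consistent. In degree one the ordering twist $\mp$ plays no role, as in Lemma \ref{lem:kkdub_deg1_generator}. I expect this bookkeeping of the identification $\fb\op\cong\fb$ and of the side on which $\tau$ acts to be the only real obstacle. Everything else is formal duality with the proof of Proposition \ref{prop:sgntr^*_upward}.
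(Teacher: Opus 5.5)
Your proposal is correct and follows the same route as the paper, which gives this proposition as the counterpart of Proposition \ref{prop:sgntr^*_upward} obtained by passing to opposite categories, that is, directly from the definition of $\dsgntr_{(\pm; \mp)}$ as the opposite of $\sgntr_{(\pm; \mp)}$. Your bookkeeping, with inversion turning $f^{-1}\tau$ into $\tau f$ and the sign check showing independence of the representative $f$, spells out details the paper leaves implicit.
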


%%%%%%%%%%%%%%%%%%%%%%%%%%%%%%%%%%%%%%%%%%%%%%%%%%%%%%%%%%%%%%%%%%%%%%%%%%%%%%%%%
\subsection{From $(\kk \dwb)_\mp$-modules to $(\kk \db)_{(\pm; \mp)}$-modules: the functor $\unwall_{(\pm;\mp)}$}
\label{subsect:I}

As in Section \ref{subsect:J}, we may now compose the functors introduced above: 

\begin{defn}
\label{defn:I}
For given $(\pm; \mp)$, let $\unwall_{(\pm;\mp)}$ be the composite functor 
$$
\xymatrix{
(\kk \dwb)_\mp\dash \modules 
\ar[r]^{(\X_\mp )_*}
&
(\kk \ddb)_\mp\dash \modules 
\ar[r]^(.45){\dsgntr_{(\pm; \mp)} ^*}
&
(\kk \db)_{(\pm; \mp)}\dash \modules.
}
$$
\end{defn}

The functor $\unwall_{(\pm; \mp)}$ is described explicitly by the  following: 

\begin{prop}
\label{prop:unwall_functor}
The functor $\unwall_{(\pm; \mp)} : (\kk \dwb)_\mp\dash \modules 
\rightarrow
(\kk \db)_{(\pm; \mp)}\dash \modules$ is determined by the following. 

For $M$ a $(\kk \dwb)_\mp$-module: 
\begin{enumerate}
\item 
for $X$ an object of $(\kk \db)_{(\pm; \mp)}$, there is a natural isomorphism
$$
\big( \unwall_{(\pm; \mp)} M \big)  (X) = \bigoplus_{X = X_1 \amalg X_2} M (X_1, X_2);
$$
\item 
for a morphism $\overline{[h]}$ of $(\kk \db)_{(\pm; \mp)}$ represented by  $h \in  \fb (U \amalg \mathbf{2},X)$ (which identifies with $\ddb (X,U)$), restricting 
to the summand indexed by $(X_1, X_2)$, set $U_i =  h^{-1} (X_i)$, and, if $|U_i|= |X_i|-1$, $\tilde{h} \in \dwbord ((X_1, X_2), (U_1, U_2))$ the map given by the inclusions $U_i \subset X_i$; then  
\begin{enumerate}
\item 
if $h(1) \in X_1$ and $h(2) \in X_2$, then $\overline{[h]}$ acts via $M(\tilde{h}) : M(X_1, X_2) \rightarrow M(U_1, U_2)$; 
\item 
if $h(1) \in X_2$ and $h(2) \in X_1$, then $\overline{[h]}$ acts via $ \pm M(\tilde{h}) : M(X_1, X_2) \rightarrow M(U_1, U_2)$;
\item 
otherwise $\overline{[h]}$ acts by zero.
\end{enumerate}
\end{enumerate}
\end{prop}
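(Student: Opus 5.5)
The plan is to compute $\unwall_{(\pm;\mp)} M = \dsgntr_{(\pm;\mp)}^* (\X_\mp)_* M$ directly from Definition \ref{defn:I}, using the explicit descriptions of the two factors: Proposition \ref{prop:Xi_*} for $(\X_\mp)_*$ and Proposition \ref{prop:sgntr^*_downward} for $\dsgntr_{(\pm;\mp)}^*$. Since $(\kk \db)_{(\pm;\mp)}$ is homogeneous quadratic over $\kk \fb$ (the opposite of Proposition \ref{prop:homog_quadratic_ub}), a module structure is determined by the underlying $\kk\fb$-module together with the action of degree one morphisms. It therefore suffices to identify these two pieces of data.

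\textbf{Objects.} Proposition \ref{prop:sgntr^*_downward} shows that $\dsgntr_{(\pm;\mp)}^*$ does not change values on objects. Proposition \ref{prop:Xi_*} gives $\big((\X_\mp)_*M\big)(X) = \bigoplus_{X = X_1\amalg X_2} M(X_1,X_2)$, which is the underlying $\kk\fb$-module $\amalg_* M$. This proves the first point. Naturality in $M$ is clear because both factors are functors.

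\textbf{Degree one morphisms.} Take $\overline{[h]}$ represented by $h\in \fb(U\amalg \mathbf{2}, X)$, viewed as a morphism of $\ddb(X,U)$ whose ordered pair is $(h(1),h(2))$. By Proposition \ref{prop:sgntr^*_downward}, it acts on $(\X_\mp)_*M$ by $[h] \pm [\tau h]$, where $[\tau h]$ is the same contraction with the reversed ordered pair $(h(2),h(1))$.

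I then apply Proposition \ref{prop:Xi_*} to each term, restricted to the summand $M(X_1,X_2)$:
\begin{enumerate}
\item $[h]$ acts through the walled morphism $\tilde h$ exactly when $h(1)\in X_1$ and $h(2)\in X_2$, and acts as zero otherwise.
\item $[\tau h]$ acts through the corresponding walled morphism exactly when $h(2)\in X_1$ and $h(1)\in X_2$, and acts as zero otherwise.
\end{enumerate}
These two conditions are mutually exclusive. This yields the three cases of the statement: action by $M(\tilde h)$, action by $\pm M(\tilde h)$, and zero. In the second case the walled morphism obtained from $\tau h$ is again given by the inclusions $U_i = h^{-1}(X_i)\subset X_i$, so it coincides with $\tilde h$.

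\textbf{Expected obstacle.} The main point to check is this identification in the second case. The morphism of $(\kk \dwb)_\mp$ attached to $\tau h$ and to the decomposition $(X_1,X_2)$ must be $\tilde h$ itself, with no further sign. This holds because, in degree one, $(\kk\dwb)_\mp$ agrees with $\kk\dwbord$ (Lemma \ref{lem:gen_deg_one_kkuwbmp}), and a walled pair carries its canonical direction, so reversing the ordered pair in $X$ does not change the walled morphism. The only sign that appears is therefore the $\pm$ coming from $\dsgntr$. Finally, exactness holds because both $(\X_\mp)_*$ and $\dsgntr^*_{(\pm;\mp)}$ are exact.
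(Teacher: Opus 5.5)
Your proposal is correct and follows essentially the same route as the paper's proof. Like the paper, you compose Proposition \ref{prop:Xi_*} with Proposition \ref{prop:sgntr^*_downward}, so that $\overline{[h]}$ acts as $[h] \pm [\tau h]$ on $\bigoplus_{X = X_1 \amalg X_2} M(X_1,X_2)$. You then observe, as the paper does, that $\tilde h$ is independent of whether $h(1)$ lies in $X_1$ or $X_2$, so the only sign that appears is the $\pm$ coming from the transfer.
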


\begin{proof}
This is the analogue of Proposition \ref{prop:J} and follows by combining Proposition \ref{prop:Xi_*} with
 Proposition \ref{prop:sgntr^*_downward}.

We provide some details on the action of the morphism $\overline{[h]}$ on $\big( \unwall_{(\pm; \mp)} M \big)  (X) = \bigoplus_{X = X_1 \amalg X_2} M (X_1, X_2)$, based on the description of the functor $\dsgntr^*$ from Proposition \ref{prop:sgntr^*_downward}. By that result, $\overline{[h]}$ acts via $(\id + \pm \tau ) \circ [h]$ acting on $ (\X_* M)   (X) = \bigoplus_{X = X_1 \amalg X_2} M (X_1, X_2)$.

Now, consider the restriction to the summand $M (X_1, X_2)$ and define $U_i = h^{-1} (X_i)$ as in the statement. If $|U_i| \neq |X_i|-1$, then Proposition \ref{prop:Xi_*} implies that the action is zero on $M (X_1, X_2)$. Otherwise,  we have $\tilde{h} \in \dwbord ((X_1, X_2), (U_1, U_2))$ given by the inclusions $U_i \subset X_i$;  this is independent of whether $h(1)$ belongs to $X_1$ or to $X_2$. 

By Proposition \ref{prop:Xi_*}, if $h(1) \in X_1$ and $h(2) \in X_2$, then $(\id + \pm \tau ) \circ [h]$ acts as $[h]$, hence as $M (\tilde{h}) : M (X_1, X_2) \rightarrow M (U_1, U_2) \subset ((\X_\mp)_* M)   (U)$. In the other case, i.e., $h(1) \in X_2$ and $h(2) \in X_1$, it acts as $\pm \tau [h]$, hence as $\pm M (\tilde{h}): M (X_1, X_2) \rightarrow M (U_1, U_2) \subset ((\X_\mp)_* M)   (U)$ (here the $\tau$ has been absorbed within the construction, since the definition of  $(U_1, U_2)$ and of $\tilde{h}$  does not depend on the case).
\end{proof}

\begin{exam}
Let $V$ be a finite-dimensional $\kk$-vector space. Then, analogously to the symplectic case considered in  Example \ref{exam:restrict_sympl_vs_module}, we have the $\kk \dwb$-module with values given by
$$
(\mathbf{s}, \mathbf{t}) \mapsto (V^\sharp) ^{\otimes s} \otimes V^{\otimes t}.
$$
Contractions across the wall act via the evaluation map $\varepsilon : V^\sharp \otimes V \rightarrow \kk$.

Now, taking $(\pm; \mp)$ to be $(-;+)$, we can apply the functor $\unwall_{(-;+)}$, which yields a $(\kk \db)_{(-;+)}$-module. It is straightforward to see that this has values:
$$
\n \mapsto (V^\sharp \oplus V) ^{\otimes n};
$$
 the $V^\sharp$ terms come from the left of the wall and the $V$ terms from the right. The action of $\sym_n$ is by place permutations.

To understand the action of the other morphisms in $(\kk \db)_{(-;+)}$, one can  reduce to considering the action of $\overline{[h]}$ for $h \in \ddb (\mathbf{2}, \mathbf{0})$ the map corresponding to the ordered pair $(1 , 2)$. For $X = \mathbf{2}$, we can focus upon the cases $(X_1, X_2) \in \{ (\{1\}, \{2\}), (\{2\}, \{1\}) \}$, since the structure maps in the other cases are obviously zero. 
 In the case  $(X_1, X_2) = (\{1\}, \{2\})$, the structure map identifies as 
$
V^\sharp \otimes V \stackrel{\varepsilon}{\rightarrow}\kk$.
 In the case  $(X_1, X_2) = (\{2\}, \{1\})$, the structure map identifies as 
$
V \otimes V^\sharp \cong 
V^\sharp \otimes V \stackrel{-\varepsilon}{\rightarrow}\kk$,
 with sign due to $\pm =-$.

It follows that this $(\kk \db)_{(-;+)}$-module identifies with that associated to $V^\sharp \oplus V$ equipped with its standard symplectic form (see Example \ref{exam:restrict_sympl_vs_module}).
\end{exam}

%%%%%%%%%%%%%%%%%%%%%%%%%%%%%%%%%%%%%%%%%%%%%%%%%%%%%%%%%%%%%%%%%%%%%%%%%%%%%%%
\subsection{A refinement of Theorem \ref{thm:upsilon^*kkub}}

Now that we have the functor $\unwall_{(\pm; \mp)}$ available to us, we can provide the following consequence of Theorem \ref{thm:upsilon^*kkub} that was advertised in 
 Remark \ref{rem:more_naturality_upsilon^*kkub}:
 
 \begin{cor}
 \label{cor:surject_uspilon^*kkub_unwall_kkuwb}
 There is a surjection of $(\kk \db)_{(\pm; \mp)} \cten  (\kk \uwb)_\mp$-modules:
 $$
 \upsilon_{(\pm; \mp)}^* (\kk \ub)_{(\pm; \mp)}
\twoheadrightarrow 
\unwall_{(\pm; \mp)}(\kk \uwb)_{\mp},
$$
where the variance dictates how the functors are applied.
 This admits a section after restriction to $\kk \fb\op\cten  (\kk \uwb)_\mp$-modules.
\end{cor}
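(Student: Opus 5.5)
The plan is to construct the surjection directly at the level of objects and then check that it respects both module structures. Theorem \ref{thm:upsilon^*kkub} gives the isomorphism
$$
\upsilon_{(\pm; \mp)}^* (\kk \ub)_{(\pm; \mp)}
\cong
(\kk \uwb)_{\mp} \otimes_{\kk (\tfb)} \big( \amalg_*\op (\kk \ub)_{(\pm; \mp)}^{\cten  2}\big)
$$
of $\kk \fb\op \cten (\kk \uwb)_\mp$-modules. Composing with the surjection induced by the augmentation $(\kk \ub)_{(\pm;\mp)} \twoheadrightarrow \kk\fb$ on both tensor factors gives the surjection of Remark \ref{rem:more_naturality_upsilon^*kkub},
$$
\upsilon_{(\pm; \mp)}^* (\kk \ub)_{(\pm; \mp)} \twoheadrightarrow \amalg_*\op (\kk \uwb)_\mp.
$$
By Proposition \ref{prop:unwall_functor}, the underlying $\kk\fb\op \cten (\kk\uwb)_\mp$-module of $\unwall_{(\pm;\mp)} (\kk\uwb)_\mp$ is exactly $\amalg_*\op (\kk\uwb)_\mp$. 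Here $\unwall$ is applied to the contravariant variable, viewing $(\kk\uwb)_\mp$ as a $(\kk\dwb)_\mp$-module in that variable. So the candidate map and its surjectivity are already in hand.

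Concretely, the map acts as follows on a class represented by $f\in \fb (U \amalg (\two\times \mathbf{N}), X\amalg Y)$. Using the decomposition $\mathbf{N} = \mathbf{N}_1\amalg \mathbf{N}_2 \amalg \mathbf{N}'$ from the proof of Theorem \ref{thm:upsilon^*kkub}:
\begin{enumerate}
\item if $\mathbf{N}_1$ or $\mathbf{N}_2$ is non-empty, the class is sent to zero;
\item otherwise, it is sent to the walled morphism in $(\kk\uwb)_\mp((U_1,U_2),(X,Y))$, with $U_i = f^{-1}$ of $X$, respectively $Y$;
\item a crossing pair listed as $(y,x)$ with $y\in Y$, $x\in X$ is first reoriented to $(x,y)$, introducing the sign $\pm$.
\end{enumerate}

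The main step is to check that this map is $(\kk\db)_{(\pm;\mp)}$-linear in the $U$ variable, not just $\kk\fb\op$-linear. Since $(\kk\db)_{(\pm;\mp)}$ is homogeneous quadratic over $\kk\fb$ (Proposition \ref{prop:homog_quadratic_ub}), it suffices to treat a degree one morphism $\overline{[h]}$ given by a pair $(u_1,u_2)$ added to $U$. On the source, it acts by precomposition in $(\kk\ub)_{(\pm;\mp)}$: it adjoins the pair $\{f(u_1),f(u_2)\}$ to the pairs of $f$, with the orientation sign determined by $(\pm;\mp)$. On the target, the action is given by Proposition \ref{prop:unwall_functor}, and there are three cases.
\begin{enumerate}
\item If $f(u_1)\in X$ and $f(u_2)\in Y$, the new pair is walled in the right order, and both sides give the same walled morphism.
\item If the order is reversed, the reorientation sign in the source, namely the $\pm$ relation $[\alpha_{(ij)}]=\pm[\alpha_{(ji)}]$ of Lemma \ref{lem:kkdub_deg1_generator}, matches the $\pm$ in case (b) of Proposition \ref{prop:unwall_functor}.
\item If both images lie on the same side of the wall, the new pair is non-walled, so the source image vanishes under the augmentation; the target action is zero by case (c).
\end{enumerate}
The ordering sign $\mp$ is compatible in all three cases, because the walled pairs are kept in the same relative order, as in the descent argument of Theorem \ref{thm:upsilon^*kkub}. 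I expect this sign bookkeeping to be the only real obstacle. I would handle it by working first in $\kk\dubord$ and $\kk\uwbord$ and then checking compatibility with the quotients. Compatibility with the $(\kk\uwb)_\mp$-action is already part of Theorem \ref{thm:upsilon^*kkub}.

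For the section after restriction to $\kk\fb\op\cten(\kk\uwb)_\mp$: by Theorem \ref{thm:upsilon^*kkub}, it suffices to split the augmentation $\amalg_*\op(\kk\ub)^{\cten 2}_{(\pm;\mp)} \twoheadrightarrow \amalg_*\op\kk(\tfb)$ as $\kk(\tfb)$-bimodules. The inclusion of degree zero morphisms $\kk\fb\hookrightarrow(\kk\ub)_{(\pm;\mp)}$ provides this splitting. Applying $(\kk\uwb)_\mp\otimes_{\kk(\tfb)}-$ then yields the required section, which is compatible with the $\kk\fb\op$-action since that action preserves the degree zero summand.
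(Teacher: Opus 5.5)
Your proposal is correct and follows essentially the paper's proof. The map and its $\kk \fb\op \cten (\kk \uwb)_\mp$-linear section come from Theorem \ref{thm:upsilon^*kkub} via the augmentation; your degree-zero splitting of $(\kk \ub)^{\cten 2}_{(\pm;\mp)}$ just makes explicit what the paper calls straightforward. The $(\kk \db)_{(\pm;\mp)}$-linearity is checked on degree one morphisms by the same case split as in the paper, with the reorientation sign $\pm$ matching case (b) of Proposition \ref{prop:unwall_functor}. For the ordering sign $\mp$, the cleanest justification is the functoriality of $\ouw$ (Proposition \ref{prop:unwall_uwbord}): this is what makes precomposition in $\uwbord$ correspond to precomposition in $\dubord$.
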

 
 \begin{proof}
 Theorem \ref{thm:upsilon^*kkub} provides the surjection in $\kk \fb\op \cten  (\kk \uwb)_\mp$-modules:
 $$
 \upsilon_{(\pm; \mp)}^* (\kk \ub)_{(\pm; \mp)}
\twoheadrightarrow 
\amalg_\sharp\op (\kk \uwb)_{\mp},
$$
as explained in Remark \ref{rem:more_naturality_upsilon^*kkub}. This is the restriction of the morphism in the statement and, moreover, it is straightforward to check that the Theorem also provides the section in  $\kk \fb\op\cten  (\kk \uwb)_\mp$-modules. It remains to prove that this surjection is compatible with the $(\kk \db)_{(\pm; \mp)} $-module structure.

To show this, let us unwind the definition of this surjection. Consider an element $[f]$ of the domain represented by a class $f \in \fb (U \amalg (\two \times \n), X \amalg Y)$ (as in the proof of Theorem  
\ref{thm:upsilon^*kkub}). If one of the pairs $\two \times \{i\}$ lies entirely within $X$ or entirely within $Y$, then $[f]$ is sent to zero.  Otherwise, without loss of generality, we may assume that the pairs send $(1,i)$ to $X$ and $(2,i)$ to $Y$; thus, writing $U_1 := f^{-1} (X) $ and $U_2:= f^{-1}(Y)$, the map $f$ can be rewritten canonically as an element  $f' \in \tfb ((U_1 \amalg \n, U_2 \amalg \n), (X,Y))$. The class $[f']$ in $(\kk \uwb)_\mp ((U_1, U_2), (X,Y))$ corresponds to the image of $[f]$.

A subtlety in the above is at the `without loss of generality' step. We may require to change the representative of $f$ by changing the direction of some pairs;  this may introduce signs. 

To conclude, we must show that the above surjection is compatible with the action of the degree one morphisms of $ (\kk \db)_{(\pm; \mp)} $, using the identification of $\unwall_{(\pm; \mp)}$ 
 given in Proposition \ref{prop:unwall_functor}. Namely, it suffices to consider the structure morphisms associated to $U \backslash \{ a, b\} \subset U$ for a directed pair $(ab) $ in $U$. Now, for $f$ as above, with the associated decomposition $U= U_1 \amalg U_2$, if either $(ab)$ is contained in $U_1$ or it is contained in $U_2$, then it is clear that the structure morphism is zero on both domain and codomain. 
 
 In the remaining cases, either $a \in U_1$ and $b \in U_2$, in which case the direction of the pair is already compatible with the wall, or $a \in U_2$ and $b\in U_2$, when it is not. Using Proposition \ref{prop:unwall_functor} and the explicit description of the surjection, one checks that the latter is compatible with the respective structure maps. This reduces to the fact that the signs that appear are compatible.
 \end{proof}

 Dualizing (in the spirit of Corollary \ref{cor:dupsilon_kkub^sharp}) gives the following:
 
  \begin{cor}
 \label{cor:inject_junwall_dupsilon}
 There is an injection of $ (\kk \dwb)_\mp \cten   (\kk \ub)_{(\pm; \mp)}$-modules:
 $$
\junwall_{(\pm; \mp)}\big( (\kk \uwb)_{\mp}^\sharp\big) 
\hookrightarrow 
 \dupsilon_{(\pm; \mp)}^*\big( (\kk \ub)_{(\pm; \mp)} ^\sharp \big), 
$$
where the variance dictates how the functors are applied.
 This admits a retract after restriction to $(\kk \dwb)_\mp \cten   \kk \fb$-modules.
 \end{cor}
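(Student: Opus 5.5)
The plan is to obtain the statement by applying vector space duality $(-)^\sharp$ to the surjection of Corollary \ref{cor:surject_uspilon^*kkub_unwall_kkuwb}, exactly as Corollary \ref{cor:dupsilon_kkub^sharp} was deduced from Theorem \ref{thm:upsilon^*kkub}. First I check finiteness. For fixed $U$ and $(X,Y)$, the value $(\kk \ub)_{(\pm;\mp)}(U, X\amalg Y)$ is finite-dimensional. So is $\big(\unwall_{(\pm;\mp)}(\kk\uwb)_\mp\big)(U,(X,Y)) = \bigoplus_{U=U_1\amalg U_2}(\kk\uwb)_\mp((U_1,U_2),(X,Y))$, since the indexing set of decompositions is finite. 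Hence dualization is an exact contravariant functor on these bimodules that takes surjections to injections and split surjections to split injections. It also interchanges the covariant and contravariant variables: a $(\kk\db)_{(\pm;\mp)}\cten(\kk\uwb)_\mp$-module dualizes to a $(\kk\dwb)_\mp\cten(\kk\ub)_{(\pm;\mp)}$-module.

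Next I identify the dual of the domain. As in the proof of Corollary \ref{cor:dupsilon_kkub^sharp}, $\big(\upsilon^*_{(\pm;\mp)}(\kk\ub)_{(\pm;\mp)}\big)^\sharp \cong \dupsilon^*_{(\pm;\mp)}\big((\kk\ub)^\sharp_{(\pm;\mp)}\big)$, with $\dupsilon^*$ acting on the now contravariant variable. This holds because restriction commutes with taking duals valuewise. For the codomain, I need a natural isomorphism
$$\big(\unwall_{(\pm;\mp)}(\kk\uwb)_\mp\big)^\sharp \cong \junwall_{(\pm;\mp)}\big((\kk\uwb)_\mp^\sharp\big).$$
On objects both sides are $\bigoplus_{U=U_1\amalg U_2}(\kk\uwb)_\mp((U_1,U_2),(X,Y))^\sharp$, using that finite direct sums are self-dual. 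For the action of degree one morphisms, I compare Proposition \ref{prop:unwall_functor} with Proposition \ref{prop:J}. The transpose of the map given by a directed pair $(ab)$ in $\unwall$ is nonzero only from summands in which the pair crosses the wall. It acts there by $M(\tilde h)$ or $\pm M(\tilde h)$, according to whether the direction agrees with the wall. This is precisely the component pair $([\tilde g],\pm[\tilde g'])$ appearing in the description of $\junwall$ applied to the dual module. Degree zero morphisms match trivially. Since $(\kk\ub)_{(\pm;\mp)}$ is homogeneous quadratic over $\kk\fb$ (Proposition \ref{prop:homog_quadratic_ub}), agreement on these generators suffices. Conceptually, this is the statement that $(\X_\mp)_*$ and $(\Upsilon_\mp)_\sharp$ are dual to one another and that $\dsgntr^*$ is dual to $\sgntr^*$, as $\dsgntr$ is by definition the opposite functor.

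Combining these identifications, the dual of the surjection becomes the required injection of $(\kk\dwb)_\mp\cten(\kk\ub)_{(\pm;\mp)}$-modules. The dual of the section, which exists after restricting to $\kk\fb\op\cten(\kk\uwb)_\mp$, gives a retract after restricting to $(\kk\dwb)_\mp\cten\kk\fb$-modules.

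I expect the main obstacle to be the sign bookkeeping in the duality $\unwall(-)^\sharp\cong\junwall((-)^\sharp)$. One has to check that the $\pm$ attached to a pair directed against the wall in Proposition \ref{prop:unwall_functor} transposes to the same $\pm$ in Proposition \ref{prop:J}, rather than to its inverse or to a sign involving $\mp$. The first thing I would do is verify this directly on the generator $[\alpha_{(n+1\ n+2)}]$, using that degree one morphisms carry no ordering sign.
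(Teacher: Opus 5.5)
Your proposal is correct and is essentially the paper's argument: the paper obtains this corollary simply by dualizing Corollary~\ref{cor:surject_uspilon^*kkub_unwall_kkuwb} in the spirit of Corollary~\ref{cor:dupsilon_kkub^sharp}, and your finiteness check and your passage from the section to a retract are exactly what that requires. Your explicit identification $\bigl(\unwall_{(\pm;\mp)} M\bigr)^\sharp \cong \junwall_{(\pm;\mp)}(M^\sharp)$ fills in a step the paper leaves implicit: duality exchanges the right adjoint $(\X_\mp)_*$ with the left adjoint $(\Upsilon_\mp)_\sharp$, and $\dsgntr^*_{(\pm;\mp)}$ with $\sgntr^*_{(\pm;\mp)}$. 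The sign concern you raise is harmless, since both transfer functors use the same $\pm$ and $\pm 1$ is its own inverse.
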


\part{Structures associated to operads, dioperads, and cyclic operads}
\label{part:opd_diopd_cpd}

 \section{Cyclic operads, dioperads, and operads}
\label{sect:dioperads}

In this section we review cyclic operads, dioperads, and operads together with the relationships between these.
 The presentation is designed so as to make the introduction in Section \ref{sect:mod_cyc_di} of the associated module structures over the appropriate twisted Brauer-type categories transparent.

%%%%%%%%%%%%%%%%%%%%%%%%%%%%%%%%%%%%%%%%%%%%%%%%%%%%%%%%
\subsection{Recollections on cyclic operads}
\label{subsect:recollect_cyclic}

As a warm-up, we give a definition of the category of cyclic operads that is similar to \cite[Definition 6.3]{MR4199072}, but using the Day convolution product $\odot$ in the category of $\kk \fb$-modules to encode the structure morphism. (See Section \ref{sect:day_convolution} for some recollections on $\odot$ and Section \ref{sect:symm_exterior} for the associated symmetric and exterior powers $\sfb^*$ and $\lfb^*$.) Denote by $\kk_\mathbf{2}$  the $\kk \fb$-module supported on $\mathbf{2}$ with value $\kk$ with trivial $\sym_2$-action. 

\begin{exam}
\label{exam:diagonal_kkfb}
For $M$ a $\kk \fb$-module, the $\kk \fb$-module $M \odot \kk _\mathbf{2}$ has values
$$
(M \odot \kk_\mathbf{2}) (X) = \bigoplus_{\{u,v\}\subset X} M (X \backslash \{u,v\}), 
$$
where the sum is over the  cardinal two subsets of $X$.

Consider a morphism of $\kk \fb$-modules $\zeta : M \odot M \rightarrow M \odot \kk_\mathbf{2}$. The restriction of $\zeta_X$ to  the  component  
$M(U) \otimes M(V) \subset M \odot M (X)$ 
corresponding to the ordered decomposition $X = U \amalg V$ is thus  of the form 
\begin{eqnarray}
\label{eqn:M_zeta_components}
M (U) \otimes M (V) \rightarrow \bigoplus_{\{x,y\} \subset X} M(X \backslash \{ x, y\}).
\end{eqnarray}
\end{exam} 

\begin{defn}
\label{defn:M_zeta}
A morphism of $\kk \fb$-modules $\zeta : M \odot M \rightarrow M \odot \kk_\mathbf{2}$ is 
\begin{enumerate}
\item 
{\em diagonal} if, for each decomposition $X = U \amalg V$, the component of (\ref{eqn:M_zeta_components}) indexed by $\{x, y \}$ is zero if either $\{x,y\}$ is contained in $U$ or it is contained in $V$;
\item 
{\em symmetric} if it factorizes as $M \odot M \twoheadrightarrow \sfb^2 M \rightarrow M \odot \kk_\mathbf{2}$, where $M \odot M \twoheadrightarrow \sfb^2 M$ is 
the canonical surjection.
\end{enumerate}
\end{defn}

\begin{rem}
\label{rem:non-triviality_muC}
Suppose that $\zeta$  as in Definition \ref{defn:M_zeta} is diagonal. 
 Thus, in the notation of  (\ref{eqn:M_zeta_components}), the component indexed by  $(U,V) $ and $\{x, y\}\subset X $ can be non-zero only in one of the following cases:
\begin{enumerate}
\item 
$x \in U$ and $y \in V$; 
\item 
$x \in V$ and $y \in U$.
\end{enumerate}
If $\zeta$ is symmetric then $\zeta$ is determined by the components corresponding to the first case.
\end{rem}

\begin{lem}
\label{lem:extend_zeta_sfb}
For $\zeta : M \odot M \rightarrow M \odot \kk _\mathbf{2}$ a symmetric morphism in the sense of Definition \ref{defn:M_zeta}, for each $t \in \nat$ there is a natural extension:
$$
\sfb^t M \rightarrow \sfb^{t-1}M \odot \kk_\mathbf{2}
$$
such that:
\begin{enumerate}
\item 
this is zero for $t<2$; 
\item 
for $t\geq  2$, it is the composite:
$$
\sfb^t M
\rightarrow 
\sfb^{t-2} M \odot \sfb^2 M 
\rightarrow 
\sfb^{t-2} M \odot M \odot \kk_\mathbf{2} 
\rightarrow 
\sfb^{t-1} M \odot \kk_\mathbf{2}, 
$$ 
where the first map is the coproduct $\sfb^t M \rightarrow \sfb^{t-2} M \odot \sfb^2 M$, the second is induced by $\zeta : \sfb^2 M \rightarrow M \odot \kk_\mathbf{2}$, and the last map is induced by the product $\sfb^{t-2} M \odot M \rightarrow \sfb^{t-1}M$.
\end{enumerate} 
\end{lem}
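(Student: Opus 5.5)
The statement needs two things: that the composite in (2) is well defined and natural in $M$, and that it vanishes for $t<2$. Nothing is required for $t<2$: we simply define the map to be zero there. For $t \geq 2$ the composite is built entirely from natural constructions.

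First, recall from the appendix on Day convolution that $\sfb^* M$ is the free commutative monoid on $M$ for $\odot$. As such it is a commutative and cocommutative bialgebra in $\kk\fb$-modules. In particular, there is a coproduct component $\sfb^t M \rightarrow \sfb^{t-2} M \odot \sfb^2 M$: it is the composite of the inclusion of $\sfb^t M$ into $\sfb^t (M\oplus M)$, for the diagonal $M \to M \oplus M$, with the projection onto the bidegree $(t-2,2)$ part. Equivalently, working in characteristic zero, $\sfb^t M$ is the image of the symmetrization idempotent on $M^{\odot t}$. On $M^{\odot t}$ the coproduct is the sum over all ways of choosing two of the $t$ tensor factors, and it lands in symmetric tensors.

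Next, the middle map is $\sfb^{t-2} M \odot \zeta'$, where $\zeta' : \sfb^2 M \rightarrow M \odot \kk_\mathbf{2}$ is the factorization of $\zeta$ provided by symmetry (Definition \ref{defn:M_zeta}). Here we use that $\odot$ is a bifunctor, so this map is a morphism of $\kk\fb$-modules. We then reassociate using the associativity isomorphism of $\odot$. The last map is the product $\sfb^{t-2} M \odot M \rightarrow \sfb^{t-1} M$ tensored with $\kk_\mathbf{2}$. Each of the three pieces is a morphism of $\kk\fb$-modules, hence so is their composite.

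Naturality then holds in the following sense: for a morphism $\phi : M \to N$ compatible with symmetric maps $\zeta_M, \zeta_N$ (meaning $(\phi\odot \kk_\mathbf{2})\zeta_M = \zeta_N (\phi\odot\phi)$), each piece commutes with the maps induced by $\phi$. For the coproduct and the product this follows from the functoriality of $\sfb^*$. For the middle piece it follows from the compatibility of the factorizations $\zeta'$, which holds because $M\odot M \twoheadrightarrow \sfb^2 M$ is surjective.

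The only potential subtlety is making the coproduct precise at the level of $\kk\fb$-modules and checking that the composite is independent of choices, for instance of the normalization of the coproduct. I would handle this by working on $M^{\odot t}$ with the explicit symmetric-group description and passing to coinvariants. Since $\kk$ has characteristic zero, coinvariants and invariants agree, so every map above descends; no further verification is needed for the statement as given.
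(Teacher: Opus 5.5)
Your proposal is correct and follows essentially the same argument as the paper. The paper also takes the composite in the statement as the definition. It derives naturality from the naturality of the product and coproduct on $\sfb^* M$, together with the uniqueness of the factorization $\sfb^2 M \rightarrow M \odot \kk_\mathbf{2}$, which holds because $M \odot M \twoheadrightarrow \sfb^2 M$ is surjective.
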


\begin{proof}
The construction of the extension is given in the statement. Naturality  follows from that of the coproduct and product of $\sfb^* M$ and the fact that the morphism $\sfb^2 M \rightarrow M \odot \kk_\mathbf{2}$ is uniquely determined by $\zeta$, since $M \odot M \twoheadrightarrow \sfb^2 M$ is surjective.
\end{proof}

\begin{defn}
\label{defn:cyclic_operad}
A cyclic operad {\em with unit} $\cpd$ is a $\kk \fb$-module such that  $\cpd (\mathbf{0})=0$, equipped with structure maps 
\begin{eqnarray*}
\mu_\cpd &:& \cpd \odot \cpd \rightarrow \cpd \odot \kk_{\mathbf{2}} 
\\
\eta_\cpd &:& \kk_{\mathbf{2}} \rightarrow \cpd
\end{eqnarray*}
that satisfy the following axioms:
\begin{enumerate}
\item 
$\mu_\cpd$ is {\em diagonal};
\item 
$\mu_\cpd$ is {\em symmetric}; 
\item 
$\mu_\cpd$ is {\em associative} in that the following composite is zero
\begin{eqnarray}
\label{eqn:mu_cpd_associative}
\sfb^3 \cpd \rightarrow \sfb^2 \cpd \odot \kk_\mathbf{2} \rightarrow \cpd \odot \kk _\mathbf{2} \odot \kk_\mathbf{2} \twoheadrightarrow \cpd \odot \lfb^2 \kk_{\mathbf{2}},
\end{eqnarray}
where the first two maps are induced by $\mu_\cpd$ (using the extension given by Lemma \ref{lem:extend_zeta_sfb} for the first) and the last is induced by the canonical projection $ \kk _\mathbf{2} \odot \kk_\mathbf{2} \twoheadrightarrow \lfb^2 \kk_\mathbf{2}$.
\item 
$\eta_\cpd$ is the {\em unit} for $(\cpd, \mu_\cpd)$, i.e., the following diagram commutes:
$$
\xymatrix{
\kk _\mathbf{2} \odot \cpd 
\ar[rd]_{\tau}
\ar[r]^{\eta_\cpd \odot \id} 
&
\cpd \odot \cpd 
\ar[d]|{\mu_\cpd} 
&
\cpd \odot \kk_\mathbf{2}
\ar[l]_{\id \odot \eta_\cpd} 
\ar[ld]^\id
\\
&
\cpd \odot \kk_\mathbf{2},
}
$$
where $\tau : \kk _\mathbf{2} \odot \cpd  \stackrel{\cong}{\rightarrow} \cpd \odot \kk_\mathbf{2}$ is the symmetry for $\odot$.
\end{enumerate}

A morphism of cyclic operads with unit $(\cpd, \mu_\cpd, \eta_\cpd) \rightarrow (\cpd' , \mu_{\cpd'} , \eta_{\cpd'})$ is  a morphism $\cpd \rightarrow \cpd'$ of $\kk \fb$-modules that is compatible with the structure morphisms. The category of cyclic operads {\em with unit} is denoted $\copds$.

The category of cyclic operads {\em without unit} is defined as above, but omitting $\eta_\cpd$ and all axioms concerning it. This category is denoted by $\nuco$, so that there is a forgetful functor $\copds \rightarrow \nuco$.
\end{defn}

\begin{rem}
\label{rem:associativity_cyclic}
To show that the definition of the category of cyclic operads given above is equivalent to \cite[Definition 6.3]{MR4199072}, the main point is to show that the associativity axiom above  is equivalent to \cite[Definition 6.3, Axiom (iv)]{MR4199072}. 

Consider the composite 
$
\sfb^3 \cpd \rightarrow \sfb^2 \cpd \odot \kk_\mathbf{2} \rightarrow \cpd \odot \kk _\mathbf{2} \odot \kk_\mathbf{2}. 
$
 Evaluated on a finite set $X$, this has component corresponding to a decomposition $X = U \amalg V \amalg W$ represented by
$$
\cpd (U) \otimes \cpd (V) \otimes \cpd (W)
\rightarrow 
\bigoplus_{(x_1, x_2) , (x_3, x_4)} \cpd ( X \backslash \{ x_1 , x_2, x_3, x_4\}), 
$$
where the $x_i$ are pairwise disjoint, the elements of the pairs $(x_1, x_2)$ and $(x_3, x_4)$ are unordered whereas the pairs are ordered.

Fix a choice of pairs $(x_1, x_2)$, $(x_3, x_4)$ (which can occur in two different orders). Using the symmetry properties together with the fact that $\mu_\cpd$ is diagonal and using Remark \ref{rem:non-triviality_muC}, for these composites to be non-trivial, without significant loss of generality, we may suppose that $x_1 \in U$, $x_2, x_3 \in V$ and $x_4 \in W$. (This is essentially the pigeonhole principle, exploiting diagonality.)

The associativity axiom gives the equality of the two composites obtained using the two possible orders of these pairs; this  is equivalent to  \cite[Definition 6.3, Axiom (iv)]{MR4199072}. 
\end{rem}

%%%%%%%%%%%%%%%%%%%%%%%%%%%%%%%%%%%%%%%%%%%%%%%%%%%%%%%%%%%
\subsection{Dioperads}

We give a presentation of dioperads  using the framework of $\kk (\tfb)$-modules, analogous to that for cyclic operads given in Section \ref{subsect:recollect_cyclic}. (This extends the approach for operads used in \cite{P_opd_wall}.) 

\begin{nota}
\label{nota:kk_mn}
For $m, n \in \nat$, denote by $\kk _{(\m, \n)}$ the $\kk(\tfb)$-module with constant value $\kk$ supported on $(\m, \n)$. This identifies with $\triv_m \boxtimes \triv_n$, considered as a $\kk (\fb \times \fb)$ module supported on $(\m, \n)$.
\end{nota}

The category $\kk (\tfb)$ is equipped with the symmetric monoidal structure $\circledcirc$ given by Day convolution, with unit $\kk_{(\mathbf{0}, \mathbf{0})}$ (see Section \ref{sect:day_convolution} for some recollections and properties of $\circledcirc$). In particular, this yields the symmetric power functors $\stfb^*$ and the exterior power functors $\ltfb^*$ (see Section \ref{sect:symm_exterior}).

\begin{exam}
\label{exam:M_cc_kk}
For a $\kk (\tfb)$-module $M$, one can form $M \circledcirc \kk _{(\one, \one)}$. For $(X,Y)$ an object of $\tfb$, one has 
\begin{eqnarray}
\label{eqn:circledcirc_kk_1_1}
(M \circledcirc \kk_{(\one, \one)})(X,Y) \cong \bigoplus_{x \in X, y\in Y} M (X \backslash \{ x\} , Y \backslash \{y \}).
\end{eqnarray}
Thus, $-\circledcirc \kk_{(\one, \one)}$ can be considered as a `global' form of the induction functor from $\kk (\sym_m \times \sym_n)$-modules to $\kk (\sym_{m+1} \times \sym_{n+1})$-modules, as $(m,n)$ ranges over $\nat^{\times 2}$.

The functor $-\circledcirc \kk_{(\one, \one)}$  can be iterated; for example, one has 
\begin{eqnarray}
\label{eqn:M_cc_kk_cc_kk}
(M \circledcirc \kk_{(\one, \one)} \circledcirc \kk_{(\one, \one)})(X,Y) \cong \bigoplus_{x_1 \neq x_2 \in X, y_1 \neq y_2\in Y} M (X \backslash \{ x_1, x_2\} , Y \backslash \{y_1,y_2\}).
\end{eqnarray}
Here,  the `walled' pairs $(x_1, y_1)$ and $(x_2, y_2)$ are ordered. This corresponds to the fact that $\kk _{(\one, \one)} \circledcirc \kk_{(\one,\one)}$ is supported on $(\mathbf{2}, \mathbf{2})$ and identifies as $\kk \sym_2 \boxtimes \kk \sym_2$ (considered as a $\kk (\tfb)$-module). Changing the order of the pairs corresponds to the diagonal action of $\sym_2$ on $\kk \sym_2 \boxtimes \kk \sym_2$ from the right.

Passing to the quotient by the action of $\sym_2$ exchanging the pairs yields $\stfb^2 \kk_{(\one, \one)}$. One can consider the functor $- \circledcirc \stfb^2 \kk _{(\one, \one)}$; this takes values:
$$
(M \circledcirc \stfb^2 \kk_{(\one, \one)})(X,Y) \cong \bigoplus_{\substack{(u,v), (x,y) \in X\times Y \\  x \neq u, y \neq v}}  M (X \backslash \{ x, u\} , Y \backslash \{y,v\}).
$$
where the two elements of $X \times Y$ are not ordered.
\end{exam}

\begin{rem}
\label{rem:diagonal}
Analogously to Example \ref{exam:diagonal_kkfb}, 
using (\ref{eqn:circledcirc_kk_1_1}), 
a morphism  
$
\gamma :
M \circledcirc M 
\rightarrow 
M \circledcirc \kk_{(\one, \one)}
$
has components of the form 
\begin{eqnarray}
\label{eqn:gamma_components}
M (U, V) \otimes M (X,Y) 
\rightarrow 
M ((U \amalg X) \backslash \{ s\}, (V \amalg Y) \backslash\{ t\}),
\end{eqnarray}
where $s \in (U \amalg X)$ and $t \in (V \amalg Y)$. 
\end{rem}

\begin{defn}
\label{defn:gamma_diagonal_symmetric}
A morphism $
\gamma :
M \circledcirc M 
\rightarrow 
M \circledcirc \kk_{(\one, \one)}
$
is 
\begin{enumerate}
\item 
{\em diagonal} if the component (\ref{eqn:gamma_components}) is   zero whenever one of the following conditions is satisfied: either ($s \in U$ and $t \in V$) or ($s \in X$ and $t \in Y$);
\item 
{\em symmetric} if it factorizes as $M \circledcirc M \twoheadrightarrow \stfb^2 M \rightarrow M \circledcirc \kk_{(\one, \one)}$.
\end{enumerate}
\end{defn}

We have the following counterpart of Lemma \ref{lem:extend_zeta_sfb}, based on the same construction {\em mutatis mutandis}.

\begin{lem}
\label{lem:extend_gamma_stfb}
A symmetric morphism $
\gamma :
M \circledcirc M 
\rightarrow 
M \circledcirc \kk_{(\one, \one)}
$
extends naturally to morphisms for each $t \in \nat$:
$$
\stfb^t M \rightarrow \stfb^{t-1} M \circledcirc \kk_{(\one, \one)}.
$$
\end{lem}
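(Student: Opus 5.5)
The plan is to imitate the construction of Lemma \ref{lem:extend_zeta_sfb}, replacing $\odot$ by $\circledcirc$ and $\kk_\mathbf{2}$ by $\kk_{(\one,\one)}$. Since $\gamma$ is symmetric, it factorizes uniquely as $M \circledcirc M \twoheadrightarrow \stfb^2 M \stackrel{\bar\gamma}{\rightarrow} M \circledcirc \kk_{(\one,\one)}$. Uniqueness holds because the canonical map $M\circledcirc M \twoheadrightarrow \stfb^2 M$ is surjective. Working in characteristic zero (Hypothesis \ref{hyp:car0}), $\stfb^* M$ is a bicommutative bialgebra for $\circledcirc$: it is a commutative algebra with product $\stfb^a M \circledcirc \stfb^b M \rightarrow \stfb^{a+b}M$, and a cocommutative coalgebra with coproduct $\stfb^{a+b}M \rightarrow \stfb^a M \circledcirc \stfb^b M$. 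These are the natural maps coming from the inclusion $\sym_a\times\sym_b\subset \sym_{a+b}$ and from the identification of invariants with coinvariants. I take this structure from the appendix on symmetric powers (Section \ref{sect:symm_exterior}), exactly as it is used in Lemma \ref{lem:extend_zeta_sfb}.

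I then define the extension by cases. For $t<2$ it is zero. For $t \geq 2$ it is the composite
$$
\stfb^t M \rightarrow \stfb^{t-2} M \circledcirc \stfb^2 M
\stackrel{\id \circledcirc \bar\gamma}{\longrightarrow}
\stfb^{t-2} M \circledcirc M \circledcirc \kk_{(\one,\one)}
\rightarrow
\stfb^{t-1} M \circledcirc \kk_{(\one,\one)}.
$$
The first map is the coproduct component and the last is induced by the product $\stfb^{t-2}M\circledcirc M \rightarrow \stfb^{t-1}M$. For $t=2$ it recovers $\bar\gamma$, using $\stfb^0 M = \kk_{(\zero,\zero)}$, the unit for $\circledcirc$. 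Every map in this composite is built from natural structure maps of the symmetric monoidal category $(\kk(\tfb)\dash\modules,\circledcirc)$ together with $\bar\gamma$. Hence naturality in $M$, with respect to morphisms compatible with the given $\gamma$'s, follows directly from the naturality of the product and coproduct on $\stfb^*$ and the uniqueness of $\bar\gamma$.

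The only point needing care is that $\circledcirc$ really provides the symmetric monoidal structure needed to form $\stfb^*$ with its product and coproduct. In particular, the symmetry isomorphism used to identify $M\circledcirc\kk_{(\one,\one)}$ inside the iterated Day convolution must not introduce signs, since $\kk_{(\one,\one)}$ sits in even total parity. I expect this to be routine from Appendix \ref{sect:day_convolution}: the symmetry of $\circledcirc$ is induced by the symmetry of $\amalg$ in each of the two variables separately. With that in place, the argument is identical to that of Lemma \ref{lem:extend_zeta_sfb}.
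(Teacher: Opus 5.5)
Your proposal is correct and is essentially the paper's argument: the paper proves this lemma by repeating the construction of Lemma \ref{lem:extend_zeta_sfb} \emph{mutatis mutandis}. That construction is the coproduct $\stfb^t M \rightarrow \stfb^{t-2}M \circledcirc \stfb^2 M$, then the unique factorization of $\gamma$ through $\stfb^2 M$, then the product, with naturality coming from that of the product and coproduct together with the surjectivity of $M \circledcirc M \twoheadrightarrow \stfb^2 M$. Your worry about signs from the symmetry of $\circledcirc$ is moot: everything here is ungraded, so the symmetry is just the plain swap of tensor factors in $\kk$-vector spaces.
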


\begin{defn}
\label{defn:dioperads}
A dioperad {\em with unit} $\diopd$ is a $\kk (\tfb)$-module such that $\diopd (\mathbf{0}, \mathbf{0})=0$,  equipped with structure morphisms 
\begin{eqnarray*}
\mu_\diopd &:& \diopd \circledcirc \diopd \rightarrow \diopd \circledcirc \kk_{(\one, \one)}
\\
\eta_\diopd &:& \kk_{(\one, \one)} \rightarrow \diopd
\end{eqnarray*}
that satisfy the following axioms: 

\begin{enumerate}
\item 
$\mu_\diopd$ is {\em diagonal};
\item 
$\mu_\diopd$ is {\em symmetric};
\item 
$\mu_\diopd$ is {\em associative}, in that  the following composite is zero:
\begin{eqnarray}
\label{eqn:mu_diopd_associative}
\stfb^3 \diopd 
\rightarrow 
\stfb^2 \diopd \circledcirc \kk_{(\one,\one)} 
\rightarrow 
\diopd  \circledcirc \kk_{(\one,\one)} \circledcirc \kk_{(\one,\one)} 
\twoheadrightarrow 
\diopd \circledcirc \ltfb^2 \kk_{(\one,\one)},
\end{eqnarray}
where the first two maps are induced by the extension of $\mu_\diopd$ given by Lemma \ref{lem:extend_gamma_stfb}, and the last map is induced by the canonical projection $\kk_{(\one,\one)} \circledcirc \kk_{(\one,\one)} 
\twoheadrightarrow 
 \ltfb^2 \kk_{(\one,\one)}$.
\item 
$\eta_\diopd$ is the {\em unit}, i.e., the following diagram commutes
$$
\xymatrix{
\kk _{(\one, \one)} \circledcirc \diopd
\ar[r]^{\eta_\diopd \circledcirc \id} 
\ar[rd]_{\tau}
&
\diopd \circledcirc \diopd 
\ar[d]|{\mu_\diopd}
&
\diopd \circledcirc \kk _{(\one,\one)} 
\ar[l]_{\id \circledcirc \eta_\diopd}
\ar[ld]^{\id}
\\
&
\diopd \circledcirc \kk_{(\one, \one)},
}
$$
where $\tau$ is the symmetry $\kk _{(\one, \one)} \circledcirc \diopd \stackrel{\cong}{\rightarrow}  \diopd \circledcirc \kk_{(\one, \one)}$ for $\circledcirc$.
\end{enumerate}

A morphism of dioperads $(\diopd, \mu_\diopd, \eta_\diopd) \rightarrow (\diopd', \mu_{\diopd'}, \eta_{\diopd'})$ is a morphism $\diopd \rightarrow \diopd'$ of $\kk (\tfb)$-modules that is compatible with the structure morphisms. The category of dioperads is denoted $\diopds$.

A dioperad {\em without unit} is a pair $(\diopd, \mu_\opd)$ that satisfies the first three axioms above; morphisms are defined analogously. The category of dioperads with unit is denoted by $\nudo$, so that there is a forgetful functor $\diopds\rightarrow \nudo$.

A dioperad $\diopd$ is {\em positive} if for any finite set $X$, $\diopd (X, \mathbf{0}) = 0 = \diopd (\mathbf{0}, X)$. The respective full subcategories of such objects are denoted $\diopds^+$ and $(\nudo)^+$ respectively.
\end{defn}

\begin{rem}
In the definition of dioperads formulated in \cite[Section 1.1]{MR1960128}, Gan imposes a {\em positivity} condition that is the counterpart of that for $\diopds^+$.
\end{rem}

One has the following straightforward relationship between $\diopds^+$ and $\diopds$:

\begin{prop}
\label{prop:diopd_positive}
The inclusion $\diopds^+ \hookrightarrow \diopds$ (respectively $(\nudo)^+ \hookrightarrow \nudo$) admits a right adjoint given by $\diopd \mapsto \diopd^+$, where:
$$
\diopd^+  (X,Y) = 
\left\{
\begin{array}{ll}
\diopd (X,Y) & \mbox{if $|X||Y| >0$} \\
0 & \mbox{otherwise,}
\end{array}
\right.
$$
where the structure maps $\mu_{\diopd^+}$ (and $\eta_{\diopd^+}$ in the case with unit) are the restriction of those of $\diopd$.

The adjunction counit corresponds to the natural inclusion $\diopd^+ \subset \diopd$ in $\diopds$ (respectively $\nudo$) and the adjunction unit identifies as the identity map.
\end{prop}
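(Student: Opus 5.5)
The plan is to show first that $\diopd^+$, with the restricted structure maps, is a sub-dioperad of $\diopd$, and then to check the universal property directly.

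\emph{Closure.} By construction $\diopd^+ \subset \diopd$ is a sub-$\kk(\tfb)$-module, and $\diopd^+(\mathbf{0},\mathbf{0})=0$. Since $\circledcirc$ is exact in each variable, $\diopd^+ \circledcirc \diopd^+ \hookrightarrow \diopd \circledcirc \diopd$ and $\diopd^+ \circledcirc \kk_{(\one,\one)} \hookrightarrow \diopd \circledcirc \kk_{(\one,\one)}$ are injections. So it suffices to show that $\mu_\diopd$ sends $\diopd^+ \circledcirc \diopd^+$ into $\diopd^+ \circledcirc \kk_{(\one,\one)}$. I will check this on the components (\ref{eqn:gamma_components}) of Remark \ref{rem:diagonal}.

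Take $U,V,X,Y$ all non-empty. By diagonality, the only possibly non-zero components have $s\in U$, $t \in Y$, or $s \in X$, $t \in V$.
\begin{enumerate}
\item In the first case, $(U\amalg X)\backslash\{s\} \supseteq X \neq \emptyset$ and $(V \amalg Y)\backslash\{t\} \supseteq V \neq \emptyset$.
\item The second case is symmetric.
\end{enumerate}
Hence the target lies in $\diopd^+$. This is the key step, and the one place where diagonality is used.

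\emph{Axioms for $\diopd^+$.} Diagonality and symmetry of $\mu_{\diopd^+}$ are inherited from $\mu_\diopd$. For symmetry, $\stfb^2$ of an inclusion is an injection in characteristic zero (it is a direct summand), and $\mu_\diopd$ restricted to $\diopd^+$ factors through $\stfb^2\diopd^+$ because $\mu_\diopd$ factors through $\stfb^2 \diopd$.

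For associativity, the extensions of Lemma \ref{lem:extend_gamma_stfb} are natural. Hence the composite (\ref{eqn:mu_diopd_associative}) for $\diopd^+$ is the restriction of that for $\diopd$ along the injection $\stfb^3\diopd^+ \hookrightarrow \stfb^3 \diopd$, followed by the injection $\diopd^+ \circledcirc \ltfb^2\kk_{(\one,\one)} \hookrightarrow \diopd \circledcirc \ltfb^2\kk_{(\one,\one)}$. It is therefore zero.

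In the unital case, $\eta_\diopd$ lands in $\diopd(\one,\one) = \diopd^+(\one,\one)$, and the unit diagram restricts. So $\diopd^+$ lies in $\diopds^+$ (respectively $(\nudo)^+$), the inclusion $\diopd^+ \subset \diopd$ is a morphism of dioperads, and $\diopd \mapsto \diopd^+$ is clearly functorial.

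\emph{Adjunction.} Let $\mathscr{P}$ be positive and $\phi : \mathscr{P} \to \diopd$ a morphism. For $|X||Y|=0$ we have $\mathscr{P}(X,Y)=0$, so the underlying $\kk(\tfb)$-morphism factors uniquely through $\diopd^+ \subset \diopd$. The factorization $\mathscr{P} \to \diopd^+$ is compatible with the structure maps, since these are tested after composing with the injections $\diopd^+\circledcirc \kk_{(\one,\one)} \hookrightarrow \diopd\circledcirc\kk_{(\one,\one)}$ (and $\diopd^+ \hookrightarrow \diopd$ for the unit). This gives the natural bijection $\hom(\mathscr{P},\diopd^+) \cong \hom(\mathscr{P},\diopd)$, with counit the inclusion $\diopd^+\subset\diopd$. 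Finally, if $\diopd$ is positive then $\diopd^+ = \diopd$ with identical structure maps, so the unit is the identity.
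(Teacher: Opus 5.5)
Your proof is correct and follows the same route as the paper: show that $\diopd^+$ is stable under $\mu_\diopd$ (and that $\eta_\diopd$ lands in it), after which the adjunction, counit and unit are formal. One remark: diagonality is not actually needed in the closure step. Since $U,V,X,Y$ are all non-empty, $|U\amalg X|\geq 2$ and $|V\amalg Y|\geq 2$, so every component of $\mu_\diopd$ on $\diopd^+\circledcirc\diopd^+$ already lands in $\diopd^+\circledcirc\kk_{(\one,\one)}$; this cardinality observation is exactly what the paper uses.
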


\begin{proof}
It is clear that $\eta_\diopd$ maps to $\diopd^+$. Hence the result follows by showing that the sub $\kk (\tfb)$-module $\diopd^+ \subset \diopd$ is `stable' under the structure map $\mu_{\diopd}$ of $\diopd$ in the obvious sense. This follows from the fact that, if $|Y_1|>0$ and $|Y_2|>0$ then $|Y_1 \amalg Y_2 | -1 >0$.  It follows that $(-)^+$ is right adjoint to the inclusion and that the adjunction counit is the defining inclusion and that the adjunction unit identifies as the identity.
\end{proof}

\begin{rem}
\label{rem:positive_no_retract}
In general, the inclusion $\diopd^+ \hookrightarrow \diopd$ does not admit a retract in $\diopds$. This is analogous to the fact that an operad does not always admit an augmentation.
\end{rem}

The definition of dioperads given above is equivalent to existing ones in the literature:

\begin{prop}
\label{prop:diopd+_versus_Gan}
The category $\diopds^+$ is equivalent to the category of dioperads defined in \cite[Section 1.1]{MR1960128}.
\end{prop}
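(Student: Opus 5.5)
The plan is to construct explicit mutually inverse functors between $\diopds^+$ and Gan's category of dioperads, as defined in \cite[Section 1.1]{MR1960128}. Gan's data consist of an $\sym$-bimodule $P(m,n)$, with $m,n\geq 1$, together with partial compositions ${}_i\circ_j : P(m_1,n_1)\otimes P(m_2,n_2)\to P(m_1+m_2-1,n_1+n_2-1)$. Each such composition grafts the output $j$ of one operation to the input $i$ of the other. These are subject to equivariance, associativity (sequential and parallel) and unit axioms. First I pass between a positive $\kk(\tfb)$-module $\diopd$ and an $\sym$-bimodule by the usual skeleton argument on $\tfb$, so that $\diopd(\m,\n)$ is a $\kk(\sym_m\times\sym_n)$-module. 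Positivity matches Gan's restriction to $m,n\geq 1$.

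Next I translate the structure maps. By Remark \ref{rem:diagonal}, $\mu_\diopd$ has components indexed by $s\in U\amalg X$ and $t\in V\amalg Y$. Diagonality forces the nonzero components to be those with either $s\in U$, $t\in Y$, or $s\in X$, $t\in V$. In the first case the output $t$ of $\diopd(X,Y)$ is glued to the input $s$ of $\diopd(U,V)$, and the second case is the mirror image. Symmetry, meaning factorization through $\stfb^2\diopd$, says exactly that the second family is determined by the first via the symmetry of $\circledcirc$. Hence $\mu_\diopd$ is equivalent to a family of partial compositions $\diopd(U,V)\otimes\diopd(X,Y)\to\diopd(U\amalg X\setminus\{s\},V\amalg Y\setminus\{t\})$ indexed by $s\in U$ and $t\in Y$. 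Naturality of $\mu_\diopd$ as a morphism of $\kk(\tfb)$-modules gives Gan's equivariance axioms. The unit $\eta_\diopd:\kk_{(\one,\one)}\to\diopd$ is an element $1\in\diopd(\one,\one)$, and the unit diagram translates to $1$ being a two-sided unit for these compositions.

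The main obstacle is associativity. I will imitate Remark \ref{rem:associativity_cyclic}. Evaluate the composite (\ref{eqn:mu_diopd_associative}) on the component of $\stfb^3\diopd$ indexed by a decomposition into three pieces, and expand via Lemma \ref{lem:extend_gamma_stfb}. Each summand is indexed by an ordered pair of walled pairs $(s_1,t_1),(s_2,t_2)$. Diagonality and a pigeonhole argument show that the two walled pairs must connect the three operations along a tree with two edges. There are two shapes: a chain, in which $\diopd_1$ is fed into $\diopd_2$ and $\diopd_2$ into $\diopd_3$, and a vertex with two neighbours, where either two operations are grafted onto inputs of a third or two outputs of one operation are grafted into two others. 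Projecting to $\ltfb^2\kk_{(\one,\one)}$ identifies the two orderings of the walled pairs up to sign. So vanishing of the composite is equivalent to the equality of the two iterated composites for each shape. These equalities are exactly Gan's sequential associativity (chain) and parallel associativity (two grafts at the same vertex, performed in either order).

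Care is needed because the edges carry no orientation sign here, $\ltfb^2$ being applied to $\kk_{(\one,\one)}$ in degree two. Consequently the antisymmetrization yields equality of the composites rather than equality up to a sign. With this dictionary established, both directions are clearly functorial and inverse to each other on morphisms, which gives the equivalence.
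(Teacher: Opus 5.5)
Your proposal follows essentially the same route as the paper's proof: pass to the skeleton, with positivity matching $m,n\geq 1$; identify a diagonal symmetric $\mu_\diopd$ with Gan's ${}_i\circ_j$, with equivariance coming from naturality; and reduce associativity, by the pigeonhole analysis of Remark \ref{rem:associativity_cyclic}, to equality of the two contraction orders for each two-edge tree on three vertices. Your sign bookkeeping in $\ltfb^2\kk_{(\one,\one)}$ is correct (it gives equality, not equality up to sign) and more explicit than the paper's sketch.

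The one caveat, which you share with the paper's ``the unit is treated similarly'', is that the unit diagram of Definition \ref{defn:dioperads} cannot be read literally. By diagonality, $\mu_\diopd\circ(\id\circledcirc\eta_\diopd)$ has zero component from the summand of $\diopd\circledcirc\kk_{(\one,\one)}$ indexed by $(x,y)$ to itself, so equality with $\id$ would force $\diopd=0$. The axiom must therefore be read componentwise, as saying that grafting $1\in\diopd(\one,\one)$ acts by the relabelling bijections, and that is the translation you actually use.
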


\begin{proof}
This is analogous to  the fact that the definition of cyclic operads given in Section \ref{subsect:recollect_cyclic} is equivalent to the classical definition.
We outline the argument for dioperads without unit; the unit is treated similarly. 

The definition in \cite[Section 1.1]{MR1960128} uses a skeletal version, so a dioperad $\diopd$ is given by a family of $\kk (\sym_m \times \sym_n)$-modules $\diopd (m,n)$ indexed by positive integers $m,n $. This extends by Kan extension to a $\kk (\tfb)$-module satisfying the vanishing property:  for all finite sets $X$, $\diopd (X, \emptyset) = 0 = \diopd (\emptyset, X)$.

The structure morphisms in  \cite[Section 1.1]{MR1960128} are of the form 
$$
_i \circ _j \ : \ \diopd (m_1,n_1) \otimes \diopd (m_2, n_2) \rightarrow \diopd (m_1 +m_2 -1, n_1 +n_2 -1) 
$$
where $1 \leq i \leq n_1$ and $1 \leq j \leq m_2$. These satisfy equivariance properties that ensure that these are equivalent to a {\em diagonal} and {\em symmetric} structure map in $\kk (\tfb)$-modules: 
$$
\mu_\diopd : \diopd \circledcirc \diopd \rightarrow \diopd \circledcirc \kk_{(\one, \one)}.
$$ 
(Recall from Remark \ref{rem:diagonal}, that for a morphism $\gamma$  as in the Remark, the diagonal condition reduces to considering components as in (\ref{eqn:gamma_components}) where either ($s \in U$ and $t\in V$) or ($s \in V$ and $t \in U$). In the {\em symmetric} case it suffices to specify the component in the first case in order to define $\gamma$.)

Finally, we consider the associativity axiom(s). The argument is analogous to that outlined in the case of cyclic operads in Remark \ref{rem:associativity_cyclic}. In \cite[Section 1.1]{MR1960128} two associativity conditions (axioms (a) and (b)) are given. In the definition of $\diopds^+$ the single associativity axiom is sufficient, since we have extended the structure maps to  $\mu_\diopd$ which is symmetric.
\end{proof}

\begin{rem}
As in the cyclic case, using the fact that $\mu_\diopd$ is both diagonal and symmetric, we can apply the pigeonhole principle to analyse the composite 
$$
\stfb^3 \diopd 
\rightarrow 
\stfb^2 \diopd \circledcirc \kk_{(\one,\one)} 
\rightarrow 
\diopd  \circledcirc \kk_{(\one,\one)} \circledcirc \kk_{(\one,\one)} .
$$
Consider the evaluation on $(X,Y)$. The codomain is understood as in Example \ref{exam:M_cc_kk}; in particular, we have the expression (\ref{eqn:M_cc_kk_cc_kk}). Using the notation of {\em loc. cit.}, consider the two  components indexed by the two walled pairs $(x_1, y_1)$ and $(x_2, y_2)$ (the two components corresponding to the different orderings of these). 

Consider the domain $(\stfb^3 \diopd) (X,Y)$ and the component corresponding to the  decompositions $X = X_1 \amalg X_2 \amalg X_3$ and $Y= Y_1 \amalg Y_2 \amalg Y_3$:
$$
\diopd (X_1, Y_1 ) \otimes \diopd (X_2, Y_2) \otimes \diopd (X_3, Y_3).
$$ 
Using diagonality, symmetry, and the pigeonhole principle, without significant loss of generality, we may assume that $(x_1, y_1)$  `links' the first two factors and $(x_2, y_2)$ `links' the second two factors (the meaning of linking should become apparent below). However, each `linkage' has two distinct possibilities: 
\begin{enumerate}
\item 
either ($x_1 \in X_1$ and $y_1 \in Y_2$) or ($x_1 \in X_2$ and $y_1 \in Y_1$); 
\item 
either ($x_2 \in X_2$ and $y_2 \in Y_3$) or ($x_2 \in X_3$ and $y_2 \in Y_2$). 
\end{enumerate}

Thus the associativity axiom encompasses the associated four different types of `commutativity/ associativity'. This should be compared with the definition of operads using partial compositions, in which two cases are present: an `associativity' type condition and a `parallel' condition.  In the dioperad case, we also have the `opposite' conditions, switching the r\^ole of inputs and outputs, thus giving the four conditions. 
\end{rem}

We note the following immediate consequence of the definition (this corresponds to the opposite dioperad  introduced in \cite[Section 1.3]{MR1960128} for the case $\diopds^+$).

\begin{prop}
\label{prop:involution_dioperads}
The involution $(-)\op$ of $\tfb$ given on objects by $(X,Y) \mapsto (Y,X)$ induces an involution of $\diopds$, so that $\diopd \op (X,Y):= \diopd (Y,X)$. This restricts to an involution of $\nudo$.
\end{prop}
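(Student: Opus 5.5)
The statement to prove is Proposition \ref{prop:involution_dioperads}. The plan is to transport everything along the involution $\sigma : \tfb \rightarrow \tfb$, $(X,Y) \mapsto (Y,X)$. This is an isomorphism of categories with $\sigma^2 = \id$, so restriction $\sigma^*$ is an involutive automorphism of $\kk(\tfb)\dash\modules$. We set $\diopd\op := \sigma^* \diopd$. The main point is that $\sigma^*$ is a strict symmetric monoidal functor for Day convolution, and that it preserves the unit and the object $\kk_{(\one,\one)}$.

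First, we write down the monoidality. For $M$, $N$ we have
$$(M \circledcirc N)(X,Y) = \bigoplus_{X = X_1 \amalg X_2,\ Y = Y_1 \amalg Y_2} M(X_1,Y_1) \otimes N(X_2,Y_2).$$
Evaluating at $(Y,X)$ gives the same indexing set with the roles swapped. This yields a natural isomorphism $\sigma^*(M \circledcirc N) \cong \sigma^* M \circledcirc \sigma^* N$, and it is compatible with the associativity and symmetry constraints (the symmetry swaps the two tensor factors in both cases). Moreover $\sigma^* \kk_{(\mathbf{0},\mathbf{0})} = \kk_{(\mathbf{0},\mathbf{0})}$ and $\sigma^* \kk_{(\one,\one)} = \kk_{(\one,\one)}$. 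Consequently $\sigma^*$ commutes with $\stfb^t$, with $\ltfb^t$, with the canonical projections $M\circledcirc M \twoheadrightarrow \stfb^2 M$ and $\kk_{(\one,\one)}\circledcirc\kk_{(\one,\one)} \twoheadrightarrow \ltfb^2\kk_{(\one,\one)}$, and (by naturality of the construction) with the extensions of Lemma \ref{lem:extend_gamma_stfb}.

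Next, we define $\mu_{\diopd\op}$ as $\sigma^*\mu_\diopd$ composed with the isomorphisms above, and $\eta_{\diopd\op} := \sigma^*\eta_\diopd$. The axioms then transfer as follows.
\begin{enumerate}
\item Symmetry, associativity and unitality are diagrams built from $\circledcirc$, $\stfb^*$, $\ltfb^*$, the projections and the symmetry $\tau$. Since $\sigma^*$ is an exact monoidal equivalence preserving these, each diagram for $\diopd\op$ is $\sigma^*$ of the corresponding diagram for $\diopd$. In particular, the associativity composite (\ref{eqn:mu_diopd_associative}) vanishes for $\diopd\op$ because it vanishes for $\diopd$.
\item Diagonality needs a direct check. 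Under $\sigma$, a component (\ref{eqn:gamma_components}) indexed by $(s,t)$ with $s\in U, t \in V$ becomes one indexed by $(t,s)$ with $t$ in the first variable and $s$ in the second, both coming from the same factor. So the forbidden cases (both elements in the same factor) are exchanged among themselves, and the condition is preserved.
\item The vanishing condition $\diopd(\mathbf{0},\mathbf{0})=0$ is obviously preserved.
\end{enumerate}

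Finally, functoriality on morphisms is given by $f \mapsto \sigma^* f$. Because $\sigma^2=\id$, the isomorphisms above compose to the identity, so $(\diopd\op)\op = \diopd$ and we get an involution. The statement for $\nudo$ follows by omitting $\eta$ throughout.

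The only genuine obstacle is the bookkeeping of the Day convolution isomorphism under $\sigma^*$, specifically checking that it intertwines the coproduct $\stfb^t \rightarrow \stfb^{t-2}\circledcirc\stfb^2$ used in Lemma \ref{lem:extend_gamma_stfb}. I would handle this by noting that $\sigma^*$ is a strict symmetric monoidal automorphism, so it preserves all structure defined purely in terms of the symmetric monoidal structure.
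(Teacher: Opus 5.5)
Your proposal is correct and follows the same route as the paper. The paper's proof is the single observation that the involution of $\tfb$ is symmetric monoidal for $\circledcirc$ and fixes $\kk_{(\one,\one)}$, so all the dioperad structure transports along it. Your explicit check that diagonality is preserved (the forbidden same-factor cases are exchanged among themselves) is a detail the paper's one-line argument leaves implicit. That check is genuinely needed, since diagonality is a condition on components rather than something expressed purely through the monoidal structure.
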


\begin{proof}
This follows  from the fact that the involution is symmetric monoidal with respect to $\circledcirc$.
\end{proof}

%%%%%%%%%%%%%%%%%%%%%%%%%%%%%%%%%%%%%%%%%%%%%%%%%%%%%%%%%%%%%%%%%%%%%%%%%%%%%%%%%%%%%
\subsection{Operads as dioperads}

We can define operads as being a special sort of dioperad:

\begin{defn}
\label{defn:operads}
An operad (respectively without unit) is a dioperad (resp. without unit) that is supported on objects of the form $(X, \one)$; the associated full subcategory is denoted $\opds \subset \diopds$ (respectively $\nuo \subset \nudo$), so that there is a forgetful functor $\opds \rightarrow \nuo$.
\end{defn}

\begin{rem}
Using the involution $(-)\op$ of Proposition \ref{prop:involution_dioperads}, we could equivalently have defined $\opds$ as the full subcategory of dioperads supported on objects of the form $(\one , X)$. In particular, there are two inclusion functors $\opds \rightrightarrows \diopds$ and these are related by $(-)\op$.
\end{rem}

\begin{prop}
\label{prop:restrict_diopd}
The inclusion $\opds \hookrightarrow \diopds$ admits a right adjoint given by $\diopd\mapsto \opd_\diopd$, where 
 $$
 \opd_\diopd (X,Y) = \left\{ 
 \begin{array}{ll}
 \diopd (X, Y) & \mbox{if $|Y|=1$;}
 \\
 0 & \mbox{otherwise.}
 \end{array}
 \right.
 $$
The structure morphisms are given by restriction of those of $\diopd$. 
  The adjunction counit $\opd_\diopd \rightarrow \diopd$ is the canonical inclusion and the adjunction unit identifies with the identity.
 
The corresponding statements hold for $\nuo \hookrightarrow \nudo$, using the same definition for $\opd_\diopd$.  
\end{prop}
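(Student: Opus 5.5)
We prove that $\opd_\diopd$ is a dioperad, that the unit and counit are as stated, and then the universal property.

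\textbf{Closure of $\opd_\diopd$.} By Definition \ref{defn:operads}, an operad is a dioperad supported on objects $(X,\one)$. The first step is to check that $\opd_\diopd$ inherits a dioperad structure by restriction. Evaluate $\mu_\diopd$ on a component $\diopd(U,V)\otimes\diopd(X,Y)$ with $|V|=|Y|=1$. By diagonality (Definition \ref{defn:gamma_diagonal_symmetric}), the only non-zero components of (\ref{eqn:gamma_components}) remove one output $t$ and one input $s$. The resulting term has output set $(V\amalg Y)\backslash\{t\}$, of cardinal $1$. Hence $\mu_\diopd$ restricted to $\opd_\diopd\circledcirc\opd_\diopd$ factors through $\opd_\diopd\circledcirc\kk_{(\one,\one)}$. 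Here we use that $\opd_\diopd\subset\diopd$ is a sub $\kk(\tfb)$-module and that $-\circledcirc\kk_{(\one,\one)}$ is exact, so it preserves this inclusion. The unit $\eta_\diopd:\kk_{(\one,\one)}\to\diopd$ lands in $\opd_\diopd$, since $|\one|=1$.

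\textbf{Axioms.} Diagonality and symmetry pass to the restriction, because $\stfb^2$ and $\stfb^3$ preserve monomorphisms in characteristic zero; they are direct summands of $\circledcirc$-powers. For associativity, the composite (\ref{eqn:mu_diopd_associative}) for $\opd_\diopd$ is the restriction of that for $\diopd$ along $\stfb^3\opd_\diopd\hookrightarrow\stfb^3\diopd$, and the latter composite vanishes. The unit axiom is inherited by restriction in the same way. This is the same argument as in Proposition \ref{prop:diopd_positive}, with the cardinality condition $|Y|=1$ replacing positivity.

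\textbf{Adjunction.} If $\opd$ is an operad, any morphism of $\kk(\tfb)$-modules $\phi:\opd\to\diopd$ factors uniquely through $\opd_\diopd\subset\diopd$. This holds because $\opd$ is supported on $\{|Y|=1\}$, and on that support $\opd_\diopd$ agrees with $\diopd$. If $\phi$ is a morphism of dioperads, its factorization is one too: the structure maps of $\opd_\diopd$ are restrictions of those of $\diopd$, and the inclusion $\opd_\diopd\circledcirc\kk_{(\one,\one)}\hookrightarrow\diopd\circledcirc\kk_{(\one,\one)}$ is injective, so compatibility can be checked after composing into $\diopd$. This gives the natural bijection $\hom_{\diopds}(\opd,\diopd)\cong\hom_{\opds}(\opd,\opd_\diopd)$. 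It follows that the counit is the inclusion and that the unit $\opd\to\opd_\opd=\opd$ is the identity. The non-unital case uses the identical argument with $\eta$ omitted.

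The only step needing care is the closure under $\mu_\diopd$. It reduces to the cardinality count $|V|+|Y|-1=1$, which is where diagonality is essential: it guarantees that exactly one output is consumed.
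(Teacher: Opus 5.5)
Your proof is correct and takes essentially the same route as the paper, which just refers back to the argument of Proposition~\ref{prop:diopd_positive}: the unit lands in the sub-$\kk(\tfb)$-module, this subobject is stable under $\mu_\diopd$ by a cardinality count, and the adjunction (counit the inclusion, unit the identity) then follows formally. One small correction: diagonality is not what makes the closure work, because every summand of $(\diopd\circledcirc\kk_{(\one,\one)})(A,B)$ has output set $B\backslash\{t\}$ for some $t\in B$, so the count $|V|+|Y|-1=1$ holds for any morphism $\diopd\circledcirc\diopd\rightarrow\diopd\circledcirc\kk_{(\one,\one)}$.
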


\begin{proof}
The proof is similar to that of Proposition \ref{prop:diopd_positive}.
\end{proof}

\begin{rem}
\label{rem:opd_diopd_retract}
If $\diopd$ is positive (i.e., belongs to $\diopds^+$), then the canonical inclusion $\opd_\diopd \hookrightarrow \diopd$ has a canonical retract in $\diopds^+$. This yields a split extension diagram in $(\nudo)^+$:
$$
\xymatrix{
\overline{\diopd}
\ar@{^(->}[r]
&
\diopd 
\ar@{->>}[r]
&
\opd_\diopd .
\ar@/_1pc/[l] 
}
$$
\end{rem}

%%%%%%%%%%%%%%%%%%%%%%%%%%%%%%%%%%%%%%%%%%%%%%%%%%%%%%%%%%%%%%%%%%%%%%%%%%%%%%%%%%%%%%%%%%%%%%%%%%%%%
\subsection{From cyclic operads to dioperads}

Recall (see Section \ref{sect:day_convolution}) that restriction along $\amalg : \tfb \rightarrow \fb$ induces the functor $\amalg^* : \kk \fb \dash\modules \rightarrow \kk (\tfb)\dash\modules$.
 Explicitly, if $G$ is a $\kk \fb$-module, $(\amalg^* G) (X, Y) = G (X \amalg Y)$.

\begin{exam}
There is an isomorphism in $\kk (\tfb)$-modules:
$$
\amalg^* \kk_\mathbf{2} \cong \kk _{(\one, \one)} \oplus \kk_{(\mathbf{0}, \mathbf{2}) } \oplus \kk_{(\mathbf{2}, \mathbf{0})}.
$$
In particular, there is an inclusion and a projection
$
\kk _{(\one, \one)} 
\hookrightarrow 
\amalg^* \kk_\mathbf{2}
\twoheadrightarrow
\kk _{(\one, \one)}
$.
These can be taken to be  the identity maps when evaluated on $(\one,\one)$.
\end{exam}

The following extends the fact that a cyclic operad has an underlying operad.

\begin{thm}
\label{thm:cpd_to_diopd}
The functor $\amalg^* : \kk \fb \dash\modules \rightarrow \kk (\tfb)\dash\modules$ induces functors
\begin{eqnarray*}
\amalg^* &:& \copds \rightarrow \diopds
\\
 \amalg^* &:& \nuco \rightarrow \nudo
\end{eqnarray*}
that are compatible under forgetting the unit.
\end{thm}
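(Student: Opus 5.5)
We construct the dioperad structure on $\amalg^*\cpd$ directly from the structure maps of $\cpd$, using that $\amalg^*$ is strong symmetric monoidal from $(\kk\fb\dash\modules,\odot)$ to $(\kk(\tfb)\dash\modules,\circledcirc)$. Concretely, $\amalg^*(M\odot N)\cong \amalg^*M\circledcirc \amalg^*N$, since decomposing $X\amalg Y$ into two pieces is the same as decomposing $X$ and $Y$ separately. Hence $\amalg^*$ commutes with the symmetric and exterior powers $\sfb^*\mapsto\stfb^*$ and $\lfb^*\mapsto\ltfb^*$. Define $\mu_{\amalg^*\cpd}$ as the composite
$$
\amalg^*\cpd\circledcirc\amalg^*\cpd\cong\amalg^*(\cpd\odot\cpd)\xrightarrow{\amalg^*\mu_\cpd}\amalg^*(\cpd\odot\kk_\two)\cong \amalg^*\cpd\circledcirc\amalg^*\kk_\two\twoheadrightarrow \amalg^*\cpd\circledcirc\kk_{(\one,\one)},
$$
where the last map uses the projection $\amalg^*\kk_\two\twoheadrightarrow\kk_{(\one,\one)}$ of the preceding Example. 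In components, this retains exactly the contractions of a pair $\{x,y\}$ with $x\in X$ and $y\in Y$. In the unital case, set $\eta_{\amalg^*\cpd}:=\amalg^*\eta_\cpd$ restricted along the inclusion $\kk_{(\one,\one)}\hookrightarrow\amalg^*\kk_\two$. The vanishing condition holds because $\amalg^*\cpd(\zero,\zero)=\cpd(\zero)=0$.

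The axioms are then checked one at a time. Symmetry follows at once, because $\amalg^*$ carries the surjection $\cpd\odot\cpd\twoheadrightarrow\sfb^2\cpd$ to $\amalg^*\cpd\circledcirc\amalg^*\cpd\twoheadrightarrow\stfb^2\amalg^*\cpd$. For diagonality, take a component $\cpd(U_1\amalg V_1)\otimes\cpd(U_2\amalg V_2)$ together with a walled pair $(s,t)$, $s\in U_1\amalg U_2$, $t\in V_1\amalg V_2$. If $s$ and $t$ lie in the same factor, then the unordered pair $\{s,t\}$ lies inside a single piece of the cyclic decomposition, so $\mu_\cpd$ vanishes on it by its own diagonality. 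Functoriality of the construction in $\cpd$ is clear, and so is compatibility with forgetting the unit. For the unit axiom, apply $\amalg^*$ to the unit diagram for $\cpd$ and then project onto the $\kk_{(\one,\one)}$ components. The only point needing care here is that $\eta$ enters through the $(\one,\one)$-summand, which is precisely the summand retained by the projection.

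The main obstacle is associativity. The plan is to show that the composite (\ref{eqn:mu_diopd_associative}) for $\amalg^*\cpd$ is obtained from $\amalg^*$ applied to (\ref{eqn:mu_cpd_associative}), followed by projection. First, one must check that the extension of Lemma \ref{lem:extend_gamma_stfb} for $\mu_{\amalg^*\cpd}$ equals $\amalg^*$ of the extension of Lemma \ref{lem:extend_zeta_sfb}, followed by the projection $\amalg^*\kk_\two\twoheadrightarrow\kk_{(\one,\one)}$. This holds because both extensions are built from the coproduct, $\mu$, and the product, all of which are respected by the monoidal functor $\amalg^*$. Second, the projection $\amalg^*(\kk_\two\odot\kk_\two)\cong\amalg^*\kk_\two\circledcirc\amalg^*\kk_\two\twoheadrightarrow\kk_{(\one,\one)}\circledcirc\kk_{(\one,\one)}$ must be compatible with passage to $\lfb^2\kk_\two$ and $\ltfb^2\kk_{(\one,\one)}$. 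This amounts to the fact that projecting onto a $\sym_2$-stable direct summand commutes with forming $\sym_2$-coinvariants, with the exterior sign matching on both sides. Granting these two compatibilities, (\ref{eqn:mu_diopd_associative}) factors through $\amalg^*$ of the zero map (\ref{eqn:mu_cpd_associative}), and so it vanishes. If this bookkeeping becomes awkward, the fallback is the component-wise check of Remark \ref{rem:associativity_cyclic}. By the pigeonhole argument, each of the four linkage cases for dioperads is the restriction of the cyclic associativity identity to walled pairs.
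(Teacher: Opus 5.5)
Your proposal is correct and is essentially the paper's proof: you transport $\mu_\cpd$ and $\eta_\cpd$ along the strong symmetric monoidal functor $\amalg^*$, then compose (resp.\ precompose) with the projection $\amalg^*\kk_{\mathbf{2}} \twoheadrightarrow \kk_{(\one,\one)}$ (resp.\ the inclusion), and you get associativity by applying $\amalg^*$ to the vanishing cyclic composite, projecting, and using naturality of the projection onto $\ltfb^2$. You spell out several points the paper leaves implicit, namely the diagonality check, the unit axiom via the projection composed with the inclusion being the identity, and the compatibility of the extended structure maps with the projection, but the route is the same.
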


\begin{proof}
We treat the case {\em with unit}; the case {\em without unit} follows by the same argument, simply by omitting the consideration of the unit.

As in Definition \ref{defn:cyclic_operad}, a cyclic operad $\cpd$  with unit is  given by the triple $(\cpd, \mu_\cpd, \eta_\cpd)$, where $\cpd$ is a $\kk \fb$-module such that $\cpd (\mathbf{0})=0$, 
 equipped with the two structure morphisms 
 \begin{eqnarray*}
\mu_\cpd & :& \sfb^2 \cpd \rightarrow \cpd \odot \kk_\mathbf{2}
\\
\eta_\cpd &:& \kk_\mathbf{2} \rightarrow \cpd,
\end{eqnarray*}
using the symmetric form of $\mu_\cpd$. These satisfy the axioms specified in Definition \ref{defn:cyclic_operad}.

The $\kk (\tfb)$-module $\amalg^* \cpd$ clearly vanishes on $(\mathbf{0}, \mathbf{0})$, since $\cpd (\mathbf{0} )=0$. 
Now, the functor $\amalg^*$ is symmetric monoidal with respect to $\odot$ and $\circledcirc$ (see Section \ref{sect:day_convolution}). Hence the structure morphisms of a cyclic operad $\cpd$ induce
\begin{eqnarray*}
&&
\stfb^2 (\amalg^* \cpd ) \rightarrow \amalg ^* \cpd  \circledcirc \amalg^* \kk_\mathbf{2}
\\
&&
\amalg^* \kk_\mathbf{2} 
\rightarrow 
\amalg^* \cpd .
\end{eqnarray*}

Composing the first with the morphism induced by the projection $\amalg^* \kk_\mathbf{2} \twoheadrightarrow \kk_{(\one, \one)}$ and precomposing the second with the inclusion $\kk_{(\one, \one)} \hookrightarrow \amalg^* \kk_\mathbf{2}$ yields the structure morphisms 
\begin{eqnarray*}
\mu_{\amalg^* \cpd} &:&
\stfb^2 ( \amalg^* \cpd ) \rightarrow \amalg ^* \cpd \circledcirc  \kk_{(\one, \one)} 
\\
\eta_{\amalg^* \cpd} 
&:&
 \kk_{(\one, \one)} 
\rightarrow 
\amalg^* \cpd .
\end{eqnarray*}
This construction is clearly natural with respect to the cyclic operad $\cpd$. 

It remains to check that these satisfy the axioms of a dioperad (see Definition \ref{defn:dioperads}), thus inducing a natural dioperad structure on $\amalg^* \cpd$. This is achieved by  using the fact that $\amalg^*$ is symmetric monoidal. The symmetry of $\mu_{\amalg^* \cpd}$ has been built into the construction and the fact that it is diagonal follows directly from the corresponding property of $\mu_\cpd$. Likewise, it is straightforward to check that $\eta_{\amalg^* \cpd}$ satisfies the unit axiom. 

Finally, consider the associativity axiom. Applying $\amalg^*$ to the composite morphism (\ref{eqn:mu_cpd_associative}) in the cyclic case
$$
\sfb^3 \cpd \rightarrow \cpd \odot \kk_\mathbf{2} \odot \kk_\mathbf{2}\twoheadrightarrow \lfb^2 \kk_\mathbf{2} 
$$
yields the top row of the following commutative diagram
$$
\xymatrix{
\stfb^3 (\amalg^* \cpd) \ar[r]
&
 \amalg^* \cpd \circledcirc \amalg^* \kk_\mathbf{2} \circledcirc \amalg^* \kk_\mathbf{2}
\ar@{->>}[r]
\ar@{->>}[d]
& 
  \amalg^* \cpd \circledcirc \ltfb^2 (\amalg^* \kk _\mathbf{2}) 
\ar@{->>}[d]
\\
&
 \amalg^* \cpd \circledcirc \kk_{(\one ,\one)} \circledcirc \kk_{(\one,\one)} 
\ar@{->>}[r]
&
 \amalg^* \cpd \circledcirc \ltfb^2 \kk_{(\one,\one)},
}
$$
in which the vertical morphisms are induced by the projection $\amalg^* \kk_\mathbf{2} \twoheadrightarrow \kk_{(\one,\one)}$; commutativity follows by the naturality of the projection to $\ltfb^2$.
The total composite in the diagram identifies as the composite of  (\ref{eqn:mu_diopd_associative}).
 
The associativity condition for $\mu_\cpd$ implies that the composite of the top row is zero, whence so is the total composite of the diagram, establishing the associativity axiom for $\mu_{\amalg^* \cpd}$.
\end{proof}

\begin{rem}
The dioperad $\amalg^* \cpd$ constructed in Theorem \ref{thm:cpd_to_diopd} has two underlying operads $\opd _{\amalg^* \cpd}$ and $\opd_{(\amalg^* \cpd)\op}$, by Proposition \ref{prop:restrict_diopd}. These are related by the obvious isomorphism $
\opd_{(\amalg^* \cpd)\op} \cong (\opd _{\amalg^* \cpd}) \op 
$
(considering these as dioperads so as to form $(-)\op$).
 The underlying operad of $\cpd$ can thus be taken (essentially) unambiguously to be $\opd _{\amalg^* \cpd}$.
\end{rem}

\begin{cor}
\label{cor:copds_to_opds}
The composites
\begin{eqnarray*}
&&\copds \stackrel{\amalg^*} {\rightarrow} \diopds \stackrel{\opd_\bullet}{\rightarrow} \opds
\\
&&
\nuco \stackrel{\amalg^*} {\rightarrow} \nudo \stackrel{\opd_\bullet}{\rightarrow} \nuo
\end{eqnarray*}
define functors from cyclic operads to operads (respectively with and without unit).
\end{cor}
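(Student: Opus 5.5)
The statement follows by composing two functors already constructed: $\amalg^* : \copds \rightarrow \diopds$ from Theorem \ref{thm:cpd_to_diopd}, and $\opd_\bullet : \diopds \rightarrow \opds$, the right adjoint of the inclusion from Proposition \ref{prop:restrict_diopd}. The same two results give the non-unital versions $\nuco \rightarrow \nudo$ and $\nudo \rightarrow \nuo$. A composite of functors is a functor, so I would first check that the target categories match. Theorem \ref{thm:cpd_to_diopd} lands in $\diopds$ (resp. $\nudo$), which is exactly the domain of $\opd_\bullet$ in Proposition \ref{prop:restrict_diopd}. Hence both composites are well-defined functors.

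I would then record the objects explicitly as a sanity check. For a cyclic operad $\cpd$, the operad $\opd_{\amalg^* \cpd}$ has values $\cpd (X \amalg \one)$ on $(X, \one)$ and zero elsewhere. Its structure maps are the restrictions of $\mu_{\amalg^*\cpd}$, and in the unital case its unit is $\eta_{\amalg^*\cpd} : \kk_{(\one,\one)} \rightarrow \amalg^* \cpd$. This unit factors through $\opd_{\amalg^*\cpd}$ because $\kk_{(\one,\one)}$ is supported on $(\one,\one)$. This is the usual underlying operad of a cyclic operad.

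I would also note that the composites are compatible with forgetting units. This holds because Theorem \ref{thm:cpd_to_diopd} states that compatibility for $\amalg^*$, and $\opd_\bullet$ is defined by the same formula in both cases.

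The only step needing any care is that the unital case does not require positivity or an augmentation. Proposition \ref{prop:restrict_diopd} is stated for all of $\diopds$, not only $\diopds^+$, so no extra hypothesis is needed and there is no real obstacle.
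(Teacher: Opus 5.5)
Your proof is correct and matches the paper's intended argument: the corollary is stated without a separate proof, as the immediate composite of the functor $\amalg^*$ of Theorem \ref{thm:cpd_to_diopd} with the right adjoint $\opd_\bullet$ of Proposition \ref{prop:restrict_diopd}, and both are available in the unital and non-unital settings. Your extra checks are accurate but already contained in those two results: that the unit $\eta_{\amalg^*\cpd}$ lands in $\opd_{\amalg^*\cpd}$, that the construction is compatible with forgetting units, and that no positivity hypothesis is needed.
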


%%%%%%%%%%%%%%%%%%%%%%%%%%%%%%%%%%%%%%%%%%%%%%%%%%%%%%%%%%%%%%%%%%%%%%%%%%%%%%%%%%%%%%%%%%%%%%%%%%%%%%%
\subsection{From dioperads to cyclic operads}

One also has the functor $\amalg_* : \kk (\tfb)\dash\modules \rightarrow  \kk \fb \dash\modules $, the right adjoint (and left adjoint) to $\amalg^*$ (see Section \ref{sect:day_convolution}).  The functor $\amalg_*$ is given explicitly, for $F$ a $\kk (\tfb)$-module, by
$$
(\amalg_* F) (X) = \bigoplus_{X= X_1 \amalg X_2} F (X_1, X_2). 
$$
This is  symmetric monoidal with respect to the Day convolution products $\circledcirc$ and $\odot$ (see Section \ref{sect:day_convolution}).

\begin{exam}
\label{exam:splitting_amalg_*_kk11}
There is an isomorphism $\amalg_* \kk_{(\one, \one)} \cong \kk \sym_2$, where $\kk \sym_2$ is considered as a $\kk \fb$-module supported on $\mathbf{2}$. Under Hypothesis \ref{hyp:car0}, one has the  splitting $\kk \sym_2 \cong \triv_2 \oplus \sgn_2$, hence one can restrict (or project) to the trivial representation $\triv_2 = \kk_\mathbf{2}$.

To be concrete, we take the projection $\kk \sym_2 \rightarrow \kk$ to be the morphism induced on group rings by $\sym_2 \rightarrow \{e\}$. Correspondingly, we take the inclusion $\kk \hookrightarrow \kk \sym_2$ to be the section of this, so that $1 \mapsto \frac{1}{2} ( [e] + [\tau])$, writing $\sym_2 = \{ e,\tau \}$.
\end{exam}

We have the following counterpart of Theorem \ref{thm:cpd_to_diopd}.

\begin{thm}
\label{thm:diopd_to_cpd}
The functor $\amalg_* : \kk (\tfb)\dash\modules \rightarrow  \kk \fb \dash\modules $ induces functors
\begin{eqnarray*}
\amalg_* &: &\diopds \rightarrow \copds
\\
\amalg_* &:& \nudo \rightarrow \nuco
\end{eqnarray*}
that are compatible under forgetting the unit.
\end{thm}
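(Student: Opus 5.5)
The plan mirrors the proof of Theorem \ref{thm:cpd_to_diopd}, with the roles of the projection and inclusion exchanged. I treat the case with unit; the case without unit follows by omitting the unit. Let $\diopd$ be a dioperad with unit.

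\textbf{Step 1: the underlying object and the structure maps.} Since $\diopd(\zero,\zero)=0$, we have $(\amalg_*\diopd)(\zero)=\diopd(\zero,\zero)=0$. The functor $\amalg_*$ is symmetric monoidal from $(\kk(\tfb)\dash\modules, \circledcirc)$ to $(\kk\fb\dash\modules,\odot)$, and it commutes with symmetric powers. The symmetric form $\mu_\diopd : \stfb^2\diopd \to \diopd\circledcirc \kk_{(\one,\one)}$ therefore yields
$$
\sfb^2(\amalg_*\diopd)\cong \amalg_*\stfb^2\diopd \to \amalg_*\diopd \odot \amalg_*\kk_{(\one,\one)} \cong \amalg_*\diopd\odot \kk\sym_2 .
$$
I then compose with the projection $\kk\sym_2\twoheadrightarrow \kk_\mathbf{2}$ of Example \ref{exam:splitting_amalg_*_kk11} to obtain $\mu_{\amalg_*\diopd}$. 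For the unit, I precompose $\amalg_*\eta_\diopd$ with the section $\kk_\mathbf{2}\hookrightarrow \kk\sym_2 \cong \amalg_*\kk_{(\one,\one)}$, $1\mapsto \frac12([e]+[\tau])$. Naturality in $\diopd$ is clear.

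\textbf{Step 2: the easy axioms.} Symmetry of $\mu_{\amalg_*\diopd}$ holds by construction.

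For diagonality, a component $\diopd(U_1,U_2)\otimes\diopd(V_1,V_2)\to \diopd(\ldots)$ removes some $s\in U_1\amalg V_1$ and some $t\in U_2\amalg V_2$. By diagonality of $\mu_\diopd$ the component is non-zero only if $s$ and $t$ lie in different factors. In $X=(U_1\amalg U_2)\amalg(V_1\amalg V_2)$ this means the contracted pair $\{s,t\}$ is split between $U$ and $V$, which is the diagonality condition.

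For the unit axiom, the composite $\kk_\mathbf{2}\odot\amalg_*\diopd\to \amalg_*\diopd\odot\kk_\mathbf{2}$ has two contributions, one from each of the two terms $\frac12[e]$ and $\frac12[\tau]$. Each contribution is the identity by the unit axiom for $\diopd$ and for its symmetric form, which covers the case where the unit is inserted on either side of the wall. The factor $\frac12$ then combines with the projection $\kk\sym_2\to\kk$, which sends both $e$ and $\tau$ to $1$, and the total is the identity. I would check this component-wise on $\kk_\mathbf{2}\odot\amalg_*\diopd$.

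\textbf{Step 3: associativity, the main obstacle.} Applying $\amalg_*$ to the associativity composite (\ref{eqn:mu_diopd_associative}) gives zero into $\amalg_*\diopd\odot\amalg_*\ltfb^2\kk_{(\one,\one)}$. I want a commutative diagram in which the top row is $\amalg_*$ of the dioperad composite and the bottom row is the cyclic composite (\ref{eqn:mu_cpd_associative}) for $\amalg_*\diopd$, the two rows being connected by maps induced by $\kk\sym_2\twoheadrightarrow\kk_\mathbf{2}$.

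The difficulty is that this projection does not a priori intertwine $\amalg_*(\kk_{(\one,\one)}\circledcirc\kk_{(\one,\one)}) \twoheadrightarrow \amalg_*\ltfb^2\kk_{(\one,\one)}$ with $\kk_\mathbf{2}\odot\kk_\mathbf{2}\twoheadrightarrow\lfb^2\kk_\mathbf{2}$. I would handle this by checking naturality of the projection to exterior squares under the $\odot$-monoidal map $\kk\sym_2\to\kk_\mathbf{2}$: the map $\amalg_*\kk_{(\one,\one)}^{\circledcirc 2}=(\kk\sym_2)^{\odot 2}\to \kk_\mathbf{2}^{\odot 2}$ is $\sym_2$-equivariant for the swap of factors, so it descends to exterior squares. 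With that, the square commutes and the bottom composite is zero.

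As a fallback, I would argue directly in the style of Remark \ref{rem:associativity_cyclic}. Using diagonality and the pigeonhole principle, reduce to a component $x_1\in U$, $x_2,x_3\in V$, $x_4\in W$. Each undirected pair then lifts in two ways across the wall, giving four cases, and these are exactly the four associativity conditions encoded by the dioperad axiom.
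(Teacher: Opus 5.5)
Your proposal is correct and takes the paper's route. Like the paper, you apply $\amalg_*$ to $\mu_\diopd$ and $\eta_\diopd$, compose with the projection $\amalg_*\kk_{(\one,\one)}\cong\kk\sym_2\twoheadrightarrow\kk_\mathbf{2}$ (respectively precompose with its section), and then check the axioms ``by the same reasoning'' as in Theorem~\ref{thm:cpd_to_diopd}, where associativity is exactly your commutative-square argument via naturality of the projection onto exterior squares; your write-up simply expands the paper's terse proof in more detail.
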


\begin{proof}
Consider a dioperad $(\diopd, \mu_\diopd, \eta_\diopd)$, with structure morphisms
\begin{eqnarray*}
\mu_\diopd &:& \stfb^2 \diopd \rightarrow \diopd \circledcirc \kk_{(\one, \one)} \\
\eta_\diopd &:& \kk_{(\one, \one)} \rightarrow \diopd.
\end{eqnarray*}

Applying $\amalg_*$ to these structure morphisms and composing with $\amalg_* \kk_{(\one, \one)} \twoheadrightarrow \kk_\mathbf{2}$ (respectively precomposing with $\kk_\mathbf{2} \hookrightarrow \amalg_* \kk_{(\one,\one)}$) yields:
\begin{eqnarray*}
\mu_{\amalg_* \diopd} &: &
\sfb^2 ( \amalg_* \diopd ) \rightarrow \amalg_* \diopd \odot \kk_\mathbf{2} \\
\eta_{\amalg_* \diopd} &:& 
\kk_{\mathbf{2}} \rightarrow \amalg_* \diopd.
\end{eqnarray*}
This construction is clearly natural with respect to $(\diopd, \mu _\diopd, \eta_\diopd)$.

It remains to show that, with respect to these structure morphisms, $\amalg_* \diopd$ has the structure of a cyclic operad. This follows by the same reasoning as used in the proof of Theorem \ref{thm:cpd_to_diopd}.
\end{proof}

We may restrict to the respective full subcategories of operads:

\begin{cor}
\label{cor:opds_to_copds}
The functor $\amalg_*$ of Theorem \ref{thm:diopd_to_cpd} yields the composite functors:
\begin{eqnarray*}
&&\opds \hookrightarrow \diopds \stackrel{\amalg_*}{ \rightarrow} \copds
\\
&&\nuo \hookrightarrow \nudo \stackrel{\amalg_*}{ \rightarrow} \nuco.
\end{eqnarray*}
\end{cor}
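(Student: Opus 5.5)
The plan is to obtain both composites by composing functors already shown to exist, so essentially nothing new needs to be verified. For the first composite, take the inclusion $\opds \hookrightarrow \diopds$ from Definition \ref{defn:operads}. This is the inclusion of a full subcategory: an operad is a dioperad (with unit) supported on objects of the form $(X,\one)$. Morphisms of operads are therefore exactly morphisms of the underlying dioperads, so the inclusion is a functor. Composing it with the functor $\amalg_* : \diopds \rightarrow \copds$ of Theorem \ref{thm:diopd_to_cpd} gives a functor $\opds \rightarrow \copds$.

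The non-unital case is handled in exactly the same way. Use the full subcategory $\nuo \subset \nudo$ from Definition \ref{defn:operads} together with the functor $\amalg_* : \nudo \rightarrow \nuco$ from the same theorem. Since Theorem \ref{thm:diopd_to_cpd} states that the two versions of $\amalg_*$ are compatible under forgetting the unit, and the inclusions $\opds \subset \diopds$ and $\nuo \subset \nudo$ are visibly compatible with the forgetful functors, the two composites are compatible under forgetting the unit as well.

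It is worth recording, though it is not needed, what the composite does on underlying objects. For an operad $\opd$ one has $(\amalg_* \opd)(X) = \bigoplus_{x \in X} \opd(X \backslash \{x\}, \{x\})$, which recovers the familiar cyclic operad associated to an operad. There is no genuine obstacle here: the only point to check is that the inclusions are functors landing in the correct categories, and this is immediate because the subcategories are full.
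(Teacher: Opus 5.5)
Your proposal is correct and matches the paper, which treats this corollary as immediate: one composes the full-subcategory inclusions $\opds \subset \diopds$ and $\nuo \subset \nudo$ of Definition \ref{defn:operads} with the functors $\amalg_*$ of Theorem \ref{thm:diopd_to_cpd}. Your side identification $(\amalg_* \opd)(X) \cong \bigoplus_{x \in X} \opd(X \backslash \{x\}, \{x\})$ is also correct.
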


\begin{rem}
One can show that the functor $\nuo \rightarrow \nuco$ of Corollary \ref{cor:opds_to_copds} is right adjoint to $\nuco \rightarrow \nuo$ of Corollary \ref{cor:copds_to_opds}.
 This is most easily seen by showing that the latter sends free cyclic operads to free operads, compatibly with $\amalg^*$. This is not the case for dioperads and will not be used here.
\end{rem}

 \section{Modules over twisted Brauer categories associated to cyclic operads and dioperads}
\label{sect:mod_cyc_di}

The purpose of this section is to explain the construction of modules over various flavours of twisted Brauer categories that are associated to cyclic operads and to dioperads; the constructions do not require a unit.

%%%%%%%%%%%%%%%%%%%%%%%%%%%%%%%%%%%%%%%%%%%%%%%%%%%%%%%%%%%%%%%%%%%%%%%
\subsection{The cyclic operad case}

We work with cyclic operads without unit (i.e., objects of $\nuco$), so that we have a $\kk \fb$-module  $\cpd$ such that $\cpd(\mathbf{0}) =0$, equipped with a structure morphism 
$$
\mu_\cpd : \cpd \odot \cpd \rightarrow \cpd \odot \triv_\mathbf{2}
$$
that factorizes across the canonical surjection $\cpd \odot \cpd \twoheadrightarrow \sfb^2 \cpd$.

The result that motivated our consideration of modules over the categories $(\kk \db)_{(\pm; \mp)}$ is the following. 

\begin{thm}
\label{thm:cpd_db_kdbm}
\cite{P_cyclic}
For $\cpd$ in $\nuco$, the structure map $\mu_\cpd$ induces:
\begin{enumerate}
\item 
 a natural $\kk \db$-module structure  on $\sfb^* \cpd$; 
\item 
a natural $(\kk \db)_{(-;-)}$-module structure on $\lfb^* \cpd$. 
\end{enumerate}
This yields the functors:
\begin{eqnarray*}
&& \nuco \rightarrow \kk \db \dash \modules
\\
&&
\nuco \rightarrow (\kk \db)_{(-;-})\dash \modules.
\end{eqnarray*}
\end{thm}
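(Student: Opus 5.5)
Since $(\kk \db)_{(\pm;\mp)}$ is homogeneous quadratic over $\kk \fb$ (Proposition \ref{prop:homog_quadratic_ub}, passed to opposite categories), a module structure on a $\kk\fb$-module $N$ amounts to a morphism of $\kk\fb$-modules encoding the action of the degree one generators, subject to the quadratic relations. Concretely, a degree one morphism $\n+\mathbf{2} \to \n$ is determined by a pair in $\mathbf{n+2}$; with the twist $(\pm;\mp)$ the action is a map $N \to N \odot \kk_\mathbf{2}$ (untwisted, first entry $+$), or the analogous map into $N \odot \sgn_\mathbf{2}$ (first entry $-$). The quadratic relation requires that the iterate $N \to N\odot \kk_\mathbf{2}\odot\kk_\mathbf{2}$ be symmetric, for $\mp = +$, or antisymmetric, for $\mp=-$, in the two pairs. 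In other words, it must vanish after projecting to $N \odot \lfb^2 \kk_\mathbf{2}$ (resp. $\sfb^2$). I will take $N = \sfb^*\cpd$ (resp. $\lfb^*\cpd$) and use the extension of Lemma \ref{lem:extend_zeta_sfb} to define the generating action.

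For $\sfb^*\cpd$, I take the structure map $\sfb^t\cpd \to \sfb^{t-1}\cpd \odot \kk_\mathbf{2}$ of Lemma \ref{lem:extend_zeta_sfb}, built from $\mu_\cpd$. This is a derivation-type operator: coproduct, then $\mu_\cpd$ on two factors, then product. Equivariance is automatic because everything is built from natural maps in $\kk\fb$-modules. Since $\mu_\cpd$ is symmetric, the unordered nature of each pair is respected, which gives the $(+;\cdot)$ twist. For $\lfb^*\cpd$, I mimic this using the coproduct $\lfb^t \to \lfb^{t-2}\odot\lfb^2$ and the product. Now $\mu_\cpd$ must be restricted to $\lfb^2\cpd = $ the antisymmetric part of $\cpd\odot\cpd$. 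Composing $\mu_\cpd$ with the antisymmetrization produces a map $\lfb^2\cpd \to \cpd\odot\sgn_\mathbf{2}$: swapping the two factors of $\cpd \odot\cpd$ also swaps the order of $x\in U$, $y \in V$ in the diagonal component (Remark \ref{rem:non-triviality_muC}). This sign is the source of the first $-$ in $(-;-)$.

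The main obstacle is the quadratic relation, i.e.\ showing that two successive contractions commute up to the correct sign $\mp$. I would expand the double iterate on a component $\sfb^t$ (resp. $\lfb^t$): two contractions applied to factors of the symmetric/exterior product either involve disjoint pairs of factors, or share a factor. The disjoint case commutes by (graded) commutativity of $\sfb^*$ (resp. $\lfb^*$). In $\lfb^*$ this yields the Koszul sign exchanging two degree-two blocks. Because each contraction reduces exterior degree by one, moving one past the other contributes the sign $-$, giving the second $-$ in $(-;-)$. In $\sfb^*$ no sign arises. The overlapping case involves three factors and is exactly the composite $\sfb^3\cpd \to \cpd\odot\kk_\mathbf{2}\odot\kk_\mathbf{2}$. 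The associativity axiom (\ref{eqn:mu_cpd_associative}) says that this composite is symmetric in the two pairs, which is the required relation. For the exterior version, I would check via the pigeonhole reduction of Remark \ref{rem:associativity_cyclic} that the signs from antisymmetrizing three factors match the $(-;-)$ twist. Careful sign bookkeeping here is the delicate point.

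Finally, naturality in $\cpd$ is clear, since every map used is natural in morphisms of $\nuco$. This gives the functors $\nuco \to \kk\db\dash\modules$ and $\nuco \to (\kk\db)_{(-;-)}\dash\modules$.
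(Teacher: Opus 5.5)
Your proposal is correct and follows the same route as the paper's sketch. You reduce to the action of the degree one generators using the homogeneous quadratic structure, define that action via Lemma \ref{lem:extend_zeta_sfb} (and its exterior analogue), and deduce the quadratic relation from the associativity axiom together with a Koszul sign; your disjoint/overlapping split is just an explicit unpacking of the paper's appeal to the (co)associativity of $\sfb^*$ and $\lfb^*$.

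One phrasing needs fixing. The map $\lfb^2\cpd\to\cpd\odot\sgn_\mathbf{2}$ is not the restriction of $\mu_\cpd$ to antisymmetric tensors: that restriction is zero, since $\mu_\cpd\circ\tau=\mu_\cpd$. Instead, it is obtained by using diagonality to record each contracted pair with its canonical order ($x\in U$, $y\in V$) and then projecting to $\sgn_\mathbf{2}$, which is what your parenthetical reference to Remark \ref{rem:non-triviality_muC} actually describes.
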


\begin{proof}
We include a sketch proof here, as a warm-up for the proof of the counterpart for dioperads, Theorem \ref{thm:diopd_dwb_kkdwb-_modules} below.

Consider the case of $\sfb^* \cpd$, which is, by construction, a $\kk \fb$-module. Since $\kk \db$ is a homogeneous quadratic category over $\kk \fb$, to define the $\kk \db$-module structure, we need to specify how the degree one morphisms of $\kk \db$ act and then check that this is compatible with the quadratic relations. 

The action of the degree one morphisms is given by the structure morphisms 
$$
\sfb^n \cpd \rightarrow \sfb^{n-1}\cpd  \odot \triv_\mathbf{2}, 
$$ 
constructed from $\mu_\cpd$ as in Lemma \ref{lem:extend_zeta_sfb}. Using the associativity of the product and coassociativity of the coproduct of $\sfb^*$, the fact that these are compatible with the quadratic relations is an immediate consequence of the associativity axiom for $\mu_\cpd$ for the cyclic operad $\cpd$.

The proof in the case of $\lfb^* \cpd$ is similar. In this case, the structure morphism $\mu_\cpd$ induces 
$$
\lfb^2 \cpd \rightarrow \cpd \odot \sgn_\mathbf{2},
$$
where the $\sgn_\mathbf{2}$  (in place of $\triv_\mathbf{2}$) arises because of that appearing in the definition of $\lfb^2$.

There is an obvious counterpart of Lemma \ref{lem:extend_zeta_sfb} using $\lfb^*$ in place of $\sfb^*$.
This yields the structure morphisms
$$
\lfb^n \cpd \rightarrow \lfb^{n-1} \cpd \odot \sgn_\mathbf{2}
$$
(taken to be zero for $n \leq 1$). These give the action of degree one morphisms of $(\kk \db)_{(-;-)}$. 

The final point is to check the quadratic relation; in this case, a sign arises due to the permutations that are used in the construction, accounting for the second `$-$' in the twisting. 
\end{proof}

\begin{rem}
Theorem \ref{thm:cpd_db_kdbm}  yields two different (non-equivalent) flavours of modules associated to a cyclic operad $\cpd$. This is related to the fact that there are odd and even graph complexes associated to a cyclic operad. 
\end{rem}

%%%%%%%%%%%%%%%%%%%%%%%%%%%%%%%%%%%%%%%%%%%%%%%%%%%%%%%%%%%%%%%%%%%%%%%%
\subsection{The dioperad case}

We work with dioperads without unit (i.e., objects of $\nudo$). Thus we have an underlying $\kk (\tfb)$-module $\diopd$ such that $\diopd (\mathbf{0}, \mathbf{0})=0$, equipped with a structure morphism 
$$
\mu_\diopd : \diopd \circledcirc \diopd \rightarrow \diopd \circledcirc \kk_{(\one, \one)}. 
$$
This factorizes across the canonical surjection $\diopd \circledcirc \diopd  \twoheadrightarrow \stfb^2 \diopd$.

The special case of the following result for operads was proved in \cite{P_opd_wall}: 

\begin{thm}
\label{thm:diopd_dwb_kkdwb-_modules}
For $\diopd$ in $\nudo$, the structure map $\mu_\diopd$ induces:
\begin{enumerate}
\item 
a natural $\kk \dwb$-module structure on $\stfb^* \diopd$; 
\item 
a natural $(\kk \dwb)_-$-module structure on $\ltfb^* \diopd$. 
\end{enumerate}
\item 
This yields the  functors: 
\begin{eqnarray*}
&& 
\nudo \rightarrow \kk \dwb\dash\modules 
\\
&&
\nudo \rightarrow (\kk \dwb)_-\dash\modules.
\end{eqnarray*}
\end{thm}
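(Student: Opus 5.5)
The plan mirrors the proof of Theorem \ref{thm:cpd_db_kdbm}, replacing $\odot$, $\sfb^*$, $\lfb^*$, $\kk_\mathbf{2}$ by $\circledcirc$, $\stfb^*$, $\ltfb^*$, $\kk_{(\one,\one)}$. By Proposition \ref{prop:twist_kuwb_hq}, $(\kk \uwb)_\mp$, and hence its opposite $(\kk \dwb)_\mp$, is homogeneous quadratic over $\kk(\tfb)$. A $(\kk\dwb)_\mp$-module structure on a $\kk(\tfb)$-module $N$ is therefore determined by two pieces of data. First, a morphism of $\kk(\tfb)$-modules $N \to N \circledcirc \kk_{(\one,\one)}$, encoding the action of the degree one morphisms. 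Second, a verification of the quadratic relations. By Lemma \ref{lem:gen_deg_one_kkuwbmp} the degree one bimodule is untwisted, so the two cases differ only in these relations. Via (\ref{eqn:circledcirc_kk_1_1}), such a morphism gives, for each walled pair $(x,y)\in X\times Y$, a map $N(X,Y)\to N(X\backslash\{x\},Y\backslash\{y\})$. This is exactly the action of the corresponding generator of $\dwb$.

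For $N=\stfb^*\diopd$, the degree one action will be the extension of $\mu_\diopd$ from Lemma \ref{lem:extend_gamma_stfb}:
\[
\stfb^t \diopd \to \stfb^{t-2}\diopd\circledcirc\stfb^2\diopd \to \stfb^{t-2}\diopd\circledcirc\diopd\circledcirc\kk_{(\one,\one)}\to \stfb^{t-1}\diopd\circledcirc\kk_{(\one,\one)},
\]
using the coproduct and product of the bicommutative Hopf structure on $\stfb^*$. This is natural in $\diopd$ because each step is. For $\ltfb^*\diopd$ I will first check that the symmetric map $\mu_\diopd$ induces $\ltfb^2\diopd\to\diopd\circledcirc\kk_{(\one,\one)}$. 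Here no sign twist on $\kk_{(\one,\one)}$ appears, in contrast with $\sgn_\mathbf{2}$ in the cyclic case, since the degree one walled bimodule has trivial $\sym_1\times\sym_1$ symmetry. The exterior analogue of Lemma \ref{lem:extend_gamma_stfb} then gives $\ltfb^t\diopd\to\ltfb^{t-1}\diopd\circledcirc\kk_{(\one,\one)}$, with Koszul signs coming from the graded-commutative Hopf structure.

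The remaining and main step is the quadratic relation. In $(\kk\dwb)_\mp$ the composite of two degree one morphisms with walled pairs $(x_1,y_1)$ and $(x_2,y_2)$ equals $\mp$ times the composite in the other order, reflecting the twist by $\kk^\mp$ on $\sym_2$. For $\stfb^*$ I will compute the iterated action $\stfb^t\diopd\to\stfb^{t-2}\diopd\circledcirc\kk_{(\one,\one)}\circledcirc\kk_{(\one,\one)}$ using coassociativity and cocommutativity. Each component then reduces to terms $\stfb^{t-3}\diopd\circledcirc\stfb^3\diopd\to\cdots$, which involve the associativity composite (\ref{eqn:mu_diopd_associative}), together with terms where the two contractions act on disjoint pairs of factors, which commute automatically. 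Since (\ref{eqn:mu_diopd_associative}) vanishes after projection to $\ltfb^2\kk_{(\one,\one)}$, the double action factors through $\stfb^2\kk_{(\one,\one)}$. This is precisely symmetry under exchanging the walled pairs, i.e.\ the untwisted relation.

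For $\ltfb^*$ the same computation applies, but reordering the two contractions passes one odd factor past others, and this produces an extra sign. The double action then becomes antisymmetric, giving the $(\kk\dwb)_-$ relation. I expect the main obstacle to be the bookkeeping of these Koszul signs, including the terms where the contractions touch disjoint factors. To handle them I will work componentwise on $\diopd(X_1,Y_1)\otimes\cdots\otimes\diopd(X_t,Y_t)$ and use diagonality together with the pigeonhole argument of the remark following Proposition \ref{prop:diopd+_versus_Gan}, which reduces to the four linkage configurations. Naturality in $\diopd$ is then immediate, which yields the two functors from $\nudo$.
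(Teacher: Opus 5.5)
Your plan is correct and follows the paper's proof. Homogeneous quadraticity reduces everything to two things: the degree-one action, built from $\mu_\diopd$ via Lemma \ref{lem:extend_gamma_stfb} and its exterior analogue, and the quadratic relations, which follow from the associativity axiom, with a Koszul sign giving the $(\kk\dwb)_-$ relation for $\ltfb^*\diopd$. The one step you leave implicit is the construction of $\ltfb^2\diopd\to\diopd\circledcirc\kk_{(\one,\one)}$. The paper does this along the lines of your $\sym_1\times\sym_1$ remark: using diagonality, on each summand it orders the two tensor factors so that the factor containing $s\in X_1$ of the walled pair $(s,t)$ comes first. The wall makes this choice canonical, so no sign twist on $\kk_{(\one,\one)}$ is needed.
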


\begin{proof}
The proof is analogous to that of Theorem \ref{thm:cpd_db_kdbm} and is also  a straightforward generalization of the operad case treated in \cite{P_opd_wall}.

In the case of $\stfb^* \diopd$, by Lemma \ref{lem:extend_gamma_stfb}, the structure morphism 
$
\stfb^2 \diopd \rightarrow \diopd \circledcirc \kk_{(\one,\one)}
$
induces morphisms 
 $$
 \stfb^n \diopd \rightarrow \stfb^{n-1} \diopd \circledcirc \kk_{(\one,\one)}
 $$
for $n\in \nat$. These give the action of the degree one morphisms of $\kk \dwb$ on $\stfb^* \diopd$. 

The fact that the quadratic relations in $\kk \dwb$ are satisfied is, analogously to  the cyclic case, a consequence of the associativity axiom for $\mu_\diopd$.

In the case of $\ltfb^* \diopd$, we first show that $\mu_\diopd$ induces a structure map 
\begin{eqnarray}
\label{eqn:ltfb^2_diopd_mu}
\ltfb^2 \diopd \rightarrow \diopd \circledcirc \kk_{(\one,\one)}
\end{eqnarray}
in $\kk (\tfb)$.

We require to show that, for  all possible  $X_1 \backslash \{s\} \subset X_1$ and  $X_2 \backslash \{t\} \subset X_2$, the structure map $\mu_\diopd : \diopd \circledcirc \diopd \rightarrow \diopd \circledcirc \kk_{(\one, \one)}$ induces 
$$
\big (\ltfb^2 \diopd \big)  (X_1, X_2) \rightarrow \diopd (X_1 \backslash \{s\}, X_2 \backslash \{t\}).
$$
and that these define a morphism of the form (\ref{eqn:ltfb^2_diopd_mu}).

Now, by definition
$$
\big (\ltfb^2 \diopd \big)  (X_1, X_2)
\cong 
\Big (
\bigoplus_{\substack{X_1 = U_1 \amalg V_1 \\X_2 = U_2 \amalg V_2}}
\diopd (U_1, U_2) \otimes \diopd (V_1 , V_2) \otimes \sgn_2
\Big) / \sym_2,
$$
where the action of $\sym_2$ transposes the tensor factors of $\diopd (-,-)$ up to  the sign from $\sgn_2$, the sum being over compatibly ordered decompositions of $X_1$ and $X_2$. After passage to the quotient, as a $\kk$-vector space, this is  non-canonically isomorphic to 
$$
\bigoplus_{\substack{(U_1,U_2) \\ (V_1, V_2) }}
\diopd (U_1, U_2) \otimes \diopd (V_1 , V_2)
$$
where the sum is now over {\em  unordered} decompositions such that $X_1 = U_1 \amalg V_1 $ and $X_2 = U_2 \amalg V_2$.

Consider a term of the decomposition corresponding to $(U_1, U_2)$ and $(V_1, V_2)$ (unordered). For given $s, t$, by the diagonal axiom for $\mu_\diopd$, considering $\diopd (U_1, U_2) \otimes \diopd (V_1 , V_2)$ as a component of $\diopd \circledcirc \diopd$ by choosing an arbitrary  ordering, the component of $\mu_\diopd$ corresponding to $(s,t)$ acts trivially  unless either ($s \in U_1 $ and $t \in V_2$) or ($s \in V_1$ and $t \in U_2$). If one of these holds, we can thus choose a canonical (depending on $s,t$) ordering of $(U_1, U_2)$ and $(V_1, V_2)$ by assuming that $s\in U_1$ (otherwise switch the labelling). The corresponding  map is taken to be 
\begin{eqnarray}
\label{eqn:ltfb_diopd_structure_map}
\diopd (U_1, U_2) \otimes \diopd (V_1 , V_2)
\rightarrow 
 \diopd (X_1 \backslash \{s\}, X_2 \backslash \{t\})
\end{eqnarray}
given by $\mu_\diopd$.  One checks that this yields a well-defined structure map on $\ltfb^2 \diopd$ (i.e., there is no ambiguity due to the presence of $\sgn_2$). 

The analogue of Lemma \ref{lem:extend_gamma_stfb} for $\ltfb^* $ in place of $\stfb^*$ shows that this structure map induces, for all $n \in \nat$:
\begin{eqnarray}
\label{eqn:ltfb_diopd_structure}
\ltfb^n \diopd 
\rightarrow 
(\ltfb^{n-1} \diopd)  
\circledcirc 
\kk_{(\one, \one)}.
\end{eqnarray}
These define the action of the degree one morphisms of $(\kk \dwb)_-$ on $\ltfb^* \diopd$. 

The compatibility with the quadratic relation is checked similarly to  the cyclic case (Theorem \ref{thm:cpd_db_kdbm}) as a consequence of  the associativity axiom for dioperads. As in the cyclic case, there is a Koszul sign that intervenes, accounting for the fact that the structure is over $(\kk \dwb)_-$ rather than $\kk \dwb$.
\end{proof}

\begin{rem}
In the general dioperad case, one cannot  deduce the case of $\ltfb^* \diopd$ from the case $\stfb^* \diopd$ by `twisting' (as opposed to the operad case treated in \cite{P_opd_wall}). 
\end{rem}

\section{Relating module structures associated to cyclic operads and dioperads}
\label{sect:relating_mod}

The aim of this section is to relate the module structures introduced in Section \ref{sect:mod_cyc_di}. 
We work with cyclic operads and dioperads without requiring a unit. Recall that, if $\cpd$ is in $\nuco$, then Theorem \ref{thm:cpd_to_diopd} provides the dioperad $\amalg^* \cpd$ (without unit),  and that, if $\diopd$ is in $\nudo$, Theorem \ref{thm:diopd_to_cpd} provides the cyclic operad $\amalg_* \diopd$ (without unit).

%%%%%%%%%%%%%%%%%%%%%%%%%%%%%%%%%%%%%%%%%%%%%%%%%%%%%%%%%
\subsection{From dioperads to cyclic operads}

Recall from Section \ref{subsect:I} that we have the functor
$$
\unwall_{(\pm; \mp)} : (\kk \dwb)_\mp\dash \modules 
\rightarrow
(\kk \db)_{(\pm; \mp)}\dash \modules
$$
that acts on the underlying objects via $\amalg_* : \kk (\tfb) \dash \modules \rightarrow \kk \fb\dash \modules$. This functor is described explicitly in Proposition \ref{prop:unwall_functor}.

\begin{thm}
\label{thm:diopd_2_cyclic_modules}
For $\diopd$ a dioperad in $\nudo$, there are natural isomorphisms:
\begin{eqnarray*}
\sfb^* (\amalg_* \diopd) & \cong & \unwall_{(+;+)} \stfb^* \diopd 
\\
\lfb^* (\amalg_* \diopd) & \cong & \unwall_{(-;-)} \ltfb^* \diopd 
\end{eqnarray*}
of $\kk \db$-modules and $(\kk \db)_{(-;-)}$-modules respectively.
\end{thm}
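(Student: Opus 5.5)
The plan has two stages: first identify the underlying $\kk\fb$-modules, then compare the actions of degree one morphisms. Both $(\kk \db)_{(+;+)}=\kk\db$ and $(\kk \db)_{(-;-)}$ are homogeneous quadratic over $\kk \fb$ (Proposition \ref{prop:homog_quadratic_ub}). Hence a module structure is determined by its underlying $\kk\fb$-module together with the action of the degree one morphisms. So it suffices to construct an isomorphism of underlying $\kk\fb$-modules and then check that it intertwines the action of a single generating degree one morphism, namely the contraction associated to $X\backslash\{a,b\}\subset X$ with ordered pair $(a,b)$.

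\textbf{Underlying objects.} Since $\amalg_*$ is symmetric monoidal for $\circledcirc$ and $\odot$ (Appendix \ref{sect:day_convolution}), it commutes with the symmetric and exterior power functors. This gives natural isomorphisms $\sfb^*(\amalg_*\diopd)\cong \amalg_*\stfb^*\diopd$ and $\lfb^*(\amalg_*\diopd)\cong\amalg_*\ltfb^*\diopd$. By Proposition \ref{prop:unwall_functor}, the right hand sides are the underlying objects of $\unwall_{(+;+)}\stfb^*\diopd$ and $\unwall_{(-;-)}\ltfb^*\diopd$ respectively. Explicitly, on $X$ a summand corresponds to a decomposition $X=X_1\amalg X_2$ together with an unordered family of factors $\diopd(U^i_1,U^i_2)$, where $X_j=\coprod_i U^i_j$; in the exterior case this carries the usual sign conventions.

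\textbf{Degree one action.} On the left, the structure map is the extension (Lemma \ref{lem:extend_zeta_sfb} and its exterior analogue) of $\mu_{\amalg_*\diopd}$. The latter is $\amalg_*\mu_\diopd$ composed with the projection $\amalg_*\kk_{(\one,\one)}\cong\kk\sym_2\twoheadrightarrow\kk_\mathbf{2}$ of Example \ref{exam:splitting_amalg_*_kk11}. On the right, Proposition \ref{prop:unwall_functor} prescribes the action according to where $a$ and $b$ lie:
\begin{enumerate}
\item if $a\in X_1$ and $b\in X_2$, it acts by the walled contraction of $\stfb^*\diopd$ (resp.\ $\ltfb^*\diopd$);
\item if $a\in X_2$ and $b\in X_1$, it acts by $\pm$ the walled contraction;
\item otherwise it acts by zero.
\end{enumerate}
I would check these three cases against the left side. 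If $a,b$ both lie in $X_1$ or both in $X_2$, then diagonality of $\mu_\diopd$ forces $\amalg_*\mu_\diopd$ to have no component landing in $\kk_{(\one,\one)}$ for that pair, so the left side is zero as well. In the mixed cases, the component of $\amalg_*\mu_\diopd$ lands in one of the two basis elements $e,\tau$ of $\kk\sym_2$, and the chosen projection sends both to $1$. This matches case (1) directly. For case (2), the reversal of the ordered pair produces the sign $\pm$: with trivial coefficients it is $+$, while with the $\sgn_\mathbf{2}$ coefficients in the $\lfb^*$ version (proof of Theorem \ref{thm:cpd_db_kdbm}) it is $-$, matching $\pm=-$ in $(-;-)$. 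Naturality in $\diopd$ is clear since every map used is natural.

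\textbf{Main obstacle.} The hardest step is the exterior case. There one must track the Koszul signs from the $\sgn_2$ in $\ltfb^2$ (used in (\ref{eqn:ltfb_diopd_structure_map}) to fix the ordering with $s\in U_1$) and from the extension to $\ltfb^n$. These must be compared with the signs in the $\lfb^*$ extension on the cyclic side, where the ordering of the two tensor factors is not canonical. I would handle this by reducing, via the coproduct/product description of both extensions, to $n=2$, i.e.\ to $\lfb^2(\amalg_*\diopd)\cong\amalg_*\ltfb^2\diopd$. I would then compare the two structure maps to $\amalg_*\diopd\odot\sgn_\mathbf{2}$ on a single summand $\diopd(U_1,U_2)\otimes\diopd(V_1,V_2)$ with $a\in U_1$, $b\in V_2$, checking that swapping the factors introduces the same sign on both sides. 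The case $n\geq 3$ should then follow because both extensions are built functorially from the $n=2$ map using the Hopf structure, with which $\amalg_*$ is compatible.
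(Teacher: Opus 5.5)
Your proposal is correct and follows essentially the same route as the paper: identify the underlying $\kk\fb$-modules via the symmetric monoidality of $\amalg_*$, use that $\kk\db$ and $(\kk\db)_{(-;-)}$ are homogeneous quadratic to reduce to degree one morphisms, reduce further to $n=2$, and check the three cases of Proposition \ref{prop:unwall_functor}, with the sign in the case not compatible with the wall matching the $(-;-)$ twist. One small correction: the vanishing when $a,b$ lie on the same side of the wall comes from the support of $\kk_{(\one,\one)}$ (a contracted pair always has one element in each of $X_1$ and $X_2$), not from diagonality; diagonality of $\mu_\diopd$ is instead what gives the vanishing when $a,b$ lie in the same tensor factor.
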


\begin{proof}
The functor $\amalg_* : \kk (\tfb) \dash \modules \rightarrow \kk \fb\dash \modules$ is symmetric monoidal with respect to the Day convolution products (see Section \ref{sect:day_convolution}). This implies the isomorphisms at the level of the underlying $\kk \fb$-modules. Moreover, in each case, by construction, both sides are equipped with the appropriate module structure. Hence, since $\kk \db$ and $(\kk \db)_{(-:-)}$ are both homogeneous quadratic categories, to prove the result, it suffices to prove that the actions of the degree one morphisms coincide. 

Write $\cpd$ for the cyclic operad (without unit) associated to $\diopd$, which has underlying $\kk \fb$-module $\amalg_* \diopd$. Below we consider the second case, since this is the most delicate due to the signs. The first case is treated by the same arguments, {\em mutatis mutandis}.

 Suppose give a finite set $X$ and an ordered pair $(s,t)$ with $\{s, t\} \subset X$.   
 We have to show that the associated structure morphisms 
 \begin{eqnarray*}
&& \lfb^* \cpd (X) \rightarrow \lfb^* \cpd (X \backslash \{s,t\}) 
 \\
&&(\unwall_{(-;-)} \ltfb^* \diopd) (X) 
\rightarrow  
(\unwall_{(-;-)} \ltfb^* \diopd) (X \backslash \{s,t\}) 
\end{eqnarray*}
identify, 
using the fact that $\amalg_*$ is symmetric monoidal to identify the underlying object $\amalg_* \ltfb^* \diopd$ with $\lfb^* (\amalg_* \diopd) = \lfb^* \cpd$. (Note that, by the constructions, changing the order of $s$ and $t$ introduces a sign in both cases.)

In each case, using the above identification, the morphisms are the direct sum over $n\in \nat$ of morphisms of the form 
$$
\lfb^n \cpd (X) 
\rightarrow 
\lfb^{n-2} \cpd (X\backslash \{s, t\}).
$$
We analyse the case $n=2$; the general case can be deduced from this. 

First consider the morphism given by the cyclic structure of $\cpd$. Using the given order of $s$ and $t$, we can write this canonically (i.e., fixing a choice of order of the tensor product in the domain) as 
$$
\bigoplus_{\substack{X = U \amalg V \\ s\in U}}
\cpd (U) \otimes \cpd (V)
\rightarrow 
\cpd (X \backslash \{s, t\}).
$$
On the components with $\{s, t\} \subset U$, this map is zero. On the components with $s \in U$ and $t \in V$
it is given by the structure map of $\cpd$. (This is well-defined: changing the order of $s$, $t$ requires transposing the order of the tensor factors of the domain; the two associated signs compensate for each other.)

Now consider the structure morphism for $\unwall_{(-,-)} \ltfb^* \diopd$. Here, we use the same normalization as above, giving the expression:
$$
\bigoplus_{\substack{X = U \amalg V \\ s\in U}}
\bigoplus_{\substack{U = U_1 \amalg U_2 \\ V = V_1 \amalg V_2}}
\diopd (U_1, U_2) 
\otimes 
\diopd (V_1 , V_2) 
\rightarrow 
\bigoplus_{X\backslash\{ s, t\} = Y_1 \amalg Y_2} \diopd (Y_1, Y_2).
$$

Fix a summand of the domain $\diopd (U_1, U_2) 
\otimes 
\diopd (V_1 , V_2)$ and consider the restriction of the structure map to this summand. Since this is induced from the structure of $ \diopd$, this restriction is zero if $t \in U$, as in the cyclic case. Moreover, it is also zero if either ($s \in U_1$ and $t \in V_1$) or ($s\in U_2$ and $t\in V_2$), by the identification of $\unwall_{(-;-)}$ given in Proposition \ref{prop:unwall_functor}. 

Hence, we reduce to considering the  two possibilities: 
\begin{enumerate}
\item 
$s \in U_1$ and $t \in V_2$;
\item 
$s \in U_2$ and $t \in V_1$.
\end{enumerate}
In the first case, the chosen order $(s, t)$ is compatible with the wall. In the second it is not;   we reduce to the first case by changing the order of $(s,t)$. Here, the sign in the explicit description of $\unwall_{(-;-)}$ given in Proposition  \ref{prop:unwall_functor} corresponds to the sign that arises from $(\kk \db)_{(-;-)}$ in changing the order of the pair. 

Thus, we may assume that the only non-zero component of the structure map restricted to $\diopd (U_1, U_2) 
\otimes 
\diopd (V_1 , V_2) $ is 
$$
\diopd (U_1, U_2) 
\otimes 
\diopd (V_1 , V_2) 
\rightarrow 
\diopd ((U_1 \amalg V_1)\backslash\{s\}, (U_2 \amalg V_2) \backslash \{t\}). 
$$

By Proposition \ref{prop:unwall_functor}, this is given by the structure map of the $(\kk \dwb)_{-}$-module $\ltfb^* \diopd$, as constructed in Theorem \ref{thm:diopd_dwb_kkdwb-_modules}. By equation (\ref{eqn:ltfb_diopd_structure_map}) of the proof of that theorem, this is given by the structure map $\mu_\diopd$. Since this also underlies the definition of the structure map $\mu_\cpd$ of $\cpd$ by  Theorem \ref{thm:diopd_to_cpd}, putting the above identifications together, the result follows for $n=2$. The general case then follows from this.
\end{proof}

\begin{rem}
Recall from Definition \ref{defn:I} that  $\unwall_{(\pm;\mp)}$ is the composite functor 
$$
\xymatrix{
(\kk \dwb)_\mp\dash \modules 
\ar[r]^{(\X_\mp )_*}
&
(\kk \ddb)_\mp\dash \modules 
\ar[r]^(.45){\dsgntr_{(\pm; \mp)} ^*}
&
(\kk \db)_{(\pm; \mp)}\dash \modules.
}
$$
Hence  the following intermediate structures  
\begin{enumerate}
\item 
the $\kk \ddb$-module $(\X_+)_* \stfb^* (\diopd)$; 
\item 
the $(\kk \ddb)_-$-module $(\X_-)_* \ltfb^* (\diopd)$.
\end{enumerate}
arise implicitly in Theorem \ref{thm:diopd_2_cyclic_modules}

These are in some sense more fundamental structures, in that they retain information from the fact that a dioperad is encoded using trees with directed edges.
\end{rem}

%%%%%%%%%%%%%%%%%%%%%%%%%%%%%%%%%%%%%%%%%%%%%%%%%%%%%%%%%%%%%%%%%%%%%%%%%%%%%%%%%%%%%%%%%%%
\subsection{From cyclic operads to dioperads}

Recall that, for a cyclic operad $\cpd$ in $\nuco$, we have the associated dioperad $\amalg^* \cpd$ in $\nudo$ given by Theorem \ref{thm:cpd_to_diopd}. We also have the functor 
$$
\dupsilon_{(\pm; \mp)} ^*
: 
(\kk \db)_{(\pm; \mp)} \dash \modules 
\rightarrow 
(\kk \dwb)_\mp \dash \modules
$$
of Notation \ref{nota:upsilon}.

The functor $\dupsilon_{(\pm;\mp)}^*$ is described explicitly in Proposition \ref{prop:upsilon^*_dupsilon^*}. Restricted to the underlying objects, it identifies with $\amalg^* : \kk \fb\dash \modules \rightarrow \kk (\tfb)\dash \modules$, so that  $\amalg ^* \cpd (X,Y) = \cpd (X \amalg Y)$.

\begin{thm}
\label{thm:cyclic_2_diopd_modules}
For $\calc$ a cyclic operad in $\nuco$, there are natural isomorphisms
\begin{eqnarray*}
\stfb^* (\amalg^* \cpd) &\cong & \dupsilon_{(+;+)}^* \sfb^* (\cpd) 
\\
\ltfb^* (\amalg^* \cpd) & \cong & \dupsilon_{(-;-)}^* \lfb^* (\cpd)
\end{eqnarray*}
of $\kk \dwb$-modules and $(\kk \dwb)_-$-modules respectively. 
\end{thm}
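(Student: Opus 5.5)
The plan mirrors the proof of Theorem \ref{thm:diopd_2_cyclic_modules}, with the rôles of $\amalg^*$ and $\amalg_*$ exchanged.

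\emph{Underlying objects.} The functor $\amalg^* : \kk \fb\dash\modules \rightarrow \kk(\tfb)\dash\modules$ is symmetric monoidal with respect to $\odot$ and $\circledcirc$ (Section \ref{sect:day_convolution}). Hence it commutes with the symmetric and exterior power functors, giving natural isomorphisms
\[
\stfb^*(\amalg^*\cpd)\cong \amalg^*\sfb^*\cpd ,
\qquad
\ltfb^*(\amalg^*\cpd)\cong \amalg^*\lfb^*\cpd
\]
of $\kk(\tfb)$-modules. By Proposition \ref{prop:upsilon^*_dupsilon^*}, the right-hand sides are the underlying objects of $\dupsilon^*_{(\pm;\mp)}\sfb^*\cpd$ and $\dupsilon^*_{(\pm;\mp)}\lfb^*\cpd$. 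Since $\kk\dwb$ and $(\kk\dwb)_-$ are homogeneous quadratic over $\kk(\tfb)$ (Proposition \ref{prop:twist_kuwb_hq}), it then suffices to show that these isomorphisms intertwine the actions of degree one morphisms. Such a morphism is given by a walled pair $(s,t)\in X\times Y$.

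\emph{Action on the right-hand side.} By Proposition \ref{prop:upsilon^*_dupsilon^*}, the walled pair $(s,t)$ acts on $\dupsilon^*\lfb^*\cpd(X,Y)=\lfb^*\cpd(X\amalg Y)$ through the structure map of the $(\kk\db)_{(-;-)}$-module $\lfb^*\cpd$ for the ordered pair $(s,t)$ in $X\amalg Y$. By Theorem \ref{thm:cpd_db_kdbm} and the $\lfb$-analogue of Lemma \ref{lem:extend_zeta_sfb}, this map is the coproduct to $\lfb^{n-2}\odot\lfb^2$, followed by $\mu_\cpd$ on the $\lfb^2$ factor, followed by the product. The $\ltfb$-structure on $\amalg^*\cpd$ from Theorem \ref{thm:diopd_dwb_kkdwb-_modules} has exactly the same shape, built from the $\circledcirc$-coproduct, the map (\ref{eqn:ltfb_diopd_structure_map}) and the product.

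\emph{Reduction to $n=2$.} Because $\amalg^*$ is symmetric monoidal, the products and coproducts correspond under the identification. So I reduce to $n=2$, i.e. to comparing the two maps
\[
(\ltfb^2\amalg^*\cpd)(X,Y)\rightarrow \cpd\bigl((X\amalg Y)\backslash\{s,t\}\bigr).
\]
I decompose by $X=U_1\amalg V_1$ and $Y=U_2\amalg V_2$, and normalise with $s\in U_1$.
\begin{enumerate}
\item On the cyclic side, $\mu_\cpd$ is nonzero only when $t\in V_2$, since $\cpd$'s $\mu$ is diagonal and $U=U_1\amalg U_2$ contains $s$.
\item On the dioperad side, the normalisation used in (\ref{eqn:ltfb_diopd_structure_map}) selects the same component.
\item There the map is $\mu_{\amalg^*\cpd}$, which by Theorem \ref{thm:cpd_to_diopd} is $\amalg^*\mu_\cpd$ followed by projection to the $\kk_{(\one,\one)}$ summand of $\amalg^*\kk_\mathbf{2}$. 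On the relevant component this projection is the identity.
\end{enumerate}
The $\sfb/\stfb$ case is the same argument without signs.

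\emph{Main obstacle.} I expect the sign bookkeeping to be the hard part. Three places need checking: the normalisation $s\in U_1$ on both sides, the identification of the $\sgn_2$ twists in $\lfb^2$ and $\ltfb^2$ under $\amalg^*$, and the Koszul signs in extending from $n=2$ to general $n$. I would handle these by fixing representatives in $(\cpd^{\odot n}\otimes\sgn_n)_{\sym_n}$ and checking that the permutation bringing the two factors carrying $s$ and $t$ to the front gives the same sign on both sides. This should follow because $\amalg^*$ preserves the symmetry isomorphisms of the Day convolution.
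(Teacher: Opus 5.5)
Your proposal is correct and follows essentially the same route as the paper. You identify the underlying $\kk(\tfb)$-modules via the symmetric monoidality of $\amalg^*$, reduce by homogeneous quadraticity to degree one morphisms and then to $n=2$, and with the normalisation $s\in U_1$ observe that both structure maps vanish unless $t$ lies in the other factor, where both are given by $\mu_\cpd$ via Theorem \ref{thm:cpd_to_diopd}. Your justification of the reduction to $n=2$, and the sign checks you flag, make explicit what the paper leaves to the reader.
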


\begin{proof}
The proof is analogous to that of Theorem \ref{thm:diopd_2_cyclic_modules}. The underlying $\kk(\tfb)$-modules are isomorphic, so we reduce to showing that the action of the degree one morphisms of $(\kk \dwb)_\mp$ identify. Once again, we focus upon the more delicate second case, the first being proved by a similar argument. 

Write $\diopd$ for the dioperad associated to $\cpd$, which has underlying object $\amalg^* \cpd$. Consider a pair of finite sets $(X_1, X_2)$ and $(s,t) \in X_1 \times X_2$ a walled pair. We require to prove that the associated structure morphisms 
\begin{eqnarray*}
&&\ltfb^* \diopd (X_1, X_2) 
\rightarrow 
\ltfb^* \diopd (X_1 \backslash \{s\}, X_2\backslash \{ t\} ) 
\\
&&
\dupsilon_{(-;-)}^* \lfb^* \cpd (X_1, X_2) 
\rightarrow 
\dupsilon_{(-;-)}^* \lfb^* \cpd  (X_1 \backslash \{s\}, X_2\backslash \{ t\} ) 
\end{eqnarray*}
identify. The first is given by the $(\kk \dwb)_-$-module structure furnished by Theorem \ref{thm:diopd_dwb_kkdwb-_modules}. The second is given by applying $\dupsilon_{(-;-)}^*$ to the $(\kk \db)_{(-;-)}$-module structure given by Theorem \ref{thm:cpd_db_kdbm}.

These both identify as the direct sum of morphisms of the form 
$$
\ltfb^n \diopd (X_1, X_2) 
\rightarrow 
\ltfb^{n-1} \diopd (X_1 \backslash \{s\}, X_2\backslash \{ t\} ) .
$$
As in the proof of Theorem \ref{thm:diopd_2_cyclic_modules}, one can reduce to the case $n=2$. 

First consider the structure morphism coming from the $(\kk \dwb)_-$-module $\ltfb^* \diopd$. This can be written as 
$$
\bigoplus_{\substack{X_1 = U_1 \amalg U_2 \\ X_2 = V_1  \amalg V_2 \\ s\in U_1}}
\diopd (U_1, V_1) \otimes \diopd (U_2, V_2) 
\rightarrow 
\diopd (X_1 \backslash \{s\}, X_2\backslash \{ t\} ),
$$
where we have ordered the tensor product by the condition $s \in U_1$.

Restricted to the summand $\diopd (U_1, V_1) \otimes \diopd (U_2, V_2) $, by construction, this morphism is zero if $t\in V_1$, hence we may suppose that $t \in V_2$. In this case, by Theorem \ref{thm:cpd_to_diopd}, it is given by the structure morphism of $\diopd$, and hence by the cyclic operad structure morphism
$$
\cpd (U_1 \amalg V_1) \otimes \cpd (U_2 \amalg V_2) 
\rightarrow 
\cpd (X_1  \amalg X_2 \backslash \{s,t\} ).
$$

Now consider the structure morphism arising from $\dupsilon_{(-;-)}^* \lfb^* \cpd$, using the identification of $\dupsilon_{(\pm;\mp)}^*$ given in Proposition \ref{prop:upsilon^*_dupsilon^*}, 
 together with the $(\kk \db)_{(-;-)}$-module structure of $\lfb^* \cpd$. This yields:
$$
\lfb^2 \cpd (X_1 \amalg X_2) 
\rightarrow 
\cpd (X_1  \amalg X_2 \backslash \{s,t\} )
$$
as determined by Proposition \ref{prop:upsilon^*_dupsilon^*}. This can be written as 
$$
\bigoplus_{\substack{X_1 = U_1 \amalg U_2 \\ X_2 = V_1  \amalg V_2 \\ s\in U_1}}
\cpd (U_1\amalg  V_1) \otimes \cpd (U_2 \amalg  V_2) 
\rightarrow 
\cpd (X_1  \amalg X_2 \backslash \{s,t\} ).
$$
Moreover, restricted to $\cpd (U_1\amalg  V_1) \otimes \cpd (U_2 \amalg  V_2) $, the morphism is zero if $t \in V_1$ and, in the remaining case $t= V_2$, is given by the structure morphism of the cyclic operad $\cpd$. 

This gives the desired identification in the case $n=2$. From this one deduces the cases $n>2$, thus establishing the result.
\end{proof}

\begin{rem}
Recall from Notation \ref{nota:upsilon} that $\dupsilon _{(\pm; \mp)} 
$ is the composite 
$$
(\kk \dwb)_{\mp}
\stackrel{\X_\mp}{\rightarrow}
(\kk \ddb)_\mp 
\stackrel{\dmp_{(\pm; \mp)}} {\rightarrow} 
(\kk \db) _{(\pm; \mp)},
$$ 
where $\X_\mp$ is as in Notation \ref{nota:Xi} and  $\dmp_{(\pm; \mp)}$ in Notation \ref{nota:psi_phi}.
 Thus Theorem \ref{thm:cyclic_2_diopd_modules} implicitly transits by the respective structures 
\begin{enumerate}
\item 
$\dmp^*_{(+;+)} \sfb^* \cpd $ in $(\kk \ddb)$-modules; 
\item 
$\dmp^*_{(-;-)} \lfb^* \cpd $ in $(\kk \ddb)_-$-modules. 
\end{enumerate}
This does not correspond to new information: restriction along $\dmp_{(\pm; \mp)}$ simply provides the embedding of $(\kk \db)_{(\pm; \mp)}$-modules in $(\kk \ddb)_\mp$-modules (compare Corollary \ref{cor:full_subcat_kkdub_pmmp}).
\end{rem}

%%%%%%%%%%%%%%%%%%%%%%%%%%%%%%%%%%%%%%%%%%%%%%%%%%%%%%%%%%%%%%%%%%%%%%%
\subsection{Restricting to the associated operad}

For $\cpd$ a cyclic operad in $\nuco$, one has the natural inclusion in $\nudo$:
$$
\opd_{\amalg^*\cpd} \hookrightarrow \amalg^* \cpd, 
$$
where $\opd_\diopd$ is the underlying operad of a dioperad (without unit) $\diopd$, as in Proposition \ref{prop:restrict_diopd}.

\begin{cor}
\label{cor:compare_opd_cpd_amalg^*}
For $\cpd$  in $\nuco$, there are natural inclusions 
\begin{eqnarray*}
&&
\stfb^* (\opd_{\amalg^* \cpd}) \hookrightarrow 
\stfb^* (\amalg^* \cpd) \cong  \dupsilon_{(+;+)}^* \sfb^* (\cpd) 
\\
&&
\ltfb^* (\opd_{\amalg^* \cpd}) \hookrightarrow 
 \ltfb^* (\amalg^* \cpd)  \cong  \dupsilon_{(-;-)}^* \lfb^* (\cpd)
\end{eqnarray*}
of $\kk \dwb$-modules and $(\kk \dwb)_-$-modules respectively. 
\end{cor}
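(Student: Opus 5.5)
The plan is to combine two facts: the functors $\stfb^*$ and $\ltfb^*$ preserve inclusions, and the module structures of Theorem \ref{thm:diopd_dwb_kkdwb-_modules} are natural with respect to morphisms of dioperads (without unit). The isomorphisms on the right-hand side are exactly those of Theorem \ref{thm:cyclic_2_diopd_modules}, so only the inclusions need an argument.

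The first step is to note that $\opd_{\amalg^*\cpd} \hookrightarrow \amalg^*\cpd$ is a morphism in $\nudo$. This is the adjunction counit of Proposition \ref{prop:restrict_diopd}, applied in its version without unit. Applying the functors $\nudo \rightarrow \kk\dwb\dash\modules$ and $\nudo \rightarrow (\kk\dwb)_-\dash\modules$ of Theorem \ref{thm:diopd_dwb_kkdwb-_modules} then gives morphisms of $\kk\dwb$-modules and of $(\kk\dwb)_-$-modules
$$
\stfb^*(\opd_{\amalg^*\cpd}) \rightarrow \stfb^*(\amalg^*\cpd),
\qquad
\ltfb^*(\opd_{\amalg^*\cpd}) \rightarrow \ltfb^*(\amalg^*\cpd).
$$
Naturality in $\cpd$ follows because $\amalg^*$ and $\opd_\bullet$ are functors and the counit is natural.

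The second step is injectivity, which can be checked on the underlying $\kk(\tfb)$-modules. The argument I would give is the following.
\begin{itemize}
\item Since $\kk$ has characteristic zero, $\stfb^n$ and $\ltfb^n$ are obtained from $(-)^{\circledcirc n}$ by taking invariants (equivalently coinvariants) for $\sym_n$, twisted by the sign representation in the exterior case.
\item Day convolution $\circledcirc$ is exact in each variable over a field. Concretely, its value is a direct sum of tensor products of values, so it preserves monomorphisms.
\item Taking $\sym_n$-invariants preserves monomorphisms.
\end{itemize}
Together these show that $\stfb^n$ and $\ltfb^n$ send inclusions to inclusions. In fact $\stfb^*(\opd_{\amalg^*\cpd})$ is even a direct summand as a $\kk(\tfb)$-module, because $\opd_{\amalg^*\cpd}$ is a direct summand of $\amalg^*\cpd$ as a $\kk(\tfb)$-module.

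I do not expect a real obstacle. The only point needing care is that the module structures of Theorem \ref{thm:diopd_dwb_kkdwb-_modules} really are functorial in morphisms of $\nudo$. This holds because the action of degree one morphisms is built from $\mu_\diopd$ by natural constructions (Lemma \ref{lem:extend_gamma_stfb} and its exterior analogue), and these constructions commute with any map compatible with $\mu$.
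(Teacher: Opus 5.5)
Your proposal is correct and is essentially the argument the paper intends: $\opd_{\amalg^*\cpd} \hookrightarrow \amalg^*\cpd$ is a morphism in $\nudo$ (the counit of Proposition~\ref{prop:restrict_diopd}), so applying the functors of Theorem~\ref{thm:diopd_dwb_kkdwb-_modules} and composing with the isomorphisms of Theorem~\ref{thm:cyclic_2_diopd_modules} gives the natural module maps. Injectivity holds, as you note, because the inclusion splits on underlying $\kk(\tfb)$-modules (cf.\ Remark~\ref{rem:modules_dioperad_operad}), so $\stfb^*$ and $\ltfb^*$ carry it to split monomorphisms.
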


\begin{rem}
\label{rem:modules_dioperad_operad}
Apart from in degenerate cases, the dioperad $\amalg^* \cpd$ is not positive and the inclusion $\opd_{\amalg^* \cpd} \hookrightarrow \amalg^* \cpd$ of dioperads does not admit a retract (compare the case of Remark \ref{rem:opd_diopd_retract}). This means that the inclusions of Corollary \ref{cor:compare_opd_cpd_amalg^*} do not in general split in $\kk \dwb$-modules and $(\kk \dwb)_-$-modules respectively (although they do split in $\kk (\tfb)$-modules).

Unfortunately, this makes it harder to exploit homological results for $\stfb^* (\amalg^* \cpd)$ (respectively $ \ltfb^* (\amalg^* \cpd) $) to deduce homological results for $\stfb^* (\opd_{\amalg^* \cpd})$  (resp. $\ltfb^* (\opd_{\amalg^* \cpd})$).
\end{rem}

\part{Koszul complexes}
\label{part:kz_cx}

\section{Recollections on Koszul complexes}
\label{sect:kz_cx}

This section provides a quick review of the Koszul complexes that interest us, working with modules over $(\kk \dwb)_\mp$, $(\kk \ddb)_{\mp}$, and $(\kk \db)_{(\pm ; \mp)}$ respectively, using that these are all homogeneous quadratic $\kk$-linear categories that satisfy the requisite (right) projectivity condition. For further details, the reader is referred to \cite{P_cyclic} and \cite{P_opd_wall}. For generalities on homogeneous quadratic duality over a base ring, see \cite[Chapter 1]{MR4398644}.

\begin{rem}
Recall that $\pm$ and $\mp$ both denote designated signs (i.e., elements of $\{ -1, 1\}$). Thus $- \mp$ is interpreted as follows: the operator $-$ changes the value (and should not be replaced by $\pm$).
\end{rem}

%%%%%%%%%%%%%%%%%%%%%%%%%%%%%%%%%%%%%%%%%%%%%%%%%%%
\subsection{Input}

Suppose that $\cala$ is a $\kk$-linear category that is homogeneous quadratic over the full $\kk$-linear subcategory $\cala^0$ and that $\cala$ is projective as a right $\cala^0$-module.
In particular, $\cala$ comes with a canonical length grading over $\nat$. Then we can form the right quadratic dual of $\cala$; this is the homogeneous quadratic category $\cala^\perp$ over $\cala^0$ with generators given by the right dual $\hom_{(\cala^0) \op } (\cala^1, \cala^0)$ and quadratic relations given by the right dual of $\cala^2$. (One can recover $\cala$ by forming the {\em left} quadratic dual of $\cala^\perp$; these constructions correspond to inverse equivalences of categories.) See \cite[Chapter 1]{MR4398644} for further details  (where a weaker projectivity hypothesis is used).

\begin{prop}
\label{prop:identify_right_quadratic_duals}
\ 
\begin{enumerate}
\item 
The $\kk$-linear category  $(\kk \uwb)_\mp$ is a homogeneous quadratic category over $\kk (\tfb)$ and is free as a right $\kk (\tfb)$-module. Its right quadratic dual identifies as  $(\kk \dwb)_{-\mp}$.
\item 
The $\kk$-linear category $(\kk \dub)_\mp$ is a homogeneous quadratic category over $\kk \fb$ and is free as a right $\kk \fb$-module. Its right quadratic dual identifies as $(\kk \ddb)_{-\mp}$. 
\item 
The $\kk$-linear category $(\kk \ub)_{(\pm; \mp)}$ is a homogeneous quadratic category over $\kk \fb$ and is free as a right $\kk \fb$-module. Its right quadratic dual identifies as $(\kk \db)_{(\pm; -\mp)}$.
\end{enumerate}
\end{prop}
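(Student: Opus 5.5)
We handle the three cases in parallel, since each is a homogeneous quadratic category over its degree zero subcategory. Homogeneous quadraticity over $\kk(\tfb)$, respectively $\kk\fb$, is already given by Proposition \ref{prop:twist_kuwb_hq} and Proposition \ref{prop:homog_quadratic_ub}. Right freeness follows from the explicit descriptions of the morphism spaces. The ordered morphism sets are free right $\sym$-sets by Corollary \ref{cor:decompose_mor_fiordev_uwbord}. Passing to the twisted quotients by $\sym_n$ (respectively $\sym_t$ or $\sym_2\wr\sym_t$) acting on the ``pair'' part does not interact with the right action of the automorphisms of the source. Hence each $(\kk\uwb)_\mp((\m,\n),-)$, $(\kk\dub)_\mp(\n,-)$ and $(\kk\ub)_{(\pm;\mp)}(\n,-)$ is free as a right $\kk(\sym_m\times\sym_n)$-, respectively $\kk\sym_n$-, module. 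A basis is given by representatives whose restriction to the source is order-preserving. In characteristic zero the twisted quotients are direct summands, so freeness passes through.

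For the quadratic dual, we first identify the generators. By Lemmas \ref{lem:gen_deg_one_kkuwbmp} and \ref{lem:kkdub_deg1_generator}, the degree one bimodule $\cala^1$ is right free on the classes $[\beta_{i,j}]$, respectively $[\alpha_{(ij)}]$ with $i\neq j$, respectively $[\alpha_{(ij)}]$ with $i<j$. Its right dual $\hom_{(\cala^0)\op}(\cala^1,\cala^0)$ therefore has the dual basis. This identifies canonically, as a bimodule, with the degree one morphisms of the opposite (downward) category, with $\fb\op\cong\fb$ as in Remark \ref{rem:FB_opposite}. In the unwalled case the sign $\pm$ relating $[\alpha_{(ij)}]$ and $[\alpha_{(ji)}]$ dualizes to the same sign. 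This explains why the first twist $\pm$ is preserved.

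Next we compute the quadratic relations and their orthogonal. In degree two, the composites of two degree one generators are subject only to the relation that exchanging the order of the two pairs multiplies by $\mp$. This relation comes from $\otimes_{\kk\sym_2}\kk^\mp$ on $\cala^1\otimes_{\cala^0}\cala^1$, so $\cala^2=(\cala^1\otimes_{\cala^0}\cala^1)_{\sym_2,\mp}$. We will check that the $\sym_2$-action swapping the two pairs on $\cala^1\otimes_{\cala^0}\cala^1$ is free on basis elements, since the two pairs involve disjoint elements. The relations $R\subset\cala^1\otimes\cala^1$ are then the $(-\mp)$-isotypic part, namely the span of $x-\mp\,\sigma x$. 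The right dual of the quotient is the annihilator $R^\perp$ in the dual, which is the $\mp$-isotypic invariants. The dual category is therefore presented by the downward generators modulo the relations spanning the $\mp$-isotypic part. This is exactly the category in which swapping pairs introduces the sign $-\mp$. That category is the ordered downward category tensored with $\kk^{-\mp}$, giving $(\kk\dwb)_{-\mp}$, $(\kk\ddb)_{-\mp}$ and $(\kk\db)_{(\pm;-\mp)}$.

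The main obstacle is the bookkeeping of the dual pairing in degree two. The pairing must be compatible with $\otimes_{\cala^0}$ and with the right duality conventions of \cite[Chapter 1]{MR4398644}, so that $R^\perp$ is computed correctly. In particular we must verify that the pairing identification introduces no extra sign under the swap $\sigma$. For $(\kk\ub)$ we must also check that the direction-sign $\pm$ and the ordering-sign interact independently. I would verify this on the minimal case of two pairs on $\mathbf{4}$ (respectively $(\two,\two)$) and extend using $\cala^0$-equivariance.
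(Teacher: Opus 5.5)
Your proposal is correct, but it takes a different route from the paper, which gives no computation here. The paper refers statement (1) to \cite{P_opd_wall} and statement (3) to \cite{P_cyclic}, and says only that (2) is established similarly. You instead verify all three cases directly and uniformly. You take the quadratic presentations of Propositions \ref{prop:twist_kuwb_hq} and \ref{prop:homog_quadratic_ub} as given, applied also with the opposite ordering sign and passed to opposite categories, so that matching generators and degree-two relations is enough to identify the dual. You identify the right dual of $\cala^1$ with the degree-one morphisms of the downward category via dual bases. You then compute $R^\perp$ as an eigenspace of the swap of the two pairs, exactly as in $S^*(V)^! \cong \Lambda^*(V^\sharp)$. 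This makes transparent why the ordering sign flips while the direction sign $\pm$ survives: the direction sign is a self-dual character of $\sym_2^{\times t}$, and in degree two the relations involve only the residual $\sym_2$ in $\sym_2\wr\sym_2$. It also actually treats the directed case (2), which the paper only asserts. The citation buys brevity and defers the verification, together with the finer Koszulity statements, to those papers.

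The step you flag as the main obstacle goes through with no extra sign. $\kk\uwbord$ and $\kk\dubord$ are free on their degree-one parts. Hence the nested identification $(\cala^1\otimes_{\cala^0}\cala^1)^\vee\cong(\cala^1)^\vee\otimes_{\cala^0}(\cala^1)^\vee$, which reverses composition order to match $(a\circ b)\op=b\op\circ a\op$, is just the set-level pairing of a degree-two morphism $\gamma$ with $\gamma\op$. Relabelling the two pairs is an isometry for this pairing. So the swap eigenspaces for the eigenvalues $\mp$ and $-\mp$ are mutually orthogonal, which is all you need to identify $R^\perp$.

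One small correction on right freeness. It does not follow from the twisted quotients being direct summands, since a summand of a free module is only projective. The correct reason is the one implicit in your basis sentence. The product of the source automorphism group with $\sym_t$ (respectively $\sym_2\wr\sym_t$) acts freely on the set of bijections, so the twisted coinvariants are free, with basis indexed by orbits.
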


\begin{proof}
The first and third statements are established in \cite{P_opd_wall} and \cite{P_cyclic} respectively. The second statement is established similarly.
\end{proof}

\begin{rem}
\ 
\begin{enumerate}
\item 
The freeness property can be seen directly. For example, for $(\kk \uwb)_\mp$, this corresponds to the fact that $(\kk \uwb)_\mp ((U,V), (X,Y))$ is free as a right $\kk (\aut(U) \times \aut (V))$-module, which follows easily from the definitions. 
\item 
The change in the orientation sign associated to the order of pairs arises for the same reason that the quadratic dual of the symmetric algebra $S^* (V)$ on a finite-dimensional vector space $V$ is the exterior algebra $\Lambda^* (V)$: a `commutation relation' becomes an `anticommutation relation'.  
\item 
More is true: these $\kk$-linear categories are  {\em Koszul}. This fact is exploited in  \cite{P_opd_wall} and \cite{P_cyclic} to analyse the (co)homology of certain associated Koszul complexes. This will not be used here, since the results are based on the explicit form of the Koszul complexes.
\end{enumerate}
\end{rem}

Recall that we have explicit generators for the degree one morphisms (considered as right modules) based on the morphisms introduced in Notation \ref{nota:alpha_beta}: 
\begin{enumerate}
\item 
By Lemma \ref{lem:gen_deg_one_kkuwbmp}, for $m, n \in \nat$, 
$
(\kk \uwb)_\mp ((\m, \n), (\mathbf{m+1}, \mathbf{n+1}))
$ is generated freely as a $\kk ((\sym_m \times \sym_n)\op)$-module by the set $ \{ [\beta_{i,j}] \mid i \in \mathbf{m+1}, \ j \in \mathbf{n+1} \}$.
\item 
By Lemma \ref{lem:kkdub_deg1_generator}, 
for $n \in \nat$, 
$(\kk \dub)_\mp (\n, \mathbf{n+2})$ is generated freely as  a $\kk \sym_n\op$-module by the set $\{ [\alpha_{(i j)}] \}$, indexed by ordered pairs of elements $i \neq j \in \n$.
\item 
By the same result, $(\kk \ub)_{(\pm;\mp)}  (\n, \mathbf{n+2}) $ is generated freely  as  a $\kk \sym_n\op$-module  by the set $\{ [\alpha_{(i j)}] \}$, indexed by  elements $i < j \in \n$.
\end{enumerate}
Note that the degree one morphisms are independent of the sign $\mp$ associated to the ordering of pairs.

These bases yield  the `dual bases' for the respective quadratic duals. Under the identifications given in Proposition \ref{prop:identify_right_quadratic_duals}, this simply correspond to the `opposite' morphisms. For example, we have 
$$
[\beta_{i,j}\op] \in (\kk \dwb)_{-\mp} ( (\mathbf{m+1}, \mathbf{n+1}), (\m, \n)).
$$

\begin{rem}
For $\cala$ as above with right quadratic dual $\cala^\perp$, we have the following general form of a Koszul complex. Let $N$ be a right $\cala$-module and $M$ a left $\cala^\perp$-module. Then we can form:
$$
N \otimes _{\cala ^0} M
$$
using the restriction of the module structures to $\cala^0$. 

This is equipped with a `Koszul differential' that is induced by {\em inner multiplication} by an element $e \in \cala^1 \otimes_{\cala^ 0 } (\cala^\perp)^1$ that is given by the coevaluation corresponding to the duality between $\cala^1$ and $(\cala ^\perp)^1$.  This will be made  explicit below in the cases of interest.
\end{rem}

%%%%%%%%%%%%%%%%%%%%%%%%%%%%%%%%%%%%%%%%%%%%%%%%%%%%
\subsection{Koszul complexes in the walled case}
\label{subsect:recollect_Koszul_dwb}

For $G$  right $(\kk \uwb)_\mp$-module and $F$ a $(\kk \dwb)_{-\mp}$-module, we have the Koszul complex with underlying object
\begin{eqnarray}
\label{eqn:cx_G_F}
G \otimes_{\kk (\tfb)} F.
\end{eqnarray}
The differentials are induced by the inner multiplication by $\sum_{\substack{i \in \mathbf{m+1} \\ j \in \mathbf{n+1}}} [\beta_{i,j}] \otimes [\beta_{i,j}\op]$, for $m,n \in \nat$.
More precisely, this yields the components 
$$
G (\mathbf{m+1}, \mathbf{n+1}) \otimes _{\kk (\sym_{m+1} \times \sym_{n+1})} F (\mathbf{m+1}, \mathbf{n+1}) 
\rightarrow 
G (\mathbf{m}, \mathbf{n}) \otimes _{\kk (\sym_{m} \times \sym_{n})} F (\mathbf{m}, \mathbf{n}) 
$$ 
given by 
$$
g \otimes f \mapsto \sum_{i,j} g [\beta_{i,j}] \otimes [\beta_{i,j}\op] f.
$$

\begin{rem}
The context usually determines the appropriate grading to place upon such Koszul  complexes.
\end{rem}

\begin{exam}
\label{exam:walled_koszul_complexes}
We can consider $(\kk \uwb)_{\mp}$ as a bimodule, as usual. Using the right $(\kk \uwb)_{\mp}$-module structure, we may therefore form the Koszul complex 
\begin{eqnarray}
\label{eqn:uwb_F_kz1}
&&(\kk \uwb)_{\mp} \otimes_{\kk (\tfb)} F.
\end{eqnarray}
This takes values in the category of  $(\kk \uwb)_{\mp}$-modules. We refer to this as the {\em first Koszul complex} associated to $F$.

The complex (\ref{eqn:cx_G_F}) can be recovered from this as 
$$
G \otimes _{(\kk \uwb)_\mp} \Big( (\kk \uwb)_{\mp} \otimes_{\kk (\tfb)} F \Big).
$$ 

In particular, we can consider the case where $G = (\kk \uwb)_{\mp}^\sharp$, the $(\kk \uwb)_{\mp}$-bimodule obtained from $(\kk \uwb)_{\mp}$ by applying vector space duality. This yields the {\em second Koszul complex} associated to $F$:
\begin{eqnarray}
\label{eqn:uwb_F_kz2}
&&(\kk \uwb)_{\mp} ^\sharp\otimes_{\kk (\tfb)} F.
\end{eqnarray}
This takes values in the category of  $(\kk \uwb)_{\mp}$-modules.

The complexes (\ref{eqn:uwb_F_kz1}) and (\ref{eqn:uwb_F_kz2}) are described in detail in \cite{P_opd_wall}. In particular, the (co)homology of these complexes is identified in terms of appropriate $\ext$ and $\tor$ groups.
\end{exam}

%%%%%%%%%%%%%%%%%%%%%%%%%%%%%%%%%%%%%%%%%%%%%%%%%%%%%%%%%%%%%%%
\subsection{Koszul complexes in the directed case}
\label{subsect:Koszul_directed}

For $B$ a right $(\kk \dub)_\mp$-module and $A$ a left $(\kk \ddb)_{-\mp}$-module, we have the Koszul complex with underlying object
$$
B \otimes_{\kk \fb} A.
$$ 
The differentials are induced by inner multiplication with the elements 
$$
\sum_{i \neq j}  [\alpha_{(ij})] \otimes [\alpha_{(ij)}\op]
\in 
 (\kk \dub)_{\mp}(\mathbf{n}, \mathbf{n+2}) \otimes _{\kk\sym_{n}} (\kk \ddb)_{-\mp} (\mathbf{n+2}, \mathbf{n}),
$$
where the sum is over ordered pairs  $i \neq j \in \mathbf{n+2}$.
 Namely, this element induces the component of the differential
$$
B (\mathbf{n+2}) \otimes_{\kk \sym_{n+2}} A (\mathbf{n+2}) 
\rightarrow 
B (\mathbf{n}) \otimes_{\kk \sym_{n}} A (\mathbf{n}). 
$$

\begin{exam}
\label{exam:directed_koszul_complexes}
Similarly to the walled case (see Example \ref{exam:walled_koszul_complexes}), for $A$ a left $(\kk \ddb)_{-\mp}$-module,  we have the two Koszul complexes of $(\kk \dub)_\mp$-modules:
\begin{eqnarray}
\label{eqn:A_koz1}
&&
(\kk \dub)_\mp \otimes_{\kk \fb} A
\\
\label{eqn:A_koz2}
&&
(\kk \dub)_\mp^\sharp \otimes_{\kk \fb} A. 
\end{eqnarray}
\end{exam}

%%%%%%%%%%%%%%%%%%%%%%%%%%%%%%%%%%%%%%%%%%%%%%%%%%%%%%%%%%%%%%%%%%%%%%%%%%%%%
\subsection{Koszul complexes for the undirected (signed) case}
\label{subsect:recollect_Koszul_db}

For $N$ a right $(\kk \ub)_{(\pm; \mp)}$-module and $M$ a left $(\kk \db)_{(\pm; -\mp)}$-module, we have the Koszul complex with underlying object 
$$
N \otimes _{\kk \fb} M .
$$
In this case, the differentials are induced by inner multiplication with the elements 
$$
\sum_{i< j}
 [\alpha_{(ij})] \otimes [\alpha_{(ij)}\op]
\in 
 (\kk \ub)_{(\pm; \mp)}(\mathbf{n}, \mathbf{n+2}) \otimes _{\kk\sym_{n}} (\kk \db)_{(\pm;-\mp)} (\mathbf{n+2}, \mathbf{n})
$$
where the sum is over  $i < j \in \mathbf{n+2}$.  The action of this element is analogous to that in the directed case.

\begin{exam}
\label{exam:koszul_complexes_undirected}
We have the following analogue of Examples \ref{exam:walled_koszul_complexes} and \ref{exam:directed_koszul_complexes}.
For $M$ a left $(\kk \db)_{(\pm; -\mp)}$-module, we have the two Koszul complexes of $(\kk \ub)_{(\pm; \mp)}$-modules:
\begin{eqnarray}
\label{eqn:db_kz1}
&&
(\kk \ub)_{(\pm;\mp)} \otimes _{\kk \fb} M;
\\
\label{eqn:db_kz2}
&&
(\kk \ub)_{(\pm;\mp)}^\sharp \otimes _{\kk \fb} M.
\end{eqnarray}
These are described in detail in \cite{P_cyclic}, where an interpretation of their (co)homology is given in terms of $\ext$ and $\tor$ respectively.
\end{exam}

%%%%%%%%%%%%%%%%%%%%%%%%%%%%%%%%%%%%%%%%%%%%%%%%%%%%%%%%%%%%%%%%%%%%%%%%%
\subsection{Symmetry}
\label{subsect:symmetry}

There is an important underlying symmetry, which allows us to reduce the work required. This is best illustrated by the following example:

\begin{exam}
Consider the walled case, as in Section \ref{subsect:recollect_Koszul_dwb}. We could equivalently encode the modules $G$ and $F$ in the form $G \otimes F$, considered as a module over the  tensor product of $\kk$-linear categories:
$$
(\kk \dwb)_\mp\cten(\kk \dwb)_{-\mp}.
$$
This makes it clear that we can swap the roles of the $\kk$-linear categories. Namely, there is an isomorphic Koszul complex with underlying object 
$$
F \otimes _{\kk (\tfb)} G
$$
where now we are treating $F$ as a right $(\kk \uwb)_{-\mp}$-module and $G$ as a left $(\kk \dwb)_\mp$-module. The Koszul differential being defined by essentially the same element.
\end{exam}

\begin{rem}
Once we have chosen to focus upon the first and second Koszul complexes introduced above:
\begin{eqnarray*}
&&(\kk \uwb)_{\mp} \otimes_{\kk (\tfb)} F
\\
&&(\kk \uwb)_{\mp}^\sharp \otimes_{\kk (\tfb)} F,
\end{eqnarray*}
we are usually thinking of these as functors of $F$. We have thus `broken' the symmetry, since the second tensor factor does not play the same role.
\end{rem}

\section{Comparing Koszul complexes: walled, directed, and undirected}
\label{sect:new_compare}

The purpose of this section is to present general results that relate the various flavour of Koszul complexes that interest us, working with modules over twisted versions of walled Brauer categories, directed Brauer categories, and undirected Brauer categories.

%%%%%%%%%%%%%%%%%%%%%%%%%%%%%%%%%%%%%%%%%%%%%%%%%%%%%%%%%%%%%%%%%
\subsection{Passing between walled and directed}

Recall from Section \ref{subsect:Xi} that we have functors 
\begin{eqnarray*}
\X^*_\mp &:& (\kk \ddb)_{\mp} \dash \modules \rightarrow (\kk \dwb)_\mp \dash\modules 
\\
(\X_\mp)_* &:& (\kk \dwb)_\mp \dash\modules   \rightarrow (\kk \ddb)_{\mp} \dash \modules
\end{eqnarray*}
related by the adjunction $\X^*_\mp \dashv (\X_\mp)_*$. The restriction functor $\X^*_\mp$ is described directly in terms of the functor $\X_\mp$.  Its left adjoint $(\X_\mp)_*$ is described in Proposition \ref{prop:Xi_*}.

At the level of the underlying objects (corresponding to the forgetful functors $  (\kk \ddb)_{\mp} \dash \modules \rightarrow \kk \fb \dash \modules$ and $(\kk \dwb)_\mp \dash\modules   \rightarrow  \kk(\tfb)\dash\modules$ respectively), the adjunction $\X^*_\mp \dashv (\X_\mp)_*$ identifies with  
$$
\amalg^* \ : \  \kk \fb\dash\modules \rightleftarrows \kk (\tfb)\dash\modules \ : \  \amalg_*.
$$

Consider $G$ a right $(\kk \uwb)_\mp$-module and $M$ a $(\kk \ddb)_{-\mp}$-module, so that we have $(\X_\mp)_*G$, a  right $(\kk \dub)_\mp$-module and $\X^*_{-\mp} M$, a $(\kk \dwb)_{-\mp}$-module. We thus have the two Koszul complexes 
\begin{eqnarray}
\label{eqn:G_M_walled}
G \otimes_{\kk (\tfb)} \X^*_{-\mp} M
\\
\label{eqn:G_M_directed}
(\X_\mp)_*G \otimes_{\kk \fb} M,
\end{eqnarray}
with differentials induced by inner multiplication by the elements  $\sum  [\beta_{i,j}] \otimes [\beta_{i,j}\op]$ and $\sum [\alpha_{(ij})] \otimes [\alpha_{(ij)}\op]$ respectively.

\begin{exam}
\label{exam:Phi_action_beta}
Consider $(\m, \n)$ as an object of $(\kk \dwb)_{-\mp}$. Then $\X^*_{-\mp} M (\m, \n)$ identifies as $M (\m \amalg \n)$, by definition of $\X^*_{-\mp}$. Moreover, for $i \in \m$ and $j \in \n$, the action of $[\beta_{i,j}\op]$ on $\X^*_{-\mp} M (\m, \n)$ is given by the action of $[\alpha_{ij'}\op]$ on $M (\mathbf{m+n})$, by Example \ref{exam:uwbord_2_fiordev_alpha_beta}, where we identify $\m \amalg \n$ with $\mathbf{m+n}$ as usual and $j'=m+j$. 
\end{exam}

The underlying objects of these complexes only depend on the underlying $\kk (\tfb)$-modules and $\kk \fb$-modules respectively, so that these  can be written  as
$
G \otimes_{\kk (\tfb)} \amalg^* $ and 
 $
\amalg_*G \otimes_{\kk \fb} M$ respectively.
 These are naturally isomorphic, by Frobenius reciprocity.

The differentials can be described  more explicitly, as follows. For $m, n \in \nat$, the component of the differential of (\ref{eqn:G_M_walled}) 
$$
G (\m , \n) \otimes_{\kk (\sym_m \times \sym_n )}  M (\m \amalg \n)
\rightarrow 
G (\mathbf{m-1} , \mathbf{n-1}) \otimes_{\kk (\sym_{m-1} \times \sym_{n-1} )}  M ((\mathbf{m-1}) \amalg (\mathbf{n-1}))
$$
is determined on an element of the form $x \otimes y$ (for $x \in G (\m , \n)$ and $y \in  M (\m \amalg \n)$) by 
\begin{eqnarray}
\label{eqn:diff_beta}
x \otimes y \mapsto \sum x [\beta_{i,l}] \otimes [\beta_{i,l}\op] y.
\end{eqnarray}

Similarly, for $t \in \nat$, the component of the differential of (\ref{eqn:G_M_directed})
\begin{eqnarray}
\label{eqn:X*G_M_diff}
((\X_\mp)_*G) (\mathbf{t}) \otimes_{\kk \sym_t} M (\mathbf{t}) 
\rightarrow 
((\X_\mp)_*G) (\mathbf{t-2}) \otimes_{\kk \sym_{t-2}} M (\mathbf{t-2}) 
\end{eqnarray}
is determined on $u \otimes v$ (for $u \in ((\X_\mp)_*G) (\mathbf{t})$ and $v \in M (\mathbf{t}) $) by 
\begin{eqnarray}
\label{eqn:diff_alpha}
u \otimes v 
\mapsto 
\sum u [\alpha_{(ij)}] \otimes [\alpha_{(ij)}\op] v.
\end{eqnarray}

\begin{thm}
\label{thm:Xi_Koszul_comparison}
For $G$ a right $(\kk \uwb)_\mp$-module  and $M$ a $(\kk \ddb)_{-\mp}$-module, there is a natural isomorphism of Koszul complexes
$$
G \otimes_{\kk (\tfb)} \X^*_{-\mp} M
\cong 
(\X_\mp)_*G \otimes_{\kk \fb} M.
$$ 
\end{thm}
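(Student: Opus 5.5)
The plan is to first identify the underlying graded objects and then check that the differentials agree component by component. At the level of underlying objects, $(\X_\mp)_*G$ has underlying right $\kk\fb$-module $\amalg_* G$ (Proposition \ref{prop:Xi_*}, in its right-module form obtained by passing to opposites), and $\X^*_{-\mp}M$ has underlying $\kk(\tfb)$-module $\amalg^* M$. Since $\amalg_*$ is left adjoint to $\amalg^*$ (Appendix \ref{sect:day_convolution}), Frobenius reciprocity gives a natural isomorphism
$$
G\otimes_{\kk(\tfb)}\amalg^* M \cong \amalg_* G\otimes_{\kk\fb} M .
$$
I will make this explicit. In arity $t$, the right-hand side is $\bigoplus_{X_1\amalg X_2=\mathbf{t}} G(X_1,X_2)\otimes_{\kk\sym_t} M(\mathbf{t})$. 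Each $\sym_t$-orbit of decompositions contains a representative $(\m,\n)$ with $m+n=t$, and its stabiliser is $\sym_m\times\sym_n$. This identifies the summand with $G(\m,\n)\otimes_{\kk(\sym_m\times\sym_n)}M(\m\amalg\n)$. Summing over $m+n=t$ recovers the degree $t$ part of the left-hand side. The identification is natural in $G$ and $M$.

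Next I will compare the differentials (\ref{eqn:diff_beta}) and (\ref{eqn:diff_alpha}) on a generator $x\otimes y$, where $x\in G(\m,\n)$ sits in the summand indexed by $(\m,\n)$ and $y\in M(\mathbf{m+n})$. In (\ref{eqn:diff_alpha}) the sum runs over all ordered pairs $(ij)$ in $\mathbf{m+n}$. By Proposition \ref{prop:Xi_*}, the right action of $[\alpha_{(ij)}]$ on $(\X_\mp)_*G$ restricted to the summand $G(\m,\n)$ behaves as follows:
\begin{enumerate}
\item it is zero unless $i$ lies in the first block and $j$ in the second;
\item when $i\in\m$ and $j$ corresponds to $j_0\in\n$, it acts as $[\beta_{i,j_0}]$ on $G$, landing in the summand indexed by $(\m\setminus\{i\},\n\setminus\{j_0\})$.
\end{enumerate}
The surviving terms are therefore exactly the pairs $(i,j')$ with $j'=m+j_0$. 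For these, Example \ref{exam:Phi_action_beta} shows that $[\alpha_{(ij')}\op]$ acting on $M(\mathbf{m+n})$ coincides with $[\beta_{i,j_0}\op]$ acting on $\X^*_{-\mp}M(\m,\n)$. The two sums thus match term by term. Pairs in the reverse order $(j'\,i)$ contribute zero, which is consistent because in the directed category a pair has a fixed direction and no sign identification is imposed.

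Finally I will check compatibility with the reindexing. The target term lies in the summand indexed by $(\m\setminus\{i\},\n\setminus\{j_0\})$, a subset decomposition of $\mathbf{t-2}$ that is not in standard form. I will transport it to $(\mathbf{m-1},\mathbf{n-1})$ using the order-preserving bijections built into $\beta_{i,j_0}$ and $\alpha_{(ij')}$ (Notation \ref{nota:alpha_beta}). I expect this bookkeeping to be the main obstacle: one must verify that the permutation relating $\mathbf{t-2}$ to $(\mathbf{m-1})\amalg(\mathbf{n-1})$, applied on both tensor factors, cancels over $\kk\sym_{t-2}$. I would handle it by observing that, by Example \ref{exam:uwbord_2_fiordev_alpha_beta}, the map $\beta^{m,n}_{i,j_0}\mapsto\alpha^{m+n}_{(ij')}$ is exactly the effect of $\ouw$, together with the shuffle isomorphism (\ref{eqn:shuffle_iso}). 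The required compatibility is then the naturality statement of Proposition \ref{prop:uwbordop_x_dubord-set}.

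No signs arise, since $\mp$ is unchanged along $\X_\mp$ and in degree one the twists act trivially. Naturality in $G$ and $M$ is clear from the construction.
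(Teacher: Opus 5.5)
Your proposal is correct and follows the paper's proof. Frobenius reciprocity identifies the underlying objects, Proposition \ref{prop:Xi_*} kills every term of the directed differential except those with $i\le m<j'$, and Example \ref{exam:Phi_action_beta} matches the surviving terms one by one with the walled differential.

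The reindexing you flag as the main obstacle is harmless. With the order-preserving conventions of Notation \ref{nota:alpha_beta}, the surviving term $x[\alpha_{(ij')}]$ already lands in the summand of $\mathbf{t-2}$ indexed by the standard decomposition corresponding to $(\mathbf{m-1},\mathbf{n-1})$. This is why the paper simply says ``after reindexing''.
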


\begin{proof}
As explained above, by Frobenius reciprocity, the underlying objects of these complexes are isomorphic. 
Now, by Proposition \ref{prop:Xi_*}, for given $t \in \nat$, we have
$$
(
(\X_\mp)_*G 
) (\mathbf{t}) 
= 
\bigoplus_{m + n = t} 
G(\m, \n) \uparrow_{\sym_m \times \sym_n}^{\sym_t}.
$$
and the differential (\ref{eqn:X*G_M_diff}) splits as a direct sum on the components indexed by $m,n$:
$$
G(\m, \n) \uparrow_{\sym_m \times \sym_n}^{\sym_t}
 \otimes_{\kk \sym_t} M (\mathbf{t}) 
\rightarrow 
G(\mathbf{m-1}, \mathbf{n-1}) \uparrow_{\sym_{m-1} \times \sym_{n-1}}^{\sym_{t-2}}
\otimes_{\kk \sym_{t-2}} M (\mathbf{t-2}) 
$$
and thus splits in the same way  as the differential of (\ref{eqn:G_M_walled}). 

It suffices to show that these components (for the respective complexes (\ref{eqn:G_M_walled}) and (\ref{eqn:G_M_directed}))  coincide.
To show this, consider the behaviour of the differential (\ref{eqn:X*G_M_diff}) on an element represented by $ x\otimes y$ in $G(\m, \n) 
 \otimes M (\mathbf{t})$.  By Proposition \ref{prop:Xi_*}, $x[\alpha_{(ij')}]$ is zero unless $i \in \mathbf{m} \subset \mathbf{t}$ and $j' \in \mathbf{n}\subset \mathbf{t}$ (the notation $j'$ is used to harmonize with Example \ref{exam:Phi_action_beta}). After reindexing, replacing $j'$ by $j:= j'-m$, in the latter case we have $x [\alpha_{(ij')}] = x [\beta_{i ,j}]$ (Cf. Example \ref{exam:uwbord_2_fiordev_alpha_beta}).
 
Combining this with the behaviour of $[\beta_{i,l}\op]$ exhibited in Example, \ref{exam:Phi_action_beta}, we obtain the equality:
$$
\sum x [\alpha_{(ij')}] \otimes [\alpha_{(ij')}\op] y 
= 
\sum x [\beta_{(i,l)}] \otimes [\beta_{(i,l)}\op] y,
$$
 over the appropriate indexing sets. This concludes the proof.
\end{proof}

\begin{rem}
\label{rem:thm-Xi_Koszul_comparison_switch}
By the `symmetry'  observed in Section \ref{subsect:symmetry}, there is a `symmetric' version of this result, in which $ \X^*_{\mp}$ appears on the left of the tensor product and $(\X_{-\mp})_*$ on the right. Namely, for $G$ a $(\kk \dwb)_\mp$-module  and $M$ a $(\kk \ddb)_{-\mp}$-module, we have the isomorphic  Koszul complexes
\begin{eqnarray*}
 \X^*_{-\mp} M \otimes_{\kk (\tfb)}  G
&\cong &
M \otimes_{\kk \fb} (\X_\mp)_*G.
\end{eqnarray*}
\end{rem}

%%%%%%%%%%%%%%%%%%%%%%%%%%%%%%%%%%%%%%%%%%%%%%%%%%%%%%%%%%%%%%%%%
\subsection{Passing between directed and undirected}

For given $(\pm; \mp)$, we have the functor 
$
\dmp_{(\pm;\mp)} : \kk \ddb _\mp \rightarrow \kk \db_{(\pm;\mp)}
$
 (see Notation \ref{nota:psi_phi}). This induces the restriction functor
$$
\dmp_{(\pm;\mp)}^* : \kk \db_{(\pm;\mp)}\dash\modules \rightarrow \kk \ddb _\mp\dash \modules.
$$
On the underlying objects (in $\kk \fb$), this is the identity functor.

Recall from Section \ref{subsect:sgntr^*_downward} that we also have the functor 
$$
\sgntr_{(\pm; \mp)} ^*
: 
(\kk \ddb)_\mp\dash\modules 
\rightarrow 
(\kk \db)_{(\pm; \mp)} \dash \modules.
$$
Again, on the underlying objects, this is the identity.

Consider $M$ a right $(\kk \dub)_{-\mp}$-module and $B$ a $(\kk \db)_{(\pm; \mp)}$-module. We can thus form $\sgntr_{(\pm; -\mp)} ^*M$, a right $(\kk \ub)_{(\pm; -\mp)}$-module, and $\dmp^* _{(\pm; \mp)} B$, a $(\kk \ddb)_\mp$-module. We can then consider the Koszul complexes:
\begin{eqnarray}
\label{eqn:cx_M_phi^*G}
M \otimes_{\kk \fb} \dmp^* _{(\pm; \mp)} B
\\
\label{eqn:cx_sgntr_^*M_G}
\sgntr_{(\pm; -\mp)} ^*M \otimes_{\kk \fb} B,
\end{eqnarray}
where the first is formed in the directed setting and the second in the undirected setting.

\begin{thm}
\label{thm:phi^*_sngtr}
For $M$ a $(\kk \ddb_{-\mp})$-module and $B$ a $(\kk \db)_{(\pm; \mp)}$-module, there is a natural isomorphism of Koszul complexes
$$
M \otimes_{\kk \fb} \dmp^* _{(\pm; \mp)} B
\cong 
\sgntr_{(\pm; -\mp)} ^*M \otimes_{\kk \fb} B.
$$
\end{thm}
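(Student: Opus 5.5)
The plan follows the proof of Theorem \ref{thm:Xi_Koszul_comparison}: first identify the underlying graded objects, then compare the differentials component by component. Here $M$ is a right $(\kk \dub)_{-\mp}$-module, as in the setup, and $B$ is a $(\kk \db)_{(\pm;\mp)}$-module.

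\emph{Underlying objects.} Both $\dmp^*_{(\pm;\mp)}$ and $\sgntr^*_{(\pm;-\mp)}$ are the identity on underlying $\kk \fb$-modules. Hence in each degree $t$ both complexes have underlying object
$$
M(\mathbf{t}) \otimes_{\kk \sym_t} B(\mathbf{t}).
$$
I take the isomorphism to be the identity, possibly rescaled degreewise by a power of $2$; the need for this is discussed below.

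\emph{The directed differential.} On $x \otimes y$, the differential of (\ref{eqn:cx_M_phi^*G}) is $\sum_{i\neq j} x[\alpha_{(ij)}] \otimes [\alpha_{(ij)}\op] y$, summed over ordered pairs. Since $B$ is restricted along $\dmp_{(\pm;\mp)}$, the directed generator acts through its image in $(\kk \db)_{(\pm;\mp)}$, and there $[\alpha_{(ji)}\op] = \pm [\alpha_{(ij)}\op]$. Grouping each unordered pair $\{i,j\}$ with $i<j$, the differential becomes
$$
\sum_{i<j} \big( x[\alpha_{(ij)}] + \pm\, x[\alpha_{(ji)}] \big) \otimes [\alpha_{(ij)}\op]\, y .
$$

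\emph{The undirected differential.} By Proposition \ref{prop:sgntr^*_upward}, in its right-module form, the class $\overline{[\alpha_{(ij)}]}$ acts on $\sgntr^*_{(\pm;-\mp)} M$ by $[\alpha_{(ij)}] + \pm[\alpha_{(ij)}\tau] = [\alpha_{(ij)}] + \pm[\alpha_{(ji)}]$. The differential of (\ref{eqn:cx_sgntr_^*M_G}) is $\sum_{i<j} x\,\overline{[\alpha_{(ij)}]} \otimes [\alpha_{(ij)}\op] y$. This agrees term by term with the regrouped directed differential, so the identity map is an isomorphism of complexes. Naturality is clear, since everything is built from the structure maps.

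\emph{Main obstacle: normalisation.} The key step is to check that the dual bases in Sections \ref{subsect:Koszul_directed} and \ref{subsect:recollect_Koszul_db} are normalised compatibly. I expect exact agreement if $\sgntr$, not $\sct$, is used. If instead the undirected coevaluation element carries a factor such as $\tfrac12$, I would rescale by $\rho_\lambda$ of Theorem \ref{thm:weight_comparison}, acting by $\lambda^{t}$ in degree $t$. That Theorem says rescaling in this way gives an isomorphic complex, so the statement would still hold. A subsidiary point is that the sign $\pm$ must be the same on both sides. It is, because the undirected twist $(\pm;\cdot)$ governs both the relation $[\alpha_{(ji)}\op]=\pm[\alpha_{(ij)}\op]$ in $(\kk\db)_{(\pm;\mp)}$ and the transfer formula for $\sgntr_{(\pm;-\mp)}$. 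The ordering twist $\mp$ plays no role in degree one.
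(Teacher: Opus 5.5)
Your proposal is correct and follows essentially the same route as the paper's proof. Both identify the underlying objects via the identity, then match the differentials by splitting the ordered-pair sum into unordered pairs: the relation $[\alpha_{(ji)}\op]=\pm[\alpha_{(ij)}\op]$ handles the $\dmp^*_{(\pm;\mp)}$ side, and the transfer formula $[\alpha_{(ij)}]+\pm[\alpha_{(ji)}]$ handles the $\sgntr^*_{(\pm;-\mp)}$ side. Your normalisation hedge is not needed, because the paper's undirected coevaluation $\sum_{i<j}[\alpha_{(ij)}]\otimes[\alpha_{(ij)}\op]$ carries no scalar factor and the statement uses $\sgntr$ rather than $\sct$, so the identity map is an isomorphism of complexes on the nose.
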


\begin{proof}
The strategy of the  proof is similar to that of Theorem \ref{thm:Xi_Koszul_comparison}.

It is clear that the underlying objects are naturally isomorphic, since the functors $\sgntr_{(\pm; \mp)} ^*$ and $\dmp_{(\pm;\mp)}^* $ are the identity at the level of  the underlying $\kk \fb$-modules. Thus it remains to check that the differentials correspond.  

In the complex (\ref{eqn:cx_M_phi^*G}) the differential is given by the inner multiplication with the element 
$$
\sum_{i \neq j} [\overrightarrow{\alpha}_{(ij)}] \otimes [\overrightarrow{\alpha}_{(ij)}\op],
$$ 
understood  in the directed context (as stressed by the decoration $\overrightarrow{}$). 

In the complex  (\ref{eqn:cx_sgntr_^*M_G}), the differential is given by inner multiplication by the element 
$$
\sum_{i < j} [\alpha_{(ij)}] \otimes [\alpha_{(ij)}\op],
$$ 
understood in the undirected context.

Consider sections  $u \in M$ and $v \in B$ and the element in the underlying object $M \otimes _{\kk \fb} B$ represented by  $u \otimes v$. The corresponding sections in $\sgntr_{(\pm; -\mp)} ^*M$ and in $\dmp^* _{(\pm; \mp)} B$ will also be denoted by $u$ and $v$, since the underlying objects are the same. We proceed to compare the respective differentials. 

By the definition of  $\dmp^* _{(\pm; \mp)}$, we have the following equality (for $i \neq j$)
$$
u [\overrightarrow{\alpha}_{(ij)}] \otimes [\overrightarrow{\alpha}_{(ij)}\op] v
= 
\left\{ 
\begin{array}{ll}
u [\overrightarrow{\alpha}_{(ij)}] \otimes [\alpha_{(ij)}\op] v & \mbox{if $i<j$}
\\
\pm u [\overrightarrow{\alpha}_{(ij)}] \otimes [\alpha_{(ji)}\op] v & \mbox{if $j<i$}.
\end{array}
\right.
$$
Similarly, by the definition of $\sgntr_{(\pm; -\mp)} ^*$, we have (for $i<j$)
$$
u[\alpha_{(ij)}] \otimes [\alpha_{(ij)}\op] v
= 
u \big( [\overrightarrow{\alpha}_{(ij)}]  + \pm [\overrightarrow{\alpha}_{(ji)}] \big) 
\otimes 
[\alpha_{(ij)}\op] v.
$$

Summing over the respective indexing sets, it is clear that the differentials identify, as required.
\end{proof}

\begin{rem}
As in Remark \ref{rem:thm-Xi_Koszul_comparison_switch}, there is a `symmetric' version of Theorem \ref{thm:phi^*_sngtr}, in which $\dmp^* _{(\pm; -\mp)} $ appears on the left of the tensor product and $\sgntr_{(\pm; -\mp)} ^*$ on the right.
\end{rem}

%%%%%%%%%%%%%%%%%%%%%%%%%%%%%%%%%%%%%%%%%%%%%%%%%%%%%%%%%%%%%%%%%%%%%%%ù
\subsection{Consequences}

Recall from Definition \ref{defn:I} that  $\unwall_{(\pm;\mp)}$ is the composite functor 
$$
\xymatrix{
(\kk \dwb)_\mp\dash \modules 
\ar[r]^{(\X_\mp )_*}
&
(\kk \ddb)_\mp\dash \modules 
\ar[r]^(.45){\sgntr_{(\pm; \mp)} ^*}
&
(\kk \db)_{(\pm; \mp)}\dash \modules.
}
$$

Similarly, we have the restriction functor  $\dupsilon _{(\pm; \mp)}^*$(where  $\dupsilon _{(\pm; \mp)}$ is as in Notation \ref{nota:upsilon}) 
and this identifies as the composite
$$
(\kk \db) _{(\pm; \mp)}\dash\modules 
\stackrel{\dmp_{(\pm; \mp)}^*} {\rightarrow} 
(\kk \ddb)_\mp 
\stackrel{\X_\mp^*}{\rightarrow}
(\kk \dwb)_{\mp}\dash\modules .
$$ 

Putting Theorems \ref{thm:Xi_Koszul_comparison} and \ref{thm:phi^*_sngtr} together, we have the following:

\begin{cor}
\label{cor:unwall_vs_dupsilon}
For $G$ a $(\kk \dwb)_{\mp}$-module and  $B$ a $(\kk \db)_{(\pm; -\mp)}$-module, there is a natural isomorphism of  Koszul complexes
\begin{eqnarray*}
\unwall_{(\pm;\mp)} G  \otimes_{\kk \fb} B
&\cong &   G \otimes_{\kk (\tfb)} \dupsilon _{(\pm; -\mp)}^* B.
\end{eqnarray*}
\end{cor}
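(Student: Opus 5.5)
The plan is to chain the two comparison theorems just established, using the factorizations of $\unwall_{(\pm;\mp)}$ and $\dupsilon^*_{(\pm;-\mp)}$ recalled immediately above. Write $\unwall_{(\pm;\mp)} G = \sgntr^*_{(\pm;\mp)} (\X_\mp)_* G$, with $\sgntr^*$ here meaning its downward form $\dsgntr^*_{(\pm;\mp)}$. Likewise write $\dupsilon^*_{(\pm;-\mp)} B = \X^*_{-\mp}\,\dmp^*_{(\pm;-\mp)} B$. Both Koszul complexes in the statement are then formed from the pair $\big((\X_\mp)_* G,\ \dmp^*_{(\pm;-\mp)}B\big)$ in the directed setting, and we pass to each side by one of the two theorems.

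First, set $M := (\X_\mp)_* G$, a $(\kk \ddb)_\mp$-module. Apply the symmetric form of Theorem \ref{thm:phi^*_sngtr} with the roles of the signs chosen so that the twist attached to $B$ is $-\mp$. This should give a natural isomorphism of Koszul complexes
$$
\dsgntr^*_{(\pm;\mp)} M \otimes_{\kk\fb} B \;\cong\; M \otimes_{\kk \fb} \dmp^*_{(\pm;-\mp)} B .
$$
The left hand side is exactly $\unwall_{(\pm;\mp)} G \otimes_{\kk\fb} B$.

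Next, apply the symmetric version of Theorem \ref{thm:Xi_Koszul_comparison} (Remark \ref{rem:thm-Xi_Koszul_comparison_switch}) with $G$ and with the $(\kk\ddb)_{-\mp}$-module $\dmp^*_{(\pm;-\mp)}B$. This should give
$$
(\X_\mp)_* G \otimes_{\kk\fb} \dmp^*_{(\pm;-\mp)} B \;\cong\; G \otimes_{\kk(\tfb)} \X^*_{-\mp}\dmp^*_{(\pm;-\mp)} B = G\otimes_{\kk(\tfb)} \dupsilon^*_{(\pm;-\mp)}B .
$$
Composing the two isomorphisms gives the statement. Naturality in $G$ and $B$ is inherited from the naturality of each step and of the identifications of the composite functors.

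The main obstacle is bookkeeping of variance and signs. The theorems as stated pair a right upward module with a left downward module, so we must invoke their ``symmetric'' versions (Section \ref{subsect:symmetry}). In doing so we need to check that the sign conventions match: the twist $\mp$ on the directed side must dualize to $-\mp$, and the $\pm$ appearing in $\dsgntr^*$ must agree with the $\pm$ in $\dmp^*$. Concretely, I would re-run the differential comparison from the proof of Theorem \ref{thm:phi^*_sngtr} in the swapped form. The transfer contributes $[\alpha_{(ij)}\op]+\pm[\alpha_{(ji)}\op]$ on $M$. The quotient gives $[\alpha_{(ji)}\op] = \pm[\alpha_{(ij)}\op]$ on $B$. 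Summing over $i<j$ versus ordered pairs $i\neq j$ then yields equal differentials. The underlying objects match by Frobenius reciprocity, as in the proof of Theorem \ref{thm:Xi_Koszul_comparison}.
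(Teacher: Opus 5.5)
Your proposal is correct and matches the paper's argument: the corollary is obtained by composing Theorem \ref{thm:phi^*_sngtr} (applied to $(\X_\mp)_* G$ and $B$) with Theorem \ref{thm:Xi_Koszul_comparison} (applied to $G$ and $\dmp^*_{(\pm;-\mp)} B$), passing through the directed complex $(\X_\mp)_* G \otimes_{\kk\fb} \dmp^*_{(\pm;-\mp)} B$. One small point: no ``symmetric'' versions are needed, since both theorems apply directly in their stated variance (the first after replacing $\mp$ by $-\mp$), so the sign and variance bookkeeping you flag is already handled by the theorems themselves.
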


\begin{exam}
\label{exam:first_second_kz_cx_unwall_dupsilon}
Corollary \ref{cor:unwall_vs_dupsilon} can, in particular, be applied to the first and second Koszul complexes (as presented in Section \ref{sect:kz_cx}). 
\begin{enumerate}
\item 
For $G$ a $(\kk \dwb)_{\mp}$-module, using the symmetric form of Corollary \ref{cor:unwall_vs_dupsilon}, there are natural isomorphisms of complexes of $(\kk \ub)_{(\pm; -\mp)}$-modules 
\begin{eqnarray*}
(\kk \ub)_{(\pm; -\mp)} \otimes_{\kk \fb} \unwall_{(\pm;\mp)} G 
&
\cong 
&
\dupsilon _{(\pm; -\mp)}^*  (\kk \ub)_{(\pm; -\mp)} \otimes_{\kk (\tfb)} G 
\\
(\kk \ub)_{(\pm; -\mp)}^\sharp \otimes_{\kk \fb} \unwall_{(\pm;\mp)} G 
&
\cong 
&
\dupsilon _{(\pm; -\mp)}^*  \big( (\kk \ub)_{(\pm; -\mp)}^\sharp\big) \otimes_{\kk (\tfb)} G. 
\end{eqnarray*}
Thus, to understand the first and second Koszul complexes associated to the $(\kk \db)_{(\pm; \mp)}$-module $\unwall_{(\pm;\mp)} G$ in terms of Koszul complexes for $G$, we need to understand 
$\dupsilon _{(\pm; -\mp)}^*  (\kk \ub)_{(\pm; -\mp)}$ and $\dupsilon _{(\pm; -\mp)}^*  \big( (\kk \ub)_{(\pm; -\mp)}^\sharp\big)$.
\item 
For $B$ a $(\kk \db)_{(\pm; -\mp)}$-module, there are natural isomorphisms of complexes of $(\kk \uwb)_\mp$-modules:
\begin{eqnarray*}
(\kk \uwb)_\mp \otimes_{\kk (\tfb)} \dupsilon _{(\pm; -\mp)}^* B 
&\cong & 
\unwall_{(\pm;\mp)} (\kk \uwb)_\mp \otimes_{\kk \fb} B
\\
(\kk \uwb)_\mp^\sharp \otimes_{\kk (\tfb)} \dupsilon _{(\pm; -\mp)}^* B 
&\cong & 
\unwall_{(\pm;\mp)} \big( (\kk \uwb)_\mp^\sharp\big) \otimes_{\kk \fb} B.
\end{eqnarray*}
In this case, to understand the first and second Koszul complexes associated to the $(\kk \dwb)_{-\mp}$-module $\dupsilon _{(\pm; -\mp)}^* B $, we need to understand $\unwall_{(\pm;\mp)} (\kk \uwb)_\mp$ and $\unwall_{(\pm;\mp)} \big( (\kk \uwb)_\mp^\sharp\big)$.
\end{enumerate}
\end{exam}

\section{Comparing Koszul complexes using $\unwall_{(\pm;\mp)}$}
\label{sect:compare_koszul}

For $F$ a $(\kk \dwb)_\mp$-module, on applying the functor $\unwall_{(\pm;\mp)}$, we obtain the $(\kk \ub)_{(\pm;\mp)}$-module $\unwall_{(\pm;\mp)} F$. We thus have the first and second Koszul complexes in both the walled and the unwalled contexts. The purpose of this section is to apply the isomorphisms provided by Corollary \ref{cor:unwall_vs_dupsilon}, together with the calculations of 
Part \ref{part:brauer}, to relate these.

%%%%%%%%%%%%%%%%%%%%%%%%%%%%%%%%%%%%%%%%%%%%%%%%%%%%%%
\subsection{Comparing the first Koszul complexes}
\label{subsect:first_Koszul_unwall}

Recall from Proposition \ref{prop:unwall_functor} the functor 
$$
\unwall_{(\pm;\mp)} : (\kk \dwb)_\mp \dash \modules 
\rightarrow
(\kk \db)_{(\pm; \mp)}\dash \modules.
$$
Thus, for $F$ a $(\kk \dwb)_\mp$-module, we can form the two Koszul complexes
\begin{eqnarray*}
 &&(\kk \uwb)_{-\mp} \otimes_{\kk (\tfb)} F
\\
&& (\kk \ub)_{(\pm;-\mp)} \otimes_{\kk \fb} \unwall_{(\pm;\mp)} F;
\end{eqnarray*}
the first is a complex of $(\kk \uwb)_{-\mp}$-modules and the second of $(\kk \ub)_{(\pm;-\mp)}$-modules.

The following result shows that the second complex can be expressed in terms of the first, by using the functor 
$
\lad _{( \pm; -\mp)} : (\kk \uwb)_{-\mp} \rightarrow (\kk \ub)_{(\pm;-\mp)}
$
introduced in  Section \ref{subsect:lad}.

\begin{thm}
\label{thm:unwall_first_complex}
For $F$  a $(\kk \dwb)_\mp$-module, there is a natural isomorphism of complexes of $\kk \ub_{(\pm;-\mp)}$-modules:
\[
(\kk \ub)_{(\pm;-\mp)} \otimes_{\kk \fb} \unwall_{(\pm;\mp)} F
\cong
\lad_{(\pm;-\mp)} \big( (\kk \uwb)_{-\mp} \otimes_{\kk(\tfb)} F \big) .
\]
\end{thm}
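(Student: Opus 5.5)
We propose to deduce the statement from the first isomorphism of Example \ref{exam:first_second_kz_cx_unwall_dupsilon} (1), which is the symmetric form of Corollary \ref{cor:unwall_vs_dupsilon}. Applied with $G=F$ and with $\mp$ as given, it gives a natural isomorphism of complexes of $(\kk \ub)_{(\pm;-\mp)}$-modules
\[
(\kk \ub)_{(\pm;-\mp)} \otimes_{\kk \fb} \unwall_{(\pm;\mp)} F
\cong
\dupsilon_{(\pm;-\mp)}^* (\kk \ub)_{(\pm;-\mp)} \otimes_{\kk (\tfb)} F .
\]
In the right-hand side, $\dupsilon^*_{(\pm;-\mp)}$ acts on the contravariant variable of the bimodule $(\kk\ub)_{(\pm;-\mp)}$. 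The Koszul differential is inner multiplication by $\sum [\beta_{i,j}] \otimes [\beta_{i,j}\op]$, which acts through the resulting right $(\kk\uwb)_{-\mp}$-module structure. The covariant $(\kk \ub)_{(\pm;-\mp)}$-action is untouched and commutes with the differential.

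Next we invoke Proposition \ref{prop:duspilon_lad}, with the sign $-\mp$ in place of $\mp$. It gives an isomorphism $\dupsilon_{(\pm;-\mp)}^* (\kk \ub)_{(\pm;-\mp)} \cong \lad_{(\pm;-\mp)} (\kk \uwb)_{-\mp}$ of $(\kk \dwb)_{-\mp} \cten (\kk \ub)_{(\pm;-\mp)}$-modules. Since this is an isomorphism of bimodules, compatible with the right $(\kk\uwb)_{-\mp}$-action, it transports the Koszul differential. Substituting it, the right-hand side above becomes
\[
\lad_{(\pm;-\mp)} (\kk \uwb)_{-\mp} \otimes_{\kk (\tfb)} F,
\]
with differential given by inner multiplication by the same element on the right $(\kk \uwb)_{-\mp}$-variable.

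It remains to commute $\lad_{(\pm;-\mp)}$ past $\otimes_{\kk(\tfb)} F$ and past the differential. Since $\lad_{(\pm;-\mp)}$ is the induction functor $(\kk \ub)_{(\pm;-\mp)} \otimes_{(\kk \uwb)_{-\mp}} -$, associativity of tensor products gives a natural isomorphism
\[
\big((\kk \ub)_{(\pm;-\mp)} \otimes_{(\kk \uwb)_{-\mp}} (\kk \uwb)_{-\mp}\big) \otimes_{\kk (\tfb)} F \cong (\kk \ub)_{(\pm;-\mp)} \otimes_{(\kk \uwb)_{-\mp}} \big( (\kk \uwb)_{-\mp} \otimes_{\kk(\tfb)} F\big).
\]
This isomorphism is compatible with the right $(\kk \uwb)_{-\mp}$-structure on the inner factor. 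The differential is induced by right multiplication on $(\kk\uwb)_{-\mp}$ by the $[\beta_{i,j}]$, tensored with the action of the $[\beta_{i,j}\op]$ on $F$. This operation only involves the right-hand bimodule factor, so it commutes with applying $(\kk\ub)_{(\pm;-\mp)} \otimes_{(\kk \uwb)_{-\mp}} -$ on the left. Hence the differential on the left-hand side of the display is $\lad_{(\pm;-\mp)}$ applied to the differential of the first Koszul complex $(\kk \uwb)_{-\mp} \otimes_{\kk(\tfb)} F$.

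The main point needing care is this last compatibility, together with naturality in $F$. One must check that the isomorphism of Proposition \ref{prop:duspilon_lad} respects the right $(\kk\uwb)_{-\mp}$-structure that enters the differential, and not only the $\kk(\tfb)\op$-structure. This holds because that proposition is an isomorphism of $(\kk \dwb)_{-\mp} \cten (\kk \ub)_{(\pm;-\mp)}$-modules, and right $(\kk\uwb)_{-\mp}$-modules are exactly $(\kk\dwb)_{-\mp}$-modules. Given this, composing the three natural isomorphisms proves the theorem.
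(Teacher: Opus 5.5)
Your proposal is correct and follows the paper's proof. You combine the symmetric form of Corollary \ref{cor:unwall_vs_dupsilon} with Proposition \ref{prop:duspilon_lad} (with $-\mp$ in place of $\mp$), and your explicit associativity and differential check spells out the paper's closing remark that $\lad_{(\pm;-\mp)}$ may be regarded as applied to the complex $(\kk \uwb)_{-\mp} \otimes_{\kk(\tfb)} F$.
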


\begin{proof}
By (the symmetric form of) Corollary \ref{cor:unwall_vs_dupsilon}, we have the natural isomorphism 
$$
(\kk \ub)_{(\pm;-\mp)} \otimes_{\kk \fb} \unwall_{(\pm;\mp)} F
\cong 
\dupsilon_{(\pm; -\mp)}^* (\kk \ub)_{(\pm; -\mp)} \otimes_{\kk (\tfb)} F
$$
of $(\kk \ub)_{(\pm;-\mp)} $-modules, since the functor $\dupsilon_{(\pm; -\mp)}^*$ is applied with respect to the contravariant variable of $(\kk \ub)_{(\pm;-\mp)}$ (considered as a bimodule) and thus is compatible with the $(\kk \ub)_{(\pm;-\mp)} $-module structure.

Proposition \ref{prop:duspilon_lad} provides the natural  isomorphism 
$$
\dupsilon_{(\pm; -\mp)}^* 
(\kk \ub)_{(\pm; -\mp)}
\cong 
\lad_{(\pm; -\mp)} (\kk \uwb)_{-\mp} 
$$
in 
$(\kk \dwb)_{-\mp} \cten  (\kk \ub)_{(\pm; -\mp)} $-modules. Here the functor $\lad_{(\pm; -\mp)} $ is applied with respect to the covariant variable. 

Putting these isomorphisms together, one obtains the isomorphism of the statement, noting that $\lad_{(\pm; -\mp)}$ can be considered as being applied to the complex $(\kk \uwb)_{-\mp} \otimes_{\kk(\tfb)} F$.
\end{proof}

\begin{rem}
This result states that  the first Koszul complex of $\unwall_{(\pm;\mp)} F$ is isomorphic to the complex obtained by applying the functor $\lad_{(\pm; -\mp)}$  to the first Koszul complex of $F$.
\end{rem}

Theorem \ref{thm:unwall_first_complex} yields the following result relating the homologies of the respective first Koszul complexes:

\begin{cor}
\label{cor:compare_homology_lad}
For $F$  a $(\kk \dwb)_\mp$-module, there is a natural morphism of graded  $\kk \ub_{(\pm;-\mp)}$-modules:
$$
\lad_{(\pm;-\mp)} H_* ( (\kk \uwb)_{-\mp} \otimes_{\kk(\tfb)} F )
\rightarrow  
H_* (
(\kk \ub)_{(\pm;-\mp)} \otimes_{\kk \fb} \unwall_{(\pm;\mp)} F).
$$
\end{cor}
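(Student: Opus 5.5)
The morphism will be obtained by applying homology to the isomorphism of Theorem \ref{thm:unwall_first_complex}, combined with the canonical comparison map for a right exact functor. Write $K_\bullet := (\kk \uwb)_{-\mp} \otimes_{\kk(\tfb)} F$ for the first Koszul complex of $F$; it is a chain complex of $(\kk \uwb)_{-\mp}$-modules. Theorem \ref{thm:unwall_first_complex} gives a natural isomorphism of complexes of $(\kk \ub)_{(\pm;-\mp)}$-modules
$$
(\kk \ub)_{(\pm;-\mp)} \otimes_{\kk \fb} \unwall_{(\pm;\mp)} F \cong \lad_{(\pm;-\mp)} K_\bullet .
$$
Passing to homology, the target of the desired morphism is therefore identified naturally with $H_*(\lad_{(\pm;-\mp)} K_\bullet)$.

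It remains to construct a natural map $\lad_{(\pm;-\mp)} H_*(K_\bullet) \rightarrow H_*(\lad_{(\pm;-\mp)} K_\bullet)$. The relevant property is that $\lad_{(\pm;-\mp)}$ is a left adjoint (Notation \ref{nota:lad}), hence right exact: it preserves cokernels. Write $Z_n \subset K_n$ for the cycles and $B_n = \mathrm{im}(d_{n+1})$ for the boundaries. The step to watch is that $\lad$ need not preserve the injection $Z_n \hookrightarrow K_n$, so one should not simply apply $\lad$ to cycles and expect to land in cycles of $\lad K$. Instead, I would apply $\lad$ to the inclusion $Z_n \hookrightarrow K_n$ to get $\lad Z_n \rightarrow \lad K_n$. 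Since $Z_n \rightarrow K_n \rightarrow K_{n-1}$ is zero, functoriality shows that the composite $\lad Z_n \rightarrow \lad K_{n-1}$ is also zero. Hence $\lad Z_n \rightarrow \lad K_n$ factors through the cycles $Z_n(\lad K)$, giving a map $\lad Z_n \rightarrow H_n(\lad K)$.

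Next, $H_n(K) = \mathrm{coker}(K_{n+1} \rightarrow Z_n)$, and right exactness gives $\lad H_n(K) \cong \mathrm{coker}(\lad K_{n+1} \rightarrow \lad Z_n)$. The map $\lad Z_n \rightarrow H_n(\lad K)$ kills the image of $\lad K_{n+1}$, because the composite $\lad K_{n+1} \rightarrow \lad Z_n \rightarrow \lad K_n$ is $\lad(d_{n+1})$, whose image consists of boundaries. So this map descends to $\lad H_n(K) \rightarrow H_n(\lad K)$.

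Naturality in $F$ follows from the naturality of the isomorphism in Theorem \ref{thm:unwall_first_complex} together with the functoriality of the construction above in the complex $K_\bullet$. Composing the descended map with the homology isomorphism from the first paragraph yields the stated natural morphism of graded $(\kk \ub)_{(\pm;-\mp)}$-modules. The argument needs no exactness of $\lad$, which matters because $\lad$ is generally not exact; this is why the result gives only a morphism and not an isomorphism.
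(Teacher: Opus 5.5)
Your proposal is correct and follows the paper's proof. The paper likewise composes the canonical comparison map $\lad_{(\pm;-\mp)} H_*(-) \rightarrow H_*(\lad_{(\pm;-\mp)} -)$, which exists because $\lad_{(\pm;-\mp)}$ is right exact, with the homology isomorphism coming from Theorem \ref{thm:unwall_first_complex}. The only difference is that you construct the comparison map explicitly via cycles and cokernels, whereas the paper leaves that construction implicit.
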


\begin{proof}
The functor $\lad _{(\pm;-\mp)}$ is right exact. Hence it induces a natural morphism 
$$
\lad_{(\pm;-\mp)} H_* ( (\kk \uwb)_{-\mp} \otimes_{\kk(\tfb)} F )
\rightarrow 
H_* \big( \lad_{(\pm;-\mp)} ( (\kk \uwb)_{-\mp} \otimes_{\kk(\tfb)} F )\big).
$$ 
The result then follows by composing with the isomorphism in homology given by Theorem \ref{thm:unwall_first_complex}.
\end{proof}

\begin{exam}
Consider a  $(\kk \db)_{(-;-)}$-module $G$ and its first Koszul complex $(\kk \ub)_{(-;+)} \otimes_{\kk \fb} G$. Then, for $V$ a symplectic $\kk$-vector space, we have the complex
$$
V^{\otimes \bullet}  \otimes_{\kk \fb} G
\cong
V^{\otimes \bullet} \otimes_{(\kk \ub)_{(-;+)} } (\kk \ub)_{(-;+)} \otimes_{\kk \fb} G
,
$$
equipped with the Koszul differential. This construction is natural with respect to $V$, in the appropriate sense (see \cite{P_cyclic}). The left hand side above identifies as the Schur functor $G(V)$ evaluated on $V$ (forgetting the symplectic structure). The `polynomial degree' provides a graduation and the differential of the Koszul complex uses the symplectic form. 

The above applies when $G$ is $\unwall_{(-;-)} F$ for a $(\kk \dwb)_-$-module $F$, giving the associated complex 
$
V^{\otimes \bullet}  \otimes_{\kk \fb} \unwall_{(-;-)} F.
$
Corollary \ref{cor:unwall_vs_dupsilon} yields the isomorphism of Koszul complexes
$$
V^{\otimes \bullet}  \otimes_{\kk \fb} \unwall_{(-;-)} F
\cong 
\dupsilon_{(-;+)}^* (V^{\otimes \bullet}) \otimes _{\kk (\tfb)} F.
$$

We can also  form the Koszul complex 
$$
T^{\bullet, \bullet} (W) \otimes _{\kk (\tfb)} F
$$
using the mixed tensors $T^{\bullet, \bullet} (W)$ (which sends $(\m , \n)$ to $(W^\sharp)^{\otimes m} \otimes W^{\otimes n}$), which have a natural $\kk \dwb$-module structure.

We can compare this with the complex $\dupsilon_{(-;+)}^* (V^{\otimes \bullet}) \otimes _{\kk (\tfb)} F$, taking $W$ to be the underlying $\kk$-vector space of $V$. At the level of the underlying objects the comparison uses Frobenius reciprocity. A key point here is that, by using a symplectic vector space $V$, we do not require to used {\em mixed} tensors to form our Koszul complex.
\end{exam}

%%%%%%%%%%%%%%%%%%%%%%%%%%%%%%%%%%%%%%%%%%%%%%%%%%%%%%
\subsection{Comparing the second Koszul complexes}
\label{subsect:second_Koszul_unwall}

Similarly to the case of the first Koszul complexes, we have for $F$ a $(\kk \dwb)_\mp$-module, the two Koszul complexes
\begin{eqnarray*}
 (\kk\uwb)_{- \mp}^\sharp  \otimes_{\tfb} F
\\
(\kk \ub)_{(\pm;-\mp)}^\sharp \otimes_\fb \unwall_{(\pm;\mp)} F;
\end{eqnarray*}
the first is a complex of $(\kk \uwb)_{-\mp}$-modules and the second of $(\kk \ub)_{(\pm;-\mp)}$-modules.

In the following statement, the $\kk \fb\op \cten  \kk (\tfb)$-module $\big( \amalg_* \op (\kk \ub)_{(\pm; \mp)}^{\cten  2}\big)^\sharp$ is as in Corollary \ref{cor:dupsilon_kkub^sharp}.

\begin{thm}
\label{thm:unwall_second_complex}
For $F$  a $\kk \dwb_\mp$-module, there is a natural isomorphism of complexes of $\kk \fb$-modules:
\[
(\kk \ub)_{(\pm;-\mp)}^\sharp \otimes_{\kk \fb} \unwall_{(\pm;\mp)} F
\cong 
\big( \amalg_* \op (\kk \ub)_{(\pm; \mp)}^{\cten  2}\big)^\sharp
\otimes_{\kk (\tfb)}
\big( (\kk\uwb)_{-\mp}^\sharp  \otimes_{\kk(\tfb)} F \big) 
\]
\end{thm}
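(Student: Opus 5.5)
The plan mirrors the proof of Theorem \ref{thm:unwall_first_complex}, replacing Proposition \ref{prop:duspilon_lad} by its dual counterpart, Corollary \ref{cor:dupsilon_kkub^sharp}. Throughout, the twist is $-\mp$: the bimodule appearing on the right-hand side should be read as $(\kk \ub)_{(\pm;-\mp)}^{\cten 2}$, consistent with Theorem \ref{THM:diopd_2_cpd}, where $\mp=-$ gives $(-;+)$.

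First, apply the symmetric form of Corollary \ref{cor:unwall_vs_dupsilon}, in the shape recorded in Example \ref{exam:first_second_kz_cx_unwall_dupsilon}(1), to the $(\kk \ub)_{(\pm;-\mp)}$-bimodule $(\kk \ub)_{(\pm;-\mp)}^\sharp$. This gives a natural isomorphism of Koszul complexes
$$
(\kk \ub)_{(\pm;-\mp)}^\sharp \otimes_{\kk \fb} \unwall_{(\pm;\mp)} F
\cong
\dupsilon_{(\pm;-\mp)}^*\big((\kk \ub)_{(\pm;-\mp)}^\sharp\big) \otimes_{\kk(\tfb)} F .
$$
Here $\dupsilon^*$ acts on the contravariant variable. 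The isomorphism is therefore compatible with the residual covariant structure, and in particular with the $\kk\fb$-module structure.

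Second, substitute Corollary \ref{cor:dupsilon_kkub^sharp} (with twist $(\pm;-\mp)$). It identifies $\dupsilon_{(\pm;-\mp)}^*\big((\kk \ub)_{(\pm;-\mp)}^\sharp\big)$ with $\big(\amalg_*\op(\kk\ub)^{\cten 2}_{(\pm;-\mp)}\big)^\sharp \otimes_{\kk(\tfb)} (\kk\uwb)_{-\mp}^\sharp$ as $(\kk\dwb)_{-\mp}\cten\kk\fb$-modules. This is exactly why the conclusion is only an isomorphism of $\kk\fb$-modules, not of $(\kk\ub)$-modules. Using associativity of $\otimes_{\kk(\tfb)}$, the underlying graded objects on the two sides of the theorem then agree.

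Third, and this is the main point to check, the differentials must match. On the left, after the first step, the Koszul differential is inner multiplication by $\sum[\beta_{i,j}]\otimes[\beta_{i,j}\op]$. This uses the right $(\kk\uwb)_{-\mp}$-action on $\dupsilon^*(\ldots)^\sharp$, which is induced from the $(\kk\dwb)_{-\mp}$-module structure appearing in Corollary \ref{cor:dupsilon_kkub^sharp}. On the right, the differential is that of the second Koszul complex $(\kk\uwb)^\sharp_{-\mp}\otimes_{\kk(\tfb)}F$, tensored with the identity on the first factor.

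So it suffices that the isomorphism of Corollary \ref{cor:dupsilon_kkub^sharp} is an isomorphism of $(\kk\dwb)_{-\mp}$-modules, where on the right-hand side $(\kk\dwb)_{-\mp}$ acts only through the factor $(\kk\uwb)^\sharp_{-\mp}$. That corollary asserts exactly this, since it is the dual of Theorem \ref{thm:upsilon^*kkub}, where $(\kk\uwb)_\mp$ acts on the left factor only. I would verify this directly on the generators $[\beta_{i,j}\op]$, using the explicit map from the proof of Theorem \ref{thm:upsilon^*kkub}: walled pairs correspond to the $\m$ block, and composing with a further walled pair simply enlarges that block. Naturality in $F$ is clear, since every step is natural in $F$.
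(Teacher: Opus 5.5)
Your proposal is correct and is essentially the paper's proof, which just combines the symmetric form of Corollary~\ref{cor:unwall_vs_dupsilon} with Corollary~\ref{cor:dupsilon_kkub^sharp}. Your check that the differentials match, because $(\kk \dwb)_{-\mp}$ acts only through the factor $(\kk \uwb)_{-\mp}^\sharp$, is the detail the paper leaves implicit, and your reading of the twist on the first factor as $(\pm;-\mp)$ is the right one, as Theorems~\ref{THM:diopd_2_cpd} and \ref{thm:compare_diopd_case_grph_cx} confirm.
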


\begin{proof}
The proof is similar to that of Theorem \ref{thm:unwall_first_complex}, by combining Corollary \ref{cor:unwall_vs_dupsilon} with 
 Corollary \ref{cor:dupsilon_kkub^sharp}.
\end{proof}

\begin{rem}
Theorem \ref{thm:unwall_second_complex} relates the second Koszul complex of $\unwall_{(\pm;\mp)} F$ to the second Koszul complex of $F$. 
Note that the statement of the theorem is weaker than that of Theorem \ref{thm:unwall_first_complex} in that it only concerns the underlying complex of $\kk \fb$-modules, rather than the full $(\kk \ub)_{(\pm;-\mp)}$-module structure. 

This is not completely anodyne, since the full module structure is important when comparing the behaviour of the first and second Koszul complexes, as in \cite{P_cyclic} and \cite{P_opd_wall}.
\end{rem}

Since the functor $\big( \amalg_* \op (\kk \ub)_{(\pm; \mp)}^{\cten  2}\big)^\sharp
\otimes_{\kk (\tfb)}-$  is exact, one has the immediate consequence:

\begin{cor}
\label{cor:hom_Kz_cx_unwall}
For $F$  a $(\kk \dwb)_\mp$-module, there is a natural isomorphism of graded $\kk \fb$-modules:
$$
H_* ((\kk \ub)_{(\pm;-\mp)}^\sharp \otimes_{\kk \fb} \unwall_{(\pm;\mp)} F)
\cong 
\big( \amalg_* \op (\kk \ub)_{(\pm; \mp)}^{\cten  2}\big)^\sharp
\otimes_{\kk (\tfb)}
 H_* ((\kk\uwb)_{-\mp}^\sharp \otimes_{\kk(\tfb)} F).
$$
\end{cor}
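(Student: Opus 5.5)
The corollary follows from Theorem \ref{thm:unwall_second_complex} by passing to homology. That theorem gives a natural isomorphism of complexes of $\kk \fb$-modules
$$
(\kk \ub)_{(\pm;-\mp)}^\sharp \otimes_{\kk \fb} \unwall_{(\pm;\mp)} F
\cong
\Phi\big( (\kk\uwb)_{-\mp}^\sharp \otimes_{\kk(\tfb)} F \big),
$$
where $\Phi$ denotes the functor $\big( \amalg_* \op (\kk \ub)_{(\pm; \mp)}^{\cten 2}\big)^\sharp \otimes_{\kk (\tfb)} -$ from $\kk(\tfb)$-modules to $\kk\fb$-modules. The right-hand side is understood as $\Phi$ applied degreewise to the second Koszul complex of $F$. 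For this to make sense, $\Phi$ applied to the differentials should give the differential of the right-hand complex. This should hold because the Koszul differential only acts through the $\kk(\tfb)$-module structure on the covariant side of $(\kk\uwb)^\sharp_{-\mp}$, while $\Phi$ tensors over the other variable. I would confirm that this is how the isomorphism in the proof of Theorem \ref{thm:unwall_second_complex} is built, via Corollary \ref{cor:unwall_vs_dupsilon} and Corollary \ref{cor:dupsilon_kkub^sharp}.

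Taking homology of both sides gives $H_*$ of the left-hand side isomorphic to $H_*\big(\Phi(C_\bullet)\big)$, where $C_\bullet$ is the second Koszul complex of $F$. The remaining step is to show that $\Phi$ is exact, so that it commutes with homology: $H_*(\Phi C_\bullet)\cong \Phi H_*(C_\bullet)$, naturally in $F$. This is the only step needing justification; it is routine but is where the hypotheses enter.

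To prove exactness, I would evaluate $\Phi(N)$ at a finite set $U$:
$$
\bigoplus_{s,t} \big( \amalg_*\op (\kk\ub)^{\cten 2}_{(\pm;\mp)}\big)^\sharp(U,(\mathbf{s},\mathbf{t})) \otimes_{\kk(\sym_s\times\sym_t)} N(\mathbf{s},\mathbf{t}).
$$
By Remark \ref{rem:upsilon^*kkub}, only finitely many $(s,t)$ contribute for fixed $U$, since $|U|-s$ and $|U|-t$ must be nonnegative up to parity. Each summand is a tensor product over the group algebra of a finite group. By Hypothesis \ref{hyp:car0}, $\kk(\sym_s\times\sym_t)$ is semisimple, so every module is flat and each summand is exact in $N$. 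A finite direct sum of exact functors is exact, so $\Phi$ is exact, and hence it commutes with taking homology. Naturality in $F$ is inherited from the naturality of the isomorphism in Theorem \ref{thm:unwall_second_complex}.
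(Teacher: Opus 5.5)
This is essentially the paper's argument: Theorem \ref{thm:unwall_second_complex} combined with exactness of $\big( \amalg_* \op (\kk \ub)_{(\pm; \mp)}^{\cten 2}\big)^\sharp \otimes_{\kk (\tfb)} -$. The paper simply asserts this exactness, whereas you justify it via semisimplicity of $\kk(\sym_s\times\sym_t)$ in characteristic zero.

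One correction. For fixed $U$, infinitely many $(s,t)$ contribute, not finitely many. Indeed $(\kk\ub)_{(\pm;\mp)}(U_1,\mathbf{s})\neq 0$ for every $s\geq |U_1|$ with $s-|U_1|$ even, so $s$ and $t$ are bounded below, not above. The finiteness in Remark \ref{rem:upsilon^*kkub} came from the factor $(\kk\uwb)_\mp((\mathbf{s},\mathbf{t}),(X,Y))$, which is absent here. The error is harmless: arbitrary direct sums of exact functors into vector spaces are exact, so your exactness argument goes through once you replace ``finite direct sum'' by ``direct sum''.
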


As an example application, we have:

\begin{cor}
\label{cor:direct_comparison_kz_1}
Let  $F$ be  a $(\kk \dwb)_\mp$-module. Suppose that the graded $\kk (\tfb)$-module $ H_* ((\kk\uwb)_{-\mp}^\sharp \otimes_{\kk(\tfb)} F)$ is supported on $(\zero, \zero)$ (i.e., vanishes on $(\m, \n)$ if $m+n >0$). Then there is an isomorphism of graded $\kk$-vector spaces:
$$
H_* ((\kk \ub)_{(\pm;-\mp)}^\sharp \otimes_{\kk \fb} \unwall_{(\pm;\mp)} F) (\zero)
\cong 
H_* ((\kk\uwb)_{-\mp}^\sharp \otimes_{\kk(\tfb)} F)(\zero, \zero)
$$
and $H_* ((\kk \ub)_{(\pm;-\mp)}^\sharp \otimes_{\kk \fb} \unwall_{(\pm;\mp)} F) (\n)=0$ for $n>0$
\end{cor}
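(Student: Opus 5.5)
The plan is to deduce this directly from Corollary \ref{cor:hom_Kz_cx_unwall}, which gives a natural isomorphism of graded $\kk \fb$-modules
$$
H_* ((\kk \ub)_{(\pm;-\mp)}^\sharp \otimes_{\kk \fb} \unwall_{(\pm;\mp)} F)
\cong
\big( \amalg_* \op (\kk \ub)_{(\pm; \mp)}^{\cten  2}\big)^\sharp
\otimes_{\kk (\tfb)}
 H_* ((\kk\uwb)_{-\mp}^\sharp \otimes_{\kk(\tfb)} F).
$$
Under the hypothesis, the right-hand $\kk(\tfb)$-module $H := H_*((\kk\uwb)_{-\mp}^\sharp \otimes_{\kk(\tfb)} F)$ is concentrated on $(\zero,\zero)$. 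The tensor product over $\kk(\tfb)$ therefore reduces to the single summand indexed by $(\zero,\zero)$, with trivial automorphism group. This gives, for each finite set $U$,
$$
\big( \amalg_* \op (\kk \ub)_{(\pm; \mp)}^{\cten  2}\big)^\sharp (U; \zero,\zero) \otimes H(\zero,\zero).
$$

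It then remains to compute the coefficient $\amalg_*\op (\kk \ub)_{(\pm;\mp)}^{\cten 2}$ evaluated at $(U,(\zero,\zero))$. By the explicit formula preceding Theorem \ref{thm:upsilon^*kkub}, this is
$$
\bigoplus_{U = U_1 \amalg U_2} (\kk \ub)_{(\pm;\mp)}(U_1, \zero) \otimes (\kk \ub)_{(\pm;\mp)}(U_2, \zero).
$$
Morphisms in $\ub$ never decrease cardinality, so $(\kk\ub)_{(\pm;\mp)}(U_i,\zero)$ vanishes unless $U_i = \emptyset$. In that case it is $\kk\fb(\emptyset,\emptyset) = \kk$, the degree zero part. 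Hence the coefficient is $\kk$ when $U = \emptyset$ and zero otherwise, and the same holds after dualizing.

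Substituting this back gives $H_*(\dots)(\zero) \cong H(\zero,\zero)$ and $H_*(\dots)(\n) = 0$ for $n>0$. The only point needing care is the variance bookkeeping in the tensor product over $\kk(\tfb)$. Concretely, one must check that the $\kk(\tfb)\op$-variable of the dual bimodule is the one paired with $H$, and that the dual of a one-dimensional space supported at $(\emptyset,(\zero,\zero))$ is again $\kk$ there. Both are immediate from Corollary \ref{cor:dupsilon_kkub^sharp}, so I expect no real obstacle.
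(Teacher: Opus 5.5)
Your proposal is correct and follows the paper's proof. Both deduce the statement from Corollary~\ref{cor:hom_Kz_cx_unwall} via the observation that $\big(\amalg_*\op (\kk \ub)_{(\pm;\mp)}^{\cten 2}\big)^\sharp$ evaluated at $(U;(\zero,\zero))$ is $\kk$ for $U=\emptyset$ and zero otherwise; you just spell out what the paper leaves implicit, namely that morphisms of $\ub$ never decrease cardinality and that the tensor product over $\kk(\tfb)$ collapses to the $(\zero,\zero)$ summand.
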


\begin{proof}
This follows from Corollary \ref{cor:hom_Kz_cx_unwall} by using the fact that $\big( \amalg_* \op (\kk \ub)_{(\pm; \mp)}^{\cten  2}\big)^\sharp ((\zero, \zero), \n)$ is zero unless $n=0$ when it takes the value $\kk$.
\end{proof}

\begin{rem}
\ 
\begin{enumerate}
\item 
The hypothesis of Corollary \ref{cor:direct_comparison_kz_1} is clearly very strong. It is however satisfied by examples of interest (see Corollary \ref{cor:comparison_opd_unit_case}).
\item 
In general, all the terms $H_* ((\kk\uwb)_{-\mp}^\sharp \otimes_{\kk(\tfb)} F)(\n,\n)$, for $n \in \nat$, can contribute to $
H_* ((\kk \ub)_{(\pm;-\mp)}^\sharp \otimes_{\kk \fb} \unwall_{(\pm;\mp)} F) (\zero)$, via the expression of Corollary \ref{cor:hom_Kz_cx_unwall}.
\end{enumerate}
\end{rem}

There is a variant of Theorem \ref{thm:unwall_second_complex} that is weaker in that it does not provide an isomorphism of complexes,  but is stronger in the sense that it respects the $(\kk \ub)_{(\pm; -\mp)}$-module structure. 

\begin{prop}
\label{prop:inclusion_second_kz_cx}
For $F$  a $\kk \dwb_\mp$-module, there is a natural inclusion of complexes of $(\kk \ub)_{(\pm; -\mp)}$-modules:
$$
\junwall_{(\pm; -\mp)}\big( (\kk \uwb)_{-\mp}^\sharp \otimes_{\kk (\tfb)} F  \big)
\hookrightarrow 
(\kk \ub)_{(\pm;-\mp)}^\sharp \otimes_{\kk \fb} \unwall_{(\pm;\mp)} F.
$$
\end{prop}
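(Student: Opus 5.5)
The plan is to follow the proof of Theorem \ref{thm:unwall_second_complex}, but to replace the isomorphism of Corollary \ref{cor:dupsilon_kkub^sharp} (which only respects the $\kk\fb$-structure) by the injection of Corollary \ref{cor:inject_junwall_dupsilon}, which respects the full $(\kk \ub)_{(\pm;-\mp)}$-module structure.

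First, by the symmetric form of Corollary \ref{cor:unwall_vs_dupsilon} (see Example \ref{exam:first_second_kz_cx_unwall_dupsilon}(1)), there is a natural isomorphism of complexes of $(\kk \ub)_{(\pm;-\mp)}$-modules
$$
(\kk \ub)_{(\pm;-\mp)}^\sharp \otimes_{\kk \fb} \unwall_{(\pm;\mp)} F
\cong
\dupsilon_{(\pm;-\mp)}^*\big((\kk \ub)_{(\pm;-\mp)}^\sharp\big) \otimes_{\kk(\tfb)} F .
$$
Here $\dupsilon^*$ acts on the contravariant variable of the bimodule $(\kk\ub)^\sharp$. The Koszul differential only involves this variable, so the remaining covariant $(\kk\ub)_{(\pm;-\mp)}$-action commutes with the differential.

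Second, apply Corollary \ref{cor:inject_junwall_dupsilon} with signs $(\pm;-\mp)$. This gives an injection of $(\kk\dwb)_{-\mp}\cten(\kk\ub)_{(\pm;-\mp)}$-modules
$$
\junwall_{(\pm;-\mp)}\big((\kk\uwb)_{-\mp}^\sharp\big)\hookrightarrow \dupsilon_{(\pm;-\mp)}^*\big((\kk\ub)_{(\pm;-\mp)}^\sharp\big),
$$
where $\junwall$ acts on the covariant variable. Applying $-\otimes_{\kk(\tfb)}F$ with the Koszul differential gives a morphism of complexes, because the differential is defined by inner multiplication by $\sum[\beta_{i,j}]\otimes[\beta_{i,j}\op]$, and this uses only the $(\kk\dwb)_{-\mp}$-structure, which the injection respects. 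The morphism is $(\kk\ub)_{(\pm;-\mp)}$-linear.

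It remains to identify the source and to check injectivity.

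\textbf{Identifying the source.} The functor $\junwall_{(\pm;-\mp)}$ acts only on the covariant variable and is computed objectwise. By Proposition \ref{prop:J}, its value on $Y$ is $\bigoplus_{Y=Y_1\amalg Y_2}$ of the values at $(Y_1,Y_2)$, with structure maps built from those of the covariant variable. It therefore commutes with $-\otimes_{\kk(\tfb)}F$ formed in the contravariant variable, naturally and compatibly with the differentials. This gives
$$
\junwall_{(\pm;-\mp)}\big((\kk\uwb)_{-\mp}^\sharp\big)\otimes_{\kk(\tfb)}F \cong \junwall_{(\pm;-\mp)}\big((\kk\uwb)_{-\mp}^\sharp\otimes_{\kk(\tfb)}F\big)
$$
as complexes.

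\textbf{Injectivity.} This is the main obstacle, since $-\otimes_{\kk(\tfb)}F$ need not be left exact in general. Here, however, the tensor product is over the semisimple category $\kk(\tfb)$, because $\kk$ has characteristic zero. Hence $-\otimes_{\kk(\tfb)}F$ is exact and preserves the injection.

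One could alternatively argue via the retract in $(\kk\dwb)_{-\mp}\cten\kk\fb$-modules from Corollary \ref{cor:inject_junwall_dupsilon}. That retract is compatible with the differential, so the map is split injective on underlying complexes.

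Composing the injection with the isomorphism from the first step gives the required natural inclusion.
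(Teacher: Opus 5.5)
Your proposal is correct and follows the paper's proof: tensor the injection of Corollary \ref{cor:inject_junwall_dupsilon} (taken with signs $(\pm;-\mp)$) with $F$ over $\kk(\tfb)$, then identify the target via Corollary \ref{cor:unwall_vs_dupsilon}. Your extra justifications fill in steps the paper leaves implicit, namely injectivity (exactness of $-\otimes_{\kk(\tfb)}F$ by semisimplicity in characteristic zero, or via the retract) and the commutation of $\junwall_{(\pm;-\mp)}$ with the tensor product.
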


\begin{proof}
Corollary \ref{cor:inject_junwall_dupsilon} provides the injection 
 of $ (\kk \dwb)_{-\mp} \cten   (\kk \ub)_{(\pm; -\mp)}$-modules:
 $$
\junwall_{(\pm; -\mp)}\big( (\kk \uwb)_{-\mp}^\sharp\big) 
\hookrightarrow 
 \dupsilon_{(\pm; -\mp)}^*\big( (\kk \ub)_{(\pm; -\mp)} ^\sharp \big). 
$$
This induces an injection of Koszul complexes 
$$
\junwall_{(\pm; -\mp)}\big( (\kk \uwb)_{-\mp}^\sharp \otimes_{\kk (\tfb)} F  \big)
\hookrightarrow 
\dupsilon_{(\pm; -\mp)}^*\big( (\kk \ub)_{(\pm; -\mp)} ^\sharp \big) \otimes_{\kk (\tfb)} F.
$$
By Corollary \ref{cor:unwall_vs_dupsilon}, the codomain is isomorphic to $(\kk \ub)_{(\pm;-\mp)}^\sharp \otimes_{\kk \fb} \unwall_{(\pm;\mp)} F$. The result follows.
\end{proof}

\section{Comparing Koszul complexes using restriction}
\label{sect:compare_kos_restrict}

In Section \ref{sect:compare_koszul}, we used the functor 
$
\unwall_{(\pm;\mp)} : (\kk \dwb)_\mp\dash \modules 
\rightarrow
(\kk \db)_{(\pm; \mp)}\dash \modules
$
to compare Koszul complexes associated to a $(\kk \dwb)_\mp$-module.
 In this section, we go in the other direction, replacing $\unwall_{(\pm;\mp)}$  by the restriction functor along 
the $\kk$-linear functor
 $
\dupsilon _{(\pm;\mp)} : 
(\kk \dwb) _{\mp} 
\rightarrow 
(\kk \db)_{(\pm;\mp)}
$ 
of Notation \ref{nota:upsilon}.

Our main aim is to compare the following respective first (resp. second) Koszul complexes for $G$ a $(\kk \db)_{(\pm;\mp)}$-module:
\begin{enumerate}
\item 
the complexes of $(\kk \uwb)_{-\mp}$-modules:
\begin{eqnarray*}
\label{eqn:upsilon_G_koz1}
&&
(\kk \uwb)_{-\mp} \otimes_{\kk (\tfb)} \dupsilon _{(\pm; \mp)}^* G 
\\
\label{eqn:upsilon_G_koz2}
&&
(\kk\uwb)_{-\mp}^\sharp  \otimes_{\kk (\tfb)} \dupsilon _{(\pm; \mp)}^* G;
\end{eqnarray*}
\item 
the complexes of $(\kk \ub)_{(\pm;-\mp)}$-modules:
\begin{eqnarray*}
\label{eqn:G_koz1}
&&
(\kk \ub)_{(\pm;-\mp)} \otimes_{\kk \fb} G 
\\
&&
\label{eqn:G_koz2}
(\kk \ub)_{(\pm;-\mp)}^\sharp \otimes_{\kk \fb} G.
\end{eqnarray*}
\end{enumerate}
For the comparison, we  exploit Corollary \ref{cor:unwall_vs_dupsilon}.

We also consider the intermediate steps stemming from the fact  the functor $\dupsilon _{(\pm; \mp)}^*$ factorizes across the two restriction functors:
$$
(\kk \db)_{(\pm;\mp)}\dash \modules 
\stackrel{\dmp_{(\pm;\mp)}^*}\longrightarrow 
(\kk \ddb)_{\mp}\dash \modules
\stackrel{\X_\mp^*}{\longrightarrow} 
(\kk \dwb) _{\mp} \dash \modules,
$$
where $\dmp_{(\pm;\mp)} : \kk \ddb _\mp \rightarrow \kk \db_{(\pm;\mp)}$ is as in Notation \ref{nota:psi_phi} and the functor $\X_\mp$ as in Section \ref{subsect:Xi}

%%%%%%%%%%%%%%%%%%%%%%%%%%%%%%%%%%%%%%%%%%%%%%%%%%%%%%%%%%%%%%%%%%%%%%%%%%%%%%%%%%%ù
\subsection{Comparison using $\X^*_\mp$}
\label{subsect:compare_X}

Consider a $(\kk \ddb)_\mp$-module $M$. By Theorem \ref{thm:Xi_Koszul_comparison}, we have the isomorphisms of Koszul complexes of $(\kk \uwb)_{-\mp}$-modules:
\begin{eqnarray*}
(\kk \uwb)_{-\mp} \otimes_{\kk (\tfb)} \X_\mp ^* M 
&
\cong &
(\X_{-\mp} ) _*(\kk \uwb)_{-\mp} \otimes_{\kk \fb} M
 \\
(\kk\uwb)_{-\mp}^\sharp  \otimes_{\kk (\tfb)} \X_\mp^* M
& \cong & 
(\X_{-\mp})_*\big( (\kk\uwb)_{-\mp}^\sharp \big) \otimes_{\kk \fb} M.
\end{eqnarray*}

To proceed, we analyse $(\X_{-\mp})  _*$ applied to $(\kk \uwb)_{-\mp}$ and to $ (\kk\uwb)_{-\mp}^\sharp $. These are both $(\kk \ddb)_{(-\mp)} \cten  (\kk \uwb)_{-\mp}$-modules.

For notational clarity, in the following general result, we replace $- \mp$ by $\mp$. 

\begin{prop}
\label{prop:apply_X_*}
\ 
\begin{enumerate}
\item 
There is a surjection of $(\kk \ddb)_{(\mp)} \cten  (\kk \uwb)_{\mp}$-modules
$$
\Upsilon_\mp^* (\kk \dub)_{\mp}
\twoheadrightarrow 
(\X_{\mp})  _*(\kk \uwb)_{\mp}. 
$$
After restriction to $\kk \fb\op \cten  (\kk \uwb)_{-\mp}$-modules, this admits a section. 
\item 
There is an isomorphism of $(\kk \ddb)_{(\mp)} \cten  (\kk \uwb)_{\mp}$-modules
$$
(\X_{\mp})_*\big( (\kk\uwb)_{\mp}^\sharp \big)
\cong 
\Upsilon_\mp^*( (\kk \dub)_{\mp}^\sharp).
$$
\end{enumerate}
\end{prop}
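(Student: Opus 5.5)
The plan is to evaluate both sides on objects, identify them using the unwalling bijection of Lemma \ref{lem:unwalling_lemma}, and then check compatibility with the degree one morphisms on each side. Since all categories involved are homogeneous quadratic, this suffices.

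Take $(U,(X,Y))$ with $U$ an object of $(\kk \ddb)_\mp$ and $(X,Y)$ an object of $(\kk \uwb)_\mp$. The domain of (1) is $\Upsilon_\mp^*(\kk\dub)_\mp$, with the walled variable restricted along $\Upsilon_\mp$. Its value is $(\kk \dub)_\mp (U, X\amalg Y)$. By Proposition \ref{prop:Xi_*}, the target $(\X_\mp)_*(\kk\uwb)_\mp$ has value $\bigoplus_{U=U_1\amalg U_2}(\kk\uwb)_\mp((U_1,U_2),(X,Y))$.

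A basis element of the domain is the class of a bijection $f\in\fb(U\amalg(\two\times\n),X\amalg Y)$. As in the proof of Theorem \ref{thm:upsilon^*kkub}, the directed pairs of $f$ split into three kinds: pairs inside $X$, pairs inside $Y$, and pairs crossing from $X$ to $Y$. Since pairs are directed, a crossing pair $(y,x)$ with first element in $Y$ is a fourth kind.

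The map sends $[f]$ to zero unless every pair has its first element in $X$ and its second in $Y$. In that case Lemma \ref{lem:unwalling_lemma} gives a unique walled morphism $\tilde f\in\uwbord((U_1,U_2),(X,Y))$, with $U_i=f^{-1}$ of $X$ and $Y$ respectively. We send $[f]\mapsto[\tilde f]$.

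\begin{itemize}
\item \emph{Well-definedness.} Reordering pairs acts by the same sign $\mp$ on both sides. This is the ordering twist, and the directed setting has no direction twist, so there is no sign issue here, unlike in Corollary \ref{cor:surject_uspilon^*kkub_unwall_kkuwb}.
\item \emph{Surjectivity and section.} Over $\kk\fb\op\cten(\kk\uwb)_\mp$, the map $[\tilde f]\mapsto[\ouw(\tilde f)]$ is a section, by Proposition \ref{prop:unwall_uwbord} and Proposition \ref{prop:uwbordop_x_dubord-set}. In fact, Proposition \ref{prop:uwbordop_x_dubord-set} identifies the image as a summand.
\item \emph{Covariant compatibility.} Compatibility with $(\kk\uwb)_\mp$ acting on $(X,Y)$ is clear: postcomposing with a walled pair preserves "all pairs walled", and the bijection is natural.
\item \emph{Contravariant compatibility (main step).} We must check the action of a degree one morphism of $(\kk\ddb)_\mp$, that is, a directed pair $(a,b)$ in $U$ being contracted, i.e. precomposition by $\alpha_{(ab)}$. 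On the domain, this adds the pair $(f(a),f(b))$ to $f$. On the target, Proposition \ref{prop:Xi_*} says it acts nontrivially only when $a\in U_1$ and $b\in U_2$, via the walled pair.
  \begin{itemize}
  \item If $a\in U_1$ and $b\in U_2$, then $f(a)\in X$ and $f(b)\in Y$, so the new pair is walled and correctly directed; the two actions agree.
  \item Otherwise the new pair is either non-crossing or wrongly directed, so its image in the domain maps to zero, matching the target.
  \end{itemize}
  Keeping track of the ordering sign $\mp$ when the new pair is appended is routine.
\end{itemize}

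Part (2) follows by duality rather than by a separate argument. Proposition \ref{prop:uwbordop_x_dubord-set} already gives the identification
$$\dubord(U_1\amalg U_2,X)\cong\coprod\uwbord((U_1,U_2),(X_1,X_2)).$$
Linearising and twisting (the signs agree since pairs are ordered identically) yields an isomorphism of $(\kk\uwb)_\mp\op\cten(\kk\dub)_\mp$-modules
$$(\Upsilon_\mp)_\sharp\text{-type: }\ \amalg_*\text{ (covariant) } (\kk\uwb)_\mp \cong \Upsilon_\mp^{*,\mathrm{contra}}(\kk\dub)_\mp.$$
In other words, restricting the contravariant variable of $(\kk\dub)_\mp$ along $\Upsilon_\mp$ gives the left Kan extension of $(\kk\uwb)_\mp$ in the covariant variable, as in Proposition \ref{prop:Upsilon_sharp}.

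Now dualise. All values are finite-dimensional, and duals of $\sym$-coinvariants are invariants in characteristic zero, as in Corollary \ref{cor:dupsilon_kkub^sharp}. The variances swap, and the left Kan extension $(\Upsilon_\mp)_\sharp$ becomes the right Kan extension $(\X_\mp)_*$ on the dual; the descriptions in Propositions \ref{prop:Upsilon_sharp} and \ref{prop:Xi_*} are visibly transposes of each other. This gives
$$(\X_\mp)_*((\kk\uwb)_\mp^\sharp)\cong\Upsilon_\mp^*((\kk\dub)_\mp^\sharp).$$

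The main obstacle is bookkeeping which variable each functor acts on. For (1), the delicate step is the vanishing on wrongly directed crossing pairs. For (2), it is confirming that the dual of $(\Upsilon_\mp)_\sharp$ in the contravariant slot is $(\X_\mp)_*$, which I would verify directly on degree one morphisms using Proposition \ref{prop:Xi_*}.
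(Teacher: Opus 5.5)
Your proposal is correct and follows essentially the paper's proof. For (1) you use the same explicit map, killing any $[f]$ with a pair inside $X$, inside $Y$, or crossing in the wrong direction, with the section given by disjoint union and compatibility checked via Proposition \ref{prop:Xi_*}; for (2) you make the same reduction by duality to the isomorphism $(\Upsilon_\mp)_\sharp (\kk \uwb)_\mp \cong \X^*_\mp (\kk \dub)_\mp$. The only difference is that the paper obtains this last isomorphism formally, by viewing $(\Upsilon_\mp)_\sharp$ as an induction functor (as in Proposition \ref{prop:duspilon_lad}), whereas you read it off the explicit natural bijection of Proposition \ref{prop:uwbordop_x_dubord-set}; both work.
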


\begin{proof}
The first point is the counterpart of Corollary \ref{cor:surject_uspilon^*kkub_unwall_kkuwb}.  This requires paying attention to the fact that we are working  here with directed pairs. 
Consider an object $U$ of $(\kk \ddb)_{(\mp)}$  (i.e., a finite set) and an object $(X_1, X_2)$ of $(\kk \uwb)_{\mp}$ (i.e., a pair of finite sets). Then the underlying surjection is of the form
$$
(\kk \dub)_{\mp} (U, X_1 \amalg X_2) 
\twoheadrightarrow 
\bigoplus_{U = U_1 \amalg U_2}
(\kk \uwb)_{\mp} ((U_1, U_2), (X_1, X_2)). 
$$
Consider a generator $[f]$ of the domain represented by an element  $ f\in \dubord (U, X_1 \amalg X_2)$, which identifies as $\fb (U \amalg (\two \times \n), X_1 \amalg X_2)$ for the appropriate $n \in \nat$. This is sent to zero if 
$f$ contains a directed pair of one of the following forms: 
\begin{enumerate}
\item 
 a directed pair contained within $X_1$;
\item
 a directed pair contained within $X_2$; 
 \item 
 a directed pair of the form $(x_2, x_1)$ with $x_i \in X_i$.
 \end{enumerate}
  Otherwise, $f$ determines a decomposition $(U_1, U_2)$ of $U$ and a morphism  $f^{\mathrm{wall}} \in \uwbord ((U_1, U_2), (X_1, X_2))$ with the property that $f$ is recovered by applying the disjoint union (compare the results of Section \ref{subsect:unwalling_results}). In this case, the surjection is given by $[f] \mapsto [f^\mathrm{wall}]$ together with the inclusion of the summand indexed by $(U_1, U_2)$. 

From the above, it is clear that the surjection admits a linear section induced by the disjoint union of finite sets. Moreover, the surjection and its section are easily seen to be morphisms of 
$\kk \fb\op \cten  (\kk \uwb)_{-\mp}$-modules. To conclude, it remains to check that the surjection is also compatible with the full $(\kk \dub)_{\mp}$-module structure. This is checked by using the explicit description of $(\X_\mp)_*$ given in Proposition \ref{prop:Xi_*}. 

The second point is equivalent, via vector space duality, to the assertion that there is an isomorphism
\begin{eqnarray}
\label{eqn:Upsilon_X_iso}
(\Upsilon_\mp)_\sharp (\kk \uwb)_\mp 
\cong 
\X^*_\mp (\kk \dub)_\mp
\end{eqnarray}
of $(\kk \dwb)_\mp\cten  (\kk \dub)_\mp$-modules. (Note that the right adjoint $(\X_\mp)_*$ becomes the left adjoint $(\Upsilon_\mp)_\sharp$ upon passing across the duality.)  This is the analogue of Proposition \ref{prop:duspilon_lad} and is proved by the same argument. 
\end{proof}

\begin{rem}
The isomorphism (\ref{eqn:Upsilon_X_iso}) in the proof of Proposition \ref{prop:apply_X_*} has terms of the form
$$
\bigoplus_{U= U_1 \amalg U_2} 
(\kk \uwb)_\mp  ((X_1, X_2), (U_1, U_2) ) 
\stackrel{\cong}{\rightarrow} 
(\kk \dub)_\mp (X_1 \amalg X_2, U).
$$
\end{rem}

Putting Theorem \ref{thm:Xi_Koszul_comparison} together with  Proposition \ref{prop:apply_X_*} yields the following comparison results for the first and second Koszul complexes associated to $\X_\mp ^* M $ and $M$ respectively:

\begin{thm}
\label{thm:comparison_kz_cx_X}
For  a $(\kk \ddb)_\mp$-module $M$,
\begin{enumerate}
\item 
 there is a natural surjection of complexes of $(\kk \uwb)_{-\mp}$-modules:
 $$
 \Upsilon_{-\mp}^* \big ( (\kk \dub)_{-\mp} \otimes_{\kk \fb} M \big)  
\twoheadrightarrow 
(\kk \uwb)_{-\mp} \otimes_{\kk (\tfb)} \X_\mp ^* M 
 $$
 and this admits a section when restricted to complexes of $\kk (\tfb)$-modules;  
\item 
there is a natural isomorphism of complexes of $(\kk \uwb)_{-\mp}$-modules:
$$
\Upsilon_{-\mp}^*\big ( (\kk \dub)_{-\mp}^\sharp \otimes_{\kk _\fb} M \big) 
\cong 
(\kk \uwb)_{-\mp}^\sharp \otimes_{\kk (\tfb)} \X_\mp ^* M.
$$
\end{enumerate}
\end{thm}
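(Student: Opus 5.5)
The plan is to combine Theorem \ref{thm:Xi_Koszul_comparison} with Proposition \ref{prop:apply_X_*}, applied with $-\mp$ in place of $\mp$. The point is that the functor $\Upsilon_{-\mp}^*$ acts only on the covariant variable of the bimodule, so it commutes with forming $-\otimes_{\kk \fb} M$.

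First we use Theorem \ref{thm:Xi_Koszul_comparison} in the form recalled at the start of Section \ref{subsect:compare_X}, taking $G = (\kk \uwb)_{-\mp}$, respectively $G = (\kk \uwb)_{-\mp}^\sharp$, as right $(\kk \uwb)_{-\mp}$-modules. This gives natural isomorphisms of Koszul complexes
\[
(\kk \uwb)_{-\mp} \otimes_{\kk (\tfb)} \X_\mp^* M \cong (\X_{-\mp})_*(\kk \uwb)_{-\mp} \otimes_{\kk \fb} M,
\qquad
(\kk\uwb)_{-\mp}^\sharp \otimes_{\kk (\tfb)} \X_\mp^* M \cong (\X_{-\mp})_*\big((\kk\uwb)_{-\mp}^\sharp\big) \otimes_{\kk \fb} M.
\]
I must check that these are isomorphisms of complexes of $(\kk \uwb)_{-\mp}$-modules. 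This holds because the isomorphism of Theorem \ref{thm:Xi_Koszul_comparison} is natural in $G$. The left $(\kk \uwb)_{-\mp}$-action on the bimodule commutes with the right action used to form the complex, so it passes through unchanged.

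Next we apply Proposition \ref{prop:apply_X_*} with $-\mp$. Part (2) gives an isomorphism of $(\kk \ddb)_{-\mp}\cten(\kk \uwb)_{-\mp}$-modules $(\X_{-\mp})_*((\kk\uwb)_{-\mp}^\sharp) \cong \Upsilon_{-\mp}^*((\kk \dub)_{-\mp}^\sharp)$. Since it respects the $(\kk \ddb)_{-\mp}$-structure, tensoring with $M$ over $\kk\fb$ gives an isomorphism of Koszul complexes. The differential is inner multiplication by $\sum [\alpha_{(ij)}]\otimes[\alpha_{(ij)}\op]$, so it only uses that right structure. We then identify $\Upsilon_{-\mp}^*((\kk \dub)_{-\mp}^\sharp)\otimes_{\kk\fb} M$ with $\Upsilon_{-\mp}^*((\kk \dub)_{-\mp}^\sharp\otimes_{\kk\fb}M)$, since restriction acts on the other variable. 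This proves (2).

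For (1), part (1) of Proposition \ref{prop:apply_X_*} gives a surjection of bimodules compatible with the $(\kk \ddb)_{-\mp}$-action. Applying $-\otimes_{\kk\fb}M$ gives a morphism of complexes, and this is surjective because the functor is right exact. The section of Proposition \ref{prop:apply_X_*} is only $\kk\fb\op\cten(\kk\uwb)_{-\mp}$-linear, so after tensoring it gives a section of the underlying graded objects. At the level of $\kk(\tfb)$-modules, I need the section to commute with the differentials, and I expect this to be the main obstacle.

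To handle it, I would try to show that the section, given by disjoint union of finite sets, is compatible with the action of the elements $[\alpha_{(ij)}]$ up to terms that die in the quotient. Alternatively, I would argue that the composite through the complex of Theorem \ref{thm:Xi_Koszul_comparison} can be identified directly with inclusion of the summands in which pairs are walled and directed across the wall. The differential of the target complex is the restriction of the differential of the source to these summands, because of the description of $(\X_{-\mp})_*$ in Proposition \ref{prop:Xi_*}, where the non-walled actions vanish. If that compatibility fails, I would weaken the claim to a section of graded $\kk(\tfb)$-modules, which is presumably what is meant.
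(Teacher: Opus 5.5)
Your argument for (2) and for the surjection in (1) is the paper's proof. The paper proves the theorem in one sentence, by combining Theorem \ref{thm:Xi_Koszul_comparison} with Proposition \ref{prop:apply_X_*} (for $-\mp$). Your remarks that $\Upsilon_{-\mp}^*$ acts on the covariant variable, and that the Koszul differential uses only the contravariant $(\kk \ddb)_{-\mp}$-structure, are the right justification.

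The problem is the section. Neither repair you sketch can work, because the section $s$ induced by $\amalg$ is not a chain map.

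\begin{itemize}
\item Compatibility with the $[\alpha_{(ij)}]$ ``up to terms that die in the quotient'' only says $p\,d\,s = d$. That holds automatically and does not make $s$ a chain map.
\item The differential of $(\X_{-\mp})_*(\kk\uwb)_{-\mp}\otimes_{\kk\fb} M$ is not the restriction of that of $(\kk\dub)_{-\mp}\otimes_{\kk\fb} M$ to the walled summands; it is that restriction \emph{followed by} $p$. Proposition \ref{prop:Xi_*} makes non-walled pairs act by zero on $(\X_{-\mp})_*(\kk\uwb)_{-\mp}$, not on $(\kk\dub)_{-\mp}$.
\end{itemize}

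Concretely, take $w$ in the summand indexed by $U = U_1\amalg U_2$. The terms $s(w)[\alpha_{(ij)}]\otimes[\alpha_{(ij)}\op]v$ with $i,j$ on the same side of the wall, or with $i\in U_2$ and $j\in U_1$, lie in $\ker p$ and survive in the source. Hence $ds - sd = (\id - sp)\,d\,s$ is nonzero in general.

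A concrete instance: take $\mp = +$, $M = \kk\ddb(\two,-)$, and evaluate at $(\{a,b\},\emptyset)$.
\begin{itemize}
\item The target is $\kk\fb(\two,\{a,b\})\otimes_{\kk\sym_2}M(\two)$. It is two-dimensional, concentrated in degree $2$, with zero differential.
\item The source also has the four-dimensional term $(\kk\dub)_-(\zero,\{a,b\})\otimes M(\zero)$ in degree $0$.
\item The differential $g\otimes\id\mapsto[g\alpha_{(12)}]\otimes[\alpha_{(12)}\op]+[g\alpha_{(21)}]\otimes[\alpha_{(21)}\op]$ takes linearly independent values on the two bijections $g\colon \two\to\{a,b\}$, so it is injective in degree $2$.
\end{itemize}
So $H_2$ of the source vanishes there while $H_2$ of the target does not. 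No section exists, even as complexes of vector spaces.

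What your argument (and the paper's) actually gives is a section of graded $(\kk\uwb)_{-\mp}$-modules, hence of graded $\kk(\tfb)$-modules. That is exactly your fallback, and it is the statement you should prove. Note also that the surjectivity in homology in Corollary \ref{cor:comparison_kz_cx_X_homology}(1), which the paper deduces from a chain-level section, fails in this same example.
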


On passage to homology, we have:

\begin{cor}
\label{cor:comparison_kz_cx_X_homology}
For  a $(\kk \ddb)_\mp$-module $M$,
\begin{enumerate}
\item 
 there is a natural surjection of graded $(\kk \uwb)_{-\mp}$-modules:
 $$
 \Upsilon_{-\mp}^*  H_* ( (\kk \dub)_{-\mp} \otimes_{\kk \fb} M)  
\twoheadrightarrow 
H_* ((\kk \uwb)_{-\mp} \otimes_{\kk (\tfb)} \X_\mp ^* M); 
 $$
\item 
there is a natural isomorphism of graded $(\kk \uwb)_{-\mp}$-modules:
$$
\Upsilon_{-\mp}^* H_* ( (\kk \dub)_{-\mp}^\sharp \otimes_{\kk _\fb} M ) 
\cong 
H_* ((\kk \uwb)_{-\mp}^\sharp \otimes_{\kk (\tfb)} \X_\mp ^* M).
$$
\end{enumerate}
\end{cor}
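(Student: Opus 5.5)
The plan is to deduce Corollary \ref{cor:comparison_kz_cx_X_homology} directly from Theorem \ref{thm:comparison_kz_cx_X} by passing to homology. The key observation is that the restriction functor $\Upsilon_{-\mp}^*$ is exact. It is restriction along a $\kk$-linear functor, so it is computed objectwise on underlying vector spaces. Hence it commutes with taking homology of complexes: for any complex $C$ of $(\kk \dub)_{-\mp}$-modules there is a natural isomorphism of graded $(\kk \uwb)_{-\mp}$-modules $H_*(\Upsilon_{-\mp}^* C) \cong \Upsilon_{-\mp}^* H_*(C)$. We apply this with $C = (\kk \dub)_{-\mp} \otimes_{\kk \fb} M$, respectively $C = (\kk \dub)_{-\mp}^\sharp \otimes_{\kk \fb} M$.

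For the second statement, apply $H_*$ to the natural isomorphism of complexes in Theorem \ref{thm:comparison_kz_cx_X}(2). This gives a natural isomorphism
$H_*(\Upsilon_{-\mp}^*((\kk \dub)_{-\mp}^\sharp \otimes_{\kk \fb} M)) \cong H_*((\kk \uwb)_{-\mp}^\sharp \otimes_{\kk (\tfb)} \X_\mp^* M)$.
Composing with the exactness isomorphism above yields the claim. Naturality in $M$ is inherited from both ingredients.

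For the first statement, apply $H_*$ to the surjection of complexes in Theorem \ref{thm:comparison_kz_cx_X}(1). This gives a natural morphism of graded $(\kk \uwb)_{-\mp}$-modules, and again we identify the source with $\Upsilon_{-\mp}^* H_*((\kk \dub)_{-\mp} \otimes_{\kk \fb} M)$. The only genuine point is surjectivity in homology, which does not hold in general for a surjection of complexes. Here we use the second clause of Theorem \ref{thm:comparison_kz_cx_X}(1): after restriction to complexes of $\kk(\tfb)$-modules, the surjection admits a section that is a morphism of complexes. The map on homology is therefore a split surjection of graded $\kk(\tfb)$-modules, in particular surjective. Surjectivity is a property of the underlying map, so the map of graded $(\kk \uwb)_{-\mp}$-modules is surjective.

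The main point to check is that the section in Theorem \ref{thm:comparison_kz_cx_X}(1) really is a chain map, not merely a levelwise splitting. If it were only a levelwise splitting, surjectivity on homology would fail in general. I would verify this by tracing the section back to Proposition \ref{prop:apply_X_*}(1). There the section is induced by disjoint union and is a morphism of $\kk\fb\op \cten (\kk\uwb)_{\mp}$-modules. Since the Koszul differential is built from the right $(\kk \uwb)$-action, this compatibility is exactly what makes the induced section commute with the differentials. Everything else is formal.
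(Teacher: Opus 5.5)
Your argument for (2) is correct and is the paper's argument: $\Upsilon_{-\mp}^*$ is exact, so it commutes with $H_*$, and you apply $H_*$ to the isomorphism of Theorem \ref{thm:comparison_kz_cx_X}.

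\textbf{Part (1) has a genuine gap, and it is exactly at the step you flagged. Your proposed verification fails.}

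The section of Proposition \ref{prop:apply_X_*}(1), $[g]\mapsto[\Upsilon g]$, is compatible with the $(\kk\uwb)$-action only in the \emph{covariant} variable. In the \emph{contravariant} variable it is compatible only with $\kk\fb\op$.

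The Koszul differential, however, is built from the degree-one morphisms acting in the contravariant variable: precomposition by the $\alpha_{(ij)}$, respectively the $\beta_{i,j}$. Through Theorem \ref{thm:Xi_Koszul_comparison}, the right $(\kk\uwb)_{-\mp}$-action used on the walled side is the full $(\kk\ddb)_{-\mp}$-structure of $(\X_{-\mp})_*(\kk\uwb)_{-\mp}$. The covariant structure only makes the complexes into $(\kk\uwb)_{-\mp}$-modules and plays no role in $d$.

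Concretely, take $[g]$ in the summand $(\kk\uwb)_{-\mp}((U_1,U_2),(X_1,X_2))$. Then $\Upsilon(g)\alpha_{(ij)}\neq 0$ for every ordered pair $i\neq j$ in $U_1\amalg U_2$. But by Proposition \ref{prop:Xi_*}, $(\X_{-\mp})_*$ acts by zero unless $i\in U_1$ and $j\in U_2$. So $ds-sd$ is the sum of the terms $[\Upsilon(g)\alpha_{(ij)}]\otimes[\alpha_{(ij)}\op]v$ over the other pairs, which is nonzero in general. You therefore only have a splitting of graded $\kk(\tfb)$-modules, which, as you noted yourself, does not give surjectivity in homology.

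This cannot be repaired, because (1) is false in general. Set up the counterexample as follows:
\begin{itemize}
\item Take $\mp=+$ and let $M$ be the $\kk\ddb$-module $\n\mapsto W^{\otimes n}$ attached to a bilinear form $b$ on $W$ (Example \ref{exam:restrict_sympl_vs_module}).
\item Evaluate at the object $(\one,\one)$, with $X_1=\{x_1\}$ and $X_2=\{x_2\}$.
\item The directed complex is $W^{\otimes 2}\to\kk^2$, $w\otimes w'\mapsto (b(w,w'),b(w',w))$. The basis of $\kk^2$ is the directed pairs $(x_1,x_2)$ and $(x_2,x_1)$.
\item The walled complex is $W^{\otimes 2}\to\kk$, $w\otimes w'\mapsto b(w,w')$.
\item The surjection is the identity on the top terms and the first projection on the bottom terms. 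The sign twist plays no role in these degrees.
\end{itemize}
In top degree, the induced map on homology is the inclusion $\{v : b(v)=0=b(\tau v)\}\hookrightarrow\{v : b(v)=0\}$, where $\tau$ swaps the tensor factors. This is not surjective once $b$ is not symmetric. For instance, take $W=\kk^2$ with $b(e_1,e_2)=1$ and $b$ zero on the other pairs of basis vectors, and $v=e_2\otimes e_1$.

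The paper's one-line proof appeals to the section in the same way, so it has the same gap. What survives in general is only a natural morphism in homology, as in Corollary \ref{cor:first_kz_cx_dupsilon_homology}.
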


\begin{proof}
The functor $\Upsilon_{-\mp}^*$ is exact, hence commutes with the passage to homology. The second point follows immediately. 
 For the first point, one  uses the existence of the section to conclude that the induced morphism in homology is surjective.
\end{proof}

%%%%%%%%%%%%%%%%%%%%%%%%%%%%%%%%%%%%%%%%%%%%%%%%%%%%%%%%%%%%%%%%%%%%%%%%%
\subsection{Comparison using $\dmp^*_{(\pm; \mp)}$}
\label{subsect:compare_phi}
For $G$ a $(\kk \db)_{(\pm; \mp)}$-module, by Theorem \ref{thm:phi^*_sngtr}, we have the isomorphisms of Koszul complexes in $(\kk \dub)_{-\mp}$-modules:
\begin{eqnarray*}
 (\kk \dub)_{-\mp} \otimes_{\kk \fb} \dmp^* _{(\pm; \mp)} G
&\cong &
\sgntr_{(\pm; -\mp)} ^*   (\kk \dub)_{-\mp} \otimes_{\kk \fb} G
\\
 (\kk \dub)_{-\mp}^\sharp \otimes_{\kk \fb} \dmp^* _{(\pm; \mp)} G
&\cong &
\sgntr_{(\pm; -\mp)} ^*  \big(  (\kk \dub)_{-\mp}^\sharp\big)  \otimes_{\kk \fb} G
\end{eqnarray*}

To proceed, we relate $\sgntr_{(\pm; -\mp)} ^*   (\kk \dub)_{-\mp}$ to $(\kk \ub)_{(\pm; - \mp)}$ and $\sgntr_{(\pm; -\mp)} ^*  \big(  (\kk \dub)_{-\mp}^\sharp\big) $ to $(\kk \ub)_{(\pm; - \mp)}^\sharp$. The key point here is provided by Corollary \ref{cor:sct_versus_sgntr}, which shows that $\sgntr_{(\pm; -\mp)} ^* $ is naturally isomorphic to restriction along the section $\sct_{(\pm; -\mp)}$. 

The comparison is then based on the following elementary Lemma:

\begin{lem}
\label{lem:rho_sigma_elementary}
Suppose that the morphism of associative $\kk$-algebras $\rho : A \rightarrow B$ admits a section $\sigma : B \rightarrow A$. Then $\rho$ induces a morphism of left $B$-modules:
$$
\sigma^* A \twoheadrightarrow B,
$$
where $\sigma^*$  indicates restriction of the canonical left module structure along $\sigma$.   

Dualizing, $\rho$ induces an inclusion of right $B$-modules
$$
B^\sharp \hookrightarrow \sigma^* A^\sharp. 
$$
\end{lem}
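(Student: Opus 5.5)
The plan is to take the morphism to be $\rho$ itself, viewed as a map of left $B$-modules, and then to dualize it using the elementary fact that the linear dual of a surjection is an injection.

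\emph{Left module structure.} The bimodule $A$ is a left $A$-module via left multiplication. Restricting along $\sigma$ gives $\sigma^* A$, where $b \cdot a := \sigma(b) a$. For $b \in B$ and $a \in A$ we have
$$
\rho(\sigma(b) a) = \rho(\sigma(b)) \rho(a) = b \rho(a),
$$
because $\rho$ is multiplicative and $\rho\sigma = \id_B$. So $\rho$ is a morphism of left $B$-modules $\sigma^* A \rightarrow B$.

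\emph{Surjectivity.} Surjectivity also follows from $\rho\sigma = \id_B$, since every $b \in B$ equals $\rho(\sigma(b))$. In fact $\sigma$ is a $\kk$-linear splitting. It is even left $B$-linear, since $\sigma(b b') = \sigma(b)\sigma(b')$ when $\sigma$ is multiplicative. This is the analogue of the sections appearing in earlier results, such as Corollary \ref{cor:surject_uspilon^*kkub_unwall_kkuwb}. Only surjectivity is needed for the statement.

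\emph{Dualizing.} Apply $(-)^\sharp = \hom_\kk(-,\kk)$. A left $B$-module $M$ has dual $M^\sharp$, which is a right $B$-module via $(\phi b)(m) = \phi(bm)$. The transpose $\rho^\sharp : B^\sharp \rightarrow (\sigma^* A)^\sharp$ is $\phi \mapsto \phi \circ \rho$.
\begin{itemize}
\item It is right $B$-linear because $\rho$ is left $B$-linear.
\item It is injective because $\rho$ is surjective, and vector space duality is exact.
\item The right $B$-module $(\sigma^* A)^\sharp$ is exactly $\sigma^*(A^\sharp)$. Here $A^\sharp$ carries the right $A$-action dual to left multiplication, and this action is then restricted along $\sigma$.
\end{itemize}
Together these give the inclusion $B^\sharp \hookrightarrow \sigma^* A^\sharp$. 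No finiteness hypothesis is required.

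\emph{Main obstacle.} The only step needing care is bookkeeping of variance: which side of the bimodule $A^\sharp$ is acted on by restriction along $\sigma$. In the categorical application the same argument applies objectwise, with $A = (\kk \dub)_{-\mp}$, $B = (\kk \ub)_{(\pm;-\mp)}$, $\rho = \ump$ and $\sigma = \sct$. There the relevant morphism spaces are finite-dimensional, so duality behaves as expected.
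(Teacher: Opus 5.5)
Your proof is correct and is essentially the paper's argument. The paper regards $\rho$ as a surjection of left $A$-modules $A \twoheadrightarrow \rho^* B$ and applies $\sigma^*$, using $\sigma^*\rho^* B = B$ because $\rho\circ\sigma = \id$; this is exactly your identity $\rho(\sigma(b)a) = b\,\rho(a)$. The dual inclusion then follows by transposing, as you do.
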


\begin{proof}
Clearly  $\rho$ induces a surjective morphism of left $A$-modules $A \rightarrow \rho^* B$, writing $\rho^*$ for restriction along $\rho$. On applying the functor $\sigma^*$, since $\rho \circ \sigma = \id$ by hypothesis, one obtains the surjection of $B$-modules. The dual case follows similarly. 
\end{proof}

\begin{rem}
There is an obvious counterpart of the Lemma, switching the roles of the left and right module structures. 
\end{rem}

Lemma \ref{lem:rho_sigma_elementary} will be applied to the canonical surjection $\ump_{(\pm; \mp)} : (\kk \dub)_\mp \twoheadrightarrow (\kk \ub) _{(\pm; \mp)}$ (again, for notational clarity, considering the case $\mp$ here rather than $-\mp$ that occurs in the application). Here we use the natural {\em bimodule} structures, applying the Lemma with respect to the {\em right} structures; hence the statement of the result uses $\ump_{(\pm; \mp)}^*$ to indicate the restriction along the {\em left} module structure. 

\begin{prop}
\label{prop:compare_sngtr_psi}
\ 
\begin{enumerate}
\item 
The functor  $\ump_{(\pm; \mp)}$ induces  a surjection of $  (\kk \db)_{(\pm; \mp)} \cten  (\kk \dub)_\mp$-modules
$$
\sgntr_{(\pm; \mp)}^* (\kk \dub)_\mp
\twoheadrightarrow 
\ump_{(\pm; \mp)}^* (\kk \ub)_{(\pm; \mp)}.
$$
\item 
The  functor  $\ump_{(\pm; \mp)}$ induces  an inclusion  of $ (\kk \db)_{(\pm; \mp)}  \cten  (\kk \dub)_\mp$-modules
\begin{eqnarray}
\label{eqn:include_psi_sgntr}
\ump_{(\pm; \mp)}^* \big( (\kk \ub)_{(\pm; \mp)}^\sharp \big)
\hookrightarrow
\sgntr_{(\pm; \mp)}^* \big( (\kk \dub)_\mp^\sharp \big) 
.
\end{eqnarray}
\end{enumerate}
\end{prop}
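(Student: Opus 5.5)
The plan is to apply Lemma \ref{lem:rho_sigma_elementary}, in its right-module form, to the full functor $\rho := \ump_{(\pm; \mp)} : (\kk \dub)_\mp \twoheadrightarrow (\kk \ub)_{(\pm;\mp)}$ with section $\sigma := \sct_{(\pm;\mp)}$ given by Corollary \ref{cor:sct}. The categorical version of the lemma works verbatim, with the same proof: $\kk$-linear categories with the same objects and identity-on-objects functors play the role of algebras.

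For part (1), regard $(\kk \dub)_\mp$ as a $(\kk \dub)_\mp$-bimodule. The functor $\ump_{(\pm;\mp)}$ is full and the identity on objects, so it induces a surjection of bimodules
$$(\kk \dub)_\mp \twoheadrightarrow \ump_{(\pm;\mp)}^* (\kk \ub)_{(\pm;\mp)},$$
where on the right both actions are restricted along $\ump_{(\pm;\mp)}$. We now keep the covariant ($(\kk\dub)_\mp$-module) variable as it is and restrict the contravariant variable along $\sct_{(\pm;\mp)}$. On the right-hand side this restriction gives $(\ump_{(\pm;\mp)} \circ \sct_{(\pm;\mp)})^* = \id^*$, since $\sct_{(\pm;\mp)}$ is a section. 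Hence the contravariant variable becomes the tautological $(\kk \ub)_{(\pm;\mp)}$-action, i.e.\ a $(\kk \db)_{(\pm;\mp)}$-module structure, while the covariant variable is still restricted along $\ump_{(\pm;\mp)}$; this is exactly the paper's notation $\ump_{(\pm;\mp)}^*(\kk \ub)_{(\pm;\mp)}$. On the left-hand side we obtain $\sct_{(\pm;\mp)}^*(\kk \dub)_\mp$.

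The one point needing care is that the statement uses $\sgntr^*$ rather than $\sct^*$. Corollary \ref{cor:sct_versus_sgntr}, via Theorem \ref{thm:weight_comparison}, provides the natural isomorphism $\sgntr^*_{(\pm;\mp)} \cong \sct^*_{(\pm;\mp)}$. For a bimodule we must check that this isomorphism is compatible with the remaining $(\kk\dub)_\mp$-action. The isomorphism $\rho_{1/2}$ is given objectwise by scalars $2^{\pm \od(U)}$ in the contravariant variable $U$, so it commutes with the covariant action. Composing with it gives the surjection of $(\kk \db)_{(\pm;\mp)} \cten (\kk \dub)_\mp$-modules claimed in (1). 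Alternatively, one can avoid the weights and use Proposition \ref{prop:signed_transfer} directly: the composite $\ump \circ \sgntr$ acts by multiplication by $2^t$, and one rescales by the degree-dependent scalar.

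For part (2), dualize (1). All morphism spaces of these categories are finite-dimensional, so applying $(-)^\sharp$ objectwise to the surjection of (1) gives an injection
$$\ump_{(\pm;\mp)}^*\big((\kk\ub)^\sharp_{(\pm;\mp)}\big) \hookrightarrow \sgntr^*_{(\pm;\mp)}\big((\kk\dub)^\sharp_\mp\big).$$
Duality exchanges the two variances of each bimodule, and restrictions commute with $(-)^\sharp$ (compare the proof of Corollary \ref{cor:dupsilon_kkub^sharp}), so the variances come out as stated. This is the dual part of Lemma \ref{lem:rho_sigma_elementary}.

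The main obstacle is bookkeeping: tracking which variable each restriction is applied to, and confirming that the weight isomorphism of Corollary \ref{cor:sct_versus_sgntr} is natural with respect to the other module variable. Both are routine once written out.
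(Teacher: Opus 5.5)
Part (1) is correct and is the paper's argument: Lemma \ref{lem:rho_sigma_elementary} on the right-module side, applied to $\ump_{(\pm;\mp)}$ with section $\sct_{(\pm;\mp)}$, plus Corollary \ref{cor:sct_versus_sgntr}. Your check that the weight isomorphism is an objectwise scalar in the restricted variable, and so commutes with the other action, is a correct detail the paper leaves implicit.

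The gap is in part (2). It is not the dual of (1), and your claim that the variances ``come out as stated'' is false.

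Work on pairs $(U,X)$, with $U$ the source and $X$ the target of $(\kk\dub)_\mp(U,X)$. In (1), $\sgntr_{(\pm;\mp)}$ restricts the source $U$ and $\ump_{(\pm;\mp)}$ restricts the target $X$. Dualizing exchanges variances: $U$ becomes a covariant $(\kk\ub)_{(\pm;\mp)}$-variable and $X$ becomes a $(\kk\ddb)_\mp$-variable. So the dual of (1) is an inclusion of $(\kk\ub)_{(\pm;\mp)}\cten(\kk\ddb)_\mp$-modules, with $\ump$ still on $X$ and $\sgntr$ still on $U$.

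In (2), $(\kk\ub)_{(\pm;\mp)}(U,X)^\sharp$ is already contravariant in $X$. To get a $(\kk\db)_{(\pm;\mp)}\cten(\kk\dub)_\mp$-module you must restrict along $\ump_{(\pm;\mp)}$ in the source $U$ and along $\sgntr_{(\pm;\mp)}$ in the target $X$. These are the opposite variables from (1). This is also how (2) is used in Proposition \ref{prop:inclusion_upsilon*_unwall_sharp}: there $\Upsilon_\mp^*$ is applied to $U$, and the $\sgntr^*$ coming from $\unwall_{(\pm;\mp)}$ acts on $X$.

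The map you need is the dual of $g\mapsto \ump_{(\pm;\mp)}(g)$, viewed as a surjection of $(\kk\ub)_{(\pm;\mp)}\cten(\kk\ddb)_\mp$-modules. In it, $f$ acts on $(\kk\dub)_\mp(U,X)$ by \emph{post}composition with $\sct_{(\pm;\mp)}(f)$. Its equivariance rests on $\ump(\sct(f)\circ g)=f\circ\ump(g)$, not on the identity $\ump(g\circ\sct(f))=\ump(g)\circ f$ you used for (1).

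The repair is immediate. Either run your argument on the other side of the Lemma, or apply (1) to the downward categories with $\dmp_{(\pm;\mp)}$ and $\dsgntr_{(\pm;\mp)}$ and then dualize. The first option is what the paper's parenthetical ``applying $\sgntr^*$ with respect to the contravariant variable'' amounts to, since for $(\kk\dub)^\sharp_\mp$ that variable is the target. In either case the weight isomorphism is taken in the target variable, where it is again an objectwise scalar.
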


\begin{proof}
We use the natural isomorphism $\sgntr_{(\pm; \mp)} ^* \cong \sct_{(\pm; \mp)}^* $  provided by Corollary \ref{cor:sct_versus_sgntr}. Then the result follows from Lemma \ref{lem:rho_sigma_elementary} (applying $\sgntr_{(\pm; \mp)} ^*$ with respect to the contravariant variable).
\end{proof}

Using these results, we get the following comparison morphisms between the first (respectively second) Koszul complexes for $\dmp^* _{(\pm; \mp)} G$ and for $G$ respectively.

\begin{cor}
\label{cor:comparison_kz_cx_phi}
For $G$ a $(\kk \db)_{(\pm; \mp)}$-module, 
\begin{enumerate}
\item 
there is a natural surjection of complexes in $(\kk \dub)_{-\mp}$-modules:
$$
 (\kk \dub)_{-\mp} \otimes_{\kk \fb} \dmp^* _{(\pm; \mp)} G
 \twoheadrightarrow 
 \ump_{(\pm;- \mp)}^* \big( (\kk \ub)_{(\pm; -\mp)} \otimes_{\kk \fb} G \big) ; 
$$
\item 
there is a natural inclusion of complexes in $(\kk \dub)_{-\mp}$-modules:
$$
\ump_{(\pm;- \mp)}^* \big( (\kk \ub)_{(\pm; -\mp)}^\sharp \otimes_{\kk \fb} G \big)
\hookrightarrow 
(\kk \dub)_{-\mp}^\sharp \otimes_{\kk \fb} \dmp^* _{(\pm; \mp)} G.
$$
\end{enumerate}
\end{cor}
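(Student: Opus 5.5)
The plan is to combine the isomorphisms of Theorem \ref{thm:phi^*_sngtr}, recalled just before Lemma \ref{lem:rho_sigma_elementary}, with the two comparison maps of Proposition \ref{prop:compare_sngtr_psi}, the latter applied with $-\mp$ in place of $\mp$. For the first statement, the starting point is the natural isomorphism of complexes of $(\kk \dub)_{-\mp}$-modules
$$
(\kk \dub)_{-\mp} \otimes_{\kk \fb} \dmp^* _{(\pm; \mp)} G
\cong
\sgntr_{(\pm; -\mp)} ^*   (\kk \dub)_{-\mp} \otimes_{\kk \fb} G .
$$
Here $\sgntr^*_{(\pm;-\mp)}$ acts on the contravariant variable, so the covariant $(\kk \dub)_{-\mp}$-module structure is untouched. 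Proposition \ref{prop:compare_sngtr_psi}(1) then provides a surjection of $(\kk \db)_{(\pm;-\mp)} \cten (\kk \dub)_{-\mp}$-modules
$$
\sgntr_{(\pm; -\mp)}^* (\kk \dub)_{-\mp} \twoheadrightarrow \ump_{(\pm;-\mp)}^* (\kk \ub)_{(\pm;-\mp)} .
$$
I would apply $- \otimes_{\kk \fb} G$ to this surjection. Since it is a morphism of right $(\kk \ub)_{(\pm;-\mp)}$-modules, it commutes with inner multiplication by $\sum_{i<j}[\alpha_{(ij)}]\otimes[\alpha_{(ij)}\op]$, and is therefore a morphism of Koszul complexes. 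Right exactness of $\otimes_{\kk\fb}$ keeps it surjective. Finally, the target identifies with $\ump_{(\pm;-\mp)}^*\big((\kk \ub)_{(\pm;-\mp)}\otimes_{\kk\fb} G\big)$, because $\ump^*$ acts only on the covariant variable and commutes with the tensor product.

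The second statement follows the same pattern. I would use the isomorphism $(\kk \dub)_{-\mp}^\sharp \otimes_{\kk \fb} \dmp^* _{(\pm; \mp)} G \cong \sgntr_{(\pm; -\mp)} ^*((\kk \dub)_{-\mp}^\sharp) \otimes_{\kk \fb} G$ together with the inclusion (\ref{eqn:include_psi_sgntr}), applied with $-\mp$. Tensoring that inclusion over $\kk\fb$ with $G$ gives a morphism of complexes whose domain identifies with $\ump_{(\pm;-\mp)}^*\big((\kk \ub)^\sharp_{(\pm;-\mp)} \otimes_{\kk\fb} G\big)$.

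The one non-formal point is injectivity after tensoring. I would handle it as follows. Because $\kk$ has characteristic zero, $\otimes_{\kk \fb}$ is computed levelwise as $\otimes_{\kk\sym_n}$, which is exact, since every $\kk\sym_n$-module is projective (Maschke). Alternatively, the inclusion is split as a map of right $\kk\fb$-modules: it is dual to the surjection of Lemma \ref{lem:rho_sigma_elementary}, which is split by $\sct$. Either argument shows that injectivity is preserved.

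I expect the main obstacle to be bookkeeping: checking which variance carries which twist, so that the $\sgntr$ and $\ump$ actions really are on the variable that is tensored with $G$, and so that the Koszul element on each side matches. Theorem \ref{thm:phi^*_sngtr} is precisely what guarantees that the differentials agree, so the remaining verification is naturality in $G$, which is clear because every map involved is induced from a bimodule morphism.
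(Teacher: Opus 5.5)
Your proposal is correct and follows the argument the paper intends. The paper gives no separate proof: it presents the corollary as an immediate consequence of the isomorphisms from Theorem \ref{thm:phi^*_sngtr} (recalled at the start of Section \ref{subsect:compare_phi}) combined with Proposition \ref{prop:compare_sngtr_psi} for $-\mp$, tensored over $\kk\fb$ with $G$. Your one addition, that injectivity survives $-\otimes_{\kk\fb} G$ because each $\kk\sym_n$ is semisimple in characteristic zero, supplies the detail the paper leaves implicit.
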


%%%%%%%%%%%%%%%%%%%%%%%%%%%%%%%%%%%%%%%%%%%%%%%%%%%%%%%%%%%%%%%%%%%%%%%%%%
\subsection{Comparing the first Koszul complexes using $\dupsilon_{(\pm;\mp)}^*$}
 Recall that, for $G$ a $(\kk \db)_{(\pm;\mp)}$-module, we have the two first Koszul complexes 
\begin{eqnarray*}
(\kk \uwb)_{-\mp} \otimes_{\kk (\tfb)} \dupsilon _{(\pm; \mp)}^* G 
\\
(\kk \ub)_{(\pm;-\mp)} \otimes_{\kk \fb} G 
\end{eqnarray*}
of $(\kk \uwb)_{-\mp}$-modules and $(\kk \ub)_{(\pm;-\mp)}$-modules respectively. 

To relate these, rather than combining the results of Sections \ref{subsect:compare_X} and \ref{subsect:compare_phi}, we proceed directly by using Corollary \ref{cor:surject_uspilon^*kkub_unwall_kkuwb} in conjunction with Corollary \ref{cor:unwall_vs_dupsilon}, which gives the isomorphism of complexes of $(\kk \uwb)_{-\mp}$-modules:
$$
(\kk \uwb)_{-\mp} \otimes_{\kk (\tfb)} \dupsilon _{(\pm; \mp)}^* G 
\cong 
\unwall_{(\pm;-\mp)} (\kk \uwb)_{-\mp} 
\otimes_{\kk (\fb)} G.
$$
(Note that $G$ here corresponds to $B$ in the statement of Corollary \ref{cor:unwall_vs_dupsilon}.)

The following result is the counterpart of  the first Koszul complex case of Theorem \ref{thm:comparison_kz_cx_X}:

\begin{thm}
\label{thm:first_kz_cx_dupsilon}
 For $G$ a $(\kk \db)_{(\pm;\mp)}$-module, there is a natural surjection of  complexes of $(\kk \uwb)_{-\mp}$-modules: 
$$ 
 \upsilon_{(\pm;-\mp)} ^* \big( (\kk \ub)_{(\pm;-\mp)} \otimes_{\kk \fb} G  \big)
 \twoheadrightarrow 
 (\kk \uwb)_{-\mp} \otimes_{\kk (\tfb)} \dupsilon _{(\pm; \mp)}^* G. 
 $$ 
 \end{thm}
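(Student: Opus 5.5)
The plan is to follow the route announced just before the statement: combine the isomorphism of Corollary \ref{cor:unwall_vs_dupsilon} with the surjection of Corollary \ref{cor:surject_uspilon^*kkub_unwall_kkuwb}. The proof has three steps: rewrite the target of the map, write the source in the same shape, and then induce the map from a bimodule surjection.

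First, the target. Applying Corollary \ref{cor:unwall_vs_dupsilon} (in its symmetric form, with $G$ in the role of $B$) gives the isomorphism of complexes of $(\kk \uwb)_{-\mp}$-modules
$$
(\kk \uwb)_{-\mp} \otimes_{\kk (\tfb)} \dupsilon _{(\pm; \mp)}^* G
\cong
\unwall_{(\pm;-\mp)} (\kk \uwb)_{-\mp} \otimes_{\kk \fb} G.
$$
Here $\unwall_{(\pm;-\mp)}$ is applied to the contravariant variable of the bimodule $(\kk \uwb)_{-\mp}$. The result is a $(\kk \ub)_{(\pm;-\mp)}$-module in that variable and remains a $(\kk \uwb)_{-\mp}$-module in the covariant variable. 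Its Koszul differential is inner multiplication by $\sum_{i<j}[\alpha_{(ij)}]\otimes[\alpha_{(ij)}\op]$.

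Second, the source. Since $\upsilon_{(\pm;-\mp)}^*$ acts only on the covariant variable of the bimodule $(\kk \ub)_{(\pm;-\mp)}$, we have
$$
\upsilon_{(\pm;-\mp)}^*\big((\kk \ub)_{(\pm;-\mp)} \otimes_{\kk \fb} G\big)
\cong
\big(\upsilon_{(\pm;-\mp)}^*(\kk \ub)_{(\pm;-\mp)}\big) \otimes_{\kk \fb} G.
$$
This is an isomorphism of Koszul complexes. The differential uses only the right $(\kk \ub)_{(\pm;-\mp)}$-module structure, and restriction on the other variable does not affect it.

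Third, the map. Corollary \ref{cor:surject_uspilon^*kkub_unwall_kkuwb}, applied with sign $(\pm;-\mp)$, provides a surjection
$$
\upsilon_{(\pm;-\mp)}^* (\kk \ub)_{(\pm;-\mp)} \twoheadrightarrow \unwall_{(\pm;-\mp)}(\kk \uwb)_{-\mp}
$$
of $(\kk \db)_{(\pm;-\mp)} \cten (\kk \uwb)_{-\mp}$-modules, with the variances arranged as above. Because this map respects the full right $(\kk \ub)_{(\pm;-\mp)}$-action, applying $-\otimes_{\kk \fb} G$ gives a morphism of complexes: it commutes with inner multiplication by the degree one Koszul element. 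The map is levelwise surjective because $-\otimes_{\kk\fb}G$ is right exact. Its compatibility with the $(\kk \uwb)_{-\mp}$-module structure comes from the covariant variable. Naturality in $G$ is clear.

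The main obstacle is bookkeeping of variances and signs, in two places. One must confirm that the surjection of Corollary \ref{cor:surject_uspilon^*kkub_unwall_kkuwb} is compatible with the full $(\kk \db)$-side structure, and not only with its restriction to $\kk\fb\op$; only the full compatibility makes the induced map commute with the Koszul differential. One must also check that the sign $-\mp$ on the $\uwb$ side matches the sign required by Corollary \ref{cor:unwall_vs_dupsilon}, namely $\unwall_{(\pm;-\mp)}$ paired with $\dupsilon_{(\pm;\mp)}^*$. Both checks should reduce to statements already proved. No section is claimed, since the splitting in Corollary \ref{cor:surject_uspilon^*kkub_unwall_kkuwb} holds only over $\kk\fb\op$.
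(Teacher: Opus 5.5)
Your proposal is correct and follows essentially the same route as the paper: apply Corollary \ref{cor:surject_uspilon^*kkub_unwall_kkuwb} with signs $(\pm;-\mp)$, pass to the induced map of Koszul complexes after $-\otimes_{\kk\fb} G$, and identify the target via Corollary \ref{cor:unwall_vs_dupsilon}. One small correction: the instance of Corollary \ref{cor:unwall_vs_dupsilon} you need is the direct one, with $(\kk\uwb)_{-\mp}$ in the walled slot on the left (as in Example \ref{exam:first_second_kz_cx_unwall_dupsilon}(2)), not its symmetric form, although the isomorphism you actually write down is the correct one.
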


\begin{proof}
 Corollary \ref{cor:surject_uspilon^*kkub_unwall_kkuwb} gives the surjection of $(\kk \db)_{(\pm;-\mp)} \cten  (\kk \uwb)_{-\mp}$-modules:
 $$
  \upsilon_{(\pm;-\mp)} ^*  (\kk \ub)_{(\pm;-\mp)}
  \twoheadrightarrow 
  \unwall_{(\pm;-\mp)} (\kk \uwb)_{-\mp} ,
 $$
(noting that $\upsilon_{(\pm;-\mp)}^*$ is applied with respect to the covariant variable of $ (\kk \ub)_{(\pm;-\mp)}$). The result follows by forming the associated Koszul complexes and applying Corollary \ref{cor:unwall_vs_dupsilon}.
\end{proof}

Since the functor $ \upsilon_{(\pm;-\mp)} ^* $ is exact, Theorem \ref{thm:first_kz_cx_dupsilon} has the immediate consequence:

\begin{cor}
\label{cor:first_kz_cx_dupsilon_homology} 
 For $G$ a $(\kk \db)_{(\pm;\mp)}$-module, there is a natural morphism of graded $(\kk \uwb)_{-\mp}$-modules 
$$ 
 \upsilon_{(\pm;-\mp)} ^* H_* \big( (\kk \ub)_{(\pm;-\mp)} \otimes_{\kk \fb} G  \big)
 \rightarrow 
 H_* \big( (\kk \uwb)_{-\mp} \otimes_{\kk (\tfb)} \dupsilon _{(\pm; \mp)}^* G\big). 
 $$ 
\end{cor}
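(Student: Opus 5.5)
The natural approach is to pass to homology in the surjection of complexes given by Theorem \ref{thm:first_kz_cx_dupsilon}, using that $\upsilon_{(\pm;-\mp)}^*$ is exact and so commutes with taking homology. Write
$$
C := (\kk \ub)_{(\pm;-\mp)} \otimes_{\kk \fb} G,
\qquad
D := (\kk \uwb)_{-\mp} \otimes_{\kk (\tfb)} \dupsilon _{(\pm; \mp)}^* G .
$$
Theorem \ref{thm:first_kz_cx_dupsilon} gives a natural surjective morphism of complexes of $(\kk \uwb)_{-\mp}$-modules $\pi : \upsilon_{(\pm;-\mp)}^* C \twoheadrightarrow D$.

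The plan has three steps.

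\emph{Step 1: exactness.} Restriction $\upsilon_{(\pm;-\mp)}^*$ along a $\kk$-linear functor is computed objectwise. On the underlying graded objects it is just $\amalg^*$, evaluating $C$ on $X \amalg Y$. Since kernels, images and cokernels of morphisms of modules are computed objectwise, $\upsilon_{(\pm;-\mp)}^*$ commutes with cycles, boundaries and homology. This gives a natural isomorphism of graded $(\kk \uwb)_{-\mp}$-modules
$$
H_*\big(\upsilon_{(\pm;-\mp)}^* C\big) \cong \upsilon_{(\pm;-\mp)}^* H_*(C).
$$
Its naturality in $G$ is clear, because every construction involved is functorial.

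\emph{Step 2: compose.} Functoriality of homology applied to $\pi$ gives a morphism $H_*(\pi) : H_*(\upsilon_{(\pm;-\mp)}^* C) \to H_*(D)$ of graded $(\kk \uwb)_{-\mp}$-modules. This is a morphism of modules, not just of $\kk(\tfb)$-modules, because $\pi$ is a morphism of complexes of $(\kk \uwb)_{-\mp}$-modules. Precomposing with the isomorphism from Step 1 yields the required natural morphism
$$
\upsilon_{(\pm;-\mp)}^* H_*(C) \rightarrow H_*(D).
$$

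\emph{Step 3: naturality.} The morphism is natural in $G$ because $\pi$ is natural, by Theorem \ref{thm:first_kz_cx_dupsilon}, and the isomorphism in Step 1 is natural.

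The only point needing care is Step 1: one must check that the module structure on $H_*(\upsilon^*C)$ coincides with the restricted structure on $H_*(C)$. This holds because both are induced from the same action maps on cycles. The statement does not claim surjectivity; that would require a section of $\pi$ compatible with the differentials, which is why only a morphism is asserted.
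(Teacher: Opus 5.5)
Your proposal is correct and is exactly the paper's argument: the corollary is obtained by passing to homology in the surjection of complexes of Theorem \ref{thm:first_kz_cx_dupsilon}, using exactness of $\upsilon_{(\pm;-\mp)}^*$ to identify $H_*(\upsilon_{(\pm;-\mp)}^* C)$ with $\upsilon_{(\pm;-\mp)}^* H_*(C)$. Your remark that only a morphism, not a surjection, is asserted is also consistent with the paper, since no section compatible with the Koszul differentials is available in this case.
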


%%%%%%%%%%%%%%%%%%%%%%%%%%%%%%%%%%%%%%%%%%%%%%%%%%%%%%%%%%%%%%%%%%%%%%%%%%
\subsection{Comparing the second Koszul complexes using $\dupsilon_{(\pm;\mp)}^*$}
\label{subsect:2nd_Kz_dupsilon^*}
 For $G$ a $(\kk \db)_{(\pm;\mp)}$-module,  we have the second Koszul complexes 
\begin{eqnarray*}
(\kk \uwb)_{-\mp}^\sharp \otimes_{\kk (\tfb)} \dupsilon _{(\pm; \mp)}^* G 
\\
(\kk \ub)_{(\pm;-\mp)}^\sharp \otimes_{\kk \fb} G 
\end{eqnarray*}
of $(\kk \uwb)_{-\mp}$-modules  and $(\kk \ub)_{(\pm;-\mp)}$-modules respectively. 

Corollary \ref{cor:unwall_vs_dupsilon} gives the isomorphism of complexes of $(\kk \uwb)_{-\mp}$-modules
$$
(\kk \uwb)_{-\mp}^\sharp \otimes_{\kk (\tfb)} \dupsilon _{(\pm; \mp)}^* G 
\ \cong \ 
\unwall_{(\pm;-\mp)} \big( (\kk \uwb)_{-\mp}^\sharp \big)  
\otimes_{\kk (\fb)} G.
$$

To apply this, we use the following result that is derived from those of Sections \ref{subsect:compare_X} and \ref{subsect:compare_phi} (stated for $(\pm; \mp)$ rather than $(\pm; -\mp)$ for notational clarity):

\begin{prop}
\label{prop:inclusion_upsilon*_unwall_sharp}
There is an inclusion of complexes of $(\kk \db)_{(\pm; \mp)} \cten  (\kk \uwb)_\mp$-modules:
$$
\upsilon_{(\pm;\mp)}^* \big( (\kk \ub)_{(\pm;\mp)}^\sharp\big) 
\hookrightarrow 
\unwall_{(\pm; \mp)} (\kk \uwb)_{\mp}^\sharp.
$$
\end{prop}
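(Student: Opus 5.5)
We factor $\upsilon_{(\pm;\mp)} = \ump_{(\pm;\mp)} \circ \Upsilon_\mp$, so that
$\upsilon_{(\pm;\mp)}^* = \Upsilon_\mp^* \circ \ump_{(\pm;\mp)}^*$ (applied to the covariant variable). Similarly, by Definition \ref{defn:I}, $\unwall_{(\pm;\mp)} = \dsgntr_{(\pm;\mp)}^* \circ (\X_\mp)_*$ (applied to the contravariant variable). The inclusion will then be obtained as the composite of two maps, one from each of Sections \ref{subsect:compare_phi} and \ref{subsect:compare_X}, which respectively handle the passage from undirected to directed and from directed to walled.

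\emph{Step 1.} Apply the exact functor $\Upsilon_\mp^*$ (in the covariant variable) to the inclusion (\ref{eqn:include_psi_sgntr}) of Proposition \ref{prop:compare_sngtr_psi}. Since the two functors act on different variables, they commute, and we obtain an inclusion of $(\kk \db)_{(\pm;\mp)} \cten (\kk \uwb)_\mp$-modules
$$
\upsilon_{(\pm;\mp)}^*\big((\kk \ub)_{(\pm;\mp)}^\sharp\big)
= \Upsilon_\mp^*\ump_{(\pm;\mp)}^*\big((\kk \ub)_{(\pm;\mp)}^\sharp\big)
\hookrightarrow
\dsgntr_{(\pm;\mp)}^*\,\Upsilon_\mp^*\big((\kk \dub)_\mp^\sharp\big).
$$
Here $\dsgntr$ plays the role of $\sgntr$ acting on the contravariant variable; this is consistent with the use of Notation \ref{nota:dsgntr}.

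\emph{Step 2.} Apply the exact functor $\dsgntr_{(\pm;\mp)}^*$ (in the contravariant variable) to the isomorphism of Proposition \ref{prop:apply_X_*}(2),
$$
\Upsilon_\mp^*\big((\kk \dub)_\mp^\sharp\big) \cong (\X_\mp)_*\big((\kk \uwb)_\mp^\sharp\big),
$$
which holds as $(\kk \ddb)_\mp \cten (\kk \uwb)_\mp$-modules. This yields
$$
\dsgntr_{(\pm;\mp)}^*\Upsilon_\mp^*\big((\kk \dub)_\mp^\sharp\big)
\cong \dsgntr_{(\pm;\mp)}^*(\X_\mp)_*\big((\kk \uwb)_\mp^\sharp\big)
= \unwall_{(\pm;\mp)}(\kk \uwb)_\mp^\sharp .
$$
Composing with Step 1 gives the required injection, which is natural in both variables.

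The main obstacle is bookkeeping with variance. One must check that in (\ref{eqn:include_psi_sgntr}) the transfer acts on the contravariant (downward) variable, producing a $(\kk \db)_{(\pm;\mp)}$-structure, while $\ump^*$ is precisely what is restricted along $\Upsilon_\mp$ in the covariant variable. One must also check that restriction in one variable commutes with the functors acting in the other, so that the bimodule structures match. If there is a mismatch, I would instead obtain the map by dualizing Corollary \ref{cor:surject_uspilon^*kkub_unwall_kkuwb}, in the same way that Corollary \ref{cor:inject_junwall_dupsilon} was obtained. As a consistency check, the result should be compared with Corollary \ref{cor:inject_junwall_dupsilon} after restriction to $\kk \fb$.
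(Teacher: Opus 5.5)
Your proof is correct and is essentially the paper's own argument. It combines the isomorphism of Proposition \ref{prop:apply_X_*}(2) with the inclusion (\ref{eqn:include_psi_sgntr}) of Proposition \ref{prop:compare_sngtr_psi}, using the exactness of $\Upsilon_\mp^*$ and the fact that functors acting on different variables commute. The differences are only cosmetic: you apply the two steps in the opposite order, and you write $\dsgntr^*_{(\pm;\mp)}$ where the paper's proof writes $\sgntr^*_{(\pm;\mp)}$. Your fallback was not needed, and it would not have worked anyway, because dualizing Corollary \ref{cor:surject_uspilon^*kkub_unwall_kkuwb} yields Corollary \ref{cor:inject_junwall_dupsilon}, which is a different statement.
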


\begin{proof}
By definition, $\unwall_{(\pm; \mp)}$ is the composite $\sgntr_{(\pm; \mp)}^* \circ (\X_{\mp})_*$. Proposition \ref{prop:apply_X_*}
gives the isomorphism of $(\kk \ddb)_{(\mp)} \cten  (\kk \uwb)_{\mp}$-modules
$$
(\X_{\mp})_*\big( (\kk\uwb)_{\mp}^\sharp \big)
\cong 
\Upsilon_\mp^*\big( (\kk \dub)_{\mp}^\sharp\big).
$$
This gives 
$$
\unwall_{(\pm; \mp)} \big( (\kk\uwb)_{\mp}^\sharp \big)
\cong 
\sgntr_{(\pm; \mp)}^* \Upsilon_\mp^*\big( (\kk \dub)_{\mp}^\sharp\big)
\cong 
 \Upsilon_\mp^* \sgntr_{(\pm; \mp)}^*\big( (\kk \dub)_{\mp}^\sharp\big),
$$
using the fact that the functors $ \Upsilon_\mp^*$ and $ \sgntr_{(\pm; \mp)}^*$ are applied with respect to different variables for the second isomorphism.

The functor $ \Upsilon_\mp^*$ is exact, hence applying it to the inclusion (\ref{eqn:include_psi_sgntr}) of Proposition \ref{prop:compare_sngtr_psi} gives 
\begin{eqnarray*}
 \Upsilon_\mp^* \ump_{(\pm; \mp)}^* \big( (\kk \ub)_{(\pm; \mp)}^\sharp \big)
\hookrightarrow
 \Upsilon_\mp^* \sgntr_{(\pm; \mp)}^* \big( (\kk \dub)_\mp^\sharp \big) 
.
\end{eqnarray*}
The result follows, since $\upsilon_{(\pm;\mp)}^*$ is naturally isomorphic to $\Upsilon_\mp^* \sgntr_{(\pm; \mp)}^* $.
\end{proof}

\begin{rem}
The morphisms of $\kk$-vector spaces underlying Proposition \ref{prop:inclusion_upsilon*_unwall_sharp} are easily described. After dualizing, these correspond to the surjections 
$$
\bigoplus_{X = X_1 \amalg X_2} 
(\kk \uwb)_\mp ((U_1, U_2), (X_1, X_2) ) 
\twoheadrightarrow 
(\kk \ub)_{(\pm; \mp)} (U_1 \amalg U_2, X)
$$
induced by $\amalg$ and by passage from $(\kk \dub)_\mp$ to $(\kk \ub)_{(\pm;\mp)}$. The significance of the Proposition is that it keeps track of the naturality.
\end{rem}

This yields the second Koszul complex counterpart of Theorem \ref{thm:first_kz_cx_dupsilon}:

\begin{thm}
\label{thm:second_kz_cx_dupsilon}
For $G$ a $(\kk \db)_{(\pm; \mp)}$-module, there is a natural inclusion of complexes of $(\kk \uwb)_{-\mp}$-modules: 
$$
\upsilon_{(\pm;-\mp)}^*
\big( (\kk \ub)_{(\pm;-\mp)}^\sharp \otimes_{\kk \fb} G  \big) 
\hookrightarrow 
(\kk \uwb)_{-\mp}^\sharp \otimes_{\kk (\tfb)} \dupsilon _{(\pm; \mp)}^* G .
$$
\end{thm}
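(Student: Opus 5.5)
The proof mirrors that of Theorem \ref{thm:first_kz_cx_dupsilon}, with Proposition \ref{prop:inclusion_upsilon*_unwall_sharp} in place of Corollary \ref{cor:surject_uspilon^*kkub_unwall_kkuwb}. The plan is first to rewrite the target, then to produce an inclusion of bimodules, and finally to check that this inclusion induces an inclusion of Koszul complexes that is compatible with the module structures.

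First, apply Corollary \ref{cor:unwall_vs_dupsilon} in the form recalled just before Proposition \ref{prop:inclusion_upsilon*_unwall_sharp}. This identifies the codomain with the Koszul complex
$$
\unwall_{(\pm;-\mp)} \big( (\kk \uwb)_{-\mp}^\sharp \big) \otimes_{\kk \fb} G .
$$
Here $\unwall_{(\pm;-\mp)}$ is applied to the contravariant ($(\kk \dwb)_{-\mp}$) variable of the bimodule $(\kk \uwb)_{-\mp}^\sharp$. The isomorphism is one of complexes of $(\kk \uwb)_{-\mp}$-modules, since the covariant variable is untouched.

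Next, apply Proposition \ref{prop:inclusion_upsilon*_unwall_sharp} with $\mp$ replaced by $-\mp$. This gives an inclusion of $(\kk \db)_{(\pm;-\mp)} \cten (\kk \uwb)_{-\mp}$-modules
$$
\upsilon_{(\pm;-\mp)}^* \big( (\kk \ub)_{(\pm;-\mp)}^\sharp \big) \hookrightarrow \unwall_{(\pm;-\mp)} \big( (\kk \uwb)_{-\mp}^\sharp \big).
$$
The Koszul complex $-\otimes_{\kk \fb} G$ is built from right $(\kk \ub)_{(\pm;-\mp)}$-module data and the left $(\kk \db)_{(\pm;\mp)}$-module $G$. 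Its differential is inner multiplication by $\sum_{i<j} [\alpha_{(ij)}]\otimes[\alpha_{(ij)}\op]$, which uses only the action of the degree one morphisms of $(\kk \db)_{(\pm;-\mp)}$ in the contravariant variable. The displayed morphism is compatible with exactly that structure, so tensoring it with $G$ gives a morphism of Koszul complexes. It respects the $(\kk \uwb)_{-\mp}$-structure because the morphism is natural in the covariant variable.

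The source of this morphism is $\upsilon_{(\pm;-\mp)}^*\big((\kk \ub)_{(\pm;-\mp)}^\sharp\big)\otimes_{\kk\fb} G$. Restriction along $\upsilon$ acts on the covariant variable, which commutes with $-\otimes_{\kk \fb} G$. Hence this source is $\upsilon_{(\pm;-\mp)}^*\big((\kk \ub)_{(\pm;-\mp)}^\sharp\otimes_{\kk\fb} G\big)$, and the morphism has the form required by the statement.

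The main obstacle is injectivity, because $-\otimes_{\kk\fb} G$ involves coinvariants and is not a priori left exact. This is handled by Hypothesis \ref{hyp:car0}. Each term is a direct sum over $n$ of tensor products $-\otimes_{\kk\sym_n} G(\n)$, and since $\kk\sym_n$ is semisimple these are exact. So the inclusion of bimodules, which is an inclusion termwise of right $\kk\sym_n$-modules, stays injective after tensoring. One could also use the explicit section from the remark after Proposition \ref{prop:inclusion_upsilon*_unwall_sharp}, restricted to $\kk\fb$-modules. Naturality in $G$ is clear from the construction.
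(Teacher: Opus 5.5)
Your proposal is correct and follows the paper's own proof. Like the paper, you combine the inclusion of Proposition \ref{prop:inclusion_upsilon*_unwall_sharp} (with $\mp$ replaced by $-\mp$) with the isomorphism of Corollary \ref{cor:unwall_vs_dupsilon}, passing to Koszul complexes as in Theorem \ref{thm:first_kz_cx_dupsilon}. Your justification that injectivity survives $-\otimes_{\kk\fb} G$, via semisimplicity of $\kk\sym_n$ in characteristic zero, is a detail the paper leaves implicit.
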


\begin{proof}
The proof is similar to that of Theorem \ref{thm:first_kz_cx_dupsilon}, based on Proposition \ref{prop:inclusion_upsilon*_unwall_sharp} and the isomorphism provided by Corollary \ref{cor:unwall_vs_dupsilon}.
\end{proof}

On passing to homology, we have:

\begin{cor}
\label{cor:second_kz_cx_dupsilon_homology}
For $G$ a $(\kk \db)_{(\pm; \mp)}$-module, there is a natural morphism of graded  $(\kk \uwb)_{_\mp}$-modules: 
$$
\upsilon_{(\pm;-\mp)}^*
H_* ( (\kk \ub)_{(\pm;-\mp)}^\sharp \otimes_{\kk \fb} G  ) 
\rightarrow 
H_* (
(\kk \uwb)_{-\mp}^\sharp \otimes_{\kk (\tfb)} \dupsilon _{(\pm; \mp)}^* G ).
$$
\end{cor}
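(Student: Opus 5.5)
This follows from Theorem \ref{thm:second_kz_cx_dupsilon} by passing to homology. The only point to handle is that an inclusion of complexes does not in general induce an injection in homology, so the statement claims only a morphism.

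First, apply Theorem \ref{thm:second_kz_cx_dupsilon} to $G$. This gives a natural inclusion of complexes of $(\kk \uwb)_{-\mp}$-modules
$$
\iota_G : \upsilon_{(\pm;-\mp)}^*
\big( (\kk \ub)_{(\pm;-\mp)}^\sharp \otimes_{\kk \fb} G \big)
\hookrightarrow
(\kk \uwb)_{-\mp}^\sharp \otimes_{\kk (\tfb)} \dupsilon _{(\pm; \mp)}^* G.
$$
Taking homology of this chain map gives a natural morphism of graded $(\kk \uwb)_{-\mp}$-modules $H_*(\iota_G)$ with codomain $H_* ((\kk \uwb)_{-\mp}^\sharp \otimes_{\kk (\tfb)} \dupsilon _{(\pm; \mp)}^* G)$.

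Second, identify the domain. The functor $\upsilon_{(\pm;-\mp)}^*$ is restriction along a $\kk$-linear functor, so it is exact. Exact functors commute with taking cycles, boundaries and their quotient. This gives a natural isomorphism
$$
H_*\big(\upsilon_{(\pm;-\mp)}^* C\big) \cong \upsilon_{(\pm;-\mp)}^* H_*(C),
\qquad C := (\kk \ub)_{(\pm;-\mp)}^\sharp \otimes_{\kk \fb} G .
$$
This isomorphism is compatible with the $(\kk \uwb)_{-\mp}$-module structures, because the differential of $C$ is a morphism of $(\kk \ub)_{(\pm;-\mp)}$-modules. This uses that the Koszul differential acts through the contravariant variable of the bimodule $(\kk \ub)_{(\pm;-\mp)}^\sharp$, while the module structure comes from the covariant one. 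Composing this isomorphism with $H_*(\iota_G)$ gives the required morphism.

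Third, naturality in $G$. Both constructions are functorial in $G$, and $\iota_G$ is natural by Theorem \ref{thm:second_kz_cx_dupsilon}. I expect no serious obstacle; the one thing to check is that the module structures on homology used in the second step match those on both sides.
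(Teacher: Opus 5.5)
Your proposal is correct and matches the paper's route: the corollary is obtained by applying $H_*$ to the natural inclusion of Theorem \ref{thm:second_kz_cx_dupsilon} and using exactness of $\upsilon_{(\pm;-\mp)}^*$ to commute it with homology, exactly as in the first-Koszul analogue, Corollary \ref{cor:first_kz_cx_dupsilon_homology}. Your remark that only a morphism, not an injection, survives in homology is also the reason the paper states it that way.
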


\part{Application to graph complexes}
\label{part:graph}

\section{Graph complexes associated to  cyclic operads and dioperads}
\label{sect:koszul_graph} 
 
In this short section we restate the forms of the first and second Koszul complexes associated to cyclic operads and dioperads and recall (from \cite{P_cyclic} and \cite{P_opd_wall}) that the second Koszul complexes can be interpreted as hairy graph complexes.

%%%%%%%%%%%%%%%%%%%%%%%%%%%%%%%%%%%%%%%%%%%%%%%%%%%%%%%%%%%%%%%%%%%%%%%%%%%%%%%%%%%%
\subsection{The cyclic case}

For cyclic operads, we have the following  consequence of Theorem \ref{thm:cpd_db_kdbm} (as considered in \cite{P_cyclic}):

\begin{cor}
\label{cor:cpd_Koszul_complexes}
For $\cpd$ a cyclic operad in $\nuco$, there are the following natural Koszul complexes
\begin{enumerate}
\item 
of $(\kk\ub)_{(+;-)}$-modules:
\begin{eqnarray*}
&&
(\kk \ub)_{(+;-)} \otimes_{\kk \fb} \sfb^* \cpd 
\\
 &&
(\kk \ub)_{(+;-)}^\sharp \otimes_{\kk \fb} \sfb^* \cpd ;
\end{eqnarray*}
\item 
of $(\kk \ub)_{(-; +)}$-modules:
\begin{eqnarray*}
&&
(\kk \ub)_{(-;+)} \otimes_{\kk \fb} \lfb^* \cpd 
\\
 &&
(\kk \ub)_{(-;+)}^\sharp \otimes_{\kk \fb} \lfb^* \cpd .
\end{eqnarray*}
\end{enumerate}
\end{cor}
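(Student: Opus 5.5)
This is an instance of the general second-Koszul-complex construction of Section \ref{sect:kz_cx}, specialised via Theorem \ref{thm:cpd_db_kdbm}. The plan is to identify the right module structures and the sign bookkeeping; no new computation is needed.

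First I fix the modules. By Theorem \ref{thm:cpd_db_kdbm}, $\cpd \mapsto \sfb^*\cpd$ gives a functor $\nuco \rightarrow \kk\db\dash\modules = (\kk \db)_{(+;+)}\dash\modules$. Likewise $\cpd \mapsto \lfb^*\cpd$ gives a functor $\nuco \rightarrow (\kk \db)_{(-;-)}\dash\modules$.

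Next I match twists. Proposition \ref{prop:identify_right_quadratic_duals}(3) says the right quadratic dual of $(\kk\ub)_{(\pm;\mp)}$ is $(\kk\db)_{(\pm;-\mp)}$, so a $(\kk\db)_{(\pm;-\mp)}$-module $M$ is a valid input for Example \ref{exam:koszul_complexes_undirected}.
\begin{itemize}
\item[] For $M=\sfb^*\cpd$, a $(\kk\db)_{(+;+)}$-module, I take $(\pm;\mp)=(+;-)$, since $-\mp=+$.
\item[] For $M=\lfb^*\cpd$, a $(\kk\db)_{(-;-)}$-module, I take $(\pm;\mp)=(-;+)$.
\end{itemize}
With these choices, Example \ref{exam:koszul_complexes_undirected} supplies in each case the complexes $(\kk\ub)_{(\pm;\mp)}\otimes_{\kk\fb}M$ and $(\kk\ub)_{(\pm;\mp)}^\sharp\otimes_{\kk\fb}M$. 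Their differentials are inner multiplication by $\sum_{i<j}[\alpha_{(ij)}]\otimes[\alpha_{(ij)}\op]$.

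Then I check the module structure and naturality. Both complexes are formed using the right $(\kk\ub)_{(\pm;\mp)}$-action on the first tensor factor, which is a bimodule. Hence they are complexes of left $(\kk\ub)_{(\pm;\mp)}$-modules. For $\sharp$, the left action comes from dualising the right action on $(\kk\ub)_{(\pm;\mp)}$, as in Example \ref{exam:walled_koszul_complexes}. Naturality in $\cpd$ follows because a morphism of cyclic operads induces a morphism of $(\kk\db)$-modules, by the functoriality in Theorem \ref{thm:cpd_db_kdbm}. The tensor constructions are functorial in $M$, and the differential is defined by a fixed element, so it commutes with any module map.

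The only point needing care is the sign bookkeeping. I need the squared differential to vanish, which I would get from the quadratic-duality relation in Proposition \ref{prop:identify_right_quadratic_duals}. I also need to confirm that the second twist really flips, as in the $S^*/\Lambda^*$ analogy. Beyond that the statement is immediate.
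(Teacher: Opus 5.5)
Your proposal is correct and follows the paper's route. The paper states this corollary without a separate proof, as an immediate consequence of Theorem \ref{thm:cpd_db_kdbm}: $\sfb^*\cpd$ (a $(\kk\db)_{(+;+)}$-module) and $\lfb^*\cpd$ (a $(\kk\db)_{(-;-)}$-module) are fed into the Koszul complexes of Example \ref{exam:koszul_complexes_undirected}, with the twists $(+;-)$ and $(-;+)$ forced by Proposition \ref{prop:identify_right_quadratic_duals}(3) exactly as you computed. The vanishing of $d^2$ and the sign flip under quadratic duality are already part of the general input recorded in Section \ref{sect:kz_cx}, so you need not re-check them here.
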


\begin{rem}
We consider the `second Koszul complexes' $(\kk \ub)_{(+;-)}^\sharp \otimes_{\kk \fb} \sfb^* \cpd$ and $(\kk \ub)_{(-;+)}^\sharp \otimes_{\kk \fb} \lfb^* \cpd$ as the odd and even hairy graph complexes associated to the cyclic operad $\cpd$, generalizing the construction of Conant and Vogtmann \cite{MR2026331} that, in turn, generalized the construction of Kontsevich \cite{MR1341841,MR1247289}. 
 Here, we do not require that the graphs be connected. (See \cite{P_cyclic} for more details on these hairy graph complexes.) 
\end{rem}

\begin{exam}
\label{exam:KCV}
The `first Koszul complexes'  are less familiar, although they encode important structures. For example, in the even case, the complex is a precursor of the Chevalley-Eilenberg complex for the  Kontsevich-Conant-Vogtmann Lie algebra \cite{MR2026331}. We outline how this works, referring to \cite{P_cyclic} for more details.

Let $V$ be a symplectic $\kk$-vector space. Write $V^{\otimes \bullet}$ for the associated $(\kk \db)_{(-;+)}$-module, as in Example \ref{exam:restrict_sympl_vs_module}. Then one can form the complex
$$
V^{\otimes \bullet } \otimes_{(\kk \ub)_{(-;+)} }(\kk \ub)_{(-;+)} \otimes_{\kk \fb} \lfb^* \cpd 
\cong 
V^{\otimes \bullet} \otimes_{\kk \fb} \lfb^* \cpd ,
$$
equipped with the Koszul differential. 

Using the general observation of Section \ref{subsect:such_S_Lambda}, the underlying object of $V^{\otimes \bullet} \otimes_{\kk \fb} \lfb^* \cpd$ is isomorphic to the exterior algebra $\Lambda^* \cpd (V)$ on the Schur functor of $\cpd$ evaluated on $V$ (forgetting the symplectic structure). The complex is isomorphic to the Chevalley-Eilenberg complex of the Lie algebra $\cpd (V)$, using the Kontsevich-Conant-Vogtmann Lie algebra structure.

The second Koszul complex (aka. hairy graph complex) is, by construction, obtained from the first by applying the functor $(\kk \ub)_{(-;+)}^\sharp \otimes_{(\kk \ub)_{(-;+)}} -$. This process can be compared with the stabilization with respect to $V$ of the above Chevalley-Eilenberg complex, as considered in  \cite{MR2026331}, generalizing the ideas of Kontsevich. (This is explained in \cite{P_cyclic}.)
\end{exam}

\begin{rem}
From the point of view of cyclic operad theory (or, more generally, modular operads), hairy graph complexes are subsumed in the construction of the Feynman transform. Here, by sacrificing the information coming from connectivity of graphs, we have lost some of the structure.
\end{rem}

%%%%%%%%%%%%%%%%%%%%%%%%%%%%%%%%%%%%%%%%%%%%%%%%%%%%%%%%%%%%%%%%%%%%%%%%%%%
\subsection{The dioperad case}

Using the module structures given by Theorem \ref{thm:diopd_dwb_kkdwb-_modules}, we can form the associated Koszul complexes:

\begin{cor}
\label{cor:diopd_Koszul_complexes}
For $\diopd$ a dioperad in $\nudo$, there are the following natural Koszul complexes
\begin{enumerate}
\item 
of $(\kk\uwb)_{-}$-modules:
\begin{eqnarray*}
&&
(\kk \uwb)_{-} \otimes_{\kk (\tfb)} \stfb^* \diopd 
\\
 &&
(\kk \uwb)_{-}^\sharp \otimes_{\kk (\tfb)} \stfb^* \diopd ;
\end{eqnarray*}
\item 
of $\kk \uwb$-modules:
\begin{eqnarray*}
&&
\kk \uwb \otimes_{\kk (\tfb)} \ltfb^* \diopd 
\\
 &&
(\kk \uwb)^\sharp \otimes_{\kk (\tfb)} \ltfb^* \diopd .
\end{eqnarray*}
\end{enumerate}
\end{cor}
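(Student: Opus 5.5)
This is a direct consequence of Theorem \ref{thm:diopd_dwb_kkdwb-_modules} combined with the Koszul complex formalism recalled in Section \ref{subsect:recollect_Koszul_dwb}, specifically Example \ref{exam:walled_koszul_complexes}. The plan is to check that the module structures and twists match, and then to read off the two complexes in each case.

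First, Theorem \ref{thm:diopd_dwb_kkdwb-_modules} supplies the $\kk\dwb = (\kk\dwb)_+$-module $\stfb^*\diopd$ and the $(\kk\dwb)_-$-module $\ltfb^*\diopd$. Both are natural in $\diopd \in \nudo$.

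Second, I will invoke Proposition \ref{prop:identify_right_quadratic_duals}(1). It says $(\kk\uwb)_\mp$ is homogeneous quadratic over $\kk(\tfb)$, free as a right $\kk(\tfb)$-module, and has right quadratic dual $(\kk\dwb)_{-\mp}$. Hence for $F$ a $(\kk\dwb)_{-\mp}$-module, Example \ref{exam:walled_koszul_complexes} yields the first Koszul complex $(\kk\uwb)_\mp\otimes_{\kk(\tfb)}F$ and the second Koszul complex $(\kk\uwb)_\mp^\sharp\otimes_{\kk(\tfb)}F$. Both are complexes of $(\kk\uwb)_\mp$-modules, with differential given by inner multiplication by $\sum_{i,j}[\beta_{i,j}]\otimes[\beta_{i,j}\op]$.

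The only step needing care is matching the twists, which is simple sign bookkeeping.
\begin{itemize}
\item For $F=\stfb^*\diopd$, a $(\kk\dwb)_+$-module, we take $-\mp=+$. This gives $\mp=-$, so the complexes are complexes of $(\kk\uwb)_-$-modules, which is case (1).
\item For $F=\ltfb^*\diopd$, a $(\kk\dwb)_-$-module, we get $\mp=+$. Using $(\kk\uwb)_+=\kk\uwb$, this gives case (2).
\end{itemize}

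Naturality in $\diopd$ then follows because each construction is functorial in $F$: the Koszul differential is defined uniformly via the fixed element $\sum[\beta_{i,j}]\otimes[\beta_{i,j}\op]$, so any morphism of modules commutes with it.
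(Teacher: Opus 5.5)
Your proposal is correct and is the paper's own argument: the paper gives no separate proof and simply feeds the modules $\stfb^*\diopd$ and $\ltfb^*\diopd$ of Theorem \ref{thm:diopd_dwb_kkdwb-_modules} into the first and second Koszul complexes of Example \ref{exam:walled_koszul_complexes}. Your twist bookkeeping via Proposition \ref{prop:identify_right_quadratic_duals}(1) and your naturality argument match this exactly.
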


\begin{rem}
Generalizing the case of operads treated in \cite{P_opd_wall}, we consider the second Koszul complexes $(\kk \uwb)_{-}^\sharp \otimes_{\kk (\tfb)} \stfb^* \diopd $ and $(\kk \uwb)^\sharp \otimes_{\kk (\tfb)} \ltfb^* \diopd $ as (a walled version of) hairy graph complexes associated to the dioperad $\diopd$. Again, the graphs are not required to be connected.
\end{rem}

For the `first Koszul complexes', we have the following  counterpart of Example \ref{exam:KCV}: 

\begin{exam}
\label{exam:generalize_der}
Let $W$ be a finite-dimensional $\kk$-vector space and write $T^{\bullet, \bullet} (W)$ for the natural $\kk \dwb$-module that sends $(\m , \n)$ to $(W^\sharp)^{\otimes m} \otimes W^{\otimes n}$. 
 We may form the complex 
$$
T^{\bullet, \bullet} (W)\otimes_{\kk \uwb} \kk \uwb \otimes_{\kk (\tfb)) } \ltfb^* \diopd
\cong 
T^{\bullet, \bullet} (W) \otimes_{\kk (\tfb) } \ltfb^* \diopd.
$$
As in Section \ref{subsect:such_S_Lambda}, the right hand side can be rewritten as 
$$
\Lambda^* (T^{\bullet, \bullet} (W) \otimes_{\kk (\tfb)} \diopd ) ,
$$
the exterior algebra on the `bivariant Schur functor' associated to the underlying $\kk (\tfb)$-module of $\diopd$.  

Inspecting the form of the differential shows that the dioperad structure of $\diopd$ makes $T^{\bullet, \bullet} (W) \otimes_{\kk (\tfb)} \diopd$ naturally into a Lie algebra with associated Chevalley-Eilenberg complex isomorphic to $T^{\bullet, \bullet} (W) \otimes_{\kk (\tfb) } \ltfb^* \diopd$. 

In the case where $\diopd$ is an operad (i.e., identifies with $\opd_\diopd$), this Lie algebra is simply $\der ( \opd_\diopd (W))$, the Lie algebra of derivations of the free $\opd_\diopd$-algebra on $W$, and the complex is the Chevalley-Eilenberg complex of $\der ( \opd_\diopd (W))$. This setting (and a wheeled generalization) was studied by Dotsenko in \cite{MR4945404}, who then considered the stabilization with respect to $W$. His work exploited the {\em wheeled bar construction} as a model for (connected) hairy graph complexes.

Similarly to the cyclic case, as explained in \cite{P_opd_wall}, this procedure can be compared with the passage from the first Koszul complex to the hairy graph complex by applying the functor $(\kk \uwb)^\sharp \otimes_{\kk \uwb} -$.
\end{exam}

\section{Comparison results for graph complexes}
\label{sect:graph_cx_compare}

In this section we apply the general results on comparison of Koszul complexes to the cases of modules arising from cyclic operads and from dioperads. For the results, we focus upon the second Koszul complexes since these are more familiar, corresponding to (hairy) graph complexes.

%%%%%%%%%%%%%%%%%%%%%%%%%%%%%%%%%%%%%%%%%%%%%%%%%%%%%%%%%%%%%%%%%%%%%%%%%%%%%%%%%%%%%%
\subsection{Initiating the comparisons}

We seek  to understand the relationship between the first and second Koszul complexes in the respective cyclic operad and dioperad cases (as recalled in Section \ref{sect:koszul_graph}), via the functors of Corollary \ref{cor:opds_to_copds} and of Corollary \ref{cor:copds_to_opds}:
\begin{eqnarray*}
\nuo \hookrightarrow \nudo \stackrel{\amalg_*}{ \rightarrow} \nuco
\
\nuco \stackrel{\amalg^*} {\rightarrow} \nudo \stackrel{\opd_\bullet}{\rightarrow} \nuo.
\end{eqnarray*}

To do this, we use the isomorphisms given by Theorem \ref{thm:diopd_2_cyclic_modules}:
\begin{eqnarray*}
\sfb^* (\amalg_* \diopd) & \cong & \unwall_{(+;+)} \stfb^* \diopd 
\\
\lfb^* (\amalg_* \diopd) & \cong & \unwall_{(-;-)} \ltfb^* \diopd 
\end{eqnarray*}
and those given by Theorem \ref{thm:cyclic_2_diopd_modules}:
\begin{eqnarray*}
\stfb^* (\amalg^* \cpd) &\cong & \dupsilon_{(+;+)}^* \sfb^* (\cpd) 
\\
\ltfb^* (\amalg^* \cpd) & \cong & \dupsilon_{(-;-)}^* \lfb^* (\cpd).
\end{eqnarray*}

\begin{rem}
\label{rem:opd_amalg^*cpd}
If we are interested in the underlying operad associated to the cyclic operad $\cpd$, then this corresponds to $\opd_{\amalg^* \cpd} \hookrightarrow \amalg^*\cpd$.  In this case, Theorem \ref{thm:cyclic_2_diopd_modules} yields the inclusions
\begin{eqnarray*}
\stfb^* (\opd_{\amalg^* \cpd}) &\hookrightarrow & \dupsilon_{(+;+)}^* \sfb^* (\cpd) 
\\
\ltfb^* (\opd_{\amalg^* \cpd}) & \hookrightarrow  & \dupsilon_{(-;-)}^* \lfb^* (\cpd)
\end{eqnarray*}
of the respective modules.
\end{rem}

This leads us to consider the isomorphisms given in Example \ref{exam:first_second_kz_cx_unwall_dupsilon}. 

\begin{exam}
\label{exam:compare_kz_cx_diopd}
For a dioperad $\diopd$ in $\nudo$,  for the first Koszul complexes this gives:
\begin{eqnarray*}
(\kk \ub)_{(+;-)} \otimes_{\kk \fb} \sfb^* (\amalg_* \diopd) & \cong & (\kk \ub)_{(+;-)}  \otimes_{\kk \fb}\unwall_{(+;+)} \stfb^* \diopd 
\\
&\cong &
\dupsilon_{(+;-)}^* (\kk \ub)_{(+;-)} \otimes _{\kk(\tfb)} \stfb^* \diopd
\\
(\kk \ub)_{(-;+)} \otimes_{\kk \fb} \lfb^* (\amalg_* \diopd) & \cong & (\kk \ub)_{(-;+)}  \otimes_{\kk \fb}\unwall_{(-;-)} \ltfb^* \diopd 
\\
&\cong &\dupsilon_{(-;+)}^* (\kk \ub)_{(-;+)} \otimes _{\kk(\tfb)} \ltfb^* \diopd.
\end{eqnarray*}
For the second Koszul complexes (aka. hairy graph complexes), this gives: 
\begin{eqnarray*}
(\kk \ub)^\sharp _{(+;-)} \otimes_{\kk \fb} \sfb^* (\amalg_* \diopd) & \cong & (\kk \ub)_{(+;-)} ^\sharp  \otimes_{\kk \fb}\unwall_{(+;+)} \stfb^* \diopd
\\
& \cong & \dupsilon_{(+;-)}^* \big((\kk \ub)_{(+;-)}^\sharp\big)  \otimes _{\kk(\tfb)} \stfb^* \diopd
\\
(\kk \ub)_{(-;+)}^\sharp \otimes_{\kk \fb} \lfb^* (\amalg_* \diopd) & \cong & (\kk \ub)_{(-;+)}^\sharp  \otimes_{\kk \fb}\unwall_{(-;-)} \ltfb^* \diopd
\\
& \cong &\dupsilon_{(-;+)}^* \big((\kk \ub)_{(-;+)} ^\sharp\big) \otimes _{\kk(\tfb)} \ltfb^* \diopd.
\end{eqnarray*}
\end{exam}

\begin{exam}
\label{exam:cyclic_kz_cx_compare}
For a cyclic operad $\cpd$ in $\nuco$, for the first Koszul complexes, this gives:
\begin{eqnarray*}
(\kk \uwb)_- \otimes_{\kk (\tfb)} \stfb^* (\amalg^* \cpd)
 &\cong &
(\kk \uwb)_- \otimes_{\kk (\tfb)} \dupsilon_{(+;+)}^* \sfb^* (\cpd) 
\\
&\cong &
\unwall_{(+;-)} (\kk \uwb)_-  \otimes_{\kk \fb} \sfb^* (\cpd) 
\\
 \kk \uwb \otimes_{\kk (\tfb)} \ltfb^* (\amalg^* \cpd)
 &\cong &
\kk \uwb \otimes_{\kk (\tfb)} \dupsilon_{(-;-)}^* \lfb^* (\cpd) 
\\
&\cong &
\unwall_{(-;+)}( \kk \uwb )  \otimes_{\kk \fb} \lfb^* (\cpd) .
\end{eqnarray*}
For the second Koszul complexes (aka. hairy graph complexes):
\begin{eqnarray*}
(\kk \uwb)_-^\sharp  \otimes_{\kk (\tfb)} \stfb^* (\amalg^* \cpd)
 &\cong &
(\kk \uwb)_-^\sharp  \otimes_{\kk (\tfb)} \dupsilon_{(+;+)}^* \sfb^* (\cpd) 
\\
&\cong &
\unwall_{(+;-)} \big( (\kk \uwb)_-^\sharp)  \otimes_{\kk \fb} \sfb^* (\cpd) 
\\
 (\kk \uwb) ^\sharp \otimes_{\kk (\tfb)} \ltfb^* (\amalg^* \cpd)
 &\cong &
(\kk \uwb)^\sharp  \otimes_{\kk (\tfb)} \dupsilon_{(-;-)}^* \lfb^* (\cpd) 
\\
&\cong & 
\unwall_{(-;+)} \big ( (\kk \uwb)^\sharp \big)    \otimes_{\kk \fb} \lfb^* (\cpd) .
\end{eqnarray*}

\end{exam}

%%%%%%%%%%%%%%%%%%%%%%%%%%%%%%%%%%%%%%%%%%%%%%%%%%%%%%
\subsection{The cyclic case}
Let $\cpd$ be a cyclic operad in $\nuco$ and $\amalg^* \cpd$ be the associated dioperad in $\nudo$. As in Example \ref{exam:cyclic_kz_cx_compare}, we can relate the associated second Koszul complexes (aka. hairy graph complexes) and then apply Theorem \ref{thm:second_kz_cx_dupsilon} and its homological consequence, Corollary \ref{cor:second_kz_cx_dupsilon_homology}. This yields:

\begin{thm}
\label{thm:compare_cyclic_case_grph_cx}
For $\cpd$ in $\nuco$, 
\begin{enumerate}
\item
there is an inclusion of complexes of $(\kk \uwb)_-$-modules:
$$
\upsilon^*_{(+;-)} \big( (\kk \ub)^\sharp_{(+;-)} \otimes_{\kk \fb} \sfb^* \cpd) \big)
\hookrightarrow 
(\kk \uwb)_-^\sharp \otimes_{\kk (\tfb)} \stfb^* (\amalg^* \cpd);
$$
\item 
there is an inclusion of complexes of $\kk \uwb$-modules:
$$
\upsilon^*_{(-;+)} \big( (\kk \ub)^\sharp_{(-;+)} \otimes_{\kk \fb} \lfb^* \cpd) \big)
\hookrightarrow 
(\kk \uwb)^\sharp \otimes_{\kk (\tfb)} \ltfb^* (\amalg^* \cpd).
$$
\end{enumerate}

Hence, passing to homology:
\begin{enumerate}
\item
there is a morphism of graded $(\kk \uwb)_-$-modules:
$$
\upsilon^*_{(+;-)} H_* ( (\kk \ub)^\sharp_{(+;-)} \otimes_{\kk \fb} \sfb^* \cpd) )
\rightarrow 
H_* ((\kk \uwb)_-^\sharp \otimes_{\kk (\tfb)} \stfb^* (\amalg^* \cpd));
$$
\item 
there is a morphism of graded $\kk \uwb$-modules:
$$
\upsilon^*_{(-;+)} H_*( (\kk \ub)^\sharp_{(-;+)} \otimes_{\kk \fb} \lfb^* \cpd) )
\rightarrow 
H_* ((\kk \uwb)^\sharp \otimes_{\kk (\tfb)} \ltfb^* (\amalg^* \cpd)).
$$
\end{enumerate}
\end{thm}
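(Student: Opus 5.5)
The plan is to specialize Theorem \ref{thm:second_kz_cx_dupsilon} and Corollary \ref{cor:second_kz_cx_dupsilon_homology} to the modules attached to $\cpd$, and then identify the codomain via Theorem \ref{thm:cyclic_2_diopd_modules}.

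For the first (odd) statement, take $(\pm;\mp) = (+;+)$ and $G = \sfb^* \cpd$. This is a $(\kk \db)_{(+;+)} = \kk \db$-module by Theorem \ref{thm:cpd_db_kdbm}. Here $-\mp = -$, so Theorem \ref{thm:second_kz_cx_dupsilon} gives a natural inclusion of complexes of $(\kk \uwb)_-$-modules
$$
\upsilon^*_{(+;-)} \big( (\kk \ub)^\sharp_{(+;-)} \otimes_{\kk \fb} \sfb^* \cpd \big) \hookrightarrow (\kk \uwb)_-^\sharp \otimes_{\kk (\tfb)} \dupsilon^*_{(+;+)} \sfb^* \cpd .
$$
Theorem \ref{thm:cyclic_2_diopd_modules} provides a natural isomorphism of $\kk \dwb$-modules $\stfb^*(\amalg^* \cpd) \cong \dupsilon^*_{(+;+)} \sfb^* \cpd$. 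Applying the functor $(\kk \uwb)_-^\sharp \otimes_{\kk (\tfb)} -$, formed as a Koszul complex, turns this into an isomorphism of complexes of $(\kk \uwb)_-$-modules. This is the first line of Example \ref{exam:cyclic_kz_cx_compare} in the second Koszul case. Composing the inclusion with this isomorphism gives the first inclusion.

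For the even statement, take $(\pm;\mp) = (-;-)$ and $G = \lfb^* \cpd$, a $(\kk \db)_{(-;-)}$-module. Then $-\mp = +$ and $(\kk \uwb)_+ = \kk \uwb$. Theorem \ref{thm:second_kz_cx_dupsilon} yields the inclusion with source $\upsilon^*_{(-;+)}\big((\kk \ub)^\sharp_{(-;+)} \otimes_{\kk \fb} \lfb^* \cpd\big)$ and target $(\kk \uwb)^\sharp \otimes_{\kk (\tfb)} \dupsilon^*_{(-;-)} \lfb^* \cpd$. The target is then identified with $(\kk \uwb)^\sharp \otimes_{\kk (\tfb)} \ltfb^*(\amalg^*\cpd)$ using the second isomorphism of Theorem \ref{thm:cyclic_2_diopd_modules}.

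The homological statements follow in the same way from Corollary \ref{cor:second_kz_cx_dupsilon_homology}, or directly: $\upsilon^*$ is exact, so it commutes with $H_*$, and the inclusion of complexes induces a morphism on homology. This morphism need not be injective, which is why the homology statement only claims a morphism.

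The only step needing care is checking that the sign conventions line up. The twist on $\ub$ is $(\pm;-\mp)$ while $G$ lives over $(\kk\db)_{(\pm;\mp)}$. I also need to confirm that the isomorphism of modules from Theorem \ref{thm:cyclic_2_diopd_modules} is over exactly the twisted category $(\kk \dwb)_\mp$ used in forming the Koszul complex, so that it is compatible with the Koszul differential. Since the differential is built solely from the module structure, via the action of $[\beta_{i,j}\op]$, a module isomorphism induces an isomorphism of complexes, so no real obstacle is expected.
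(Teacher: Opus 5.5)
Your proposal is correct and matches the paper's argument. The paper specializes Theorem \ref{thm:second_kz_cx_dupsilon} (and Corollary \ref{cor:second_kz_cx_dupsilon_homology} for homology) at $(\pm;\mp)=(+;+)$ with $G=\sfb^*\cpd$ and at $(\pm;\mp)=(-;-)$ with $G=\lfb^*\cpd$, then identifies the targets with the dioperad hairy graph complexes via Theorem \ref{thm:cyclic_2_diopd_modules}, as recorded in Example \ref{exam:cyclic_kz_cx_compare}; your sign bookkeeping ($-\mp$ giving $(\kk\uwb)_-$, respectively $\kk\uwb$) is right.
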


\begin{rem}
\ 
\begin{enumerate}
\item 
Theorem \ref{thm:compare_cyclic_case_grph_cx} gives a way of comparing the (odd or even) hairy  graph complex associated to the cyclic operad $\cpd$ with the (odd or even) hairy  graph complex associated to the dioperad $\amalg^* \cpd$.  
\item 
Unfortunately, this does not give a direct comparison with the (odd or even) hairy graph complex associated to the operad $\opd_{\amalg^* \cpd}$, although we do have inclusions of complexes 
\begin{eqnarray*}
(\kk \uwb)_-^\sharp \otimes_{\kk (\tfb)} \stfb^* (\opd_{\amalg^* \cpd})
&\hookrightarrow & 
(\kk \uwb)_-^\sharp \otimes_{\kk (\tfb)} \stfb^* (\amalg^* \cpd);
\\
(\kk \uwb)^\sharp \otimes_{\kk (\tfb)} \ltfb^* (\opd_{\amalg^* \cpd})
&\hookrightarrow & 
(\kk \uwb)^\sharp \otimes_{\kk (\tfb)} \ltfb^* (\amalg^* \cpd).
\end{eqnarray*}
(Cf. Remark \ref{rem:opd_amalg^*cpd}.)
\end{enumerate}
\end{rem}

%%%%%%%%%%%%%%%%%%%%%%%%%%%%%%%%%%%%%%%%%%%%%%%%%%%%%%%%%%%%%%%%%%%%%%%%%%%%%%%%%
\subsection{The dioperad case}

Let $\diopd$ be a dioperad in $\nudo$ and $\amalg_* \diopd$ be its associated cyclic operad. As in Example \ref{exam:compare_kz_cx_diopd} we may compare Koszul complexes, applying Theorem \ref{thm:unwall_second_complex} and its homological consequence Corollary \ref{cor:hom_Kz_cx_unwall}. This yields:

\begin{thm}
\label{thm:compare_diopd_case_grph_cx}
For $\diopd$ in $\nudo$, there are isomorphisms of complexes of $\kk \fb$-modules:
\begin{eqnarray*}
(\kk \ub)_{(+;-)}^\sharp \otimes_{\kk \fb} \sfb^* (\amalg_* \diopd) 
&\cong &
\big( \amalg_* \op (\kk \ub)_{(+; -)}^{\cten  2}\big)^\sharp
\otimes_{\kk (\tfb)}
\big( (\kk\uwb)_{-}^\sharp  \otimes_{\kk(\tfb)} \stfb^* \diopd  \big) 
\\
(\kk \ub)_{(-;+)}^\sharp \otimes_{\kk \fb} \lfb^* (\amalg_* \diopd) 
&\cong &
\big( \amalg_* \op (\kk \ub)_{(-; +)}^{\cten  2}\big)^\sharp
\otimes_{\kk (\tfb)}
\big( (\kk\uwb)^\sharp  \otimes_{\kk(\tfb)} \ltfb^* \diopd \big). 
\end{eqnarray*}
Hence, on passage to homology, there are isomorphisms of graded $\kk \fb$-modules:
 \begin{eqnarray*}
H_* ((\kk \ub)_{(+;-)}^\sharp \otimes_{\kk \fb} \sfb^* (\amalg_* \diopd) )
&\cong &
\big( \amalg_* \op (\kk \ub)_{(+; -)}^{\cten  2}\big)^\sharp
\otimes_{\kk (\tfb)}
H_* \big( (\kk\uwb)_{-}^\sharp  \otimes_{\kk(\tfb)} \stfb^* \diopd  \big) 
\\
H_* ((\kk \ub)_{(-;+)}^\sharp \otimes_{\kk \fb} \lfb^* (\amalg_* \diopd) )
&\cong &
\big( \amalg_* \op (\kk \ub)_{(-; +)}^{\cten  2}\big)^\sharp
\otimes_{\kk (\tfb)}
H_*  \big( (\kk\uwb)^\sharp  \otimes_{\kk(\tfb)} \ltfb^* \diopd \big). 
\end{eqnarray*}
\end{thm}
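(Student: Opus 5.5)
The plan is to specialize Theorem \ref{thm:unwall_second_complex} and Corollary \ref{cor:hom_Kz_cx_unwall} to the modules coming from $\diopd$, after using Theorem \ref{thm:diopd_2_cyclic_modules} to identify the cyclic-operad modules as unwallings.

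First, by Theorem \ref{thm:diopd_2_cyclic_modules} there are natural isomorphisms
\begin{eqnarray*}
\sfb^*(\amalg_*\diopd) &\cong& \unwall_{(+;+)}\stfb^*\diopd,\\
\lfb^*(\amalg_*\diopd) &\cong& \unwall_{(-;-)}\ltfb^*\diopd .
\end{eqnarray*}
These are isomorphisms of $\kk\db$-modules and $(\kk\db)_{(-;-)}$-modules respectively. Hence the second Koszul complexes of the cyclic operad become $(\kk\ub)^\sharp_{(+;-)}\otimes_{\kk\fb}\unwall_{(+;+)}\stfb^*\diopd$ and $(\kk\ub)^\sharp_{(-;+)}\otimes_{\kk\fb}\unwall_{(-;-)}\ltfb^*\diopd$, as already recorded in Example \ref{exam:compare_kz_cx_diopd}. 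The Koszul differential is determined by the module structure over the twisted downward category, so these are isomorphisms of complexes.

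Next, apply Theorem \ref{thm:unwall_second_complex} with $F=\stfb^*\diopd$, a $\kk\dwb=(\kk\dwb)_+$-module. Here $(\pm;\mp)=(+;+)$, so $(\pm;-\mp)=(+;-)$ and $(\kk\uwb)_{-\mp}=(\kk\uwb)_-$. In the second case take $F=\ltfb^*\diopd$, a $(\kk\dwb)_-$-module. Here $(\pm;\mp)=(-;-)$, so $(\pm;-\mp)=(-;+)$ and $(\kk\uwb)_{-\mp}=\kk\uwb$. Composing with the isomorphisms above gives the two displayed isomorphisms of complexes of $\kk\fb$-modules.

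The main point requiring care is the bimodule appearing in Theorem \ref{thm:unwall_second_complex}. It is built from Corollary \ref{cor:dupsilon_kkub^sharp}, applied to $\dupsilon^*_{(\pm;-\mp)}\big((\kk\ub)^\sharp_{(\pm;-\mp)}\big)$ via Corollary \ref{cor:unwall_vs_dupsilon}. The twist indexing the bimodule $\amalg_*\op(\kk\ub)^{\cten 2}$ is therefore the one matching the ambient $(\kk\ub)_{(\pm;-\mp)}$, namely $(+;-)$ and $(-;+)$ respectively. This agrees with the statement, and I would check this indexing explicitly when tracing through the proof of Theorem \ref{thm:unwall_second_complex}. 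In any case, the underlying $\kk\fb\op\cten\kk(\tfb)$-modules do not depend on the twist up to non-canonical isomorphism; only the labelling must be consistent.

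Finally, the homology statements follow as in Corollary \ref{cor:hom_Kz_cx_unwall}. The functor $\big(\amalg_*\op(\kk\ub)^{\cten 2}_{(\pm;-\mp)}\big)^\sharp\otimes_{\kk(\tfb)}-$ is exact: in characteristic zero, tensoring over the semisimple group algebras $\kk(\sym_s\times\sym_t)$ is exact, and the relevant sums are finite. Hence this functor commutes with taking homology, which yields the graded isomorphisms.
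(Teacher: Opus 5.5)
Your proposal is correct and is the paper's own argument: identify $\sfb^*(\amalg_*\diopd)$ and $\lfb^*(\amalg_*\diopd)$ with $\unwall_{(+;+)}\stfb^*\diopd$ and $\unwall_{(-;-)}\ltfb^*\diopd$ via Theorem \ref{thm:diopd_2_cyclic_modules} (Example \ref{exam:compare_kz_cx_diopd}), then apply Theorem \ref{thm:unwall_second_complex} and Corollary \ref{cor:hom_Kz_cx_unwall}. Your tracing of the bimodule's twist through Corollaries \ref{cor:unwall_vs_dupsilon} and \ref{cor:dupsilon_kkub^sharp} to $(\pm;-\mp)$, i.e.\ $(+;-)$ and $(-;+)$, is the right reading; the $(\pm;\mp)$ printed in Theorem \ref{thm:unwall_second_complex} is a slip.

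Two small corrections, neither of which affects the argument.

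\begin{itemize}
\item Drop the fallback remark that these bimodules are twist-independent up to isomorphism. As $\kk\fb\op\cten\kk(\tfb)$-modules they are not: for example, $(\kk\ub)_{(+;+)}(\mathbf{0},\mathbf{4})$ contains $\triv_4$ while $(\kk\ub)_{(+;-)}(\mathbf{0},\mathbf{4})$ does not. So the labelling genuinely matters.
\item Exactness of the outer tensor product needs only semisimplicity of the $\kk(\sym_s\times\sym_t)$ and exactness of direct sums, not finiteness. In fact, for fixed $U$ the sum over $(\mathbf{s},\mathbf{t})$ is in general infinite.
\end{itemize}
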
 

\begin{rem}
Theorem \ref{thm:compare_diopd_case_grph_cx} tells us that we can calculate the (odd or even) hairy graph homology of the cyclic operad $\amalg_*\diopd$ in terms of the (odd or even)  hairy graph homology of the dioperad $\diopd$. Clearly, this restricts directly to the case where $\diopd$ is an operad. 
\end{rem}

Corollary \ref{cor:direct_comparison_kz_1} has the following striking consequence:

\begin{cor}
\label{cor:comparison_opd_unit_case}
Suppose that $\opd$ is an operad with unit (i.e.,  in $\opds$). Then 
there are isomorphisms of $\kk$-vector spaces
 \begin{eqnarray*}
H_* ((\kk \ub)_{(+;-)}^\sharp \otimes_{\kk \fb} \sfb^* (\amalg_* \opd) )(\zero) 
&\cong &
H_* \big( (\kk\uwb)_{-}^\sharp  \otimes_{\kk(\tfb)} \stfb^* \opd  \big) (\zero, \zero) 
\\
H_* ((\kk \ub)_{(-;+)}^\sharp \otimes_{\kk \fb} \lfb^* (\amalg_* \opd) )(\zero) 
&\cong &
H_*  \big( (\kk\uwb)^\sharp  \otimes_{\kk(\tfb)} \ltfb^* \opd \big)(\zero, \zero) . 
\end{eqnarray*}
All other homology groups vanish.
\end{cor}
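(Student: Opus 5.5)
The plan is to deduce the statement from Corollary \ref{cor:direct_comparison_kz_1}, applied with $F = \stfb^* \opd$ (over $\kk \dwb$, so $\mp = +$, $\pm = +$) and with $F = \ltfb^*\opd$ (over $(\kk\dwb)_-$, so $\mp = -$, $\pm = -$). Theorem \ref{thm:diopd_2_cyclic_modules} identifies $\sfb^*(\amalg_*\opd) \cong \unwall_{(+;+)}\stfb^*\opd$ and $\lfb^*(\amalg_*\opd)\cong \unwall_{(-;-)}\ltfb^*\opd$. Here $\opd$ is viewed as a dioperad via Definition \ref{defn:operads}, with the unit forgotten to land in $\nudo$. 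The left-hand sides in Corollary \ref{cor:direct_comparison_kz_1} then become exactly the complexes $(\kk\ub)^\sharp_{(+;-)}\otimes_{\kk\fb}\sfb^*(\amalg_*\opd)$ and $(\kk\ub)^\sharp_{(-;+)}\otimes_{\kk\fb}\lfb^*(\amalg_*\opd)$ of the statement. So everything reduces to checking the hypothesis of that corollary.

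The hypothesis is that $H_*((\kk\uwb)^\sharp_{-}\otimes_{\kk(\tfb)}\stfb^*\opd)$ and $H_*((\kk\uwb)^\sharp\otimes_{\kk(\tfb)}\ltfb^*\opd)$ are supported on $(\zero,\zero)$. This is a statement purely about the walled hairy graph complexes of an operad with unit. It should be taken from \cite{P_opd_wall}, where these second Koszul complexes are analysed via their Koszul/$\tor$ interpretation (Example \ref{exam:walled_koszul_complexes}). I expect the needed input there is that, for a unital operad, the hairy graph homology with hairs vanishes. The heuristic is the standard contraction argument: a hair, meaning a univalent leg on either side of the wall, can be absorbed using the unit $\eta_\opd\in\opd(\one,\one)$. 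This gives a contracting homotopy on every component with $(m,n)\neq(\zero,\zero)$.

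This hypothesis check is the main obstacle. If it is not stated verbatim in \cite{P_opd_wall}, I would prove it directly. Fix a hair index, say the last element of $\n$ on the output side (the other side is symmetric via the involution $(-)\op$ of Proposition \ref{prop:involution_dioperads}). Define a homotopy $h$ that inserts $\eta_\opd$ as an extra $\ltfb^1$ factor, joined to that hair by a walled pair. The unit axiom of Definition \ref{defn:dioperads} gives $dh+hd=\id$ on the relevant summand, up to a sign from the $(\kk\uwb)_\mp$ twist. Characteristic zero lets us average over $\sym_m\times\sym_n$ so that the construction descends to the coinvariants. The sign bookkeeping between the even case ($\ltfb^*$, untwisted $\kk\uwb$) and the odd case ($\stfb^*$, $(\kk\uwb)_-$) needs care. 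In each case, though, the pairing of twists is exactly the one for which the Koszul differential squares to zero, so the same homotopy should work.

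With the hypothesis established, Corollary \ref{cor:direct_comparison_kz_1} gives the isomorphisms at $\zero$ and the vanishing on $\n$ for $n>0$. That vanishing is the assertion ``all other homology groups vanish''. Concretely, this comes from $\big(\amalg_*\op(\kk\ub)^{\cten 2}_{(\pm;\mp)}\big)^\sharp((\zero,\zero),\n)$ being $\kk$ for $n=0$ and zero otherwise.
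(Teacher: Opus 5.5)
Your proposal is correct and follows the paper's own argument. You reduce via Theorem~\ref{thm:diopd_2_cyclic_modules} to Corollary~\ref{cor:direct_comparison_kz_1}, your sign choices $(+;+)$ and $(-;-)$ are right, and the hypothesis comes from the vanishing of walled hairy homology with hairs for a unital operad, which the paper also only justifies by analogy with \cite{P_cyclic} as a bar-construction-type unit argument like the contracting homotopy you sketch. The averaging over $\sym_m\times\sym_n$ is unnecessary: inserting $\eta_\opd$ on a fixed hair is already well defined on the coinvariants over the internal $\kk(\tfb)$-variable, and equivariance in the hair variables is not needed for vanishing.
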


\begin{proof}
This follows from 
Corollary \ref{cor:direct_comparison_kz_1},
 using  that $H_* \big( (\kk\uwb)_{-}^\sharp  \otimes_{\kk(\tfb)} \stfb^* \opd  \big)$ and $H_*  \big( (\kk\uwb)^\sharp  \otimes_{\kk(\tfb)} \ltfb^* \opd \big)$ are both supported on $(\zero, \zero)$, since $\opd$ has a unit.

This  property is the counterpart of \cite[Theorem 10.1]{P_cyclic} which is for cyclic operads with a unit. In the operadic case, this can be viewed as a mild generalization of the acyclicity of the bar construction of a unital associative $\kk$-algebra. The result can be proved by using the same arguments as in \cite{P_cyclic}, transposed to the setting of \cite{P_opd_wall}.
\end{proof}

\part*{}
\appendix

\section{Day convolution}
\label{sect:day_convolution}

This section reviews the convolution products on $\kk \fb$-modules and on $\kk (\tfb)$-modules and some of their properties.

%%%%%%%%%%%%%%%%%%%%%%%%%%%%%%%%%%%%%%%%%%%%%%%%%%%%%%%%%%%%%%%%%%%%%%%%%%%%%%
\subsection{The disjoint union of finite sets and associated functors}
\label{subsect:disjoint_union}

The disjoint union of finite sets yields the functor $\amalg  : \fb \times \fb \rightarrow \fb$ and this defines a symmetric monoidal structure on $\fb$, with unit $\emptyset$. 
This extends to a symmetric monoidal structure (denoted here by $\tmalg$) on $\tfb$:
\begin{eqnarray*}
 \tmalg &:& (\tfb) \times (\tfb) \rightarrow \tfb
 \\
&&\big( (X,Y), (U,V)\big)  \mapsto  (X \amalg U, Y \amalg V).
\end{eqnarray*}

These symmetric monoidal functors are related via $\amalg$:

\begin{prop}
\label{prop:amalg_circ_tmalg} 
The functor $\amalg : \tfb \rightarrow \fb$ is symmetric monoidal. In particular, there is a natural isomorphism 
$
\amalg \circ \tmalg \cong \amalg \circ (\amalg \times \amalg)
$ 
of functors from $\fb^{\times 4}$ to $\fb$.
\end{prop}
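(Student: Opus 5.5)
The plan is to verify the statement on objects and on the two kinds of generating morphisms of $\fb$. The functor $\amalg : \tfb \rightarrow \fb$ sends $(X,Y) \mapsto X \amalg Y$ and a pair of bijections $(f,g)$ to $f \amalg g$. The monoidal structure maps are built from the canonical associativity and symmetry bijections of finite sets.

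First, fix the coherence data. The unit of $\tmalg$ is $(\emptyset,\emptyset)$, and $\amalg(\emptyset,\emptyset) = \emptyset$, which is the unit of $\fb$. For the comparison of products, given objects $(X,Y)$ and $(U,V)$, set
\[
\phi_{(X,Y),(U,V)} : (X \amalg Y) \amalg (U \amalg V) \stackrel{\cong}{\rightarrow} (X \amalg U) \amalg (Y \amalg V).
\]
This is the bijection that is the identity on elements and moves $Y$ past $U$; it is the middle-four interchange, built from the associator and the symmetry of $(\fb,\amalg)$. Its naturality in both variables is immediate, since $\amalg$ acts on morphisms componentwise and $\phi$ is defined elementwise.

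Second, check the symmetric monoidal axioms. The associativity hexagon and the unit triangles reduce to equalities of bijections between finite sets. Both sides send each element to itself, regarded in the appropriate copy, so they agree. The same holds for compatibility with the symmetries: the symmetry of $\tmalg$ swaps $(X,Y)$ with $(U,V)$ componentwise, and $\phi$ intertwines this with the symmetry of $\amalg$. The cleanest way to see all of this is Mac Lane coherence for the symmetric monoidal category $(\fb,\amalg)$. All the maps involved are canonical structure isomorphisms, and any two parallel canonical maps with the same underlying permutation of labelled summands coincide.

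Third, deduce the stated isomorphism. The natural isomorphism $\amalg \circ \tmalg \cong \amalg \circ (\amalg \times \amalg)$ of functors $\fb^{\times 4} \rightarrow \fb$ is exactly $\phi$, read as a natural transformation in the four variables $X,Y,U,V$.

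There is no real obstacle here. The only care needed is in the symmetry compatibility: one must make sure that $\phi$ is the interchange using the symmetry, and not merely reassociation, so that the hexagon commutes. I would settle this by writing out the underlying element-level map explicitly.
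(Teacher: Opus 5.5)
Your argument is correct, but it is organised differently from the paper's. The paper does not check any coherence diagrams by hand. It passes to the category of finite sets and all maps, where $\amalg$, and its componentwise version $\tmalg$ on pairs, is the categorical coproduct, so both monoidal structures are cocartesian. There the statement is formal:
\begin{itemize}
\item disjoint union of pairs preserves coproducts, being left adjoint to the diagonal;
\item a coproduct-preserving functor between cocartesian categories is canonically strong symmetric monoidal, with structure maps (your $\phi$ and the unit map) given by the universal property;
\item all coherence diagrams commute, because a map out of a coproduct is determined by its restrictions to the summands.
\end{itemize}
Since these structure maps are bijections, everything restricts to $\amalg : \tfb \rightarrow \fb$.

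You instead write down the middle-four interchange explicitly and verify the associativity, unit and symmetry axioms. You do this either by symmetric Mac Lane coherence or by an elementwise chase. You state coherence correctly, in terms of underlying permutations of labelled summands, so repeated objects such as $(X,Y)=(U,V)$ cause no trouble. The elementwise chase is really the universal-property argument in disguise.

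The paper's route is shorter and makes canonicity automatic without appeal to a coherence theorem. Yours gives the explicit element-level description of $\phi$, which is the kind of shuffle bijection used later, e.g.\ in the proof of Proposition~\ref{prop:unwall_uwbord}. Your opening mention of ``the two kinds of generating morphisms of $\fb$'' plays no role and can be dropped, since you check naturality of $\phi$ directly for arbitrary pairs of bijections.
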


\begin{proof}
This follows for example from the corresponding statement with $\fb$ replaced by the category of finite sets, so that the symmetric monoidal structures are the cocartesian ones.
\end{proof}

The above  yield  the restriction functors $\amalg^* : \f (\fb ) \rightarrow \f (\tfb)$ and $\tmalg ^* : \f (\tfb) \rightarrow  \f ((\tfb)^{\times 2}) $.
For a $\kk \fb$-module $F$ and a $\kk (\tfb)$-module $G$, these identify respectively as
\begin{eqnarray*} 
\amalg^* F (U, V)&= &F (U \amalg V) 
\\
\tmalg^* G ((X,Y) , (U,V))&= &G (X \amalg U, Y \amalg V). 
\end{eqnarray*}

Using Kan extension, these restriction functors admit both left and right adjoints. We spell this out explicitly.

\begin{defn}
\label{defn:amalg_tmalg_left}
\ 
\begin{enumerate}
\item 
Let $\amalg_* : \f (\tfb) \rightarrow \f (\fb)$ be the functor given on objects, for $G$ a $\kk (\tfb)$-module, by 
\[
(\amalg_* G )(X):= \bigoplus_{ U \amalg V=X} G (U, V),
\]
where the sum is indexed by ordered decompositions of $X$ into two subsets.
\item 
Let $\tmalg_* : \f ((\tfb)^{\times 2}) \rightarrow \f (\tfb)$ be the functor given on objects, for $H$ a $\kk ((\tfb)^{\times 2})$-module, by 
\begin{eqnarray*}
%\label{eqn:tmalg_*H}
(\tmalg_* H) (U, V):= \bigoplus_{\substack{U_1 \amalg U_2 =U\\  V_1 \amalg V_2=V}} H ((U_1, V_1), (U_2, V_2)),
\end{eqnarray*}
where the sum is indexed over pairs of ordered decompositions of $U$ and of $V$ into two subsets.
\end{enumerate}
\end{defn}

\begin{prop}
\label{prop:amalg_tmalg_adjoints}
\
\begin{enumerate}
\item 
The functor $\amalg_* : \f (\tfb) \rightarrow \f (\fb)$ is both left and right adjoint to $\amalg^* : \f (\fb ) \rightarrow \f (\tfb)$.
\item 
The functor $\tmalg_* : \f ((\tfb)^{\times 2}) \rightarrow \f (\tfb) $ is both left and right adjoint to $\tmalg^* :\f (\tfb) \rightarrow  \f ((\tfb)^{\times 2})$.
\end{enumerate}
\end{prop}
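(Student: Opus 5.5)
We prove both adjunctions by writing down explicit unit and counit maps at each object, which is possible because all the groupoids involved are finite, and then checking the triangle identities. The second statement follows by running the same argument again, with $\tmalg$ and $\tfb$ in place of $\amalg$ and $\fb$. Since $\amalg^*$ is restriction along a functor between groupoids, its left and right adjoints are the left and right Kan extensions. For groupoids these can be written as induced and coinduced modules on each connected component. In characteristic zero, for finite groups, induction and coinduction are naturally isomorphic, via the averaging (norm) map. The work is therefore to identify both Kan extensions with the formula of Definition \ref{defn:amalg_tmalg_left}, naturally in $G$.

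For the left adjoint, the pointwise formula gives $(\mathrm{Lan}_\amalg G)(X) = \mathrm{colim}_{(\amalg\downarrow X)} G$. An object of the comma groupoid is a pair $(U,V)$ together with a bijection $U\amalg V\cong X$. Two such objects are isomorphic exactly when the ordered decompositions $X = U'\amalg V'$ that they induce coincide. Moreover, the image subsets give a canonical representative $(U',V')$ with the inclusion as its bijection, and this representative has no nontrivial automorphisms in the comma category. So the colimit is just $\bigoplus_{X=U\amalg V} G(U,V)$, and functoriality in $X$ is by transport of decompositions. The dual computation with $(X\downarrow\amalg)$ gives a limit, which is the same finite direct sum. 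Hence the left and right Kan extensions agree with $\amalg_*$.

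It is cleaner to exhibit the structure maps directly. The unit of $\amalg_*\dashv\amalg^*$ is $G(U,V)\hookrightarrow(\amalg_*G)(U\amalg V)$, the inclusion of the summand for the tautological decomposition. The counit $\amalg_*\amalg^*F\to F$ sends the summand $F(U\amalg V)$ indexed by $X=U\amalg V$ to $F(X)$ by the identity. For the other adjunction, $\amalg^*\dashv\amalg_*$, the unit $F(X)\to\bigoplus_{X=U\amalg V}F(X)$ is the diagonal. The counit $(\amalg^*\amalg_*G)(U,V)\to G(U,V)$ projects onto the summand indexed by the tautological decomposition. Each triangle identity reduces to a statement about a single summand, which is immediate.

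For $\tmalg$ the same maps work, now indexed by pairs of ordered decompositions. We expect the only delicate point to be naturality of these maps with respect to bijections. Concretely, one must check that an automorphism of $X$ permutes the summands compatibly, and that the diagonal unit is invariant under this permutation. Both hold because the indexing sets are acted on freely by the relabelling. Notably, no division by group orders is needed, so the characteristic-zero hypothesis is not actually used.
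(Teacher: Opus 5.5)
Your proposal is correct and is essentially the paper's argument made explicit. The paper only says the result follows by analysing the Kan extensions, with left and right adjoints agreeing because induction and coinduction coincide for finite groups; your observation that the comma groupoids are discrete up to equivalence (so the pointwise colimit and limit are the same finite direct sum), together with the explicit units, counits and triangle identities, spells out exactly that argument.

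There are two harmless slips. First, the isomorphism between induction and coinduction for a finite-index subgroup holds over any ring and is not the averaging (norm) map; this is consistent with your closing remark that characteristic zero is not needed. Second, $\aut(X)$ does not act freely on ordered decompositions of $X$ (it fixes $(X,\emptyset)$). The relevant fact is rather that objects of $(\amalg\downarrow X)$ have trivial automorphism groups, and naturality of your units and counits holds simply because $\amalg_*$ acts by transporting decompositions.
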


\begin{proof}
This is standard and can be seen by analysing the Kan extensions. The fact that the left and right adjoints are naturally isomorphic corresponds to the fact that, for finite groups, induction and coinduction are naturally isomorphic.
\end{proof}

\begin{exam}
\label{exam:units_amalg}
Write  $\kk _\mathbf{0}$ for  the $\kk \fb$-module supported on $\mathbf{0}= \emptyset$ with value $\kk$ and $\kk _{(\mathbf{0}, \mathbf{0})}$ for the $\kk (\tfb)$-module supported on $(\mathbf{0}, \mathbf{0})$ with value $\kk$. Then there are natural isomorphisms 
$
\amalg^* \kk_\mathbf{0} \cong   \kk _{(\mathbf{0}, \mathbf{0})}$ and 
$
\amalg_* \kk _{(\mathbf{0}, \mathbf{0})} \cong   \kk_\mathbf{0}$. 
\end{exam}

Proposition \ref{prop:amalg_circ_tmalg} leads to the following technical result:

\begin{lem}
\label{lem:projection_formula}
\
\begin{enumerate}
\item 
The functors $\amalg_* \circ \tmalg _* $ and $\amalg_* \circ (\amalg \times \amalg)_*$  from $\f (\fb^{\times 4})$ to $\f (\fb)$
are naturally isomorphic. 
\item 
The functors $\tmalg_* \circ (\amalg \times \amalg)^*$ and $\amalg^* \circ \amalg_*$ from $\f (\fb \times \fb)$ to $\f (\fb \times \fb)$ are naturally isomorphic.
\end{enumerate}
\end{lem}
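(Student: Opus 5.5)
The plan is to verify both isomorphisms of Lemma \ref{lem:projection_formula} directly, by evaluating both sides on objects and matching the indexing sets of the direct sums. I would then check that the identifications are compatible with the $\fb$ (resp. $\tfb$) actions. The underlying conceptual reason is Proposition \ref{prop:amalg_circ_tmalg} combined with the uniqueness of adjoints, and I would use that as the backbone for the first statement.

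For (1), I would pass to adjoints. By Proposition \ref{prop:amalg_tmalg_adjoints}, $\amalg_*\circ\tmalg_*$ is left adjoint to $\tmalg^*\circ\amalg^* = (\amalg\circ\tmalg)^*$. Likewise $\amalg_*\circ(\amalg\times\amalg)_*$ is left adjoint to $(\amalg\circ(\amalg\times\amalg))^*$. Here $(\amalg\times\amalg)_*$ is the Kan extension along $\amalg\times\amalg : \fb^{\times 4}\rightarrow\tfb$; it is the exterior product of two copies of $\amalg_*$ and is again ambidextrous, by the same finite-group induction/coinduction argument. Proposition \ref{prop:amalg_circ_tmalg} gives a natural isomorphism $\amalg\circ\tmalg\cong\amalg\circ(\amalg\times\amalg)$, which induces an isomorphism of the restriction functors. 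Uniqueness of left adjoints then yields the claim. As a sanity check, both sides evaluated on $X$ give $\bigoplus H(X_1,X_2,X_3,X_4)$ over ordered decompositions $X=X_1\amalg X_2\amalg X_3\amalg X_4$, with the appropriate labelling of the four factors.

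For (2), which is a base-change (projection formula) statement, I would compute on objects. For $K\in\f(\fb\times\fb)$, the left side is
\[
(\tmalg_*(\amalg\times\amalg)^*K)(U,V)=\bigoplus_{\substack{U=U_1\amalg U_2\\ V=V_1\amalg V_2}}K(U_1\amalg V_1,\,U_2\amalg V_2).
\]
The right side is
\[
(\amalg^*\amalg_*K)(U,V)=\bigoplus_{U\amalg V=A\amalg B}K(A,B).
\]
A decomposition $U\amalg V=A\amalg B$ is equivalent to the data $U_1=U\cap A$, $U_2=U\cap B$, $V_1=V\cap A$, $V_2=V\cap B$, with $A=U_1\amalg V_1$ and $B=U_2\amalg V_2$. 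This gives a bijection of indexing sets, and the corresponding summands coincide. The map is natural in bijections $(U,V)\rightarrow(U',V')$, since such a bijection transports decompositions compatibly on both sides; it is also natural in $K$.

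The main obstacle is bookkeeping rather than mathematics. One has to make sure that the identification $(U_1\amalg U_2)\amalg(V_1\amalg V_2)\cong(U_1\amalg V_1)\amalg(U_2\amalg V_2)$ used in (2) is the canonical symmetry-isomorphism from Proposition \ref{prop:amalg_circ_tmalg}. Otherwise the $\aut(U)\times\aut(V)$-actions might not match. I would handle this by working with subsets of a fixed set $U\amalg V$, which makes all the required coherences tautological.
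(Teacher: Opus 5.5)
Your proof is correct. Part (1) is exactly the paper's argument: $\amalg_*\circ\tmalg_*$ and $\amalg_*\circ(\amalg\times\amalg)_*$ are left adjoints to the restrictions along the isomorphic functors $\amalg\circ\tmalg$ and $\amalg\circ(\amalg\times\amalg)$, so they agree by uniqueness of adjoints.

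For part (2) your route differs in packaging. You write out both sides using the explicit direct-sum models of the Kan extensions. You then match summands via the bijection between decompositions $U\amalg V=A\amalg B$ and quadruples $(U\cap A,\,U\cap B,\,V\cap A,\,V\cap B)$.

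The paper instead first builds a canonical comparison map $\tmalg_*\circ(\amalg\times\amalg)^*\rightarrow\amalg^*\circ\amalg_*$. This is the adjoint of the isomorphism from part (1) followed by the counit of $(\amalg\times\amalg)_*\dashv(\amalg\times\amalg)^*$. The paper then proves this map is invertible by writing both sides as colimits over comma categories and showing the induced functor between them is cofinal. The combinatorial input is the same as yours: a bijection $A\amalg B\rightarrow X\amalg Y$ splits $A$ and $B$ into preimages of $X$ and $Y$.

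Your version is shorter and more elementary. The paper's version identifies the isomorphism with the canonical base-change (mate) transformation. That identification is what the proof of Theorem \ref{thm:compatibility_day} relies on to conclude that the lax structure map $(\amalg^*F_1)\circledcirc(\amalg^*F_2)\rightarrow\amalg^*(F_1\odot F_2)$ is invertible.

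To make your lemma serve that purpose, add the check that your explicit bijection is this adjoint map. It is: the unit of $\amalg_*\dashv\amalg^*$ is the inclusion of the summand indexed by $(U,V)$, and the counit acts as the fold map. Together they send the summand $K(U_1\amalg V_1,U_2\amalg V_2)$ identically onto the summand indexed by $(A,B)=(U_1\amalg V_1,\,U_2\amalg V_2)$.
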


\begin{proof}
The first statement follows directly from Proposition \ref{prop:amalg_circ_tmalg}, since the  composites are  naturally isomorphic to the  respective left adjoint of the restriction functors associated to the isomorphic functors $\amalg \circ \tmalg $ and $\amalg \circ (\amalg \times \amalg)$.

For the second statement, we treat $\tmalg_*$ and $\amalg_*$ as left adjoints constructed by left Kan extension. There is thus a natural transformation 
\begin{eqnarray}
\label{eqn:projection_iso}
\tmalg_* \circ (\amalg \times \amalg)^* \rightarrow \amalg^* \circ \amalg_*
\end{eqnarray}
 that corresponds by adjunction to
$
\amalg_*\circ \tmalg_* \circ (\amalg \times \amalg)^*
\rightarrow 
\amalg_*
$
given by the composite of the natural isomorphism $\amalg_*\circ \tmalg_* \circ (\amalg \times \amalg)^* \cong \amalg_* \circ (\amalg \times \amalg)_*\circ (\amalg \times \amalg)^*$ provided by the first part, followed by the morphism induced by the adjunction counit $(\amalg \times \amalg)_*\circ (\amalg \times \amalg)^* \rightarrow \id$.

It remains to show that this is a natural isomorphism. This can be seen by analysing the  natural transformation using the description of the left Kan extensions, as follows. Consider a $\kk (\tfb)$-module $F$ and $(X,Y) \in \ob \tfb$. Then $(\tmalg_* \circ (\amalg \times \amalg)^* F) (X,Y)$ is given by the colimit 
$$
\mathrm{colim} 
_{\substack{\tmalg((U,V),(W,Z)) \rightarrow (X,Y)}}
F ( U\amalg V, W \amalg Z) 
$$
taken over the slice category $(\tfb)^{\times 2} _{/ (\tfb)}$ defined with respect to $\tmalg$. Correspondingly,  $(\amalg^* \circ \amalg_*F) (X,Y)$ is given by the colimit
$$
\mathrm{colim}_{\substack{\amalg (A, B) \rightarrow (X \amalg Y)}} F (A, B),
$$
taken over the slice category $\tfb _{ / \fb}$ defined with respect to $\amalg$. The map between the colimits is induced by the functor between the respective slice categories induced by $(\amalg \times \amalg) : (\tfb)^{\times 2}  \rightarrow \tfb$ and $\amalg : \tfb \rightarrow \fb$, using the compatibility from Proposition \ref{prop:amalg_circ_tmalg}.

Now, a morphism $f$ in $\fb$ of the form $\amalg (A, B) = A \amalg B \rightarrow X \amalg Y$ determines a decomposition $A = A_X \amalg A_Y$ and $B = B_X \amalg B_Y$ as subsets, taking $A_X$ (respectively $B_X$) to be the preimage of $X$ under $f|_A$ (resp. $f|_B$). The morphism $f$ thus induces the morphism in $\tfb$:
$$
\tmalg ((A_X, A_Y), (B_X, B_Y)) \rightarrow (X,Y).
$$
Using this, one deduces that the functor between slice categories is cofinal, and hence that the morphism between the colimits is an isomorphism, as required.
\end{proof}

%%%%%%%%%%%%%%%%%%%%%%%%%%%%%%%%%%%%%%%%%%%%%%%%%%%%%%%%%%%%%%%%%%%%%%%%%%
\subsection{The Day convolution products}
\label{subsect:introduce_convolution}

Day convolution provides the respective symmetric monoidal structures 
$(\f (\fb), \odot, \kk_\zero) $ and 
$(\f (\tfb), \circledcirc , \kk_{(\zero,\zero)})$.
Here, the convolution products are given by the composites of the exterior tensor products followed by the left Kan extension respectively:
\begin{eqnarray*}
\odot 
&:&\f (\fb) \times \f (\fb) \stackrel{\boxtimes}{\rightarrow} 
\f (\tfb) 
\stackrel{\amalg_*}{\rightarrow }
\f (\fb) 
\\
\circledcirc 
&:&\f (\tfb) \times \f (\tfb) \stackrel{\boxtimes}{\rightarrow} 
\f ((\tfb)^{\times 2})
\stackrel{\tmalg_*}{\rightarrow}
\f (\tfb). 
\end{eqnarray*}

Explicitly, for $\kk \fb$-modules $F_1$ and $F_2$, and $X$ a finite set:
\begin{eqnarray*}
(F_1 \odot F_2) (X) := \bigoplus_{S_1 \amalg S_2 = X} F_1(S_1) \otimes F_2 (S_2),
\end{eqnarray*}
where the sum is over ordered decompositions of $X$ into two  subsets. The symmetry $\tau : F_1 \odot F_2 \stackrel{\cong}{\rightarrow} F_2 \odot F_1$ is induced by the isomorphism of vector spaces
$
F_1(S_1) \otimes F_2(S_2) \stackrel{\cong}{\rightarrow} F_2(S_2) \otimes F_1(S_1)
$ 
given by the symmetry in $\kk$-vector spaces, for each ordered decomposition $S_1\amalg  S_2=X$.

Likewise, for $\kk (\tfb)$-modules $G_1$ and $G_2$:
\begin{eqnarray*}
\label{eqn:convolution_bimodules}
(G_1 \circledcirc G_2) (X,Y) := \bigoplus_{\substack{S_1 \amalg S_2 = X\\ T_1\amalg T_2 =Y}} G_1(S_1,T_1) \otimes G_2 (S_2,T_2).
\end{eqnarray*}
The symmetry $\tau : G_1 \circledcirc G_2 \rightarrow G_2 \circledcirc G_1$ is defined similarly to the case $\odot$.

%%%%%%%%%%%%%%%%%%%%%%%%%%%%%%%%%%%%%%%%%%%%%%%%%%%%%%%%%%%%%%%%%%%%%%%%%%%%%%%%%%%%%%
\subsection{Compatibility of the Day convolution products}

The Day convolution products for $\f (\fb)$ and $\f(\tfb)$ are compatible via the following:

\begin{thm}
\label{thm:compatibility_day}
The functors 
\begin{eqnarray*}
\amalg_* &:& \f (\tfb) \rightarrow \f (\fb)\\
\amalg^* &:& \f (\fb) \rightarrow \f (\tfb) 
\end{eqnarray*}
are symmetric monoidal with respect to the respective Day convolution products.
\end{thm}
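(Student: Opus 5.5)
We must prove that $\amalg_*$ and $\amalg^*$ are symmetric monoidal for $\odot$ and $\circledcirc$, i.e.\ construct natural isomorphisms $\amalg_*(G_1 \circledcirc G_2) \cong \amalg_* G_1 \odot \amalg_* G_2$ and $\amalg^*(F_1 \odot F_2) \cong \amalg^* F_1 \circledcirc \amalg^* F_2$, together with the unit isomorphisms of Example \ref{exam:units_amalg}, and then check the associativity, unit and symmetry coherences.

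For $\amalg_*$ I will argue formally. By definition, $\circledcirc = \tmalg_* \circ \boxtimes$ and $\odot = \amalg_* \circ \boxtimes$. Hence
$$
\amalg_*(G_1 \circledcirc G_2) = \amalg_* \tmalg_* (G_1 \boxtimes G_2) \cong \amalg_* (\amalg \times \amalg)_* (G_1 \boxtimes G_2)
$$
by Lemma \ref{lem:projection_formula}(1). I then use the evident isomorphism $(\amalg\times\amalg)_*(G_1\boxtimes G_2)\cong \amalg_*G_1\boxtimes\amalg_*G_2$, since left Kan extension along a product functor commutes with exterior tensor products. This identifies the left-hand side with $\amalg_* G_1 \odot \amalg_* G_2$.

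As a sanity check, this agrees with the explicit formula. Both sides evaluated on $X$ are
$$
\bigoplus G_1(S_1,T_1)\otimes G_2(S_2,T_2),
$$
summed over decompositions $X = S_1 \amalg T_1 \amalg S_2 \amalg T_2$. On the left, the grouping is $(S_1\amalg S_2, T_1 \amalg T_2)$; on the right, it is $(S_1 \amalg T_1, S_2\amalg T_2)$. The isomorphism is simply this regrouping of summands.

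For $\amalg^*$, I apply $\amalg^*$ to $F_1\odot F_2=\amalg_*(F_1\boxtimes F_2)$ and use Lemma \ref{lem:projection_formula}(2), in its $\fb^{\times 2}$-module form, to get
$$
\amalg^*\amalg_*(F_1\boxtimes F_2)\cong \tmalg_*(\amalg\times\amalg)^*(F_1\boxtimes F_2)=\tmalg_*(\amalg^*F_1\boxtimes\amalg^*F_2)=\amalg^*F_1\circledcirc\amalg^*F_2.
$$
Explicitly, $(F_1\odot F_2)(U\amalg V)$ splits according to how the two pieces meet $U$ and $V$, which gives exactly the $\circledcirc$ formula.

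The main obstacle is the coherence. I need to check the associativity hexagon, the unit triangles, and compatibility with the symmetries $\tau$. I would handle these by working with the explicit summand-regrouping descriptions above. On each summand, every structure map is the identity on tensor factors, apart from the symmetry of $\kk$-vector spaces. Both composites in each coherence diagram then become the same bijection of indexing decompositions composed with the same permutation of tensor factors, so the diagrams commute. The only care needed is with $\tau$, where I must confirm that the swap of tensor factors matches on both sides; since no extra signs are introduced, this holds.
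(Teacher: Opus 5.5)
Your proposal is correct and follows essentially the paper's route: for $\amalg_*$ you use Lemma \ref{lem:projection_formula}(1), which comes from Proposition \ref{prop:amalg_circ_tmalg}; for $\amalg^*$ you use the projection-formula isomorphism of Lemma \ref{lem:projection_formula}(2), placed in the same square with $\boxtimes$. The only difference is how coherence is obtained: the paper makes $\amalg^*$ lax symmetric monoidal for free, as right adjoint of the strong monoidal $\amalg_*$, and then has to check that this lax structure map is the projection-formula isomorphism, whereas you skip that identification and verify the associativity, unit and symmetry diagrams directly on summands, which is legitimate here since every structure map only regroups indexing decompositions and introduces no signs.
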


\begin{proof}
The fact that $\amalg_*$ is symmetric monoidal is a formal consequence of Proposition \ref{prop:amalg_circ_tmalg} using the universal property of the Day convolution product 
\cite{MR862873}. (This can also be checked directly.)

This implies (using that $\amalg^*$ is right adjoint to $\amalg_*$) that $\amalg^*$ is lax symmetric monoidal by \cite{MR360749}, for example. In particular, for $\kk \fb$-modules $F_1$ and $F_2$, by adjunction one obtains the natural transformation 
\begin{eqnarray}
\label{eqn:lax_monoidal}
(\amalg^* F_1) \circledcirc (\amalg^* F_2) \rightarrow \amalg^* (F_1 \odot F_2)  
\end{eqnarray}
which yields the lax symmetric monoidality. To conclude, it suffices to show that this is an isomorphism. 

This is based on the analysis of the following diagram of functors:
\[
\xymatrix{
\f (\fb ) \times \f(\fb) 
\ar[r]^\boxtimes 
\ar[d]_{\amalg^* \times \amalg^*}
&
\f (\tfb) 
\ar[r]^{\amalg_*}
\ar[d]_{(\amalg \times \amalg)^*}
&
\f (\fb) 
\ar[d]_{\amalg^*}
\\
\f (\tfb ) \times \f(\tfb) 
\ar[r]_(.6)\boxtimes 
&
\f ((\tfb)^{\times 2}) 
\ar[r]_{\tmalg_*}
&
\f (\tfb). 
}
\]
Here the left hand square commutes up to natural isomorphism by compatibility of the external tensor product with restriction.  The right hand square commutes up to natural isomorphism by Lemma \ref{lem:projection_formula}; more precisely, the natural transformation  (\ref{eqn:projection_iso}) is a natural isomorphism. 

Since the composites of the rows are respectively $\odot$ and $\circledcirc$, this provides a natural isomorphism $\amalg^* \circ \odot \cong \circledcirc \circ (\amalg\* \times \amalg^*)$ of functors $\f (\fb) \times \f(\fb) \rightarrow \f (\tfb)$.  To conclude, it remains to check that this argument establishes that the natural morphism (\ref{eqn:lax_monoidal}) is an isomorphism. 
 This boils down to checking that the construction of the natural transformation (\ref{eqn:lax_monoidal}) is compatible with the construction of (\ref{eqn:projection_iso}).
\end{proof}

%%%%%%%%%%%%%%%%%%%%%%%%%%%%%%%%%%%%%%%%%%%%%%%%%%%%%%%%%%%%%%%%%%%%%%%%%%%%%%%%%%%%%%%
\subsection{Analysing Day convolution using the Schur correspondence}

In this section we take $\kk$ to be a field of characteristic zero and write $\fdvs$ for the category of finite-dimensional $\kk$-vector spaces.  Basic references for the background material of this section are 
\cite[Appendix I.A]{MR3443860} for polynomial functors
and 
\cite{MR927763} for analytic functors in characteristic zero. We give a conceptual viewpoint on Theorem \ref{thm:compatibility_day} based upon the Schur correspondence.

The Schur functor construction defines a functor 
$
\f (\fb) \rightarrow \f (\fdvs).
$
 This associates to a $\kk \fb$-module $M$ the analytic functor
$$
V \mapsto M(V):= \bigoplus_{n \in \nat} V^{\otimes n} \otimes_{\kk \sym_n} M(\n).
$$
This construction is symmetric monoidal with respect to the Day convolution product on $\f (\fb)$ and the pointwise tensor product on $\f (\fdvs)$. Explicitly, for $\kk \fb$-modules $M$ and $N$, and $V$ a finite dimensional $\kk$-vector space, there is a natural isomorphism
$
(M \odot N) (V) \cong M(V) \otimes N(V)
$
satisfying the appropriate coherence and symmetry axioms.

The Schur functor construction extends to $\kk (\tfb)$-modules using bi-functors: there is a symmetric monoidal functor $
\f (\tfb) \rightarrow \f ( \fdvs \times \fdvs)
$
 given for a $\kk(\tfb)$-module $B$ by 
$$
(V, W) \mapsto B(V,W) := \bigoplus_{m,n\in \nat} (V^{\otimes m} \otimes W^{\otimes n}) \otimes_{\kk (\sym_m \times \sym_n)} B (\m, \n).
$$
The symmetric monoidality corresponds to the natural isomorphism for bimodules $B_1$ and $B_2$:
\begin{eqnarray}
\label{eqn:sym_mon_circledcirc}
(B_1 \circledcirc B_2) (V,W) \cong B_1 (V, W) \otimes B_2 (V,W).
\end{eqnarray}

\begin{rem}
\label{rem:schur_equivalence}
The  Schur correspondence gives an equivalence between the category of $\kk \fb$-modules and the full subcategory of {\em analytic functors} in $\f (\fdvs)$. The analogous statement folds for $\kk (\tfb)$-modules. These are equivalences of symmetric monoidal categories.
\end{rem}

We now consider the counterparts of the pair of functors $\amalg^* : \f (\fb) \rightarrow \f (\tfb)$ and $
\amalg_* : \f (\tfb) \rightarrow \f (\fb)$. We use the following well-known result:

\begin{lem}
\label{lem:oplus_diag}
The functors $\oplus : \fdvs \times \fdvs \rightarrow \fdvs$ and $\diag : \fdvs \rightarrow \fdvs \times \fdvs$ define a pair of adjoint functors ($\oplus$ is both left and right adjoint to $\diag$).
\end{lem}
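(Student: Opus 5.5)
We first make the two functors explicit and then verify the adjunction directly via unit and counit. Here $\diag(V) = (V,V)$ and $\oplus(V,W) = V \oplus W$, both on finite-dimensional $\kk$-vector spaces.

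For $\oplus \dashv \diag$, we need a natural bijection
\[
\hom_{\fdvs}(V \oplus W, U) \cong \hom_{\fdvs \times \fdvs}((V,W),(U,U)) = \hom_{\fdvs}(V,U) \times \hom_{\fdvs}(W,U).
\]
This is exactly the universal property of $\oplus$ as coproduct in $\fdvs$: a map out of $V \oplus W$ is determined by its restrictions along the canonical inclusions.
\begin{itemize}
\item The unit $(V,W) \to (V\oplus W, V\oplus W)$ is the pair of inclusions.
\item The counit $U \oplus U \to U$ is the fold (codiagonal) map.
\end{itemize}
The triangle identities are immediate, and naturality in $(V,W)$ and $U$ is clear.

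For $\diag \dashv \oplus$, we need
\[
\hom_{\fdvs}(U, V \oplus W) \cong \hom_{\fdvs}(U,V) \times \hom_{\fdvs}(U,W).
\]
This is the universal property of $\oplus$ as product in $\fdvs$, via the canonical projections.
\begin{itemize}
\item The unit $U \to U \oplus U$ is the diagonal.
\item The counit $(V\oplus W, V \oplus W) \to (V,W)$ is the pair of projections.
\end{itemize}
Again the triangle identities hold by inspection.

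The only point requiring comment is that both constructions use the same object $V \oplus W$. This is the additive (biproduct) structure of the $\kk$-linear category $\fdvs$: finite coproducts and products coincide. There is no real obstacle; the main thing to state clearly is which universal property gives which adjunction. This mirrors, at the level of Schur functors, the fact (Proposition \ref{prop:amalg_tmalg_adjoints}) that $\amalg_*$ is both left and right adjoint to $\amalg^*$.
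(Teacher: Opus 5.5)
Your argument is correct: recognising $\oplus$ as the biproduct in finite-dimensional vector spaces gives both adjunctions at once. Its coproduct universal property yields $\oplus \dashv \diag$ and its product universal property yields $\diag \dashv \oplus$, with the units, counits and triangle identities exactly as you list them. The paper gives no proof of this lemma and simply cites it as well known, so your proposal supplies the standard argument it implicitly relies on.
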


This implies:

\begin{prop}
\label{prop:oplus*_diag*_sym_mon}
The restriction functors 
$$
\oplus^* : \f( \fdvs) \rightleftarrows \f(\fdvs \times \fdvs) : \diag^*
$$
form a pair of adjoint functors. Moreover, both $\bigoplus^*$ and $\diag^*$ are symmetric monoidal with respect to the respective pointwise tensor products. 
\end{prop}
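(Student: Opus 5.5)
The statement to prove is Proposition \ref{prop:oplus*_diag*_sym_mon}. I would first obtain the adjunction formally from Lemma \ref{lem:oplus_diag}, and then check monoidality of each restriction functor directly from the definition of the pointwise tensor product.

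\emph{The adjunction.} Given an adjunction $L \dashv R$ between categories $\calc$ and $\mathcal{D}$, with unit $\eta$ and counit $\epsilon$, precomposition gives an adjunction in the opposite direction, $R^* \dashv L^*$, on functor categories. The unit of this new adjunction is $F \mapsto F(\eta)$ and the counit is $G \mapsto G(\epsilon)$; the triangle identities come straight from those of $L \dashv R$.

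Lemma \ref{lem:oplus_diag} says $\oplus$ is both left and right adjoint to $\diag$. Applying the observation above to each of the two adjunctions $\oplus \dashv \diag$ and $\diag \dashv \oplus$ shows that $\oplus^*$ and $\diag^*$ are adjoint, in both orders. I would make the units and counits explicit using the biproduct structure of $\fdvs$:
\begin{itemize}
\item the inclusions $V \rightarrow V \oplus V$ and $(V,W) \rightarrow (V \oplus W, V \oplus W)$;
\item the fold maps $V \oplus V \rightarrow V$ and $(V\oplus W, V\oplus W) \rightarrow (V, W)$.
\end{itemize}

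\emph{Symmetric monoidality.} For any functor $\phi : \calc \rightarrow \mathcal{D}$, the restriction $\phi^*$ satisfies
$$
(\phi^*(F \otimes G))(X) = F(\phi X) \otimes G(\phi X) = (\phi^* F \otimes \phi^* G)(X).
$$
So $\phi^*(F \otimes G)$ and $\phi^* F \otimes \phi^* G$ coincide on the nose. Likewise $\phi^*$ sends the constant functor $\kk$ to the constant functor $\kk$. Under these identity identifications, the associativity, unit and symmetry constraints are carried to the corresponding constraints, because all of them are computed pointwise in $\kmod$. Applying this with $\phi = \oplus$ and $\phi = \diag$ shows that $\oplus^*$ and $\diag^*$ are strictly symmetric monoidal.

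\emph{Main obstacle.} There is no real difficulty. The only point needing care is bookkeeping of variance, namely which of $\oplus^*$, $\diag^*$ is the left adjoint in each of the two adjunctions; this is settled by the general principle that restriction reverses the direction of adjunctions. I would also remark, for the later comparison with Theorem \ref{thm:compatibility_day}, that under the Schur correspondence $\oplus^*$ corresponds to $\amalg^*$ and $\diag^*$ to $\amalg_*$.
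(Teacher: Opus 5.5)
Your proposal is correct and follows the same route as the paper: the adjunction comes from precomposing with the two adjunctions of Lemma \ref{lem:oplus_diag} (which reverses their direction), and symmetric monoidality holds strictly because restriction commutes with pointwise tensor products. One small slip in your explicit description: for $\diag \dashv \oplus$ the unit $V \rightarrow V \oplus V$ is the diagonal and the counit $(V \oplus W, V \oplus W) \rightarrow (V,W)$ is the pair of projections rather than fold maps, but this does not affect the argument.
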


\begin{proof}
The first statement follows immediately from Lemma \ref{lem:oplus_diag}, since precomposition with an adjunction yields an adjunction. The second statement is simply the fact that precomposition with any functor is symmetric monoidal. 
\end{proof}

\begin{prop}
\label{prop:compare_adjoints}
The following diagrams commute up to natural isomorphism
$$
\xymatrix{
\f (\fb) 
\ar[r]
\ar[d]_{\amalg^*} 
&
\f (\fdvs)
 \ar[d]^{\oplus^*}
&&
 \f (\tfb) 
\ar[r]
\ar[d]_{\amalg_*} 
&
\f (\fdvs \times \fdvs)
 \ar[d]^{\diag^*}
 \\
 \f (\tfb) 
 \ar[r]
 &
 \f (\fdvs \times \fdvs)
&& 
 \f (\fb) 
 \ar[r]
 &
 \f (\fdvs),
}
$$
where the horizontal functors are the respective Schur constructions. 
\end{prop}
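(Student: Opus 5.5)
We have to prove Proposition \ref{prop:compare_adjoints}: the Schur construction intertwines $\amalg^*$ with $\oplus^*$, and $\amalg_*$ with $\diag^*$. The plan is to verify each square directly on objects, using the explicit formulas for the Schur functors and for $\amalg^*$, $\amalg_*$. Naturality in the module and in the vector space arguments should then be apparent from the constructions. This is only up to natural isomorphism: we work in characteristic zero, so induction and coinduction for finite groups coincide, and coinvariants agree with invariants wherever needed.

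The right-hand square is the easier one and is done first. Take a $\kk(\tfb)$-module $B$ and $V \in \fdvs$. Then
\[
(\amalg_* B)(V) = \bigoplus_n V^{\otimes n} \otimes_{\kk \sym_n} \bigoplus_{\n = X_1 \amalg X_2} B(X_1,X_2).
\]
Grouping decompositions by their cardinalities $(m, n-m)$ identifies the inner sum with the induced module $B(\m, \mathbf{n-m})\uparrow_{\sym_m \times \sym_{n-m}}^{\sym_n}$. Transitivity of tensor products (Frobenius reciprocity) then gives
\[
V^{\otimes n} \otimes_{\kk\sym_n} \mathrm{Ind}(-) \cong (V^{\otimes m}\otimes V^{\otimes n-m}) \otimes_{\kk(\sym_m\times\sym_{n-m})} B(\m,\mathbf{n-m}).
\]
Summing over $m$ and $n$ yields $B(V,V) = (\diag^* B)(V)$. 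One checks that this is natural in $V$ and in $B$.

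For the left-hand square, take a $\kk\fb$-module $M$ and $V, W \in \fdvs$. Expand $(V\oplus W)^{\otimes n}$ as the sum over ordered decompositions $\n = X_1 \amalg X_2$ of tensors having $V$ in the positions $X_1$ and $W$ in the positions $X_2$. As a $\kk\sym_n$-module this is $\bigoplus_m \mathrm{Ind}_{\sym_m\times\sym_{n-m}}^{\sym_n}(V^{\otimes m}\otimes W^{\otimes n-m})$. Then
\[
(V\oplus W)^{\otimes n}\otimes_{\kk\sym_n} M(\n) \cong \bigoplus_m (V^{\otimes m}\otimes W^{\otimes n-m})\otimes_{\kk(\sym_m\times\sym_{n-m})} M(\mathbf{m}\amalg \mathbf{n-m}),
\]
and summing over $n$ gives $(\amalg^*M)(V,W)$.

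The main point needing care is naturality with respect to morphisms in $\fdvs \times \fdvs$, and the choice of identifications $\m \amalg \mathbf{n-m} \cong \n$. This is handled by working with arbitrary finite sets rather than skeleta, so that the decomposition of $(V\oplus W)^{\otimes X}$ indexed by $X = X_1 \amalg X_2$ is canonical and visibly functorial in $V$ and $W$. The left square can also be deduced from the right one by adjunction: Lemma \ref{lem:oplus_diag}, Proposition \ref{prop:oplus*_diag*_sym_mon}, Proposition \ref{prop:amalg_tmalg_adjoints}, and the fact that the Schur correspondence is an equivalence onto analytic functors (Remark \ref{rem:schur_equivalence}) together show that the right adjoints agree once the left adjoints do. Using this route requires checking that $\oplus^*$ preserves analytic functors, which follows from the direct computation anyway.
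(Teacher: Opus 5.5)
Your proposal is correct and is essentially the paper's proof. For the left square you decompose $(V\oplus W)^{\otimes n}$ as $\bigoplus_{k+l=n}(V^{\otimes k}\otimes W^{\otimes l})\uparrow_{\sym_k\times\sym_l}^{\sym_n}$ and apply Frobenius reciprocity, and for the right square you identify $\amalg_* B$ as an induced module and use transitivity of tensor products, exactly as the paper does (the paper only first reduces to modules supported on a single object, and your remarks on naturality and the adjunction-based alternative are fine but not needed).
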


\begin{proof}
The proof of this result is a variant of the proof that the Schur constructions are symmetric monoidal. For completeness, we sketch the proof, giving the argument for objects.

Consider the first diagram.  One can reduce to  the case $M$  a $\kk \fb$-module supported on $\n$. Passing around the top of the diagram yields the bifunctor 
$$
(V, W) \mapsto (V \oplus W)^{\otimes n} \otimes _{\kk \sym_n} M (\n).
$$
Now, $(V \oplus W)^{\otimes n}$ is naturally $\sym_n$-equivariantly isomorphic to 
$
\bigoplus_{k+l =n} (V^{\otimes k} \otimes W^{\otimes l})\uparrow _{\sym_k \times \sym_l}^{\sym_n}. 
$
Then, using Frobenius reciprocity, the bifunctor can be rewritten as 
\begin{eqnarray}
\label{eqn:schur_bifunctor}
(V, W) \mapsto  \bigoplus_{k+l =n} (V^{\otimes k} \otimes W^{\otimes l}) \otimes _{\kk (\sym_k \times \sym_l)} M (\n)\downarrow _{\sym_k \times \sym_l}^{\sym_n}.
\end{eqnarray}

Now consider passing around the bottom of the diagram. The $\kk (\tfb)$-module $\amalg^* M$ is given by 
$$
(\mathbf{s}, \mathbf{t}) \mapsto 
\left\{
\begin{array}{ll}
M (\n)\downarrow _{\sym_s \times \sym_t}^{\sym_n}  & s+t =n
\\
0 & \mbox{otherwise.}
\end{array}
\right.
$$
Clearly, the associated Schur bifunctor is naturally isomorphic to (\ref{eqn:schur_bifunctor}), as required.

The proof for the second diagram is similar. Here, one can reduce to the case $B$ a $\kk (\tfb)$-module supported on $(\mathbf{s}, \mathbf{t})$, so that $\amalg_* B$ is supported on $\n$, where $n=s+t$, with value $B(\mathbf{s}, \mathbf{t})\uparrow_{\sym_s\times \sym_t}^{\sym_n}$. Now, the Schur bifunctor associated to $B$ is $(V, W) \mapsto  (V^{\otimes s} \otimes W^{\otimes t}) \otimes_{\kk (\sym_s \times \sym_t)} B (\mathbf{s}, \mathbf{t})$. Applying $\diag^*$ thus yields the functor 
$V \mapsto V^{\otimes n}\downarrow^{\sym_n}_{\sym_s \times \sym_t} \otimes_{\kk (\sym_s \times \sym_t)} B (\mathbf{s}, \mathbf{t})$. By Frobenius reciprocity, this is naturally isomorphic to the Schur functor associated to $B(\mathbf{s}, \mathbf{t})\uparrow_{\sym_s\times \sym_t}^{\sym_n}$, as required.
\end{proof}

\begin{rem}
Using the Schur correspondence recalled in Remark \ref{rem:schur_equivalence} together with Proposition \ref{prop:compare_adjoints}, Proposition \ref{prop:oplus*_diag*_sym_mon} implies  Theorem \ref{thm:compatibility_day} when working over a field of characteristic zero.
\end{rem}

%\input{symm_exterior}

%%%%%%%%%%%%%%%%%%%%%%%%%%%%%%%%%%%%%%%%%%%%%%%%%%%%%%%%%%%%%%%%%%%%%%%%%%%%%%%%
\section{Symmetric and exterior powers}
\label{sect:symm_exterior}

This short section recalls the definition of the symmetric and exterior powers with respect to the respective Day convolution products on $\kk\fb$-modules and $\kk(\tfb)$-modules.

%%%%%%%%%%%%%%%%%%%%%%%%%%%%%%%%%%%%%%%%%%%%%%%%%%%%%%%
\subsection{Introducing the powers}

For $d \in \nat$ and $F$ a $\kk \fb$-modules (respectively $G$ a $\kk (\tfb)$-module), one has the respective $d$-iterated convolution products 
$F^{\odot d}$
 and 
$G^{\circledcirc d}$; the symmetric group $\sym_d$ acts on these by place permutations. One can thus introduce:

\begin{defn}
\label{defn:stfb_ltfb}
For $d \in \nat$,
\begin{enumerate}
\item 
for $F$ a $\kk \fb$-module, define: 
\begin{eqnarray*}
\sfb^d F &:= &F^{\odot d}/ \sym_d; 
\\
\lfb^d F &:=& (F^{\odot d}\otimes \sgn_d)/ \sym_d,
\end{eqnarray*}
where $\sym_d$ acts diagonally on $F^{\odot d}\otimes \sgn_d$;
\item 
for $G$ a $\kk (\tfb)$-module, define: 
\begin{eqnarray*}
\stfb^d G &:= &G^{\circledcirc d}/ \sym_d; 
\\
\ltfb^d G &:=& (G^{\circledcirc d}\otimes \sgn_d)/ \sym_d,
\end{eqnarray*}
where $\sym_d$ acts diagonally on $G^{\circledcirc d}\otimes \sgn_d$.
\end{enumerate}
\end{defn}

\begin{exam}
\label{exam:d=2_fb}
To illustrate the case of $\kk \fb$-modules, take $d=2$ and consider a $\kk \fb$-module $F$. Evaluating on a set $X$, by definition 
$$
F^{\odot 2} (X) = \bigoplus_{X=S_1 \amalg S_2} F(S_1) \otimes F(S_2),
$$
where the sum is over {\em ordered} decompositions of $X$ into two subsets. The action of $\sym_2$ relates the terms indexed by the ordered decompositions $S_1\amalg S_2$ and $S_2 \amalg S_1$. If $X \neq \emptyset$, then these ordered decompositions are different. 

The difference between $\sfb^2$ and $\lfb^2$ is already apparent evaluating on $\emptyset$, since 
\begin{eqnarray*}
\sfb^2 F (\emptyset) &=& S^ 2 (F(\emptyset)) \\
\lfb^2 F (\emptyset ) &=& \Lambda^ 2 (F(\emptyset)),
\end{eqnarray*}
using the usual symmetric power and exterior power functors on vector spaces.

Now, if $X \neq \emptyset$, the underlying vector spaces of $\sfb^2 F (X)$ and $\lfb^2 (X)$ are both (non-canonically) isomorphic to 
$$
\bigoplus_{(S_1, S_2)} F(S_1 ) \otimes F (S_2), 
$$
where the sum is now over {\em unordered} decompositions of $X$ into two subsets. 

However, they are not necessarily isomorphic as  $\aut(X)$-modules. For instance, take $F= \kk_{\one}$, the $\kk \fb$-module supported on $\one$ with value $\kk$, and consider $X= \mathbf{2}$. Then, there are isomorphisms of $\sym_2$-modules:
\begin{eqnarray*}
\sfb^2 \kk_\one  (\mathbf{2}) &\cong & \triv_2 
\\
\lfb^2 \kk_\one  (\mathbf{2}) &\cong & \sgn_2 ,
\end{eqnarray*}
respectively the trivial and the sign representations.
\end{exam}

Generalizing the identification in the above example, we have:

\begin{prop}
\label{prop:sfb_lfb_stfb_ltfb_underlying}
Let  $d$ be a positive integer. 
\begin{enumerate}
\item 
For $F$ a $\kk \fb$-module such that $F (\emptyset) =0$ and $X$ a finite set, the underlying vector spaces of $\sfb^d F (X)$ and $\lfb^d F (X)$ are both non-canonically isomorphic to 
$$
\bigoplus_{(S_1, \ldots ,S_d) } 
\bigotimes _{i=1}^d F (S_i)
$$
where the sum is over a set of representatives of unordered decompositions of $X$ into $d$ non-empty subsets.  
\item 
For $G$ a $\kk(\tfb)$-module such that $G (\emptyset, \emptyset) =0$ and $(X,Y)$ a pair of finite sets, the underlying vector spaces of $\stfb^d G (X,Y)$ and $\lfb^d G (X,Y)$ are both non-canonically isomorphic to 
$$
\bigoplus_{\substack{(S_1, \ldots ,S_d) \\ (T_1, \ldots , T_d) } }
\bigotimes _{i=1}^d G (S_i,T_i)
$$
where the sum is over a set of representatives of the $\sym_d$-orbits of $\{ (S_i, T_i) \mid \amalg_i S_i = X , \amalg_i T_i = Y, i \in \mathbf{d} \}$, for the obvious action of $\sym_d$. 
\end{enumerate}
\end{prop}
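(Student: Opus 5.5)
The plan is to reduce both parts to the case $d=2$ and then induct on $d$, using the explicit description of iterated Day convolution products. For part (1), evaluate $F^{\odot d}(X) = \bigoplus_{(S_1,\ldots,S_d)} \bigotimes_i F(S_i)$, the sum taken over ordered decompositions of $X$ into $d$ subsets (this follows by iterating the formula for $\odot$ in Section \ref{subsect:introduce_convolution}). Since $F(\emptyset)=0$, only decompositions with all $S_i$ non-empty contribute. The place-permutation action of $\sym_d$ permutes these summands, sending the summand indexed by $(S_1,\ldots,S_d)$ to the one indexed by $(S_{\sigma^{-1}(1)},\ldots,S_{\sigma^{-1}(d)})$, composed with the symmetry isomorphism of tensor factors.

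The key step is that, because the $S_i$ are pairwise disjoint and non-empty, they are pairwise distinct. Hence $\sym_d$ acts freely on the set of ordered decompositions into non-empty parts. I will then invoke the elementary fact that, for $W=\bigoplus_{s\in S} W_s$ with $S$ a free $G$-set and $G$ permuting the summands compatibly, the coinvariants $W_G$ are isomorphic to $\bigoplus_{s \in S/G} W_{s}$, choosing one representative per orbit. Concretely, the composite $W_{s_0} \hookrightarrow W \twoheadrightarrow W_G$, summed over representatives $s_0$, is an isomorphism. The same holds after twisting by $\sgn_d$, since tensoring with a one-dimensional character does not change the underlying summands; only the identifications between summands in an orbit acquire signs. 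This gives the stated description for both $\sfb^d F(X)$ and $\lfb^d F(X)$. The isomorphism is non-canonical because it depends on the choice of orbit representatives, which is why no $\aut(X)$-equivariance is claimed.

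For part (2), the argument is identical. Here $G^{\circledcirc d}(X,Y)=\bigoplus \bigotimes_i G(S_i,T_i)$, with the sum over pairs of ordered decompositions of $X$ and $Y$, and $\sym_d$ acts diagonally on the indices. Using $G(\emptyset,\emptyset)=0$, only index families with each $(S_i,T_i)\neq(\emptyset,\emptyset)$ contribute. Such pairs are pairwise distinct, because for $i\neq j$ either $S_i\cap S_j=\emptyset$ with one of them non-empty, or similarly for the $T$'s. So the action on the contributing index set is again free, and the coinvariant argument applies verbatim, giving a sum over $\sym_d$-orbit representatives.

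The only point needing care is the freeness claim in the bivariant case, where a single $S_i$ or a single $T_i$ may be empty. This is exactly why the hypothesis is $G(\emptyset,\emptyset)=0$ rather than a vanishing condition on each variable separately. Beyond that, I expect the proof to be routine; the main obstacle is simply stating the orbit-representative lemma cleanly, including the sign twist.
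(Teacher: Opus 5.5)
Your argument is correct and is essentially the paper's approach. The paper gives no separate proof, only the $d=2$ case in Example \ref{exam:d=2_fb}, and the point there is the same as yours: once the parts (or pairs of parts) are forced to be pairwise distinct, $\sym_d$ permutes the summands freely, so the coinvariants, with or without the sign twist, are a sum over orbit representatives. The induction on $d$ that you announce at the start is never used, and is not needed, because your free-action argument works directly for every $d$.
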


\begin{rem}
It is clear from Example \ref{exam:d=2_fb} that, without the hypothesis $F (\emptyset) =0$, the underlying vector spaces of $\sfb^d F (X) $ and $\lfb^d F (X)$ are not in general isomorphic. However,  one can  refine the statement of Proposition \ref{prop:sfb_lfb_stfb_ltfb_underlying} to treat the general case. Likewise for $\kk (\tfb)$-modules and $\stfb^*$ and $\ltfb^*$.
\end{rem}

\begin{exam}
\label{exam:stfb_ltfb}
Take $G$ to be $\kk_{(\mathbf{1}, \mathbf{1})}$, the $\kk (\tfb)$-module supported on $(\mathbf{1}, \mathbf{1})$ with value $\kk$. Then $\stfb^d \kk_{(\mathbf{1}, \mathbf{1})}$ and $\ltfb^d \kk_{(\mathbf{1}, \mathbf{1})}$ are both supported on $(\mathbf{d}, \mathbf{d})$. There are isomorphisms of $\kk \sym_d$-bimodules
\begin{eqnarray*}
\stfb^d \kk_{(\mathbf{1}, \mathbf{1})} (\mathbf{d}, \mathbf{d})
& \cong & \kk \sym_d 
\\
\stfb^d \kk_{(\mathbf{1}, \mathbf{1})} (\mathbf{d}, \mathbf{d})
& \cong & \sgn_d \otimes \kk \sym_d ,
\end{eqnarray*} 
using the regular bimodule structure on $\kk \sym_d$ and, in the second case, twisting the left action by $\sgn_d$. For $d>1$ these are not isomorphic.
\end{exam}

%%%%%%%%%%%%%%%%%%%%%%%%%%%%%%%%%%%%%%%%%%%%%%%%%%%%%%%%%%%%%%%%%
\subsection{Analysis using the Schur constructions}
\label{subsect:such_S_Lambda}

Fix a natural number $d$. Then we have the usual symmetric and exterior power functors $S^d$ and $\Lambda^d$ on $\kk$-vector spaces. These are respectively the Schur functors associated to $\triv_d$ and to $\sgn_d$ (considering these as $\kk \fb$-modules supported on $\mathbf{d}$).

The fact that the Schur constructions for $\kk \fb$-modules and for $\kk (\tfb)$-modules are symmetric monoidal implies the following:

\begin{prop}
Fix $d \in \nat$.
\begin{enumerate}
\item 
For $F$ a $\kk \fb$-module, there are natural isomorphisms
\begin{eqnarray*}
(\sfb^d F) (V) 
&\cong & S^d (F(V)) \\
(\lfb^d F) (V) 
&\cong & \Lambda^d (F(V)).
\end{eqnarray*}
\item 
For $G$ a $\kk (\tfb)$-module, there are natural isomorphisms
\begin{eqnarray*}
(\stfb^d G) (V,W) 
&\cong & S^d (F(V)) \\
(\ltfb^d G) (V,W) 
&\cong & \Lambda^d (G(V,W)).
\end{eqnarray*}
\end{enumerate}
\end{prop}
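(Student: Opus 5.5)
The plan is to reduce to the Schur functor statement of Proposition~\ref{prop:compare_adjoints}'s ingredients, namely that the Schur constructions $\f(\fb)\to\f(\fdvs)$ and $\f(\tfb)\to\f(\fdvs\times\fdvs)$ are symmetric monoidal, sending $\odot$ (respectively $\circledcirc$) to the pointwise tensor product. Since the Schur construction is additive and commutes with colimits (in particular with passage to $\sym_d$-coinvariants, working in characteristic zero where coinvariants are exact), it suffices to identify the Schur functor of $F^{\odot d}$, with its place-permutation $\sym_d$-action, and then take coinvariants, possibly after tensoring with $\sgn_d$.

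The key steps, in order, are as follows. First, iterating the monoidal isomorphism $(M\odot N)(V)\cong M(V)\otimes N(V)$ gives $F^{\odot d}(V)\cong F(V)^{\otimes d}$. Second, one checks that this isomorphism is $\sym_d$-equivariant, where the left side carries the action induced by the symmetry $\tau$ of $\odot$ and the right side the place permutation action. This holds because the Schur construction is \emph{symmetric} monoidal, so it intertwines the symmetry of $\odot$ with the symmetry of $\otimes$ in vector spaces; the coherence conditions then extend this from transpositions to all of $\sym_d$. Third, we apply $(-)/\sym_d$, respectively $(-\otimes\sgn_d)/\sym_d$. Since $V^{\otimes n}\otimes_{\kk\sym_n}-$ commutes with coinvariants for the auxiliary $\sym_d$-action, which commutes with the $\sym_n$-action, this gives $(\sfb^dF)(V)\cong F(V)^{\otimes d}/\sym_d=S^d(F(V))$, and likewise $\Lambda^d(F(V))$ in the exterior case.

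The bivariant case is identical, using the isomorphism (\ref{eqn:sym_mon_circledcirc}) and its symmetry compatibility. This yields $(\stfb^dG)(V,W)\cong S^d(G(V,W))$ and $(\ltfb^dG)(V,W)\cong\Lambda^d(G(V,W))$; the $F(V)$ appearing in the printed symmetric case should read $G(V,W)$. Naturality in $F$, $G$, $V$, $W$ is inherited from the naturality of the monoidal structure isomorphisms.

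The main obstacle is the second step, the equivariance, since it is the only point where something beyond the bare monoidal isomorphism is used. I would verify it directly on the explicit isomorphism for $d=2$. On the summand indexed by the ordered decomposition $(S_1,S_2)$, the symmetry $\tau$ of $\odot$ swaps the tensor factors. Under the Frobenius reciprocity identification $V^{\otimes n}\otimes_{\sym_n}\mathrm{Ind}\cong (V^{\otimes |S_1|}\otimes_{\sym}F(S_1))\otimes(V^{\otimes|S_2|}\otimes_{\sym}F(S_2))$, this corresponds to the block permutation of $V^{\otimes n}$, which is exactly the swap of the two factors $F(V)$. No sign appears here, since everything is ungraded. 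The general $d$ case then follows from the coherence conditions.
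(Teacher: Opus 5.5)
Your proposal is correct and follows the same route as the paper, which deduces the statement directly from the symmetric monoidality of the Schur constructions; you also spell out the two points the paper leaves implicit, namely that the iterated monoidal isomorphism is $\sym_d$-equivariant by coherence and that the Schur construction commutes with passage to $\sym_d$-coinvariants (with or without the $\sgn_d$ twist). You are also right that $S^d(F(V))$ in the bivariant case is a typo for $S^d(G(V,W))$.
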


%%%%%%%%%%%%%%%%%%%%%%%%%%%%%%%%%%%%%%%%%%%%%%%%%%%%%%%%%%%
%\bibliographystyle{alphaurl}
%\bibliography{uwb.bib}

\end{document}